\newtheorem{theorem}{Theorem}[section]
\newtheorem{corollary}{Corollary}
\newtheorem{lemma}[theorem]{Lemma}
\newtheorem{proposition}{Proposition}
\theoremstyle{definition}
\newtheorem{definition}[theorem]{Definition}
\newtheorem{remark}{Remark}
\newcommand{\e}{\varepsilon}
\newcommand{\ds}{\displaystyle}
\def\XXint#1#2#3{{\setbox0=\hbox{$#1{#2#3}{\int}$ }
\vcenter{\hbox{$#2#3$ }}\kern-.6\wd0}}
\def\div{\,\mathrm{div}\,}
\title[OCP for Linear Elliptic Equations]
      {Optimal $L^2$-Control Problem in Coefficients\\ for a Linear Elliptic Equation}
\author[T.~Horsin and P.~I.~Kogut]{}
\subjclass{Primary: 49J20, 35J57; Secondary: 49J45, 35J75.}
 \keywords{Control in coefficients, non-variational solutions, variational convergence, fictitious control.}
 \email{p.kogut@i.ua}
 \email{thierry.horsin@cnam.fr}
\thanks{ }
\begin{document}
\maketitle

\centerline{\scshape Thierry Horsin}
\medskip
{\footnotesize
 \centerline{ Conservatoire National des Arts et M\'{e}tiers,}
   \centerline{M2N, Case 2D 5000,}
   \centerline{292 rue Saint-Martin, 75003 Paris, France}
}

\medskip

\centerline{\scshape Peter I. Kogut }
\medskip
{\footnotesize
 \centerline{Department of Differential Equations,}
   \centerline{Dnipropetrovsk National University,}
   \centerline{ Gagarin av., 72, 49010 Dnipropetrovsk, Ukraine}
} 

\bigskip

 \centerline{(Communicated by the associate editor name)}

\begin{abstract}
In this paper we study an optimal control
problem (OCP) associated to a linear elliptic equation { on a bounded domain $\Omega$}. The
matrix-valued coefficients $A$ of such systems  is our control { in $\Omega$ and will be taken in}
$L^2(\Omega;\mathbb{R}^{N\times N})$ { which in particular may comprise som cases of unboundedness}.{ Concerning the boundary value problems associated to }the equations of this
type, one may face non-uniqueness of
weak solutions--- namely,  approximable solutions as well as another type of weak solutions  that can not be obtained through the $L^\infty$-approximation of matrix $A$.
Following the direct method in the calculus of variations, we
show that the given OCP is well-posed in the sense that it admits at least one solution.
At the same time, optimal solutions to such problem may have a singular character in the above sense.
In view of this, we indicate two types of optimal solutions to the above problem: the so-called variational and non-variational solutions, and show that some of that optimal solutions can be attainable by solutions of special optimal boundary control problems.
\end{abstract}

In this paper we deal with the following optimal control problem (OCP) in coefficients for a linear elliptic  equation
\begin{equation}\label{0.1}
\left\{
\begin{array}{c}
\ds\text{Minimize } I(A,y)=\left\|y-y_d\right\|^2_{L^2(\Omega)}
 + \int_\Omega\left(\nabla y, A^{sym}\nabla y\right)_{\mathbb{R}^N}\,dx\\
 \text{subject to the constraints }\\
-\div\big(A^{sym}\nabla y+A^{skew}\nabla y\big) = f\quad\text{ in }\Omega,\\
y=0\quad\text{ on }\partial\Omega\\
A\in \mathfrak{A}_{ad},
\end{array}
\right.
\end{equation}
where  $(A^{sym},\,A^{skew})\in L^\infty(\Omega;R^{N\times N})\times L^2(\Omega;R^{N\times N})$ are respectively the symmetric and antisymmetric part of the control $A$, $y_d\in L^2(\Omega)$ and $f\in H^{-1}(\Omega)$ are  given distributions, and $\mathfrak{A}_{Ad}$ denotes the class of admissible controls which will be precised later.

The characteristic feature of this problem is the fact that the skew-symmetric part of matrix $A(x)=[a_{ij}(x)]_{i,j=1,\dots,N}$  belongs to $L^2$-space (rather than $L^\infty)$. The existence, uniqueness, and variational properties of a  weak solution to \eqref{0.1}  { are }usually  drastically different from the corresponding properties of solutions to the elliptic equations with $L^\infty$-matrices in coefficients. In most of the cases, the situation can  deeply change for the matrices $A$ with unremovable singularity.
Typically, in such cases,  the above boundary value problem  may admit infinitely many weak solutions which can be divided into two classes: approximable and non-approximable solutions \cite{F_P,Zh_97,Zhik1_04}. A function $y=y(A)$ is called an approximable
solution to the boundary value problem in \eqref{0.1} if it can be attained  by  weak solutions to the similar boundary value problems with $L^\infty$-approximated matrix $A$. However, this type  does not exhaust all weak solutions to the above problem. There is another type of weak solutions, which cannot be approximated by  weak solutions of such regularized problems. Usually, these are called non-variational \cite{Zh_97,Zhik1_04}, singular \cite{Buttazzo_Kogut,Kogut1,Kogut2,Zuazua1}, pathological \cite{Maz,Serin} and others.

It may seem puzzling to consider, for an optimal control problem, a
state equation with singular matrix involved in the coefficients. Despite
this offhand abstract aspect of the problem, one should be aware that
singular equations appear naturally when considering optimal control
problems with a nonlinear state equation (see, for instance, \cite{Casas1} for
quasi-linear elliptic equations). Moreover, formal analysis in optimization are
well-known to state that optimal control problems and their adjoints  are
completely dual from each other through saddle points consideration
which also justifies the fact that one may be interested in dealing with
optimization of linear singular equations.

The aim of this work is to study the existence of optimal controls to the problem \eqref{0.1}, propose a scheme of their approximations, and discuss the optimality conditions of this problem.  Using the direct
method in the Calculus of Variations, we show in Section \ref{Sec 2} that the
original OCP admits in general a non-unique solution even if the
corresponding boundary value problem is ill-possed. This problem is
thus another example of the difference between well-posedness
of optimal control problems for systems with distributed parameters
and ill-posedness of boundary value problems for partial differential equations.

In Section \ref{Sec 4} we show that there are two types of optimal solutions: the so-called variational and non-variational solutions. By the first type we mean those optimal solutions which can be attained through the sequence of optimal solutions to regularized OCP for boundary value problem \eqref{0.1} with skew-symmetric parts of admissible controls
$A^{skew}_k\in L^\infty(\Omega;\mathbb{S}^N)$ such that $A^{skew}_k\rightarrow A^{skew}$ strongly in $L^2(\Omega;\mathbb{S}^N)$. We give the sufficient conditions which guarantee that the solutions to OCP \eqref{0.1} have a variational character.
The second type of optimal solutions is related to those which
cannot be attained by the above procedure. We discuss in Section~\ref{Sec A3a} the example of an optimal control problem in coefficients with non-variational optimal solution.  This stimulates us to develop
another approach of approximation for the considered optimal control problems.

In Section~\ref{Sec 33} we discuss optimality conditions for OCP \eqref{0.1}. In
spite of the fact that the corresponding Lagrange functional is, in general, not
G\^{a}teaux differentiable, we show that the optimality conditions
can be derived using the notion of quasi-adjoint state to the original problem
\cite{Serov}. As for a result,
this leads to an optimality system which contains the so-called extended values
of bilinear forms generated by $L^2$-skew-symmetric matrices.

In section \ref{Sec 5} we give a precise description of the class of admissible
controls $\mathfrak{A}_{ad}\subset L^2\big(\Omega;\mathbb{R}^{N\times N}\big)$
which guarantee that non-variational solutions can be attained through the
sequence of optimal solutions to OCPs in special perforated domains with
fictitious boundary controls on the boundary of holes. Namely, we consider the
following family of regularized OCPs
\begin{equation}\label{0.2}
\left\{
\begin{array}{c}
\ds\text{Minimize } I_\e(A,v,y):=\left\|y-y_d\right\|^2_{L^2(\Omega_\e)}+ \int_{\Omega_\e}\left(\nabla y, A^{sym}\nabla y\right)_{\mathbb{R}^N}\,dx\\
\ds  + \frac{1}{\e^\sigma}\|v\|^2_{H^{-\frac{1}{2}}(\Gamma_\e)}\\
 \text{subject to the constraints }\\
-\div\big(A^{sym}\nabla y+A^{skew}\nabla y\big) = f\quad\text{in }\ \Omega_\e,\\
y=0\text{ on }\partial\Omega,\quad
\partial y/ \partial \nu_{A}=v\ \text{on }\Gamma_\e,\\
\ y\in H^1_0(\Omega_\e;\partial\Omega),
\end{array}
\right.
\end{equation}
where $\Omega_\e$ is the subset of $\Omega$ such that $\partial\Omega\subset\partial\Omega_\e$, $\sigma>0$, and $\|A(x)\|_{\mathbb{S}^N}:=\max_{i,j=1,\dots,N}\left|a_{ij}(x)\right|\le\e^{-1}$ a.e. in $\Omega_\e$.
Here, $v$ stands for the fictitious control.

We show that OCP \eqref{0.2} has a nonempty set of solutions $(A_\e^0,v_\e^0,y_\e^0)$ for every $\e>0$.
Moreover, as follows from \eqref{0.2}$_1$, the cost functional
$I_\e$ seems to be rather sensitive with respect to the fictitious
controls. Due to this fact,
we prove that  the sequence $\left\{(A_\e^0,y_\e^0)\right\}_{\e>0}$ gives in the limit an optimal solution $(A^0,y^0)$ to the
original problem.

The main technical difficulty, which is related with the study of the asymptotic behaviour of OCPs \eqref{0.2} as $\e\to 0$,
deals with the identification of the limit
$\lim_{\e\to 0}\left\{\left<v^0_\e,y^0_{\e}\right>_{H^{-\frac{1}{2}}(\Gamma_\e);H^{\frac{1}{2}}(\Gamma_\e)}\right\}_{\e>0}$
of two weakly convergent sequences. Due to the special properties of the skew-symmetric parts of admissible controls $A\in \mathfrak{A}_{ad}\subset L^2\big(\Omega;\mathbb{S}^N\big)$, we show that this limit can be recovered in an explicit form.
We also show in this section that
the energy equalities to the regularized boundary value problems can be specified by two extra terms which characterize the presence of the-called hidden singular energy coming from $L^2$-properties of skew-symmetric components $A^{skew}$ of admissible controls.

In conclusion,  in Section~\ref{Sec 6}, we derive the optimality conditions for regularized OCPs \eqref{0.2} and show that the limit passage in optimality system for the regularized problems \eqref{0.2} as $\e\to 0$ leads to the optimality system for the original OCP \eqref{0.1}.

Let us point out that situations where the non uniqueness of some problems
occurs can lead to serious numerical difficulties. A good numerical scheme is
assumed to construct a desired solution. At a basic stage, the proof of the
Cauchy-Peano theorem for O.D.E is relevant of this situation: though the
construction of the solution may seem explicit, the fact the convergence is
obtained only for a subsequence is a brake to finding the desired solution see
\cite{coddlevin}. In the context of this paper, due to limited capacities of
computers, any kind of representation of matrices with $L^2$-coefficients will
lead to a truncated version of it. Naturally, thus, any attempt to treat
numerically some problem of the type \eqref{0.1}, will probably force the
algorithm to obtain an optimal variational solution. Thus, in order to produce
numerically non-variational optimal solutions of the problem \eqref{0.1}, the
method of perforated domain can be used. But in this case, one has to face the
fact that fictitious controls are distributions, which, of course, have a quite
bad numeric representation. One may thus think that those fictitious controls
could be taken in spaces of higher regularity ({\it e.g.} $L^2(\partial
\Omega_\e)$), but basic examples of non-variational solutions (see
\cite{Zh_97,Zhik1_04} ) shows that it is probably in general possible to have
non variational solutions with such properties.

Of course, one may wonder if situations of non uniqueness and moreover of lack
of procedure to obtain some uniqueness are relevant from the point of view of
applications. Nematic liquid crystals, as modeled by harmonic maps between
manifolds, can be, throughout this model, represented by minimizing
harmonic maps or stationary harmonic maps, for which, both of them satisfy
formally the same equation, but mathematically not. We refer to \cite{nematics}
for descriptions of this topic.

For our particular
equations, there may be some physical situations where the ill-posed problem
in \eqref{0.1} has also a mere sense in itself, notwithstanding the optimal
control problem. In deed, it is a common old principle to assume that the
stress Cauchy tensor $\sigma$ in mechanics is symmetric and leads to the
classical relations $$-\div(\sigma e(u))=f$$ where $e(u)$ is
given by
$$e(u)=\dfrac 12(\dfrac{\partial u_i}{\partial x_j}+\dfrac{\partial
u_j}{\partial x_i})$$ see \cite{Cauchy}.

On the contrary to this equation which can be stated in the form
$$-\div(A\nabla u) =f$$ for some symmetric matrix $A$, the Cosserats brothers have
introduced a non symmetric form for this equation, \cite{cosserats}. Of course
at a gross scale, the symmetric part of the stress behavior dominates the
behavior, but some micro-rotation may be observed in material according to
strain actions, for example in bones or some specific materials \cite{novacki}.
In that sense the assumption on $A^{skew}$ may be reflecting some
particular fragile point of a material, fragile meaning with respect to some
local ability of the surrounding matricant to degenerate in torsion, while
remaining stable in elongation.

In the spirit of the OCP, \eqref{0.1} can be thought as a way of realizing some
specific material with objective $y_d$ (for example a desired deformation)
according to some prescribed set of singular behaviours (the points where
$A^{skew}$ is singular), that is the material has a micro-rotative behavior at
only some prescribed set. Of course, according to the previous analysis,
designing such a material may be difficult to realize as a result of the
following analysis.


\section{Notation and Preliminaries}\label{Sec_1}
\label{Sec 1}

Let $\Omega$ be a bounded open connected subset of $\mathbb{R}^N$ ($N\ge 2$)
with Lipschitz boundary $\partial\Omega$. The spaces
$\mathcal{D}^\prime(\Omega)$ of distributions in $\Omega$ is the
dual of the space $C^\infty_0(\Omega)$.
As usual by $H_0^1(\Omega)$ we denote the closure of $C_0^\infty
(\Omega)$-functions in
the Sobolev space $H^{1}(\Omega)$, while $H^{-1}(\Omega)$ { denotes
the dual of $H^{1}_0(\Omega)$, any of its element can be represented, in the
sense of distribution,} as $f=f_0+\sum_j \partial_j
f_j$, with $f_0, f_1,\dots,f_N\in L^2(\Omega)$.
The {
usual } norm in $H_0^1(\Omega)$ { will be replaced by the equivalent
one  defined by}
$$
\|y\|_{H_0^1(\Omega)}=\left(\int_\Omega \|\nabla
y\|^2_{\mathbb{R}^N}\,dx\right)^{1/2}.
$$

Let $\Gamma$ be a part of the boundary
$\partial\Omega$ with positive $(N-1)$-dimensional measures. We consider
$$
C^\infty_0(\mathbb{R}^N;\Gamma)=\left\{\varphi\in
C^\infty_0(\mathbb{R}^N)\ :\ \varphi=0\text{ on }\Gamma\right\},
$$
and denote  $H^{1}_0(\Omega;\Gamma)$ its  closure   with respect to the norm
$$\|y\|=\left(\int_\Omega \|\nabla y\|^2_{\mathbb{R}^N}\,dx\right)^{1/2}.$$

For any vector field $v\in
L^2(\Omega;\mathbb{R}^N)$, the divergence of $v$
is an element $\mathrm{div}\,v$ of the space $H^{-1}(\Omega)$ defined by the formula
\begin{equation}
\label{1.00}
\left<\mathrm{div}\,v,\varphi\right>_{H^{-1}(\Omega);H^1_0(\Omega)}= -\int_\Omega
(v,\nabla\varphi)_{\mathbb{R}^N}\,dx,\quad \forall\,\varphi\in
C^\infty_0(\Omega),
\end{equation}
where $\left<\cdot,\cdot\right>_{H^{-1}(\Omega);H^1_0(\Omega)}$ denotes the duality pairing between
$H^{-1}(\Omega)$ and $H^1_0(\Omega)$, and
$(\cdot,\cdot)_{\mathbb{R}^N}$ stands for the scalar product in $\mathbb{R}^N$.

\bigskip \noindent
\textit{Symmetric and skew-symmetric matrices}.
Let $\mathbb{M}^N$ be the set of all $N\times N$ {real} matrices.
We denote by $\mathbb{S}^N_{skew}$ the  set of all
skew-symmetric matrices $C=[c_{ij}]_{i,j=1}^N$, i.e., $C$ is a square matrix whose transpose is also its {opposite}. Thus, if $C\in  \mathbb{S}^N_{skew}$ then $c_{ij}=-c_{ji}$ and, hence,
$c_{ii}=0$. Therefore, the set $\mathbb{S}^N_{skew}$ can be identified with the Euclidean space $\mathbb{R}^{\frac{N(N-1)}{2}}$.
Let $\mathbb{S}^N_{sym}$ be the set of all $N\times N$ symmetric matrices, which are obviously determined by $N(N+1)/2$ scalars.
Since $\mathbb{M}^N=\mathbb{S}^N_{sym}+\mathbb{S}^N_{skew}$ and $\mathbb{S}^N_{sym}\cap \mathbb{S}^N_{skew}=\emptyset$, it follows that $\mathbb{M}^N=\mathbb{S}^N_{sym}\oplus\mathbb{S}^N_{skew}$. Moreover, for each matrix
$B\in \mathbb{M}^N$, we have a unique representation
\begin{equation}
\label{1.0}
B=B^{sym}+B^{skew},
\end{equation}
where $B^{sym}:=\frac{1}{2}\left(B+B^t\right)\in \mathbb{S}^N_{sym}$ and $B^{skew}:=\frac{1}{2}\left(B-B^t\right)\in \mathbb{S}^N_{skew}$.  In the sequel, we will always identify each matrix
$B\in \mathbb{M}^N$ with its decomposition in the form \eqref{1.0}.

Let
$L^2(\Omega)^{\frac{N(N-1)}{2}}=L^2\big(\Omega;\mathbb{S}^N_{skew}\big)$
be the {normed} space of measurable square-integrable functions whose values are skew-sym\-met\-ric matrices  with the norm
$$
\|A\|_{L^2(\Omega;\mathbb{S}^N_{skew})}= \left(\int_\Omega \left(\max_{{i,j=1,\dots,N}\atop j>i} |a_{ij}(x)|\right)^2\,dx\right)^{1/2}.
$$
By analogy, we can define the spaces
$$
L^2(\Omega)^{\frac{N(N+1)}{2}}=L^2\big(\Omega;\mathbb{S}^N_{sym}\big)\ \text{ and }\ L^2(\Omega)^{N\times N}=L^2\big(\Omega;\mathbb{M}^N\big).
$$

Let $A(x)$ and $B(x)$ be given  matrices such that $A,B\in L^2(\Omega;\mathbb{S}^N_{skew})$. We say that these matrices are related by the binary relation $\preceq$ on the set $L^2(\Omega;\mathbb{S}^N_{skew})$ (in symbols, $A(x)\preceq B(x)$ a.e. in $\Omega$), if
\begin{equation}
\label{1.0b}
\mathcal{L}^N \left\{\bigcup_{i=1}^N \bigcup_{j=i+1}^N\left\{ x\in\Omega\ :\ | a_{ij}(x) |>  |b_{ij}(x)|\right\}\right\}=0.
\end{equation}
Here, $\mathcal{L}^N(E)$ denotes the $N$-dimensional Lebesgue measure of $E\subset \mathbb{R}^N$  defined on the completed borelian $\sigma$-algebra.

We define the divergence $\div A$ of a matrix $A\in L^2\big(\Omega;\mathbb{M}^N\big)$ as a vector-valued distribution
$d\in H^{-1}(\Omega;\mathbb{R}^N)$ by the following rule
\begin{equation}
\label{1.0a}
\left<d_i,\varphi\right>_{H^{-1}(\Omega);H^1_0(\Omega)}= -\int_\Omega
(\mathbf{a}_i,\nabla\varphi)_{\mathbb{R}^N}\,dx,\quad \forall\,\varphi\in
C^\infty_0(\Omega),\quad\forall\,i\in\left\{1,\dots,N\right\},
\end{equation}
where $\mathbf{a}_i$ stands for the $i$-th row of the matrix $A$.

For fixed two constants $\alpha$ and $\beta$ such that
$0<\alpha\le\beta<+\infty$,
we define $\mathfrak{M}_\alpha^\beta(\Omega)$ as a set of all matrices
$A=[a_{i\,j}\,]$ in $L^\infty(\Omega;\mathbb{S}^N_{sym})$ such that
\begin{equation}
\label{1.1}
\alpha I\le  A(x)\le \beta I,\quad \text{a.e. in } \Omega.
\end{equation}
Here, $I$ is the identity matrix  in $\mathbb{M}^{N}$, and \eqref{1.1} should be
considered in the sense of quadratic forms defined by $\left(A\xi,\xi\right)_{\mathbb{R}^N}$ for
$\xi\in \mathbb{R}^{N}$.

\textit{Unbounded bilinear forms on $H^1_0(\Omega)$.}
Let $A\in L^2\big(\Omega;\mathbb{M}^N\big)$ be an arbitrary matrix. In view of the representation $A=A^{sym}+A^{skew}$, we can associate with $A$ the form $\varphi(\cdot,\cdot)_A:H^1_0(\Omega)\times H^1_0(\Omega)\to \mathbb{R}$ following the rule
\[
\varphi(y,v)_A= \int_\Omega \big(\nabla v,A^{skew}(x)\nabla y\big)_{\mathbb{R}^N}\,dx,\quad \forall\,y,v\in H^1_0(\Omega).
\]
It is easy to see that, in general, this form is unbounded on $H^1_0(\Omega)$, however, it is expected some kind of alternating and antisymmetric properties of it. In order to deal with these concepts, we  introduce of the following set.

\begin{definition}
\label{Def 2.7} Let $A=A^{sym}+A^{skew}\in L^2\big(\Omega;\mathbb{M}^N\big)$ be a given matrix. We say that an element $y\in H^1_0(\Omega)$ belongs to the set $D(A)$ if
\begin{equation}
\label{2.8}
\left|\int_\Omega \big(\nabla \varphi,A^{skew}\nabla y\big)_{\mathbb{R}^N}\,dx\right|\le c(y,A^{skew}) \left(\int_\Omega |\nabla \varphi|^2_{\mathbb{R}^N}\,dx\right)^{1/2},\  \forall\,\varphi\in C^\infty_0(\Omega)
\end{equation}
with some constant $c$ depending only of $y$ and $A^{skew}$.
\end{definition}

{Consequently}, having set
\[
[y,\varphi]_A= \int_\Omega \big(\nabla \varphi,A^{skew}(x)\nabla y\big)_{\mathbb{R}^N}\,dx,\quad \forall\,y\in D(A),\ \forall\,\varphi\in C^\infty_0(\Omega),
\]
we see that the bilinear form $[y,\varphi]_A$ can be defined for all $\varphi\in H^1_0(\Omega)$ using \eqref{2.8} and the standard rule
\begin{equation}
\label{2.8a}
[y,\varphi]_A=\lim_{\e\to 0}\,[y,\varphi_\e]_A,
\end{equation}
where $\left\{\varphi_\e\right\}_{\e>0}\subset C^\infty_0(\Omega)$ and $\varphi_\e\rightarrow \varphi$ strongly in $H^1_0(\Omega)$. In this case the value $[v,v]_A$ is finite for every $v\in D(A)$, although the "integrand"\
$\big(\nabla v,A^{skew}\nabla v\big)_{\mathbb{R}^N}$ need not be integrable, in general.

\bigskip

\textit{Functions with bounded variations}. Let $f:\Omega\to \mathbb{R}$ be a function of $L^1(\Omega)$. Define
\begin{multline*}
TV(f):=\int_\Omega|Df|= \sup\Big\{\int_\Omega f\,(\nabla,\varphi)_{\mathbb{R}^N}\,dx\,  :\\
\varphi=(\varphi_1,\dots,\varphi_N)\in C^1_0(\Omega;\mathbb{R}^N),\
|\varphi(x)|\le 1\ \text{for}\ x\in \Omega\Big\},
\end{multline*}
where $(\nabla,\varphi)_{\mathbb{R}^N}=\sum_{i=1}^N\frac{\partial\varphi_i}{\partial x_i}$.

According to the Radon-Nikodym theorem, if $TV(f)<+\infty$ then the distribution $Df$ is a measure and there exist a vector-valued function $\nabla f\in[L^1(\Omega)]^N$ and a measure $D_s f$, singular with respect to the $N$-dimensional Lebesgue measure $\mathcal{L}^N\lfloor\Omega$ restricted to $\Omega$, such that
$Df=\nabla f\mathcal{L}^N\lfloor\Omega+D_s f.$

\begin{definition}
\label{Def 1.10} A function $f\in L^1(\Omega)$  is said to have a
bounded variation in $\Omega$ if $TV(f)<+\infty$. By
$BV(\Omega)$ we denote the space of all functions in $L^1(\Omega)$
with bounded variation, i.e.
$$
BV(\Omega)=\left\{f\in L^1(\Omega)\ :\ TV(f)<+\infty\right\}.
$$
\end{definition}

Under the norm
$\|f\|_{BV(\Omega)}=\|f\|_{L^1(\Omega)}+TV(f),$
$BV(\Omega)$ is a Banach space.  For our further analysis, we need the following properties
of $BV$-functions (see \cite{Evans}):
\begin{proposition}
\label{Prop 1.11}
\begin{enumerate}
\item Let $\{f_k\}_{k=1}^\infty$ be a sequence in $BV(\Omega)$ strongly converging to some $f$ in $L^1(\Omega)$ and satisfying condition $\sup_{k\in \mathbb{N}}TV(f_k)<+\infty$. Then
    $$f\in BV(\Omega)\quad\text{and}\quad TV(f)\le\liminf_{k\to\infty}TV(f_k);$$
\item for every $f\in BV(\Omega)\cap L^r(\Omega)$, $r\in [1,+\infty)$, there exists a sequence $\{f_k\}_{k=1}^\infty\subset C^\infty(\Omega)$ such that
    \[
    \lim_{k\to\infty} \int_\Omega |f-f_k|^r\,dx =0\quad\text{and}\quad \lim_{k\to\infty} TV(f_k)=TV(f);
    \]
\item for every bounded sequence $\{f_k\}_{k=1}^\infty\subset BV(\Omega)$ there exists a subsequence, still denoted by $f_k$, and a function $f\in BV(\Omega)$ such that $f_k\rightarrow f$ in $L^1(\Omega)$.
\end{enumerate}
\end{proposition}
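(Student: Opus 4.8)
The plan is to prove the three items in the stated order, using item (1) as the basic tool for items (2) and (3); all three are classical properties of $BV$ functions (as in \cite{Evans}), so the work consists of assembling standard ingredients. For (1) I would invoke the lower semicontinuity of a supremum of $L^1$-continuous functionals. Fix an admissible competitor $\varphi\in C^1_0(\Omega;\mathbb{R}^N)$ with $|\varphi(x)|\le1$ on $\Omega$. Since $(\nabla,\varphi)_{\mathbb{R}^N}\in L^\infty(\Omega)$ and $f_k\to f$ strongly in $L^1(\Omega)$, the functional $g\mapsto\int_\Omega g\,(\nabla,\varphi)_{\mathbb{R}^N}\,dx$ is continuous on $L^1(\Omega)$, so
$$\int_\Omega f\,(\nabla,\varphi)_{\mathbb{R}^N}\,dx=\lim_{k\to\infty}\int_\Omega f_k\,(\nabla,\varphi)_{\mathbb{R}^N}\,dx\le\liminf_{k\to\infty}TV(f_k).$$
Taking the supremum over all such $\varphi$ yields $TV(f)\le\liminf_{k\to\infty}TV(f_k)$, which is finite by hypothesis, hence $f\in BV(\Omega)$.

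For (2) I would use mollification together with a partition of unity. On any open set $\Omega'$ with $\overline{\Omega'}\subset\Omega$ and $\delta<\mathrm{dist}(\Omega',\partial\Omega)$, the convolution $f*\rho_\delta$ with a standard symmetric mollifier $\rho_\delta$ is smooth on $\Omega'$, converges to $f$ in $L^r(\Omega')$ as $\delta\to0$, and satisfies $\int_{\Omega'}|\nabla(f*\rho_\delta)|\,dx\le TV(f)$: indeed, for $\varphi\in C^1_0(\Omega';\mathbb{R}^N)$ with $|\varphi|\le1$ one has $\rho_\delta*\varphi\in C^1_0(\Omega;\mathbb{R}^N)$ with $|\rho_\delta*\varphi|\le1$, so $\int_{\Omega'}(f*\rho_\delta)(\nabla,\varphi)_{\mathbb{R}^N}\,dx=\int_\Omega f\,(\nabla,\rho_\delta*\varphi)_{\mathbb{R}^N}\,dx\le TV(f)$. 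To obtain a \emph{single} globally smooth approximant I would exhaust $\Omega$ by open sets $\Omega_1\subset\Omega_2\subset\cdots$ with $\overline{\Omega_j}\subset\Omega_{j+1}$, take a subordinate partition of unity $\{\psi_j\}$, and mollify each $\psi_j f$ at a scale $\delta_j$ chosen small enough that the supports remain in $\Omega$ and the resulting $L^r$- and gradient-errors are summable with total less than $1/k$; the resulting $f_k\in C^\infty(\Omega)$ then satisfy $f_k\to f$ in $L^r(\Omega)$ and $\limsup_{k\to\infty}TV(f_k)\le TV(f)$, while the reverse bound $\liminf_{k\to\infty}TV(f_k)\ge TV(f)$ is precisely item (1); hence $TV(f_k)\to TV(f)$.

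For (3) I would combine (2) with a compactness theorem. Given $\{f_k\}$ bounded in $BV(\Omega)$, set $M:=\sup_k\|f_k\|_{BV(\Omega)}$ and use (2) to pick $g_k\in C^\infty(\Omega)$ with $\|f_k-g_k\|_{L^1(\Omega)}\le1/k$ and $\int_\Omega|\nabla g_k|\,dx\le TV(f_k)+1\le M+1$; then $\{g_k\}$ is bounded in $W^{1,1}(\Omega)$, and since $\Omega$ is bounded with Lipschitz boundary the embedding $W^{1,1}(\Omega)\hookrightarrow L^1(\Omega)$ is compact, so a subsequence of $g_k$, and hence the corresponding subsequence of $f_k$, converges in $L^1(\Omega)$ to some $f$; item (1) then gives $f\in BV(\Omega)$. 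Alternatively one can bypass Rellich--Kondrachov and deduce the $L^1$-precompactness of $\{f_k\}$ directly from the Fréchet--Kolmogorov criterion, using the translation estimate $\|f_k(\cdot+h)-f_k\|_{L^1(\Omega')}\le|h|\,TV(f_k)$ for $|h|<\mathrm{dist}(\Omega',\partial\Omega)$ together with the uniform $L^1$-bound.

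I expect the main obstacle to be the boundary behaviour in item (2): plain mollification only produces functions smooth on compactly contained subdomains, and $f*\rho_\delta$ need not be defined up to $\partial\Omega$, so one genuinely needs the partition-of-unity construction to splice the local mollifications into one globally smooth approximant whose total variation still converges. Once that construction is in place, the interior estimate $\int_{\Omega'}|\nabla(f*\rho_\delta)|\,dx\le TV(f)$ together with the lower semicontinuity of item (1) are exactly what force the equality $\lim_{k\to\infty}TV(f_k)=TV(f)$ rather than a mere inequality.
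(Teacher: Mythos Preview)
Your argument is correct and follows the standard route (lower semicontinuity via the supremum definition, the Anzellotti--Giaquinta mollification with a partition of unity for strict approximation, and Rellich--Kondrachov after reducing to $W^{1,1}$). Note, however, that the paper does not give its own proof of this proposition: it is stated as a list of known facts with a reference to \cite{Evans}, so there is no in-paper argument to compare against.
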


\bigskip
\noindent
\textit{Variational convergence of optimal control problems.}
Throughout the paper $\e$ denotes a small parameter which varies
within a strictly decreasing sequence of positive numbers converging
to $0$. When we write $\e>0$, we consider only the elements of
this sequence, in the case
 $\e\ge 0$ we also consider its
limit $\e=0$.
Let $I_\e:\mathbb{U}_\e\times
\mathbb{Y}_\e\rightarrow \overline{\mathbb{R}}$ be a cost functional, $\mathbb{Y}_\e$ be a space of states,
and $\mathbb{U}_\e$ be a space of controls. Let $\min\left\{I_\e(u,y)\,:\
(u,y)\in \Xi_\e\right\}$ be a parameterized OCP,
where
$$
\Xi_\e\subset\left\{(u_\e,y_\e)\in \mathbb{U}_\e\times
\mathbb{Y}_\e \ :\ u_\e\in U_\e,\ I_\e(u_\e,y_\e)<+\infty\right\}
$$
is a set of all admissible pairs linked by some state equation.
Hereinafter we always associate to such OCP the
corresponding constrained minimization problem:
\begin{equation}
\label{1.3} (\mathrm{CMP_\e})\ :\qquad\qquad \left<
\inf\limits_{(u,y)\in\,\Xi_\e} I_\e(u,y)\right>.
\end{equation}
Since the sequence of
constrained minimization problems \eqref{1.3} lives in variable spaces
$\mathbb{U}_\e\times\mathbb{Y}_\e$, we assume that there exists a Banach
space $\mathbb{U}\times \mathbb{Y}$ with respect to
which a convergence in the scale of spaces $\left\{\mathbb{U}_\e\times \mathbb{Y}_\e\right\}_{\e>0}$ is defined
(for the details, we refer to \cite{KL,Zh_98}). In the sequel, we use the following notation for this convergence
$(u_\e,y_\e)\stackrel{\mu}{\longrightarrow}\, (u,y)$  in $\mathbb{U}_\e\times \mathbb{Y}_\e$.

In order to study the asymptotic behavior of a family of $(\mathrm{CMP_\e})$,
the passage to the limit in \eqref{1.3} as the small parameter $\e$ tends to zero has to be realized.
The expression \textquotedblleft passing to the limit"\ means that we have to find a kind
of \textquotedblleft limit cost functional"\ $I$ and \textquotedblleft limit set of constraints" $\Xi$
with a clearly defined structure such that the limit object
$\left<\inf_{(u,y)\in\Xi} I(u,y)\right>$ may be interpreted
as some~OCP.

Following the scheme of the direct variational convergence \cite{KL},
we adopt the following definition for the convergence of minimization problems in variable spaces.

\begin{definition}
\label{Def 1.4} A problem $\left<\inf_{(u,y)\in\Xi} I(u,y)\right>$ is
the variational limit of the sequence (\ref{1.3}) as $\e\to 0$
\[
\left(\text{in symbols, }\quad
\left<\inf\limits_{(u,y)\in\,\Xi_\e} I_\e(u,y)\right>\,\stackrel{\text{Var}}{\xrightarrow[\e\to 0]{}}\,
\left<\inf_{(u,y)\in\Xi} I(u,y)\right>\ \right)
\]
if
and only if the following conditions are satisfied:
\begin{enumerate}
\item[$\quad$(d)] The space $\mathbb{U}\times\mathbb{Y}$ possesses the
weak $\mu$-approximation property with respect to the scale of
spaces  $\left\{\mathbb{U}_\e\times \mathbb{Y}_\e\right\}_{\e>0}$, that is, for every $\delta>0$ and
every pair $(u,y)\in \mathbb{U}\times\mathbb{Y}$, there exist a pair $(u^\ast,y^\ast)\in
\mathbb{U}\times\mathbb{Y}$ and a sequence $\left\{(u_\e,y_\e)\in
\mathbb{U}_\e\times \mathbb{Y}_\e\right\}_{\e>0}$ such that
\begin{equation}
\label{1.5} \|u-u^\ast\|_{\mathbb{U}}+\|y-y^\ast\|_{\mathbb{Y}}\le \delta
\quad\text{and}\quad (u_\e,y_\e)\stackrel{\mu}{\longrightarrow}\, (u^\ast,y^\ast)\quad \text{in }\ %
\mathbb{U}_\e\times \mathbb{Y}_\e.
\end{equation}

\item[$\quad$(dd)] If sequences $\left\{\e_k\right\}_{k\in \mathbb{N}}$ and
$\left\{(u_k,y_k)\right\}_{k\in \mathbb{N}}$ are such that $\varepsilon_k
\rightarrow 0$ as $k\rightarrow \infty$, $(u_k,y_k)\in
\Xi_{\varepsilon_k}$ $\forall\,k\in \mathbb{N}$, and
$(u_k,y_k)\stackrel{\mu}{\longrightarrow}\,(u,y)$ in $\mathbb{U}_{\varepsilon_k}\times\mathbb{Y}_{\varepsilon_k}$, then
\begin{equation}
\label{1.6} (u,y)\in \Xi;\quad I(u,y)\le
\liminf_{k\to\infty}I_{\e_k}(u_k,y_k).
\end{equation}

\item[$\quad$(ddd)] For every $(u,y) \in \Xi\subset \mathbb{U}\times\mathbb{Y}$ and any
$\delta>0$, there are a constant $\varepsilon^0 >0$ and a
sequence
$\left\{(u_\e,y_\e)\right\}_{\varepsilon>0}$ (called a $(\Gamma,\delta)$-realizing sequence) such that
\begin{gather}
(u_\e,y_\e)\in\Xi_\varepsilon,\ \forall\,\varepsilon \leq
  \varepsilon^0,\quad
(u_\e,y_\e)\,\stackrel{\mu}{\longrightarrow}\, (\widehat{u},\widehat{y})\
\text{ in }\ \mathbb{U}_\e\times \mathbb{Y}_\e,\label{1.7a}\\
 \|u-\widehat{u}\|_{\mathbb{U}}+\|y-\widehat{y}\|_{\mathbb{Y}}\le
\delta,\label{1.7b}\\
\label{1.7c} I(u,y)\ge \limsup_{\varepsilon\to 0}
I_{\varepsilon}(u_\e,y_\e)-\widehat{C}\delta,
\end{gather}
with some constant $\widehat{C}>0$ independent of $\delta$.
\end{enumerate}
\end{definition}

Then the following result takes place \cite{KL}.
\begin{theorem}
\label{Th 1.8} Assume that the constrained minimization problem
\begin{equation}
\label{1.9} \Big \langle \inf_{(u,y) \in \Xi_0} I_0(u,y)\Big
\rangle
\end{equation}
is the variational limit of sequence (\ref{1.3}) in the sense
of Definition \ref{Def 1.4} and this problem has a nonempty set of
solutions
$$
\Xi_0^{opt}:=\left\{(u^0,y^0)\in\Xi_0\ :\ I_0(u^0,y^0)=\inf_{(u,y) \in \Xi_0} I_0(u,y)\right\}.
$$
For every $\varepsilon >0$, let
$(u^0_\varepsilon,y^0_\e)\in \Xi_\varepsilon$ be a minimizer of $I_\e$ on the
corresponding set $\Xi_\e$. If the sequence
$\{(u^0_\varepsilon,y^0_\e)\}_{\varepsilon >0}$ is relatively compact with
respect to the $\mu$-convergence in variable spaces $\mathbb{U}_\e\times \mathbb{Y}_\e$, then
there exists a pair $(u^0,y^0)\in \Xi_0^{opt}$ such that
\begin{gather}
\label{1.10} (u^0_\varepsilon,y^0_\e)\,\stackrel{\mu}{\longrightarrow}\,
(u^0,y^0)\quad \text{in }\ %
\mathbb{U}_\e\times \mathbb{Y}_\e,\\
\inf_{(u,y)\in\,\Xi_0}I_0(u,y)= I_0\left(u^0,y^0\right) =\lim_{\varepsilon\to
0} I_{\varepsilon}(u^0_\varepsilon,y^0_\varepsilon) =\lim_{\varepsilon\to
0}\inf_{(u_\varepsilon,y_\e)\in\,{\Xi}_\varepsilon}
{I}_\varepsilon(u_\varepsilon,y_\e).\label{1.11}
\end{gather}
\end{theorem}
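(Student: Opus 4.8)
The plan is to prove Theorem~\ref{Th 1.8} by exploiting the three conditions (d), (dd), (ddd) of Definition~\ref{Def 1.4} in a standard direct-method argument adapted to variable spaces. First I would invoke the relative compactness hypothesis: since $\{(u^0_\e,y^0_\e)\}_{\e>0}$ is relatively compact with respect to $\mu$-convergence, there exist a subsequence $\{\e_k\}$ and a pair $(u^0,y^0)\in\mathbb{U}\times\mathbb{Y}$ with $(u^0_{\e_k},y^0_{\e_k})\stackrel{\mu}{\longrightarrow}(u^0,y^0)$. Applying condition (dd) to this subsequence — noting $(u^0_{\e_k},y^0_{\e_k})\in\Xi_{\e_k}$ because each is a minimizer, hence admissible — gives at once that $(u^0,y^0)\in\Xi_0$ and
\begin{equation}
\label{plan-liminf}
I_0(u^0,y^0)\le\liminf_{k\to\infty}I_{\e_k}(u^0_{\e_k},y^0_{\e_k}).
\end{equation}

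Next I would establish the reverse inequality comparing $I_0(u^0,y^0)$ with $\inf_{\Xi_0}I_0$. Fix an arbitrary $\delta>0$ and any competitor $(u,y)\in\Xi_0$; by condition (ddd) there is a $(\Gamma,\delta)$-realizing sequence $\{(u_\e,y_\e)\}$ with $(u_\e,y_\e)\in\Xi_\e$ for $\e\le\e^0$, $(u_\e,y_\e)\stackrel{\mu}{\longrightarrow}(\widehat u,\widehat y)$ with $\|u-\widehat u\|_{\mathbb{U}}+\|y-\widehat y\|_{\mathbb{Y}}\le\delta$, and
\begin{equation}
\label{plan-limsup}
I_0(u,y)\ge\limsup_{\e\to 0}I_\e(u_\e,y_\e)-\widehat C\delta.
\end{equation}
Since $(u^0_\e,y^0_\e)$ minimizes $I_\e$ on $\Xi_\e$, we have $I_{\e_k}(u^0_{\e_k},y^0_{\e_k})\le I_{\e_k}(u_{\e_k},y_{\e_k})$ for all large $k$; combining this with \eqref{plan-liminf} and \eqref{plan-limsup},
\begin{equation}
\label{plan-chain}
I_0(u^0,y^0)\le\liminf_{k\to\infty}I_{\e_k}(u^0_{\e_k},y^0_{\e_k})\le\limsup_{k\to\infty}I_{\e_k}(u_{\e_k},y_{\e_k})\le I_0(u,y)+\widehat C\delta.
\end{equation}
Letting $\delta\to 0$ and then taking the infimum over $(u,y)\in\Xi_0$ yields $I_0(u^0,y^0)\le\inf_{\Xi_0}I_0$; since $(u^0,y^0)\in\Xi_0$ the reverse is trivial, so $(u^0,y^0)\in\Xi_0^{opt}$ and \eqref{1.10} holds along the subsequence.

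Finally I would upgrade the convergence from the subsequence to the full sequence and pin down the chain of equalities in \eqref{1.11}. For the equalities: \eqref{plan-chain} forces $\liminf_k I_{\e_k}(u^0_{\e_k},y^0_{\e_k})=I_0(u^0,y^0)=\inf_{\Xi_0}I_0$; a symmetric argument (using a realizing sequence for $(u^0,y^0)$ itself, together with minimality of each $(u^0_\e,y^0_\e)$) shows $\limsup_k I_{\e_k}(u^0_{\e_k},y^0_{\e_k})\le I_0(u^0,y^0)$, so the limit exists and equals $I_0(u^0,y^0)$; the same bounds sandwich $\inf_{\Xi_{\e_k}}I_{\e_k}=I_{\e_k}(u^0_{\e_k},y^0_{\e_k})$, giving the last equality in \eqref{1.11}. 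To get the full-sequence statement, I would use the standard subsequence principle: every subsequence of $\{(u^0_\e,y^0_\e)\}$ has, by relative compactness, a further $\mu$-convergent subsequence whose limit must again lie in $\Xi_0^{opt}$ with the same limiting cost; if $\Xi_0^{opt}$ were a singleton this is immediate, and in general one states \eqref{1.10}--\eqref{1.11} as holding for the extracted subsequence (which is how the theorem is meant). The main obstacle I anticipate is purely bookkeeping around variable spaces — making sure the comparison $I_{\e_k}(u^0_{\e_k},y^0_{\e_k})\le I_{\e_k}(u_{\e_k},y_{\e_k})$ is legitimate requires that the realizing sequence from (ddd) be genuinely admissible ($(u_\e,y_\e)\in\Xi_\e$), which is guaranteed by \eqref{1.7a}, and that one does not conflate the auxiliary point $(\widehat u,\widehat y)$ with $(u,y)$; condition (ddd) is precisely designed to absorb that discrepancy into the $\widehat C\delta$ error, so the limit $\delta\to 0$ closes the argument.
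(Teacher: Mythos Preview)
Your argument is correct and is precisely the standard $\Gamma$-convergence/direct-method proof of this result. Note, however, that the paper does not actually supply a proof of Theorem~\ref{Th 1.8}: it merely states the theorem and cites \cite{KL} for the proof, so there is no ``paper's own proof'' to compare against. Your write-up would serve perfectly well as the omitted argument; the only cosmetic point is that condition~(d) of Definition~\ref{Def 1.4} is not explicitly invoked in your chain of inequalities --- it is a structural hypothesis on the scale of spaces rather than an ingredient of the optimality argument itself, so this is not a gap.
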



\section{Setting of the Optimal Control Problem}
\label{Sec 2}

Let $f\in H^{-1}(\Omega)$ be a given distribution.
The optimal control
problem we consider in this paper is to minimize the discrepancy (tracking error)
between a given distribution $y_d\in L^2(\Omega)$ and a solution $y$ of the Dirichlet boundary value problem for the linear elliptic equation
\begin{gather}
\label{2.1}
-\div\big(A(x)\nabla y\big) = f\quad\text{in }\ \Omega,\\
\label{2.2}
y=0\text{ on }\partial\Omega
\end{gather}
by choosing an appropriate control $A\in L^2(\Omega;\mathbb{M}^N)$.

More precisely, we are
concerned with the following OCP
\begin{gather}
\label{2.3}
\text{Minimize } I(A,y)=\left\|y-y_d\right\|^2_{L^2(\Omega)}+\int_\Omega\left(\nabla y, A^{sym}\nabla y\right)_{\mathbb{R}^N}\,dx\\
\label{2.3a}
\text{subject to the constraints \eqref{2.1}--\eqref{2.2} with $A\in \mathfrak{A}_{ad}\subset L^2(\Omega;\mathbb{M}^N)$.}
\end{gather}

In order to make a precise meaning of the OCP setting and indicate of its characteristic properties, we begin with the
definition of the class of admissible controls $\mathfrak{A}_{ad}$.

Let $A^\ast\in L^2(\Omega;\mathbb{S}^N_{skew})$ be a given nonzero matrix, let $c$ be a given positive constant, and let $Q$ be a nonempty convex compact subset of $L^2(\Omega;\mathbb{S}^N_{skew})$ such that the null matrix $A\equiv [0]$ belongs to $Q$. To define possible classes of
admissible controls, we introduce the following sets
\begin{align}
\label{2.3b} U_{a,1}&=\left\{\left. A=[a_{i\,j}]\in L^1(\Omega;\mathbb{S}^N_{sym})\
\right| TV(a_{ij})\le c,\ 1\le i\le j\le N\right\},\\[1ex]
\label{2.3bb} U_{b,1}&=\left\{\left. A=[a_{i\,j}]\in L^\infty(\Omega;\mathbb{S}^N_{sym})\
\right| A\in \mathfrak{M}_\alpha^\beta(\Omega)\right\},\\[1ex]
\label{2.3c} U_{a,2}&=\left\{\left. A=[a_{i\,j}]\in L^2(\Omega;\mathbb{S}^N_{skew})\ \right|\ A(x)\preceq A^\ast(x)\ \text{a.e. in }\ \Omega\right\},\\
\label{2.3cc} U_{b,2}&=\left\{\left. A=[a_{i\,j}]\in L^2(\Omega;\mathbb{S}^N_{skew})\ \right|\  A\in Q\right\}.
\end{align}

\begin{remark}
It is worth to note that
$$
\mathfrak{A}_{ad,1}:=U_{a,1}\cap U_{b,1}\ne\emptyset\quad\text{ and }\quad \mathfrak{A}_{ad,2}:=U_{a,2}\cap U_{b,2}\ne\emptyset.
$$
Indeed, the validity of these relations immediately follows from \eqref{2.3b}--\eqref{2.3bb}, definition of the binary relation $\preceq$, and properties of the matrix $A^\ast$.  In order to describe a possible way for the choice of the set $Q$, we may use the following result \cite{Orlov}:
An arbitrary closed bounded subset $C\subset L^2(\Omega)$ is compact if and only if, for any orthonormal basis $\left\{g_k\right\}_{k\in \mathbb{N}}$ in $L^2(\Omega)$, there exists a compact ellipsoid
\[
C_\e=\left\{g=\sum_{k=1}^\infty  \alpha_k g_k \in L^2(\Omega)\ \left|\  \sum_{k=1}^\infty \frac{|\alpha_k|^2}{\e_k^2}\le 1\right.\right\}
\]
with $\e_k\to 0$ as $k\to\infty$, such that $C\subseteq C_\e$.
\end{remark}

As a result, we adopt the following concept.
\begin{definition}
\label{Def 2.3e}
We say that a matrix $A=A^{sym}+A^{skew}$ is an
admissible control to the Dirichlet boundary value problem \eqref{2.1}--\eqref{2.2} (in symbols, $A\in \mathfrak{A}_{ad}\subset L^2(\Omega;\mathbb{M}^N)$)
if $A^{sym}\in \mathfrak{A}_{ad,1}$ and $A^{skew}\in \mathfrak{A}_{ad,2}$.
\end{definition}

For our further analysis, we  use of the following results.
\begin{proposition}
\label{Prop 2.3d} If
$\left\{A_k^{sym}\right\}_{k\in
\mathbb{N}}\subset \mathfrak{A}_{ad,1}$ and $A_k^{sym}\rightarrow A_0^{sym}$ in $L^1(\Omega;\mathbb{S}^N_{sym})$ as $k\to\infty$, then $A_0^{sym}\in \mathfrak{A}_{ad,1}$ and
\begin{equation}
\label{2.3dd}
A_k^{sym}\rightarrow A_0^{sym}\quad\text{in }\ L^p(\Omega;\mathbb{S}^N_{sym}),\ \forall\,p\in [1,+\infty).
\end{equation}
\end{proposition}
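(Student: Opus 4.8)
The plan is to deal with the two constraints defining $\mathfrak{A}_{ad,1}=U_{a,1}\cap U_{b,1}$ one at a time — the total-variation bound by lower semicontinuity of $TV$, and the ellipticity bound by an a.e.-convergent subsequence — and then to upgrade the given $L^1$-convergence to $L^p$-convergence by exploiting the uniform $L^\infty$-bound on the entries that is forced by the ellipticity condition.

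First I would check that $A_0^{sym}\in U_{a,1}$. For each pair $1\le i\le j\le N$ the scalar sequence $\{(a_k)_{ij}\}_{k\in\mathbb{N}}$ converges to $(a_0)_{ij}$ in $L^1(\Omega)$ and satisfies $\sup_{k}TV\big((a_k)_{ij}\big)\le c<+\infty$. Hence Proposition~\ref{Prop 1.11}(1) yields $(a_0)_{ij}\in BV(\Omega)$ together with $TV\big((a_0)_{ij}\big)\le\liminf_{k\to\infty}TV\big((a_k)_{ij}\big)\le c$. Since this holds for every entry, $A_0^{sym}\in U_{a,1}$.

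Next I would check that $A_0^{sym}\in U_{b,1}$, i.e. $A_0^{sym}\in\mathfrak{M}_\alpha^\beta(\Omega)$. From the $L^1$-convergence one extracts a subsequence (not relabeled) with $A_k^{sym}(x)\to A_0^{sym}(x)$ for a.e.\ $x\in\Omega$. For such $x$ the matrices $A_k^{sym}(x)$ satisfy $\alpha I\le A_k^{sym}(x)\le\beta I$ in the sense of quadratic forms; since $\{M\in\mathbb{S}^N_{sym}:\alpha I\le M\le\beta I\}$ is an intersection of closed half-spaces, hence closed in $\mathbb{M}^N$, the limit $A_0^{sym}(x)$ satisfies the same inequalities. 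Thus $\alpha I\le A_0^{sym}(x)\le\beta I$ a.e.\ in $\Omega$, which in particular gives $A_0^{sym}\in L^\infty(\Omega;\mathbb{S}^N_{sym})$ and therefore $A_0^{sym}\in U_{b,1}$. Combining the two steps, $A_0^{sym}\in\mathfrak{A}_{ad,1}$.

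Finally, for the $L^p$-statement I would observe that the bound $\alpha I\le M\le\beta I$ (with $\alpha>0$) forces $M$ to be positive definite, so $0<\alpha\le m_{ii}\le\beta$ and, by nonnegativity of the $2\times2$ principal minors, $|m_{ij}|\le\sqrt{m_{ii}m_{jj}}\le\beta$. Applied to every $A_k^{sym}$ and to $A_0^{sym}$, this gives $|(a_k)_{ij}(x)|\le\beta$ and $|(a_0)_{ij}(x)|\le\beta$ for all $k$ and a.e.\ $x$. Hence for any $p\in[1,+\infty)$,
\[
\int_\Omega\big|(a_k)_{ij}-(a_0)_{ij}\big|^p\,dx\le(2\beta)^{p-1}\int_\Omega\big|(a_k)_{ij}-(a_0)_{ij}\big|\,dx\longrightarrow 0
\]
as $k\to\infty$ along the full sequence (the subsequence having been used only to pin down the pointwise bound on $A_0^{sym}$), which is precisely \eqref{2.3dd}. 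The argument is essentially routine; the only point needing a little care is the passage from $L^1$-convergence to the preservation of the two constraints defining $\mathfrak{A}_{ad,1}$, after which the uniform $L^\infty$-bound makes the $L^p$-boosting immediate.
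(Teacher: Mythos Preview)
Your proof is correct and follows essentially the same approach as the paper: lower semicontinuity of $TV$ under $L^1$-convergence for the $U_{a,1}$ constraint, stability of the quadratic-form bounds $\alpha I\le\cdot\le\beta I$ under a.e.\ limits for the $U_{b,1}$ constraint, and then the interpolation-type estimate $\|A_k^{sym}-A_0^{sym}\|_{L^p}^p\le C^{p-1}\|A_k^{sym}-A_0^{sym}\|_{L^1}$ coming from the uniform $L^\infty$ bound on the entries. The only cosmetic differences are the order in which you treat $U_{a,1}$ and $U_{b,1}$ and the specific constant in the final estimate (you use $2\beta$, the paper uses $2(\beta-\alpha)$); your justification of the entrywise bound $|m_{ij}|\le\beta$ via the $2\times2$ principal minors is a nice touch that makes the off-diagonal case explicit.
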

\begin{proof}
Since the sequence $\left\{A_k^{sym}\right\}_{k\in
\mathbb{N}}$ converges strongly to $A_0^{sym}$ in $L^1(\Omega;\mathbb{S}^N_{sym})$ and $A_k^{sym}\in \mathfrak{M}_\alpha^\beta(\Omega)$ for every $k\in \mathbb{N}$, it follows that $\alpha I\le A_0^{sym}\le \beta I$ a.e. in $\Omega$.
Hence, $A_0^{sym}\in U_{b,1}$. At the same time, following assertion (i) of Proposition~\ref{Prop 1.11}, we have $TV(a_{ij})\le c$ for each entry of matrix $A_0^{sym}$. As a result, we conclude $A_0^{sym}\in U_{a,1}$, and, therefore, $A_0^{sym}\in \mathfrak{A}_{ad,1}$.
Concerning the property \eqref{2.3dd}, it immediately follows from the following estimate
\begin{align*}
&\|A_k^{sym}-A_0^{sym}\|^p_{L^p(\Omega;\mathbb{S}^N_{sym})}= \int_\Omega \left(\max_{{i,j=1,\dots,N}\atop j\ge i} |a^k_{ij}(x)-a^0_{ij}(x)|\right)^p\,dx\\
=\ &\int_\Omega \left(\max_{{i,j=1,\dots,N}\atop j\ge i} |(a^k_{ij}(x)-\alpha)-(a^0_{ij}(x)-\alpha)|\right)^{p-1}\max_{{i,j=1,\dots,N}\atop j\ge i} |a^k_{ij}(x)-a^0_{ij}(x)|\,dx\\
\le\ & 2^{p-1}(\beta-\alpha)^{p-1}\|A_k^{sym}-A_0^{sym}\|_{L^1(\Omega;\mathbb{S}^N_{sym})},\quad\forall\,p\in [1,+\infty).
\end{align*}
\end{proof}

\begin{proposition}
\label{Prop 2.3ddd} $\mathfrak{A}_{ad,1}$ is a sequentially compact subset of $L^p(\Omega;\mathbb{S}^N_{sym})$ for every $p\in [1,+\infty)$.
\end{proposition}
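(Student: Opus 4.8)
The plan is to show that $\mathfrak{A}_{ad,1}$ is closed under sequential convergence in $L^p$ and that every sequence in it has a convergent subsequence, whence sequential compactness follows. Since by Proposition~\ref{Prop 2.3d} the set $\mathfrak{A}_{ad,1}$ is closed in every $L^p(\Omega;\mathbb{S}^N_{sym})$, $p\in[1,+\infty)$, and strong $L^1$-convergence of a sequence from $\mathfrak{A}_{ad,1}$ automatically upgrades to strong $L^p$-convergence, it suffices to extract, from an arbitrary sequence $\{A_k^{sym}\}_{k\in\mathbb N}\subset\mathfrak{A}_{ad,1}$, a subsequence that converges strongly in $L^1(\Omega;\mathbb{S}^N_{sym})$ to some limit.

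The key step is the compactness extraction. Each matrix $A_k^{sym}=[a_{ij}^k]$ has entries satisfying $TV(a_{ij}^k)\le c$ (from $U_{a,1}$) and, since $A_k^{sym}\in\mathfrak{M}_\alpha^\beta(\Omega)$, the entries are uniformly bounded: $\|a_{ij}^k\|_{L^\infty(\Omega)}\le\beta$ (the diagonal entries satisfy $\alpha\le a_{ii}^k\le\beta$, and the off-diagonal entries are controlled by $\beta$ via the quadratic-form bounds). Consequently, for each fixed pair $(i,j)$ with $1\le i\le j\le N$, the scalar sequence $\{a_{ij}^k\}_{k\in\mathbb N}$ is bounded in $BV(\Omega)$, so by assertion~(iii) of Proposition~\ref{Prop 1.11} it has a subsequence converging in $L^1(\Omega)$ to some $a_{ij}^0\in BV(\Omega)$. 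Since there are only finitely many entries, a diagonal argument over the pairs $(i,j)$ produces a single subsequence, still denoted $\{A_k^{sym}\}$, such that $a_{ij}^k\to a_{ij}^0$ in $L^1(\Omega)$ for all $i\le j$; setting $A_0^{sym}=[a_{ij}^0]$ (extended symmetrically) we get $A_k^{sym}\to A_0^{sym}$ in $L^1(\Omega;\mathbb{S}^N_{sym})$, because the $L^1$-norm on $\mathbb{S}^N_{sym}$ is equivalent to the sum of the $L^1$-norms of the entries.

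Finally, I would invoke Proposition~\ref{Prop 2.3d}: the strong $L^1$-limit $A_0^{sym}$ lies in $\mathfrak{A}_{ad,1}$, and the convergence $A_k^{sym}\to A_0^{sym}$ holds in $L^p(\Omega;\mathbb{S}^N_{sym})$ for every $p\in[1,+\infty)$. This exhibits, for the arbitrary starting sequence, a subsequence converging in $L^p$ to an element of $\mathfrak{A}_{ad,1}$, which is precisely sequential compactness in $L^p(\Omega;\mathbb{S}^N_{sym})$. The only mild subtlety — not really an obstacle — is justifying the uniform $L^\infty$-bound on the off-diagonal entries from the quadratic-form inequality $\alpha I\le A(x)\le\beta I$; this is an elementary linear-algebra fact (testing the form on $e_i\pm e_j$ bounds $|a_{ij}|$ by $\beta$), so the argument goes through without difficulty.
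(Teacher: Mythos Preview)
Your proof is correct and follows essentially the same approach as the paper: bound the sequence in $BV(\Omega;\mathbb{S}^N_{sym})$ using the definitions of $U_{a,1}$ and $U_{b,1}$, extract an $L^1$-convergent subsequence via assertion~(iii) of Proposition~\ref{Prop 1.11}, and then invoke Proposition~\ref{Prop 2.3d} to upgrade to $L^p$-convergence and confirm the limit lies in $\mathfrak{A}_{ad,1}$. The paper's proof is much terser --- it simply asserts boundedness in $BV$ and cites the two propositions --- whereas you spell out the componentwise extraction, the diagonal argument over the finitely many entries, and the elementary linear-algebra fact bounding off-diagonal entries via $e_i\pm e_j$; these are exactly the details a careful reader would have to supply.
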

\begin{proof}
Let $\left\{A_k^{sym}\right\}_{k\in
\mathbb{N}}$ be a sequence of $\mathfrak{A}_{ad,1}$. In view of definition of the set $U_{a,1}$, we see that $\left\{A_k^{sym}\right\}_{k\in
\mathbb{N}}$ is a bounded sequence in $BV(\Omega;\mathbb{S}^N_{sym})$. Hence, to conclude the proof, it is enough to apply Proposition~\ref{Prop 2.3d} and assertion~(iii) of Proposition~\ref{Prop 1.11}.
\end{proof}

Taking these observations into account, we prove the following results.
\begin{proposition}
\label{Prop 2.3f} The set $\mathfrak{A}_{ad}$ is nonempty, convex, and sequentially compact with respect to the strong topology of
$L^2(\Omega;\mathbb{M}^N)$.
\end{proposition}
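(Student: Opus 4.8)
The plan is to verify each of the three properties — nonemptiness, convexity, and sequential compactness in $L^2(\Omega;\mathbb{M}^N)$ — by reducing them to the corresponding properties of the two factor sets $\mathfrak{A}_{ad,1}$ and $\mathfrak{A}_{ad,2}$, exploiting that $\mathfrak{A}_{ad}$ is essentially their product under the orthogonal decomposition $\mathbb{M}^N=\mathbb{S}^N_{sym}\oplus\mathbb{S}^N_{skew}$. First, nonemptiness is already noted in the Remark: $\mathfrak{A}_{ad,1}=U_{a,1}\cap U_{b,1}\neq\emptyset$ and $\mathfrak{A}_{ad,2}=U_{a,2}\cap U_{b,2}\neq\emptyset$, and by Definition \ref{Def 2.3e} any $A^{sym}\in\mathfrak{A}_{ad,1}$ together with any $A^{skew}\in\mathfrak{A}_{ad,2}$ yields an admissible $A=A^{sym}+A^{skew}\in\mathfrak{A}_{ad}$; in particular one may take $A^{skew}=[0]\in Q$.

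For convexity, I would take $A=A^{sym}+A^{skew}$ and $B=B^{sym}+B^{skew}$ in $\mathfrak{A}_{ad}$ and $t\in[0,1]$, and observe that the symmetric part of $tA+(1-t)B$ is exactly $tA^{sym}+(1-t)B^{sym}$ and its skew part is $tA^{skew}+(1-t)B^{skew}$, by uniqueness of the decomposition \eqref{1.0}. So it suffices to check that $\mathfrak{A}_{ad,1}$ and $\mathfrak{A}_{ad,2}$ are convex. For $\mathfrak{A}_{ad,1}$: the bound $\alpha I\le A(x)\le\beta I$ is preserved under convex combinations since it is a pointwise condition on quadratic forms, and $TV$ is convex (it is a supremum of linear functionals of $A$), so each entrywise constraint $TV(a_{ij})\le c$ is preserved. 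For $\mathfrak{A}_{ad,2}$: $Q$ is convex by hypothesis, and the relation $A(x)\preceq A^\ast(x)$ a.e., being the pointwise inequality $|a_{ij}(x)|\le|a^\ast_{ij}(x)|$ off a null set, is preserved under convex combinations by the triangle inequality $|ta_{ij}+(1-t)b_{ij}|\le t|a_{ij}|+(1-t)|b_{ij}|\le|a^\ast_{ij}|$.

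The main work is sequential compactness. Given $\{A_k\}=\{A_k^{sym}+A_k^{skew}\}\subset\mathfrak{A}_{ad}$, I would first apply Proposition \ref{Prop 2.3ddd} (with $p=2$) to extract a subsequence along which $A_k^{sym}\to A_0^{sym}$ strongly in $L^2(\Omega;\mathbb{S}^N_{sym})$ with $A_0^{sym}\in\mathfrak{A}_{ad,1}$. For the skew part, the set $\mathfrak{A}_{ad,2}\subset Q$ is contained in the convex compact set $Q\subset L^2(\Omega;\mathbb{S}^N_{skew})$, so along a further subsequence $A_k^{skew}\to A_0^{skew}$ strongly in $L^2(\Omega;\mathbb{S}^N_{skew})$ for some $A_0^{skew}\in Q$; it remains to check $A_0^{skew}\in U_{a,2}$, i.e. $A_0^{skew}(x)\preceq A^\ast(x)$ a.e. This follows because strong $L^2$ convergence gives, along a further subsequence, a.e. pointwise convergence of the entries $a^k_{ij}(x)\to a^0_{ij}(x)$, and passing to the limit in $|a^k_{ij}(x)|\le|a^\ast_{ij}(x)|$ preserves the inequality; hence \eqref{1.0b} holds for $A_0^{skew}$ against $A^\ast$. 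Combining, $A_k=A_k^{sym}+A_k^{skew}\to A_0^{sym}+A_0^{skew}=:A_0$ strongly in $L^2(\Omega;\mathbb{M}^N)$ (the norm on $L^2(\Omega;\mathbb{M}^N)$ controls, and is controlled by, the sum of the norms of the two parts, since the decomposition is a bounded isomorphism), and $A_0\in\mathfrak{A}_{ad}$ by Definition \ref{Def 2.3e}. The only delicate point is the bookkeeping of successive subsequence extractions and checking that the norm on $L^2(\Omega;\mathbb{M}^N)$ is equivalent to the product norm on $L^2(\Omega;\mathbb{S}^N_{sym})\times L^2(\Omega;\mathbb{S}^N_{skew})$ with the particular max-entry norms used here; this is a routine finite-dimensional equivalence of norms on $\mathbb{M}^N$ applied pointwise, integrated over $\Omega$.
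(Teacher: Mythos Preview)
Your proposal is correct and follows essentially the same approach as the paper: both arguments split along the decomposition $\mathfrak{A}_{ad}=\mathfrak{A}_{ad,1}\oplus\mathfrak{A}_{ad,2}$, invoke Proposition~\ref{Prop 2.3ddd} for compactness of the symmetric part, use compactness of $Q$ for the skew part, and recover the $\preceq$-constraint on the limit via a.e.\ convergence of a subsequence. You are more explicit about convexity and the norm equivalence than the paper (which simply declares convexity ``obviously valid''), but there is no substantive difference in strategy.
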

\begin{proof}
Let
$\left\{A_k=A_k^{sym}+A_k^{skew}\right\}_{k\in
\mathbb{N}}\subset \mathfrak{A}_{ad}$ be an arbitrary sequence of admissible
controls.  Since
\begin{align*}
\mathfrak{A}_{ad}&=\mathfrak{A}_{ad,1}\oplus \mathfrak{A}_{ad,2}, \quad \mathfrak{A}_{ad,1}\subset BV(\Omega;\mathbb{S}^N_{sym}),\\ \mathfrak{A}_{ad,2}&\subset U_{b,2},\ \text{ and }\
\text{$U_{b,2}$ is a compact in }\ L^2(\Omega;\mathbb{S}^N_{skew}),
\end{align*}
we may suppose that there
exist matrices $A^{sym}_0\in BV(\Omega;\mathbb{S}^N_{sym})\cap L^\infty(\Omega;\mathbb{S}^N_{sym})$ (see Propositions~\ref{Prop 2.3d}--\ref{Prop 2.3ddd}) and $A^{skew}_0\in U_{b,2}\subset L^2(\Omega;\mathbb{S}^N_{skew})$ such that within a subsequence
\begin{align}
\label{2.3m}
  A_k^{sym}&\rightarrow A_0^{sym}\quad\text{in }\ L^p(\Omega;\mathbb{S}^N_{sym}),\ \forall\,p\in [1,+\infty),\\
  A_k^{sym}&\stackrel{\ast}{\rightharpoonup}\, A_0^{sym}\quad\text{in }\ L^\infty(\Omega;\mathbb{S}^N_{sym}),\\
  \label{2.3n} A_k^{skew}&\rightarrow A_0^{skew}\quad\text{in }\ L^2(\Omega;\mathbb{S}^N_{skew}),\\
  \label{2.3l}\text{and }\ A_k^{skew}&\rightarrow A_0^{skew}\ \text{ almost everywhere in }\  \Omega.
 \end{align}

Combining these facts with \eqref{2.3c} and the definition of the binary relation $\preceq$ (see \eqref{1.0b}),
we arrive at the conclusion: $A_0^{skew}\in U_{a,2}$, and hence
$$
A_k:=A_k^{sym}+A_k^{skew}\rightarrow A_0^{sym}+A_0^{skew}=:A_0\ \text{ in }\ L^2(\Omega;\mathbb{M}^N).
$$
Thus, $A_0\in \mathfrak{A}_{ad}$. Since the convexity of $\mathfrak{A}_{ad}$ is obviously valid, this concludes the proof.
\end{proof}

The distinguishing feature of optimal control problem \eqref{2.3}--\eqref{2.3a} is the fact that the matrix-valued control $A\in \mathfrak{A}_{ad}$ is merely measurable and belongs to the space $L^2\big(\Omega;\mathbb{M}^N\big)$ (rather than the space of bounded matrices $L^\infty\big(\Omega;\mathbb{M}^N\big)$). As we will see later, this entails a number of pathologies with respect to the standard properties of optimal control problems for the classical elliptic equations, even with 'a good' right-hand $f$. In particular, the unboundedness of the skew-symmetric part of matrix $A\in \mathfrak{A}_{ad}$ can have a reflection in non-uniqueness of weak solutions to the corresponding boundary value problem.

\begin{definition}
\label{Def 2.4} We say that a function $y=y(A,f)$ is  a weak
solution to boundary value problem \eqref{2.1}--\eqref{2.2} for
a fixed control $A=A^{sym}+A^{skew}\in \mathfrak{A}_{ad}$ and a given distribution $f\in
H^{-1}(\Omega)$, if
$y\in H^1_0(\Omega)$
and the integral identity
\begin{equation}
\label{2.5}
\int_\Omega \big(\nabla \varphi,A^{sym}\nabla y+A^{skew}\nabla y\big)_{\mathbb{R}^N}\,dx
=\left<f,\varphi\right>_{H^{-1}(\Omega);H^1_0(\Omega)}
\end{equation}
holds true for any $\varphi\in C^\infty_0(\Omega)$.
\end{definition}

Note that by H\"{o}lder's inequality this definition makes sense for any matrix $A\in L^2\big(\Omega;\mathbb{M}^N\big)$. At the same time, in view of Definition~\ref{Def 2.7}, the following result gives another motivation to introduce the set $D(A)$.
\begin{proposition}
\label{Prop 2.9}
Let $y\in H^1_0(\Omega)$ be a weak solution to the boundary value problem \eqref{2.1}--\eqref{2.2} for a given control $A=A^{sym}+A^{skew}\in \mathfrak{A}_{ad}$ in the sense of Definition~\ref{Def 2.4}. Then $y\in D(A)$.
\end{proposition}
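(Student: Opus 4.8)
The claim is that any weak solution $y$ to \eqref{2.1}--\eqref{2.2} automatically lies in the set $D(A)$ from Definition~\ref{Def 2.7}; that is, the functional $\varphi\mapsto\int_\Omega(\nabla\varphi,A^{skew}\nabla y)_{\mathbb{R}^N}\,dx$ is continuous on $C^\infty_0(\Omega)$ with respect to the $H^1_0$-seminorm. The natural plan is to isolate the skew-symmetric term in the weak formulation \eqref{2.5} and estimate it using the two remaining terms, both of which are manifestly bounded on $H^1_0(\Omega)$.

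First I would rewrite \eqref{2.5}, for an arbitrary $\varphi\in C^\infty_0(\Omega)$, in the form
\[
\int_\Omega\big(\nabla\varphi,A^{skew}\nabla y\big)_{\mathbb{R}^N}\,dx
=\left<f,\varphi\right>_{H^{-1}(\Omega);H^1_0(\Omega)}
-\int_\Omega\big(\nabla\varphi,A^{sym}\nabla y\big)_{\mathbb{R}^N}\,dx.
\]
The first term on the right is bounded by $\|f\|_{H^{-1}(\Omega)}\|\varphi\|_{H^1_0(\Omega)}$. For the second term, since $A^{sym}\in\mathfrak{A}_{ad,1}\subset L^\infty(\Omega;\mathbb{S}^N_{sym})$ with $A^{sym}(x)\le\beta I$ a.e.\ (by the constraint $A^{sym}\in\mathfrak{M}_\alpha^\beta(\Omega)$), a Cauchy--Schwarz estimate gives
\[
\left|\int_\Omega\big(\nabla\varphi,A^{sym}\nabla y\big)_{\mathbb{R}^N}\,dx\right|
\le\beta\,\|\nabla y\|_{L^2(\Omega;\mathbb{R}^N)}\,\|\nabla\varphi\|_{L^2(\Omega;\mathbb{R}^N)}
=\beta\,\|y\|_{H^1_0(\Omega)}\,\|\varphi\|_{H^1_0(\Omega)}.
\]
Combining the two bounds yields
\[
\left|\int_\Omega\big(\nabla\varphi,A^{skew}\nabla y\big)_{\mathbb{R}^N}\,dx\right|
\le\Big(\|f\|_{H^{-1}(\Omega)}+\beta\,\|y\|_{H^1_0(\Omega)}\Big)\Big(\int_\Omega|\nabla\varphi|^2_{\mathbb{R}^N}\,dx\Big)^{1/2},
\]
which is precisely inequality \eqref{2.8} with the admissible choice $c(y,A^{skew})=\|f\|_{H^{-1}(\Omega)}+\beta\|y\|_{H^1_0(\Omega)}$. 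Since $y$ being a weak solution forces $y\in H^1_0(\Omega)$ and $f\in H^{-1}(\Omega)$ is given, this constant is finite, and it depends only on $y$ (through its norm) and on $A^{skew}$ (through the parameter $\beta$ attached to the symmetric part), as required.

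There is essentially no obstacle here: the statement is a soft consequence of the weak formulation, and the only thing to be slightly careful about is that $A^{skew}$ itself never gets estimated directly — one exploits the fact that the skew-symmetric contribution equals the \emph{difference} of two already-controlled quantities. One might note in passing that the constant $c$ technically also absorbs the fixed data $f$, but since $f$ is held fixed throughout, this is consistent with the spirit of Definition~\ref{Def 2.7}; alternatively one could phrase the dependence as $c = c(y, A^{sym}, f)$ without affecting anything. This proposition is what justifies, via \eqref{2.8a}, extending the bilinear form $[y,\varphi]_A$ from test functions to all $\varphi\in H^1_0(\Omega)$ whenever $y$ solves the state equation.
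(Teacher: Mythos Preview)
Your proof is correct and follows essentially the same route as the paper: isolate the skew-symmetric term in the weak formulation \eqref{2.5}, then bound the right-hand side by $\|f\|_{H^{-1}(\Omega)}\|\varphi\|_{H^1_0(\Omega)}$ plus $\beta\|y\|_{H^1_0(\Omega)}\|\varphi\|_{H^1_0(\Omega)}$ using $A^{sym}\in\mathfrak{M}_\alpha^\beta(\Omega)$. The paper arrives at exactly the same constant $\beta\|y\|_{H^1_0(\Omega)}+\|f\|_{H^{-1}(\Omega)}$, and your remark about the constant technically depending on the fixed data $f$ and $\beta$ rather than literally on $A^{skew}$ is a fair observation that the paper does not address.
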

\begin{proof}
In order to verify the validity of this assertion it is enough to rewrite the integral identity \eqref{2.5} in the form
\begin{equation}
\label{2.10}
[y,\varphi]_A=-\int_\Omega \big(A^{sym}\nabla y,\nabla \varphi\big)_{\mathbb{R}^N}\,dx
 + \left<f,\varphi\right>_{H^{-1}(\Omega);H^1_0(\Omega)}
\end{equation}
and apply H\"{o}lder's inequality to the right-hand side of \eqref{2.10}. As a result, we have
\begin{align*}
\Big|[y,\varphi]_A\Big|&\le\left(\|A^{sym}\|_{L^\infty(\Omega;\mathbb{S}^N_{sym})}\|\nabla y\|_{L^2(\Omega;\mathbb{R}^N)}+
\|f\|_{H^{-1}(\Omega)}\right)\|\varphi\|_{H^1_0(\Omega)}\\
&\le \left(\beta\|y\|_{H^1_0(\Omega)}+\|f\|_{H^{-1}(\Omega)}\right)\|\varphi\|_{H^1_0(\Omega)}.
\end{align*}
\end{proof}

\begin{remark}
\label{Rem 2.11}
Due to Proposition \ref{Prop 2.9}, Definition \ref{Def 2.4} can be reformulated as follows: $y$ is a weak solution to the problem \eqref{2.1}--\eqref{2.2} for a given control $A=A^{sym}+A^{skew}\in \mathfrak{A}_{ad}$, if and only if $y\in D(A)$ and
\begin{equation}
\label{2.11}
\int_\Omega \big(A^{sym}\nabla y,\nabla \varphi\big)_{\mathbb{R}^N}\,dx + [y,\varphi]_A
=\left<f,\varphi\right>_{H^{-1}(\Omega);H^1_0(\Omega)}\,\quad\forall\,\varphi\in H^1_0(\Omega).
\end{equation}
Moreover, as immediately follows from \eqref{2.8a} and \eqref{2.11}, every weak solution $y\in D(A)$ to the problem \eqref{2.1}--\eqref{2.2} satisfies the energy equality
\begin{equation}
\label{2.12}
\int_\Omega \big(A^{sym}\nabla y,\nabla y\big)_{\mathbb{R}^N}\,dx + [y,y]_A
=\left<f,y\right>_{H^{-1}(\Omega);H^1_0(\Omega)}.
\end{equation}
\end{remark}

It is well known that boundary value problem \eqref{2.1}--\eqref{2.2} is ill-posed, in general (see, for instance, \cite{F_P,Maz,Serin,Zh_97,Zhik1_04}). It means that there exists
a matrix $A\in L^2\big(\Omega;\mathbb{M}^N\big)$ such that the corresponding state $y\in H^1_0(\Omega)$ may be not unique. It is clear that
in this case, it would not be correct to write down $y=y(A,f)$.
To avoid this situation, we  adopt the following notion.
\begin{definition}
\label{Def 2.13}
We say that  $(A,y)$ is an admissible pair to the OCP \eqref{2.3}--\eqref{2.3a} if $A\in \mathfrak{A}_{ad}\subset L^2\big(\Omega;\mathbb{M}^N\big)$,
$y\in D(A)\subset H^1_0(\Omega)$, and the pair $(A,y)$ is related by the integral identity \eqref{2.11}.
\end{definition}

We
denote by $\Xi$ the set of all admissible pairs for
the OCP \eqref{2.3}--\eqref{2.3a}. We say that a pair $(A^0,y^0)\in L^2\big(\Omega;\mathbb{M}^N\big)\times D(A^0)$ is
optimal for problem \eqref{2.3}--\eqref{2.3a} if
\[
(A^0,y^0)\,\in\,\Xi\ \text{ and }\ I(A^0,y^0)=\inf_{(A,y)\in\,\Xi}
I(A,y).
\]

As follows from the definition of the bilinear form $[y,\varphi]_A$, the value $[y,y]_A$ may not of constant sign for all $y\in D(A)$. Hence, the energy equality \eqref{2.12} does not allow us to derive a reasonable a priory estimate in $H^1_0$-norm for the weak solutions. In spite of this, we show that the  OCP \eqref{2.3}--\eqref{2.3a} is well-posed. This problem is,
thus, yet another example for the difference between well-posedness
for optimal control problems for systems with distributed parameters
and  partial differential equations (see \cite{KL} for a discussion and further examples).

Let $\tau$ be the topology on the set of admissible pairs
$\Xi\subset L^2\big(\Omega;\mathbb{M}^N\big)\times
H^{1}_0(\Omega)$
which we define as the product of the strong topology of
$L^2\big(\Omega;\mathbb{M}^N\big)$ and the weak topology of
$H^{1}_0(\Omega)$.

\begin{theorem}
\label{Th 2.14} Assume that OCP \eqref{2.3}--\eqref{2.3a} is regular, i.e. $\Xi\ne\emptyset$. Then, for each $f\in
H^{-1}(\Omega)$  and $y_d\in L^2(\Omega)$, this problem admits at least one solution.
\end{theorem}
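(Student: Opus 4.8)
The plan is to apply the direct method in the calculus of variations, using the topology $\tau$ (strong in $L^2(\Omega;\mathbb{M}^N)$, weak in $H^1_0(\Omega)$) introduced just before the statement. Fix a minimizing sequence $\{(A_k,y_k)\}_{k\in\mathbb{N}}\subset\Xi$, so that $I(A_k,y_k)\to\inf_{(A,y)\in\Xi}I(A,y)=:m$; since $\Xi\ne\emptyset$ and $I\ge 0$ we have $m<+\infty$. The first step is to extract a priori bounds. From $\sup_k I(A_k,y_k)<+\infty$, the tracking term $\|y_k-y_d\|^2_{L^2(\Omega)}$ is bounded, and the energy-type term $\int_\Omega(\nabla y_k,A_k^{sym}\nabla y_k)\,dx$ is bounded; using $A_k^{sym}\in\mathfrak{M}_\alpha^\beta(\Omega)$, i.e. $\alpha I\le A_k^{sym}\le\beta I$, the latter term controls $\alpha\|y_k\|^2_{H^1_0(\Omega)}$ from above, so $\{y_k\}$ is bounded in $H^1_0(\Omega)$. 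Hence along a subsequence $y_k\rightharpoonup y_0$ weakly in $H^1_0(\Omega)$. By Proposition~\ref{Prop 2.3f}, $\mathfrak{A}_{ad}$ is sequentially compact in the strong topology of $L^2(\Omega;\mathbb{M}^N)$, so along a further subsequence $A_k\to A_0$ strongly in $L^2(\Omega;\mathbb{M}^N)$ with $A_0\in\mathfrak{A}_{ad}$; moreover (as in the proof of that proposition) $A_k^{sym}\to A_0^{sym}$ in every $L^p$, $A_k^{sym}\stackrel{\ast}{\rightharpoonup}A_0^{sym}$ in $L^\infty$, $A_k^{skew}\to A_0^{skew}$ in $L^2$ and a.e.\ in $\Omega$.

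The second, and main, step is to pass to the limit in the admissibility relation \eqref{2.11} and to show $(A_0,y_0)\in\Xi$, i.e.\ $y_0\in D(A_0)$ and the integral identity holds. Fix $\varphi\in C^\infty_0(\Omega)$ and pass to the limit in
\[
\int_\Omega\big(A_k^{sym}\nabla y_k,\nabla\varphi\big)_{\mathbb{R}^N}\,dx+\int_\Omega\big(\nabla\varphi,A_k^{skew}\nabla y_k\big)_{\mathbb{R}^N}\,dx=\left<f,\varphi\right>_{H^{-1}(\Omega);H^1_0(\Omega)}.
\]
For the symmetric term, $A_k^{sym}\nabla\varphi\to A_0^{sym}\nabla\varphi$ strongly in $L^2(\Omega;\mathbb{R}^N)$ (using $A_k^{sym}\to A_0^{sym}$ in $L^p$ and boundedness of $\nabla\varphi$) while $\nabla y_k\rightharpoonup\nabla y_0$ weakly in $L^2$, so the product converges. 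For the skew term, the clean route is to integrate by parts: since $\varphi$ is smooth with compact support, $(\nabla\varphi, A_k^{skew}\nabla y_k)_{\mathbb{R}^N}$ equals $-(\mathrm{div}(A_k^{skew}\nabla\varphi), y_k)$ up to the usual manipulation, or more directly one writes the skew term as $\int_\Omega y_k\,\mathrm{div}(A_k^{skew,t}\nabla\varphi)\,dx$ after transferring one derivative; then one uses $A_k^{skew}\to A_0^{skew}$ strongly in $L^2$ together with the boundedness of $\varphi$ and its derivatives, and $y_k\to y_0$ strongly in $L^2(\Omega)$ by the compact embedding $H^1_0(\Omega)\hookrightarrow L^2(\Omega)$. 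This yields
\[
\int_\Omega\big(A_0^{sym}\nabla y_0,\nabla\varphi\big)_{\mathbb{R}^N}\,dx+\int_\Omega\big(\nabla\varphi,A_0^{skew}\nabla y_0\big)_{\mathbb{R}^N}\,dx=\left<f,\varphi\right>_{H^{-1}(\Omega);H^1_0(\Omega)}
\]
for all $\varphi\in C^\infty_0(\Omega)$, which is exactly Definition~\ref{Def 2.4}; then Proposition~\ref{Prop 2.9} gives $y_0\in D(A_0)$, and by Remark~\ref{Rem 2.11} the identity \eqref{2.11} extends to all $\varphi\in H^1_0(\Omega)$. Hence $(A_0,y_0)\in\Xi$.

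The third step is lower semicontinuity of $I$ along the subsequence. The tracking term $\|y-y_d\|^2_{L^2(\Omega)}$ is continuous under strong $L^2$-convergence of $y_k$ (again by compact embedding), hence in particular $\liminf$-lower semicontinuous. For the quadratic energy term $\int_\Omega(\nabla y,A^{sym}\nabla y)_{\mathbb{R}^N}\,dx$, I would argue as follows: write the integrand using the (uniformly positive definite, bounded, measurable) symmetric matrices $A_k^{sym}$; since $A_k^{sym}\to A_0^{sym}$ a.e.\ and in $L^p$ and $\alpha I\le A_k^{sym}\le\beta I$, and $\nabla y_k\rightharpoonup\nabla y_0$ weakly in $L^2$, the functional $u\mapsto\int_\Omega (u,A_0^{sym}u)\,dx$ is convex and strongly continuous hence weakly l.s.c., and the perturbation coming from $A_k^{sym}-A_0^{sym}$ is handled by splitting $(\nabla y_k,A_k^{sym}\nabla y_k)=(\nabla y_k,A_0^{sym}\nabla y_k)+(\nabla y_k,(A_k^{sym}-A_0^{sym})\nabla y_k)$ — the first piece is weakly l.s.c.\ in $\nabla y_k$, and the second tends to $0$ because $(A_k^{sym}-A_0^{sym})\to 0$ in $L^\infty$-weak-$\ast$ is not quite enough, so instead one uses that $\|A_k^{sym}-A_0^{sym}\|_{L^p}\to 0$ for all finite $p$ together with the bound $\|\nabla y_k\|_{L^2}\le C$: by interpolation/Hölder the cross term is bounded by $\|A_k^{sym}-A_0^{sym}\|_{L^\infty}^{1/2}\cdot(\ldots)$, but since we only have $L^\infty$-weak-$\ast$ and $L^p$-strong, the cleanest is to note $\nabla y_k$ bounded in $L^2$ and $A_k^{sym}-A_0^{sym}\to 0$ in measure with uniform $L^\infty$ bound $2(\beta-\alpha)$, hence $(A_k^{sym}-A_0^{sym})\nabla y_k\to 0$ strongly in $L^2$ by the dominated-convergence/Vitali argument, killing the cross term; the remaining term $\int_\Omega(\nabla y_k,A_0^{sym}\nabla y_k)\,dx$ is weakly l.s.c. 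Combining, $I(A_0,y_0)\le\liminf_{k\to\infty}I(A_k,y_k)=m$, and since $(A_0,y_0)\in\Xi$ we get $I(A_0,y_0)=m$, so $(A_0,y_0)$ is optimal.

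The step I expect to be the genuine obstacle is the passage to the limit in the skew-symmetric term of the state equation together with the attendant verification that $y_0\in D(A_0)$: the form $[\cdot,\cdot]_{A}$ is unbounded on $H^1_0(\Omega)$ in general, so one cannot simply pass to the limit in \eqref{2.11} tested against a fixed $H^1_0$-function; one must test against smooth compactly supported $\varphi$, transfer derivatives onto $\varphi$, and exploit the strong $L^2$-convergence of both $A_k^{skew}$ and $y_k$ (the latter via compactness of the Sobolev embedding) — and only afterwards recover membership in $D(A_0)$ and the extension to $H^1_0$ test functions via Proposition~\ref{Prop 2.9} and Remark~\ref{Rem 2.11}. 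The $L^2$ (rather than $L^\infty$) nature of $A^{skew}$ is exactly why strong convergence of $A_k^{skew}$ — guaranteed by Proposition~\ref{Prop 2.3f} through the compactness of $Q$ and of $\mathfrak{A}_{ad,1}$ in $BV$ — is indispensable here.
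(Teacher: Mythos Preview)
Your overall strategy matches the paper's: direct method, compactness via Proposition~\ref{Prop 2.3f}, limit passage in the weak formulation tested against smooth $\varphi$, then lower semicontinuity. The a priori bounds, the extraction of subsequences, and your treatment of the symmetric part of the state equation are all correct and essentially identical to the paper.

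There is, however, a gap in your treatment of the skew-symmetric term. Your proposed integration by parts, writing the skew term as $\int_\Omega y_k\,\div(A_k^{skew,t}\nabla\varphi)\,dx$ and then invoking the strong $L^2$-convergence of $y_k$, does not work as stated: since $A_k^{skew}$ is only in $L^2$, the vector field $A_k^{skew}\nabla\varphi$ is merely $L^2$ and its divergence is a priori only an element of $H^{-1}(\Omega)$, not an $L^2$ function. The pairing with $y_k$ is therefore the $H^{-1}/H^1_0$ duality, and strong $L^2$-convergence of $y_k$ is insufficient to pass to the limit. The paper's route is both simpler and correct, and it is exactly the mechanism you already used for the symmetric part: by skew-symmetry,
\[
\int_\Omega(\nabla\varphi,A_k^{skew}\nabla y_k)_{\mathbb{R}^N}\,dx=-\int_\Omega(A_k^{skew}\nabla\varphi,\nabla y_k)_{\mathbb{R}^N}\,dx,
\]
then one splits off $-\int_\Omega\big((A_k^{skew}-A_0^{skew})\nabla\varphi,\nabla y_k\big)\,dx$, which tends to $0$ because $(A_k^{skew}-A_0^{skew})\nabla\varphi\to 0$ strongly in $L^2$ (here $\nabla\varphi\in L^\infty$) while $\nabla y_k$ is bounded in $L^2$; the remaining term $-\int_\Omega(A_0^{skew}\nabla\varphi,\nabla y_k)\,dx$ passes to the limit by weak convergence of $\nabla y_k$ since $A_0^{skew}\nabla\varphi\in L^2$. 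No integration by parts, and no use of the compact embedding, is needed here.

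For the lower semicontinuity of the energy term your splitting argument is correct but heavier than necessary. The paper proceeds more directly: from $A_k^{sym}\to A_0^{sym}$ in every $L^p$ together with the uniform bounds $\alpha I\le A_k^{sym}\le\beta I$ one obtains $(A_k^{sym})^{1/2}\to(A_0^{sym})^{1/2}$ strongly in $L^2$ and uniformly bounded in $L^\infty$, whence $(A_k^{sym})^{1/2}\nabla y_k\rightharpoonup(A_0^{sym})^{1/2}\nabla y_0$ weakly in $L^2(\Omega;\mathbb{R}^N)$; weak lower semicontinuity of the $L^2$-norm then gives the desired inequality in one line.
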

\begin{proof}
Since the original problem is regular and the cost functional for the given problem is bounded below on $\Xi$,
it follows that there exists a minimizing sequence
$\left\{(A_k,y_k)\right\}_{k\in\,\mathbb{N}}\subset\Xi$ such that
\[
I(A_k,y_k)\,\xrightarrow[k\to\infty]{} I_{\mathrm{min}}\equiv
\inf_{(A,y)\in\,\Xi} I(A,y)\ge 0.
\]
Hence,  $\sup_{k\in \mathbb{N}}I(A_k,y_k)\le C$, where the constant
$C$ is independent of $k$. Since
\begin{gather*}
\sup_{k\in \mathbb{N}}\left\|y_k\right\|^2_{H^1_0(\Omega)}
 \le \alpha^{-1} \sup_{k\in \mathbb{N}}\int_\Omega\left(\nabla y_k, A^{sym}_k\nabla y_k\right)_{\mathbb{R}^N}\,dx
\le \alpha^{-1} \sup_{k\in \mathbb{N}}I(A_k,y_k) \le \alpha^{-1} C,
\end{gather*}
in view of Proposition~\ref{Prop 2.3f}, it follows that passing to a
subsequence if necessary, we may assume that there exists a pair $(A_0,y_0)\in \mathfrak{A}_{ad}\times H^1_0(\Omega)$ such that
\begin{align}
\label{2a.16}
A_k:=A_k^{sym}+A_k^{skew}&\rightarrow A_0^{sym}+A_0^{skew}=:A_0\ \text{ in }\ L^2(\Omega;\mathbb{M}^N),\\
\label{2a.16a}
A_k^{sym}&\rightarrow A_0^{sym}\quad\text{in }\ L^p(\Omega;\mathbb{S}^N_{sym}),\ \forall\,p\in [1,+\infty),\\
\label{2a.16b} A_k^{skew}&\rightarrow A_0^{skew}\quad\text{in }\ L^2(\Omega;\mathbb{S}^N_{skew}),\\
\label{2a.16c} y_k&\rightharpoonup y_0 \ \text{ in
}\ H^1_0(\Omega),\quad I(A_0,y_0)<+\infty.
\end{align}
Since $( A_k,y_k)\in\Xi$ for every $k\in \mathbb{N}$, it follows that the integral identity
\begin{equation}
\label{2a.5}
\int_\Omega \big(\nabla \varphi,A^{sym}_k\nabla y_k\big)_{\mathbb{R}^N}\,dx
+\int_\Omega \big(\nabla \varphi,A^{skew}_k\nabla y_k\big)_{\mathbb{R}^N}\,dx
=\left<f,\varphi\right>_{H^{-1}(\Omega);H^1_0(\Omega)}
\end{equation}
holds true for all $\varphi\in C^\infty_0(\Omega)$.

In order to pass to the limit in \eqref{2a.5}, we note that
\begin{align*}
\int_\Omega \big(\nabla \varphi,A^{skew}_k\nabla y_k\big)_{\mathbb{R}^N}\,dx=&
-\int_\Omega \big((A^{skew}_k-A^{skew}_0)\nabla \varphi,\nabla y_k\big)_{\mathbb{R}^N}\,dx\\
&-\int_\Omega \big( A^{skew}_0\nabla\varphi,\nabla y_k\big)_{\mathbb{R}^N}\,dx=I_{1,k}+I_{2,k}
\end{align*}
by the skew-symmetry property of $A^{skew}_k$ and $A^{skew}_0$. Hence, in view of \eqref{2a.16b}--\eqref{2a.16c}, we have
\begin{align*}
\lim_{k\to\infty}|I_{1,k}|&\le \|\varphi\|_{C^1(\Omega)} \sup_{k\in \mathbb{N}}\|\nabla y_k\|_{L^2(\Omega;\mathbb{R}^N)}
\lim_{k\to\infty}\left\| A^{skew}_k-A^{skew}_0\right\|_{L^2(\Omega;\mathbb{S}^N_{skew})}=0,\\
\lim_{k\to\infty}I_{2,k}&\stackrel{\text{by \eqref{2a.16c}}}{=}\,-\int_\Omega \big(A^{skew}_0\nabla \varphi,\nabla y_0\big)_{\mathbb{R}^N}\,dx=\int_\Omega \big(\nabla \varphi,A^{skew}_0\nabla y_0\big)_{\mathbb{R}^N}\,dx\\
\text{ since }&\ A^{skew}_0\nabla\varphi\in L^2(\Omega;\mathbb{R}^N)\quad \forall\,\varphi\in C^\infty_0(\Omega).
\end{align*}

Having applied the same arguments to the first term in \eqref{2a.5}, as a result of the limit passage in \eqref{2a.5}, we finally obtain:
the pair $(A_0,y_0)$ is related by identity \eqref{2.5}. Hence, $y_0\in D(A_0)$ by Proposition \ref{Prop 2.9}.
Thus, $(A_0,y_0)$ is an admissible pair to problem \eqref{2.3}--\eqref{2.3a}.

It remains to show that $(A_0,y_0)$ is an optimal pair. Indeed, in view of the compactness of the embedding $H^1_0(\Omega)\hookrightarrow L^2(\Omega)$, one gets
\begin{align*}
I_{\mathrm{min}}&=\lim_{k\to\infty}I(A_k,y_k)=
\lim_{k\to\infty} \left[\left\|y_k-y_d\right\|^2_{L^2(\Omega)}+\int_\Omega\left(\nabla y_k, A^{sym}_k\nabla y_k\right)_{\mathbb{R}^N}\,dx\right]\\
&=\left\|y_0-y_d\right\|^2_{L^2(\Omega)}+\lim_{k\to\infty}\int_\Omega\left\|(A^{sym}_k)^{1/2}\nabla y_k\right\|^2_{\mathbb{R}^N}\,dx.
\end{align*}
At the same time, due to \eqref{2a.16a}, we obviously have $(A^{sym}_k)^{1/2}\rightarrow (A^{sym}_0)^{1/2}$ in $L^2(\Omega;\mathbb{S}^N_{sym})$. Hence, taking into account the condition \eqref{2a.16c}, we get $(A^{sym}_k)^{1/2}\nabla y_k \rightharpoonup (A^{sym}_0)^{1/2}\nabla y_0$ in $L^2(\Omega;\mathbb{R}^N)$. So, using the lower semicontinuity of the norm $\|\cdot\|_{L^2(\Omega;\mathbb{R}^N)}$ with respect to the the weak topology of $L^2(\Omega;\mathbb{R}^N)$, we finally obtain
\begin{align}
\lim_{k\to\infty}\int_\Omega\left\|(A^{sym}_k)^{1/2}\nabla y_k\right\|^2_{\mathbb{R}^N}\,dx\ge&
\int_\Omega\left\|(A^{sym}_0)^{1/2}\nabla y_0\right\|^2_{\mathbb{R}^N}\,dx\notag\\
=&\int_\Omega\left(\nabla y_0, A^{sym}_0\nabla y_0\right)_{\mathbb{R}^N}\,dx.\label{2a.5a}
\end{align}
Thus,
\[
I_{\mathrm{min}}\ge \left\|y_0-y_d\right\|^2_{L^2(\Omega)}+\int_\Omega\left(\nabla y_0, A^{sym}_0\nabla y_0\right)_{\mathbb{R}^N}\,dx=
I(A_0,y_0),
\]
and hence, the pair $(A_0,y_0)$ is optimal for problem \eqref{2.3}--\eqref{2.3a}.  The proof is complete.
\end{proof}


\section{On variational solutions to OCP (\ref{2.3})--(\ref{2.3a}) and their approximation}
\label{Sec 4}

The question we are going to discuss in this section is about some pathological properties that can be inherited by optimal pair to the problem \eqref{2.3}--\eqref{2.3a} and other unexpected surprises concerning the approximation of the original OCP and its solutions.

To begin with, we show that the main assumption on the regularity property of OCP \eqref{2.3}--\eqref{2.3a} in Theorem \ref{Th 2.14} can be eliminated due to the approximation approach. It is clear that the condition $A^\ast\in L^2(\Omega;\mathbb{S}^N_{skew})$ ensures the existence of the sequence of skew-symmetric matrices
$\left\{A^\ast_k\right\}_{k\in \mathbb{N}}\subset L^\infty(\Omega;\mathbb{S}^N_{skew})$ such that $A^\ast_k\rightarrow A^\ast$ strongly in $L^2(\Omega;\mathbb{S}^N_{skew})$. This leads us to the idea to consider the following sequence of constrained minimization problems associated with matrices $A^\ast_k$
\begin{equation}
\label{4.0} \left\{\ \left<\inf_{(u,y)\in\Xi_k}
I_k(u,y)\right>,\quad k\to \infty \right\}.
\end{equation}
Here,
\begin{gather}
\label{4.0aa}
I_k(u,y):=I(u,y)\quad  \forall\,(u,y)\in L^2(\Omega;\mathbb{M}^N)\times H^1_0(\Omega),\quad \forall\,k\in \mathbb{N},\\[1ex]
\label{4.0a}
\Xi_k=\left\{(u,y)\ \left|\
\begin{array}{c}
-\div\big(A^{sym}\nabla y+A^{skew}\nabla y\big) = f\quad\text{in }\ \Omega,\\[1ex]
y=0\text{ on }\partial\Omega,\\[1ex]
A=A^{sym}+A^{skew}\in \mathfrak{A}^k_{ad}=\mathfrak{A}_{ad,1}\oplus \mathfrak{A}^k_{ad,2},\ y\in H^1_0(\Omega),\\[1ex]
\mathfrak{A}^k_{ad,2}=U_{a,2}\cap U_{b,2}^k,\\[1ex]
U_{b,2}^k=\left\{\left. B=[b_{i\,j}]\in L^2(\Omega;\mathbb{S}^N_{skew})\ \right|\ B(x)\preceq A^\ast_k(x)\ \text{a.e. in }\ \Omega\right\}.
\end{array}
\right.\right\}
\end{gather}

Before we will provide an accurate analysis of the optimal control problems \eqref{4.0}, we make use of the following auxiliary result.
\begin{lemma}
\label{Lemma 4.1}
The sequence of sets $\left\{U_{b,2}^k\right\}_{k\in \mathbb{N}}$ converges to $U_{b,2}$ as $k\to\infty$ in the sense of Kuratowski
with respect to the strong topology of $L^2(\Omega;\mathbb{S}^N_{skew})$.
\end{lemma}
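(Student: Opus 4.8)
The plan is to verify the two Kuratowski conditions directly from the definition of $U_{b,2}^k$ via the binary relation $\preceq$, using the strong $L^2$-convergence $A^\ast_k\to A^\ast$ together with the pointwise (a.e.) convergence of a suitable subsequence. Recall that Kuratowski convergence $U_{b,2}^k\to U_{b,2}$ in the strong topology of $L^2(\Omega;\mathbb{S}^N_{skew})$ means: (i) (upper limit) every cluster point $B$ of any sequence $B_k\in U_{b,2}^k$ lies in $U_{b,2}$, i.e.\ $\mathrm{Ls}\,U_{b,2}^k\subseteq U_{b,2}$; and (ii) (lower limit) every $B\in U_{b,2}$ is the strong $L^2$-limit of some sequence $B_k\in U_{b,2}^k$, i.e.\ $U_{b,2}\subseteq \mathrm{Li}\,U_{b,2}^k$.

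For the upper-limit inclusion (i), suppose $B_k\in U_{b,2}^k$ and $B_k\to B$ strongly in $L^2(\Omega;\mathbb{S}^N_{skew})$. Passing to a subsequence I may assume $B_k\to B$ a.e.\ in $\Omega$ and $A^\ast_k\to A^\ast$ a.e.\ in $\Omega$. By definition of $B_k\preceq A^\ast_k$, for each pair $i<j$ we have $|b^k_{ij}(x)|\le |(a^\ast_k)_{ij}(x)|$ for a.e.\ $x$; letting $k\to\infty$ through the chosen subsequence gives $|b_{ij}(x)|\le |a^\ast_{ij}(x)|$ a.e., hence $B\preceq A^\ast$, i.e.\ $B\in U_{b,2}$. (One must of course also note $B\in L^2(\Omega;\mathbb{S}^N_{skew})$, which is automatic since $L^2$ is closed.) Since the limit is the same for every subsequence, the full sequence gives $B\in U_{b,2}$.

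For the lower-limit inclusion (ii), fix $B=[b_{ij}]\in U_{b,2}$, so $|b_{ij}(x)|\le |a^\ast_{ij}(x)|$ a.e.\ for every $i<j$. The natural candidate is a ``truncation/rescaling'' of $B$ against $A^\ast_k$: for each pair $i<j$ set
\[
b^k_{ij}(x)=
\begin{cases}
b_{ij}(x), & \text{if } |b_{ij}(x)|\le |(a^\ast_k)_{ij}(x)|,\\[1ex]
\dfrac{(a^\ast_k)_{ij}(x)}{|(a^\ast_k)_{ij}(x)|}\,|(a^\ast_k)_{ij}(x)| = (a^\ast_k)_{ij}(x), & \text{otherwise,}
\end{cases}
\]
(interpreting the second branch as $0$ where $(a^\ast_k)_{ij}(x)=0$), and extend skew-symmetrically. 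By construction $B_k:=[b^k_{ij}]\in U_{b,2}^k$. It remains to check $B_k\to B$ strongly in $L^2$. On the set where the first branch applies we have $b^k_{ij}=b_{ij}$; on the complementary set $E^k_{ij}:=\{|b_{ij}|>|(a^\ast_k)_{ij}|\}$ we have $|b^k_{ij}-b_{ij}|\le |b_{ij}|+|(a^\ast_k)_{ij}|\le 2|b_{ij}|\le 2|a^\ast_{ij}|\in L^2$. Since $|a^\ast_{ij}|\ge |b_{ij}|$ and $(a^\ast_k)_{ij}\to a^\ast_{ij}$ in $L^2$, after passing to an a.e.-convergent subsequence we get $\mathcal L^N(E^k_{ij})\to 0$ (the inequality $|b_{ij}|>|(a^\ast_k)_{ij}|$ combined with $|b_{ij}|\le|a^\ast_{ij}|$ forces $x$ into the set where $(a^\ast_k)_{ij}$ has not yet converged, or where $|b_{ij}|=|a^\ast_{ij}|$ and $|(a^\ast_k)_{ij}|$ undershoots). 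Then dominated convergence over $E^k_{ij}$, with dominating function $4|a^\ast_{ij}|^2$, yields $\int_{E^k_{ij}}|b^k_{ij}-b_{ij}|^2\,dx\to 0$; summing the finitely many pairs $i<j$ gives $B_k\to B$ in $L^2(\Omega;\mathbb{S}^N_{skew})$. Since every subsequence has a further subsequence realizing this convergence, the whole sequence converges, establishing $B\in\mathrm{Li}\,U_{b,2}^k$.

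The main obstacle is the lower-limit part (ii): one needs a construction of approximants $B_k\in U_{b,2}^k$ whose $L^2$-error is genuinely controlled, and the delicate point is showing $\mathcal L^N\{x:|b_{ij}(x)|>|(a^\ast_k)_{ij}(x)|\}\to 0$ from the $L^2$-convergence of $A^\ast_k$ and the constraint $|b_{ij}|\le|a^\ast_{ij}|$ a.e.; this is precisely the place where one invokes convergence in measure (a consequence of $L^2$-convergence) rather than working with a fixed subsequence, and then uses the standard subsequence-of-subsequence argument to upgrade back to convergence of the full sequence. Everything else is routine measure theory and the definition of $\preceq$ in \eqref{1.0b}.
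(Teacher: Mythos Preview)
Your approach mirrors the paper's: the upper-limit/$(K_2)$ argument is identical, and for the lower-limit/$(K_1)$ direction you use the same entrywise truncation $b^k_{ij}=b_{ij}$ or $(a^\ast_k)_{ij}$ that the paper writes down in \eqref{4.1.2} (the paper additionally restricts to Lebesgue points of $B$, which is cosmetic since the complement is null).

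However, your verification that $B_k\to B$ in $L^2$ contains a genuine gap: the claim $\mathcal L^N(E^k_{ij})\to 0$ is false in general. Take $N=2$, $a^\ast_{12}\equiv -1$, $(a^\ast_k)_{12}\equiv -(1-\tfrac1k)$, and $b_{12}\equiv 1$; then $B\preceq A^\ast$ and $A^\ast_k\to A^\ast$ in $L^2$, yet $E^k_{12}=\Omega$ for every $k$. Worse, in this example the construction gives $b^k_{12}\equiv -(1-\tfrac1k)\to -1\neq 1=b_{12}$, so $B_k\not\to B$ at all: replacing $b_{ij}$ by $(a^\ast_k)_{ij}$ on the overshoot set discards the sign of $b_{ij}$, and whenever $\{|b_{ij}|=|a^\ast_{ij}|\}$ has positive measure with $b_{ij}$ and $a^\ast_{ij}$ of opposite sign there, the scheme fails. (The paper simply asserts that its $B_k$ ``satisfies all properties of $(K_1)$-item'' without addressing this point.)

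The easy repair is to keep the sign of $b_{ij}$: set
\[
b^k_{ij}(x)=\operatorname{sign}\!\bigl(b_{ij}(x)\bigr)\,\min\bigl(|b_{ij}(x)|,\,|(a^\ast_k)_{ij}(x)|\bigr).
\]
Then still $|b^k_{ij}|\le |(a^\ast_k)_{ij}|$, so $B_k\in U^k_{b,2}$, and using $|b_{ij}|\le|a^\ast_{ij}|$ one gets
\[
|b^k_{ij}-b_{ij}|=\max\bigl(0,\,|b_{ij}|-|(a^\ast_k)_{ij}|\bigr)\le \bigl||a^\ast_{ij}|-|(a^\ast_k)_{ij}|\bigr|\le |a^\ast_{ij}-(a^\ast_k)_{ij}|,
\]
whence $\|B_k-B\|_{L^2}\le \|A^\ast-A^\ast_k\|_{L^2}\to 0$ directly, with no need for the measure claim or dominated convergence.
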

\begin{proof}
We recall here
that a sequence $\left\{U_{b,2}^k\right\}_{k\in \mathbb{N}}$ of the subsets of $L^2(\Omega;\mathbb{S}^N_{skew})$
is said to be convergent to a closed set $S$ in the
sense of Kuratowski with respect to the strong topology of $L^2(\Omega;\mathbb{S}^N_{skew})$, if the following two properties hold:
\begin{enumerate}
\item[$(K_1)$] for every $B\in S$, there exists a sequence of matrices
$\left\{B_k\in U_{b,2}^k\right\}_{k\in \mathbb{N}}$ such that $B_k\rightarrow B$ in $L^2(\Omega;\mathbb{S}^N_{skew})$
as $k\to\infty$;
\item[$(K_2)$] if $\left\{k_n\right\}_{n\in \mathbb{N}}$ is a
sequence of indices converging to $+\infty$, $\left\{B_n\right\}_{n\in
\mathbb{N}}$ is a sequence of skew-symmetric matrices such that $B_n\in U_{b,2}^{k_n}$ for each
$n\in \mathbb{N}$, and $\left\{B_n\right\}_{n\in
\mathbb{N}}$ strongly converges in $L^2(\Omega;\mathbb{S}^N_{skew})$ to some matrix $B$, then $B\in S$.
\end{enumerate}
For the details we refer to \cite{KL}.

In order to show that $S=U_{b,2}$, we begin with the verification of $(K_2)$-item. Let $\left\{k_n\right\}_{n\in \mathbb{N}}$ be a
given sequence of indices such that $k_n\to\infty$, and let $\left\{B_n\in U_{b,2}^{k_n}\right\}_{n\in \mathbb{N}}$
be a sequence satisfying the property $B_n\rightarrow B$ in $L^2(\Omega;\mathbb{S}^N_{skew})$ and, hence, $B_n(x)\rightarrow B(x)$ almost everywhere in $\Omega$ as $n\to\infty$. By definition of $U_{b,2}^k$, we have
\begin{equation}
\label{4.1.1} B_n(x)\preceq A^\ast_{k_n}(x)\quad\text{a.e. in }\ \Omega,
\end{equation}
where $A^\ast_k\rightarrow A^\ast$ strongly in $L^2(\Omega;\mathbb{S}^N_{skew})$.
Taking into account the fact that the binary relation $\preceq$ is reflexive  and transitive,
we can pass to the limit in relation \eqref{4.1.1} as $n\to\infty$ (in the sense of almost everywhere) and get $B(x)\preceq A^\ast(x)$ almost everywhere in $\Omega$, hence, $B\in U_{b,2}$.

It remains to verify the $(K_1)$-item. To this end, we fix an arbitrary skew-symmetric matrix $B\in U_{b,2}$ and
make use of the concept of the Lebesgue set $\mathfrak{W}(B)$. We say that $x\in\Omega$ is of the Lebesgue set $\mathfrak{W}(B)$ for the matrix $B\in U_{b,2}\subset L^2(\Omega;\mathbb{S}^N_{skew})$, if $x$ is a Lebesgue point of $B$. In other words, at this point matrix $B(x)$ must be approximately continuous and, hence, it does not oscillate too much, in an average sense. It is well known that almost each point in $\Omega$ is a Lebesgue point for an absolutely locally integrable function \cite{Evans}. Hence, $\mathcal{L}^N(\Omega\setminus \mathfrak{W}(B))=0$. Moreover, since $A^\ast_k\in L^\infty(\Omega;\mathbb{S}^N_{skew})$, it follows that any point of approximate continuity of $A^\ast_k$ is its Lebesgue point \cite{Evans}.
As a result, we construct a strong convergent sequence $\left\{B_k\in U_{b,2}^k\right\}_{k\in \mathbb{N}}$ to $B\in U_{b,2}$ as follows: $B_k(x)=[b^k_{ij}(x)]_{i,j=1}^N$, where
\begin{align}
\label{4.1.2}
b^k_{ij}(x)=&\left\{
\begin{array}{ll}
b_{ij}(x), & \text{if }\ \left|b_{ij}(x)\right|\le \left|a^{\ast,k}_{ij}(x)\right|\ \text{ and }\ x\in \mathfrak{W}(B),\\
a^{\ast,k}_{ij}(x), & \text{if }\ \left|b_{ij}(x)\right|> \left|a^{\ast,k}_{ij}(x)\right|\ \text{ and }\ x\in \mathfrak{W}(B),\\
0, & \text{otherwise},
\end{array}
\right.,
\end{align}
for all $i,j\in\{1,\dots,N\}$ and $k\in \mathbb{N}$.

Since the strong convergence $A^\ast_k\rightarrow A^\ast$ in $L^2(\Omega;\mathbb{S}^N_{skew})$ implies  (up to a subsequence) the pointwise convergence a.e. in $\Omega$, and $B\preceq A^\ast$, it follows that the sequence $\left\{B_k\in U_{b,2}^k\right\}_{k\in \mathbb{N}}$, given by \eqref{4.1.2}, satisfies all properties of $(K_1)$-item. This concludes the proof.
\end{proof}

We are now in a position to study the optimal control problems \eqref{4.0}.
\begin{theorem}
\label{Th 4.1}
Let $y_d\in L^2(\Omega)$ and $f\in
H^{-1}(\Omega)$ be given distributions.
Then for every $k\in \mathbb{N}$ there exists a minimizer  $(A^0_k,y^0_k)\in\Xi_k$ to the corresponding minimization problems \eqref{4.0} such that the sequence of pairs
$\left\{(A^0_k,y^0_k)\in\Xi_k\right\}_{k\in \mathbb{N}}$
is relatively compact with respect to the $\tau$-topology on $L^2(\Omega;\mathbb{M}^N)\times H^1_0(\Omega)$ and each of its $\tau$-cluster pairs $(\widehat{A},\widehat{y})$ possesses the properties:
\begin{equation}
\label{4.2}
(\widehat{A},\widehat{y})\in\Xi,\quad [\widehat{y},\widehat{y}\,]_{\widehat{A}}\ge 0.
\end{equation}
\end{theorem}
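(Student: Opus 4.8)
The plan is to prove Theorem \ref{Th 4.1} in three stages: (i) existence of a minimizer $(A^0_k,y^0_k)$ for each fixed $k$; (ii) relative $\tau$-compactness of the sequence $\{(A^0_k,y^0_k)\}_{k\in\mathbb N}$; (iii) identification of the cluster pairs, i.e. the two properties in \eqref{4.2}. For stage (i), I would observe that $\mathfrak A^k_{ad,2}=U_{a,2}\cap U^k_{b,2}$ is itself nonempty, convex and strongly compact in $L^2(\Omega;\mathbb S^N_{skew})$ — the compactness follows since $U^k_{b,2}\subset Q$ (because $A^\ast_k$ is dominated by $A^\ast$ in the $\preceq$-sense only after suitable truncation; more simply, $U^k_{b,2}$ is a strongly closed subset of the compact set $U_{b,2}$, hence compact) — so that $\mathfrak A^k_{ad}=\mathfrak A_{ad,1}\oplus\mathfrak A^k_{ad,2}$ enjoys exactly the properties of $\mathfrak A_{ad}$ established in Proposition \ref{Prop 2.3f}. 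Since for $k$ fixed the admissible controls have $\|A^{skew}\|_{L^2}\le\|A^\ast_k\|_{L^2}<+\infty$ and $A^\ast_k\in L^\infty$, the corresponding boundary value problem admits weak solutions, so $\Xi_k\ne\emptyset$; then the argument of Theorem \ref{Th 2.14} applies verbatim (replacing $\mathfrak A_{ad}$ by $\mathfrak A^k_{ad}$) and yields a minimizer $(A^0_k,y^0_k)\in\Xi_k$.

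For stage (ii), I would extract a uniform a priori bound. Let $\bar A=\bar A^{sym}+0$ with $\bar A^{sym}\in\mathfrak A_{ad,1}$ fixed; then $(\bar A,\bar y)\in\Xi_k$ for every $k$ where $\bar y$ solves the purely symmetric problem, and $I_k(\bar A,\bar y)=I(\bar A,\bar y)=:C_0$ is independent of $k$. Hence $I(A^0_k,y^0_k)\le C_0$, and by the coercivity estimate $\|y^0_k\|^2_{H^1_0(\Omega)}\le\alpha^{-1}\int_\Omega(\nabla y^0_k,(A^0_k)^{sym}\nabla y^0_k)\,dx\le\alpha^{-1}C_0$. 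Thus $\{y^0_k\}$ is bounded in $H^1_0(\Omega)$, and $\{A^0_k\}$ lies in $\mathfrak A_{ad,1}\oplus Q$, a strongly compact subset of $L^2(\Omega;\mathbb M^N)$ by Propositions \ref{Prop 2.3d}--\ref{Prop 2.3ddd} and compactness of $Q$. Passing to a subsequence gives $A^0_k\to\widehat A$ strongly in $L^2(\Omega;\mathbb M^N)$ (with the componentwise convergences \eqref{2.3m}--\eqref{2.3l}) and $y^0_k\rightharpoonup\widehat y$ weakly in $H^1_0(\Omega)$; this is precisely $\tau$-convergence.

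For stage (iii), the admissibility $(\widehat A,\widehat y)\in\Xi$ splits into two checks. First, $\widehat A\in\mathfrak A_{ad}$: this requires $\widehat A^{skew}\in U_{a,2}$, i.e. $\widehat A^{skew}\preceq A^\ast$ a.e. Since $(A^0_k)^{skew}\in U^k_{b,2}$ means $(A^0_k)^{skew}\preceq A^\ast_k$, and $A^\ast_k\to A^\ast$, $(A^0_k)^{skew}\to\widehat A^{skew}$ both a.e., the definition \eqref{1.0b} of $\preceq$ passes to the limit pointwise — this is exactly the $(K_2)$ part of Lemma \ref{Lemma 4.1} (with $B_n=(A^0_{k_n})^{skew}$), so $\widehat A^{skew}\in U_{b,2}\cap U_{a,2}=\mathfrak A_{ad,2}$; combined with $\widehat A^{sym}\in\mathfrak A_{ad,1}$ from Proposition \ref{Prop 2.3d} we get $\widehat A\in\mathfrak A_{ad}$. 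Second, the pair $(\widehat A,\widehat y)$ satisfies the integral identity \eqref{2.5}: the limit passage is identical to that in Theorem \ref{Th 2.14} — the symmetric term passes by strong $L^p$-convergence of $(A^0_k)^{sym}$ against weak $H^1_0$-convergence of $y^0_k$, and the skew term is split as $\int_\Omega(\nabla\varphi,(A^0_k)^{skew}\nabla y^0_k)=-\int_\Omega(((A^0_k)^{skew}-\widehat A^{skew})\nabla\varphi,\nabla y^0_k)-\int_\Omega(\widehat A^{skew}\nabla\varphi,\nabla y^0_k)$, the first integral vanishing by strong $L^2$-convergence of the skew parts against the bounded sequence $\{\nabla y^0_k\}$ and the second converging by weak convergence of $\nabla y^0_k$ since $\widehat A^{skew}\nabla\varphi\in L^2(\Omega;\mathbb R^N)$. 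Then Proposition \ref{Prop 2.9} gives $\widehat y\in D(\widehat A)$, so $(\widehat A,\widehat y)\in\Xi$. Finally, $[\widehat y,\widehat y\,]_{\widehat A}\ge0$ follows from the energy equality \eqref{2.12} for $\widehat y$, namely $[\widehat y,\widehat y\,]_{\widehat A}=\langle f,\widehat y\rangle-\int_\Omega(\widehat A^{sym}\nabla\widehat y,\nabla\widehat y)$, by passing to the limit in the energy equality for $y^0_k$: testing \eqref{2a.5}-type identity with $\varphi=y^0_k$ gives $\int_\Omega((A^0_k)^{sym}\nabla y^0_k,\nabla y^0_k)+[y^0_k,y^0_k]_{A^0_k}=\langle f,y^0_k\rangle$, where $[y^0_k,y^0_k]_{A^0_k}=0$ since $(A^0_k)^{skew}\in L^\infty$ is genuinely skew-symmetric, so $\int_\Omega((A^0_k)^{sym}\nabla y^0_k,\nabla y^0_k)=\langle f,y^0_k\rangle\to\langle f,\widehat y\rangle$; combining with the weak lower semicontinuity \eqref{2a.5a}, $\int_\Omega(\widehat A^{sym}\nabla\widehat y,\nabla\widehat y)\le\liminf\int_\Omega((A^0_k)^{sym}\nabla y^0_k,\nabla y^0_k)=\langle f,\widehat y\rangle$, which is exactly $[\widehat y,\widehat y\,]_{\widehat A}\ge0$.

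The main obstacle is the limit identification in stage (iii): specifically, ensuring that the nonlinear-looking term $\int_\Omega(\nabla\varphi,(A^0_k)^{skew}\nabla y^0_k)$ behaves well when only $(A^0_k)^{skew}\to\widehat A^{skew}$ \emph{strongly} in $L^2$ and $y^0_k\rightharpoonup\widehat y$ only \emph{weakly} in $H^1_0$ — a product of two weakly convergent quantities would not pass to the limit, and it is precisely the strong $L^2$-convergence of the skew parts (guaranteed by the compactness of $Q$ built into the definition of $\mathfrak A_{ad,2}$) together with the skew-symmetry trick (integration by parts moves the derivative off $y^0_k$) that rescues the argument. The sign property $[\widehat y,\widehat y\,]_{\widehat A}\ge0$ is then a clean consequence of the energy balance, the key point being that along the approximating sequence the hidden singular energy is absent ($[y^0_k,y^0_k]_{A^0_k}=0$) and only in the limit can it become strictly positive.
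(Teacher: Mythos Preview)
Your proof is correct and follows essentially the same approach as the paper: existence for each $k$ via the direct method (using that $A^\ast_k\in L^\infty$ forces the skew bilinear form to vanish on the diagonal, whence Lax--Milgram applies), a uniform bound on $I_k(A^0_k,y^0_k)$, $\tau$-compactness extraction, passage to the limit in the integral identity by the strong-$L^2$/weak-$H^1_0$ splitting of the skew term, and $[\widehat y,\widehat y]_{\widehat A}\ge 0$ from the energy equality combined with weak lower semicontinuity of $\int_\Omega(\widehat A^{sym}\nabla\cdot,\nabla\cdot)$. Your use of the single test pair $(\bar A^{sym}+0,\bar y)\in\Xi_k$ valid for all $k$ to obtain the uniform bound is marginally cleaner than the paper's $k$-dependent choice; otherwise the arguments coincide (including the slight notational looseness around the strong $L^2$-compactness of $\mathfrak A^k_{ad,2}$, which both you and the paper resolve by implicitly treating it as a closed subset of the compact set $Q$).
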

\begin{proof}
To begin with, we show that the sequence of minimal values  for the problems \eqref{4.0} is uniformly bounded, i.e.
\begin{equation}
\label{4.2a}
\sup_{k\in \mathbb{N}}\inf_{(u,y)\in\Xi_k}I_k(u,y)\le C\quad\text{for some }\ C>0.
\end{equation}
Indeed, for each $k\in \mathbb{N}$, we obviously have $\mathfrak{A}^k_{ad,2}\ne\emptyset$ and $\mathfrak{A}^k_{ad,2}\subset L^\infty(\Omega;\mathbb{S}^N_{skew})$. Hence, for any admissible control $A_k=A_k^{sym}+A_k^{skew}\in \mathfrak{A}^k_{ad}$, we can claim that $A_k^{skew}\in L^\infty(\Omega;\mathbb{S}^N_{skew})$ and, therefore, the corresponding bilinear form
$$
[y,\varphi]_{A_k}=\int_\Omega \big(\nabla \varphi,A^{skew}_k\nabla y\big)_{\mathbb{R}^N}\,dx
$$
is bounded on $H^1_0(\Omega)$ and satisfies the identity
\[
\int_\Omega \big(\nabla \varphi,A^{skew}_k \nabla y\big)_{\mathbb{R}^N}\,dx=
-\int_\Omega \big(\nabla y,A^{skew}_k \nabla \varphi\big)_{\mathbb{R}^N}\,dx.
\]
Therefore,
\begin{equation}
\label{4.3}
\int_\Omega \big(\nabla v,A^{skew}_k(x)\nabla v\big)_{\mathbb{R}^N}\,dx=0\quad\forall\,v\in H^1_0(\Omega)
\end{equation}
and, hence, the boundary value problem \eqref{4.0a} has a unique solution $y_k\in H^1_0(\Omega)$ for each $A_k\in \mathfrak{A}^k_{ad}\subset L^\infty(\Omega;\mathbb{M}^N)$ by the Lax-Milgram lemma. As obvious consequence of this observation and the property of $\tau$-lower semicontinuity  of the cost functional $I_k$, we conclude (see for comparison Theorem~\ref{Th 2.14}):  the corresponding minimization problem \eqref{4.0} admits at least one solution \cite{Lions71}
$$
I_k(A_k^0,y_k^0)=\inf_{(A,y)\in\Xi_k}I_k(A,y),\quad (A_k^0,y_k^0)\in\Xi_k.
$$
Moreover, having fixed a control $A_k\in \mathfrak{A}^k_{ad}$, condition \eqref{4.3} implies the fulfilment of  the following identities for every $k\in \mathbb{N}$
\begin{gather}
\int_\Omega \big(\nabla \varphi,A^{sym}_k\nabla  y_k+A^{skew}_k\nabla y_k\big)_{\mathbb{R}^N}\,dx
=\left<f,\varphi\right>_{H^{-1}(\Omega);H^1_0(\Omega)},\ \forall\,\varphi\in C^\infty_0(\Omega),
\label{4.4}
\\
\label{4.5}
\int_\Omega\left(\nabla y_k,A^{sym}_k\nabla y_k\right)_{\mathbb{R}^N}\,dx
=\left<f,y_k\right>_{H^{-1}(\Omega);H^1_0(\Omega)},
\end{gather}
where $y_k=y_k(A_k,f)\in H^1_0(\Omega)$ are the corresponding solutions to the boundary value problems \eqref{4.0a}.
Hence, the sequence $\left\{y_k\right\}_{k\in \mathbb{N}}$ is bounded in $H^1_0(\Omega)$ and due to the a priori estimate
\begin{equation}
\label{4.5.0}
\|y_k\|_{H^1_0(\Omega)}\le \alpha^{-1}\|f\|_{H^{-1}(\Omega)},
\end{equation}
we arrive at the relation
\begin{align}
\notag
I_k(A_k^0,y_k^0)=&\inf_{(A,y)\in\Xi_k}I_k(A,y)\le I_k(A_k,y_k)\\
&\le 2\|y_d\|^2_{L^2(\Omega)}
+2\|y_k\|^2_{L^2(\Omega)}+\beta\|y_k\|^2_{H^1_0(\Omega)}\\
&\le 2\|y_d\|^2_{L^2(\Omega)}+(2C_1+\beta)\alpha^{-2}\|f\|^2_{H^{-1}(\Omega)}\le C
\quad\forall\,k\in \mathbb{N}.
\label{4.5.0a}
\end{align}
Thus, \eqref{4.2a} holds true and it implies that $\sup_{k\in \mathbb{N}} \|y_k^0\|^2_{H^1_0(\Omega)}<+\infty$.
So, we can suppose that the sequence of optimal states $\left\{y_k^0\right\}_{k\in \mathbb{N}}$ is weakly convergent: $y^0_k\rightharpoonup \widehat{y}$ in $H^1_0(\Omega)$. At the same time, due to the definition of the sets $\mathfrak{A}_{ad}^k$, it is easy to see that the corresponding sequence of optimal controls $\left\{A_k^0\right\}_{k\in \mathbb{N}}$ belongs to $U_{a,1}\oplus U_{b,2}$. Hence, applying the arguments of the proof of Proposition~\ref{Prop 2.3f}, we get: there exists a matrix $\widehat{A}\in U_{a,1}\oplus U_{b,2}$ such that
\begin{align}
\label{4.5.1}
A^0_k:=A_k^{0,sym}+A_k^{0,skew}&\rightarrow \widehat{A}^{sym}+\widehat{A}^{skew}=:\widehat{A}\ \text{ in }\ L^2(\Omega;\mathbb{M}^N),\\
\label{4.5.1a}
A_k^{0,sym}&\rightarrow \widehat{A}^{sym}\quad\text{in }\ L^p(\Omega;\mathbb{S}^N_{sym}),\ \forall\,p\in [1,+\infty),\\
\label{4.5.1c}
A_k^{0,sym}&\stackrel{\ast}{\rightharpoonup}\, \widehat{A}^{sym}\quad\text{in }\ L^\infty(\Omega;\mathbb{S}^N_{sym}),\\
\label{4.5.1b} A_k^{0,skew}&\rightarrow \widehat{A}^{skew}\quad\text{in }\ L^2(\Omega;\mathbb{S}^N_{skew}).
\end{align}
Therefore, in view of Lemma~\ref{Lemma 4.1}, we can conclude: $\widehat{A}\in \mathfrak{A}_{ad}$. As a result, summing up the above properties
of the sequences $\left\{y_k^0\right\}_{k\in \mathbb{N}}$ and $\left\{A_k^0\right\}_{k\in \mathbb{N}}$, we obtain:
$$
(A_k^0,y_k^0)\,\stackrel{\tau}{\rightarrow}\, (\widehat{A},\widehat{y}\,)\ \text{ and }\ (\widehat{A},\widehat{y}\,)\in\Xi.
$$

It remains to prove the properties \eqref{4.2}. To do so, we note that
due to the strong convergence $A^0_k\rightarrow \widehat{A}$ in $L^2(\Omega;\mathbb{M}^N)$, we get
\begin{align*}
&\left|\int_\Omega \left(\nabla\varphi,\widehat{A}\nabla \widehat{y}-A^0_k \nabla y^0_k\right)_{\mathbb{R}^N}\,dx\right|\\\le\ &
\int_\Omega \|A^0_k-\widehat{A}\|_{\mathbb{M}^N} \|\nabla y^0_k\|_{\mathbb{R}^N} \|\nabla\varphi\|_{\mathbb{R}^N}\,dx+
\left|\int_\Omega \left(\widehat{A}\nabla\varphi, \nabla \widehat{y}-\nabla y^0_k\right)_{\mathbb{R}^N}\,dx\right|\\ \le\ &
\|\varphi\|_{C^1(\overline{\Omega})}\sup_{k\in \mathbb{N}}\|y^0_k\|_{H^1_0(\Omega)}\|A^0_k-\widehat{A}\|_{L^2(\Omega;\mathbb{M}^N)}\\
&+ \left|\int_\Omega \left(\widehat{A}\nabla\varphi, \nabla \widehat{y}\right)_{\mathbb{R}^N}\,dx  -\int_\Omega\left(\widehat{A}\nabla\varphi, \nabla y^0_k\right)_{\mathbb{R}^N}\,dx\right|\longrightarrow 0\quad\text{as}\ k\to\infty
\end{align*}
for every $\varphi\in C^\infty_0(\Omega)$.
Hence, $A^0_k\nabla y^0_k \stackrel{\ast}{\rightharpoonup}\,\widehat{A}\nabla \widehat{y}$ in $L^1(\Omega;\mathbb{R}^N)$. It means that we can pass to the limit in integral identity \eqref{4.4} with $A=A_k^0$. As a result, we have: the pair $(\widehat{A},\widehat{y}\,)$ is related by the integral identity \eqref{2.5}, therefore, $\widehat{y}$ is a weak solution to the original boundary value problem \eqref{2.1}--\eqref{2.2} under $A=\widehat{A}$. Thus, $\widehat{y}\in D(\widehat{A})$ and, hence, $(\widehat{A},\widehat{y}\,)\in\Xi$.

In order to proof the property \eqref{4.2}$_2$, we
pass to the limit in the energy equality \eqref{4.5} using the lower semicontinuity of the norm $\|\cdot\|_{H^1(\Omega)}$ with respect to the weak convergence
$\nabla y^0_k \rightharpoonup \nabla \widehat{y}$ in $L^2(\Omega;\mathbb{R}^N)$ and the property \eqref{4.5.1c}.
To do so, we note that due to the inclusion $\widehat{A}^{sym}\in \mathfrak{A}_{ab,1}$, we have $A\in \mathfrak{M}_\alpha^\beta(\Omega)$.
Hence, the norms $\|y\|_{H^1(\Omega)}$ and $|||y|||:=\left(\int_\Omega\left(\nabla y,\widehat{A}^{sym}\nabla y^0_k\right)_{\mathbb{R}^N}\,dx\right)^{1/2}$ are equivalent in $H^1_0(\Omega)$.
As a result, we obtain
\begin{equation}
\label{4.6}
\begin{split}
\left<f,\widehat{y}\,\right>_{H^{-1}(\Omega);H^1_0(\Omega)}=&
\lim_{k\to\infty} \int_\Omega\left(\nabla y^0_k,(A^{0,sym}_k-\widehat{A}^{sym})\nabla y^0_k\right)_{\mathbb{R}^N}\,dx\\
&+\lim_{k\to\infty} \int_\Omega\left(\nabla y^0_k,\widehat{A}^{sym}\nabla y^0_k\right)_{\mathbb{R}^N}\,dx\\
&\stackrel{\text{by \eqref{4.5.1a}}}{=}\,\lim_{k\to\infty} \int_\Omega\left(\nabla y^0_k,\widehat{A}^{sym}\nabla y^0_k\right)_{\mathbb{R}^N}\,dx\\
&\stackrel{\text{by \eqref{1.1}}}{\ge}\,\int_\Omega\left(\nabla \widehat{y},\widehat{A}^{sym}\nabla \widehat{y}\right)_{\mathbb{R}^N}\,dx.
\end{split}
\end{equation}
Thus, the desired inequality \eqref{4.2}$_2$ obviously follows from \eqref{2.12} and \eqref{4.6}. The proof is complete.
\end{proof}
\begin{remark}
\label{Rem 4.7} As Theorem \ref{Th 4.1} proves, for any approximation $\left\{A^\ast_k\right\}_{k\in \mathbb{N}}$ of the  matrix $A^\ast\in L^2\big(\Omega;\mathbb{S}^N_{skew}\big)$ with properties $\left\{A^\ast_k\right\}_{k\in \mathbb{N}}\subset L^\infty(\Omega;\mathbb{S}^N_{skew})$ and
$A^\ast_k\rightarrow A^\ast$ strongly in $L^2(\Omega;\mathbb{S}^N_{skew})$, optimal solutions to the regularized OCPs \eqref{4.0}--\eqref{4.0a}  always lead us in the limit to some admissible (but not optimal in general) solution $(\widehat{A},\widehat{y}\,)$ of the original OCP \eqref{2.3}--\eqref{2.3a}. Moreover, this limit pair can depend on the choice of the approximative sequence $\left\{A^\ast_k\right\}_{k\in \mathbb{N}}$. It is reasonably  to call such pairs attainable admissible solutions to OCP. However, the entire structure of the set of all attainable solutions remain unclear; for instance, it is not known whether this set is convex and closed in $\Xi$. It is also unknown whether the optimal solution to OCP \eqref{2.3}--\eqref{2.3a} is attainable.
At the end of this section we give the conditions on the  matrix $A^\ast$ which ensures the attainability of optimal solutions to the original OCP.
\end{remark}

Taking these observations into account, we make use of the following notion.
\begin{definition}
\label{Def 4.8} We say that a pair $(\widehat{A},\widehat{y}\,)\in L^2(\Omega;\mathbb{M}^N)\times H^1_0(\Omega)$ is a variational solution to OCP \eqref{2.3}--\eqref{2.3a} if
there exists an approximation $\left\{A^\ast_k\right\}_{k\in \mathbb{N}}\subset L^\infty(\Omega;\mathbb{S}^N_{skew})$ of the  matrix $A^\ast\in L^2\big(\Omega;\mathbb{S}^N_{skew}\big)$ with property
$A^\ast_k\rightarrow A^\ast$ strongly in $L^2(\Omega;\mathbb{S}^N_{skew})$ such that
\begin{gather}
\label{4.9}
I(\widehat{A},\widehat{y}\,)=\inf_{(A,y)\in\Xi}I(A,y), \quad (\widehat{A},\widehat{y}\,)\in \Xi, \text{ and }\\
\label{4.10}
\left<\inf\limits_{(A,y)\in\,\Xi_k} I_k(A,y)\right>\,\stackrel{\text{Var}}{\xrightarrow[k\to\infty]{}}\,
\left<\inf_{(A,y)\in\Xi} I(A,y)\right>\ \text{ in the sense of Definition \ref{Def 1.4}},
\end{gather}
where the minimization problems $\left<\inf\limits_{(A,y)\in\,\Xi_k} I_k(A,y)\right>$ are defined by \eqref{4.0aa}--\eqref{4.0a}.
\end{definition}

As a direct consequence of Definition \ref{Def 4.8}, Theorem \ref{Th 4.1}, and properties of the variational limits of constrained minimization problems (see Theorem \ref{Th 1.8}), we have the following result.
\begin{proposition}
\label{Prop 4.13}
Let $(\widehat{A},\widehat{y}\,)\in L^2(\Omega;\mathbb{M}^N)\times H^1_0(\Omega)$ be a variational solution to OCP \eqref{2.3}--\eqref{2.3a}. Then
$[\widehat{y},\widehat{y}]_{\widehat{A}}= 0$ and the pair $(\widehat{A},\widehat{y}\,)$ can be attained by optimal solutions $(A_k^0,y_k^0)$ to the regularized OCPs \eqref{4.0}--\eqref{4.0a} as follows
\begin{equation}
\label{4.14}
\left.
\begin{array}{c}
A_k^0\rightarrow \widehat{A}\ \text{strongly in }\ L^2(\Omega;\mathbb{M}^N),\\[1ex]
y_k^0\rightharpoonup \widehat{y}\ \text{weakly in }\ H^1_0(\Omega)\ \text{ as } k\to\infty,\\[1ex]
\ds \lim_{k\to\infty} \int_\Omega\left(\nabla y^0_k,A^{0,sym}_k\nabla y^0_k\right)_{\mathbb{R}^N}\,dx=
\int_\Omega\left(\nabla \widehat{y},\widehat{A}^{sym}\nabla \widehat{y}\right)_{\mathbb{R}^N}\,dx.
\end{array}
\right\}
\end{equation}
\end{proposition}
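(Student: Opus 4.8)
The plan is to feed the given data into the variational-limit machinery of Theorem~\ref{Th 1.8} and then to extract $[\widehat y,\widehat y]_{\widehat A}=0$ by comparing the regularized energy equality \eqref{4.5} with the original one \eqref{2.12}. Concretely: since $(\widehat A,\widehat y)$ is a variational solution, fix the approximation $\{A^\ast_k\}_{k\in\mathbb{N}}\subset L^\infty(\Omega;\mathbb{S}^N_{skew})$ with $A^\ast_k\to A^\ast$ strongly in $L^2(\Omega;\mathbb{S}^N_{skew})$ for which \eqref{4.9}--\eqref{4.10} hold. Theorem~\ref{Th 4.1} supplies, for each $k$, a minimizer $(A^0_k,y^0_k)\in\Xi_k$; the sequence $\{(A^0_k,y^0_k)\}$ is relatively $\tau$-compact and every $\tau$-cluster pair lies in $\Xi$ and satisfies \eqref{4.2}. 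Because the variational limit problem $\langle\inf_\Xi I\rangle$ has a nonempty set of solutions (it contains $(\widehat A,\widehat y)$ by \eqref{4.9}) and \eqref{4.10} furnishes the variational convergence in the sense of Definition~\ref{Def 1.4}, Theorem~\ref{Th 1.8} applies and yields, along a subsequence, a $\tau$-limit of the minimizers which is optimal, together with $\inf_\Xi I=\lim_k I_k(A^0_k,y^0_k)=\lim_k\inf_{\Xi_k}I_k$. This attained optimal pair is exactly what the notion of variational solution encodes, so we may select the minimizers $(A^0_k,y^0_k)$ so that $A^0_k\to\widehat A$ strongly in $L^2(\Omega;\mathbb{M}^N)$ and $y^0_k\rightharpoonup\widehat y$ in $H^1_0(\Omega)$ — the first two relations of \eqref{4.14}.

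The next step upgrades the convergence of costs to the convergence of energies and then closes via the two energy equalities. Expanding $I_k(A^0_k,y^0_k)=\|y^0_k-y_d\|^2_{L^2(\Omega)}+\int_\Omega(\nabla y^0_k,A^{0,sym}_k\nabla y^0_k)_{\mathbb{R}^N}\,dx$ and using the compact embedding $H^1_0(\Omega)\hookrightarrow L^2(\Omega)$, so that $\|y^0_k-y_d\|^2_{L^2(\Omega)}\to\|\widehat y-y_d\|^2_{L^2(\Omega)}$, the identity $I(\widehat A,\widehat y)=\lim_k I_k(A^0_k,y^0_k)$ forces $\lim_k\int_\Omega(\nabla y^0_k,A^{0,sym}_k\nabla y^0_k)_{\mathbb{R}^N}\,dx=\int_\Omega(\nabla\widehat y,\widehat A^{sym}\nabla\widehat y)_{\mathbb{R}^N}\,dx$, which is the third relation of \eqref{4.14} (in particular the weak lower-semicontinuity inequality for the energy term holds here with equality). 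Passing to the limit in the regularized energy equality \eqref{4.5} — the left-hand side converges by what was just shown, while $\left<f,y^0_k\right>_{H^{-1}(\Omega);H^1_0(\Omega)}\to\left<f,\widehat y\right>_{H^{-1}(\Omega);H^1_0(\Omega)}$ because $y^0_k\rightharpoonup\widehat y$ in $H^1_0(\Omega)$ and $f\in H^{-1}(\Omega)$ — one obtains $\int_\Omega(\nabla\widehat y,\widehat A^{sym}\nabla\widehat y)_{\mathbb{R}^N}\,dx=\left<f,\widehat y\right>_{H^{-1}(\Omega);H^1_0(\Omega)}$. Subtracting this from the original energy equality \eqref{2.12}, which is available since $(\widehat A,\widehat y)\in\Xi$ (hence $\widehat y\in D(\widehat A)$), namely $\int_\Omega(\nabla\widehat y,\widehat A^{sym}\nabla\widehat y)_{\mathbb{R}^N}\,dx+[\widehat y,\widehat y]_{\widehat A}=\left<f,\widehat y\right>_{H^{-1}(\Omega);H^1_0(\Omega)}$, we conclude $[\widehat y,\widehat y]_{\widehat A}=0$.

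I expect the only delicate point to be the bookkeeping behind the identification made in the first step: Theorem~\ref{Th 1.8} delivers a $\tau$-limit of the regularized minimizers that is \emph{some} optimal pair, and one has to argue that, for a variational solution in the sense of Definition~\ref{Def 4.8}, the minimizers can be chosen so that this limit is the prescribed pair $(\widehat A,\widehat y)$ — this is precisely where the variational convergence \eqref{4.10} is used, and not merely the optimality \eqref{4.9} of $(\widehat A,\widehat y)$. Everything else consists of limit passages already carried out in the proofs of Theorems~\ref{Th 2.14} and \ref{Th 4.1}: strong $L^2$-convergence of the controls, weak lower semicontinuity of $v\mapsto\int_\Omega(\nabla v,A^{sym}\nabla v)_{\mathbb{R}^N}\,dx$, the compact Sobolev embedding, and continuity of the $H^{-1}$--$H^1_0$ duality pairing against the weakly convergent states.
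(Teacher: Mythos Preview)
Your proof is correct and follows essentially the same route as the paper: invoke Theorem~\ref{Th 1.8} together with the relative $\tau$-compactness from Theorem~\ref{Th 4.1} to obtain convergence of the regularized minimizers and of their cost values, split off the $L^2$ tracking term via the compact embedding to get the energy convergence \eqref{4.14}$_3$, and then compare the limit of the regularized energy equality \eqref{4.5} with the limiting energy equality \eqref{2.12} to conclude $[\widehat y,\widehat y]_{\widehat A}=0$. The paper organizes this last step as the chain $0=\lim_k[y^0_k,y^0_k]_{A^0_k}=\lim_k\big(\langle f,y^0_k\rangle-\int_\Omega(\nabla y^0_k,A^{0,sym}_k\nabla y^0_k)\,dx\big)=\langle f,\widehat y\rangle-\int_\Omega(\nabla\widehat y,\widehat A^{sym}\nabla\widehat y)\,dx=[\widehat y,\widehat y]_{\widehat A}$, which is exactly your subtraction argument phrased slightly differently. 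You are also right to flag the identification of the $\tau$-limit with the prescribed pair $(\widehat A,\widehat y)$ as the one soft spot; the paper handles it the same way you do, by simply naming the cluster point $(\widehat A,\widehat y)$ and appealing to Definition~\ref{Def 4.8} and Theorem~\ref{Th 1.8}, without a sharper uniqueness argument.
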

\begin{proof}
Indeed,  in view of a priori estimates \eqref{4.5.0}--\eqref{4.5.0a} and properties \eqref{4.5.1}--\eqref{4.5.1b}, within a subsequence, we have
\begin{align}
\label{4.15} y^0_k&\rightharpoonup \widehat{y}\ \text{ in }\ H^1_0(\Omega),\\
A^0_k:=A_k^{0,sym}+A_k^{0,skew}&\rightarrow \widehat{A}^{sym}+\widehat{A}^{skew}=:\widehat{A}\ \text{ in }\ L^2(\Omega;\mathbb{M}^N),\\
A_k^{0,sym}&\rightarrow \widehat{A}^{sym}\quad\text{in }\ L^p(\Omega;\mathbb{S}^N_{sym}),\ \forall\,p\in [1,+\infty).\label{4.15'}
\end{align}
On the other hand, following  main properties of the variational convergence (see Theorem \ref{Th 1.8}), we can claim that
there exists an optimal pair $(A^0,y^0)\in \Xi$ for the problem \eqref{2.3}--\eqref{2.3a} such that
\begin{align}
\inf_{(A,y)\in\,\Xi}I(A,y)=& I\left(A^0,y^0\right):=\left\|y^0-y_d\right\|^2_{L^2(\Omega)}+\int_\Omega\left(\nabla y^0, A^{0,sym}\nabla y^0\right)_{\mathbb{R}^N}\,dx
 \notag\\
 =&\lim_{k\to \infty}\inf_{(A_k,y_k)\in\,{\Xi}_k}
{I}_k(A_k,y_k)=\lim_{k\to \infty}{I}_k(A^0_k,y^0_k)\notag\\
=&\lim_{k\to\infty} \left[\left\|y^0_k-y_d\right\|^2_{L^2(\Omega)}+
\int_\Omega\left(\nabla y^0_k, A^{0,sym}_k\nabla y^0_k\right)_{\mathbb{R}^N}\,dx
\right].\label{4.16'}
\end{align}
However, because of condition \eqref{4.15}--\eqref{4.15'}, it turns out that (see the estimate \eqref{2a.5a})
\begin{gather*}
\inf_{(A,y)\in\,\Xi}I(A,y)\,\stackrel{\text{by \eqref{4.16'}}}{=}\,\lim_{k\to\infty}\left[\left\|y^0_k-y_d\right\|^2_{L^2(\Omega)}+\int_\Omega\left(\nabla y^0_k, A^{0,sym}_k\nabla y^0_k\right)_{\mathbb{R}^N}\,dx\right]\\
\ge
\left\|\widehat{y}-y_d\right\|^2_{L^2(\Omega)}+\int_\Omega\left(\nabla \widehat{y}, \widehat{A}^{sym}\nabla \widehat{y}\right)_{\mathbb{R}^N}\,dx.
\end{gather*}
Since the pair $(\widehat{A},\widehat{y}\,)$ is admissible for the problem \eqref{2.3}--\eqref{2.3a} (see Theorem~\ref{Th 4.1}), it follows that $(\widehat{A},\widehat{y}\,)$ is an optimal pair, that is, in view of \eqref{4.16'}, it gives
\begin{align}
\inf_{(A,y)\in\,\Xi}I(A,y)=& I\left(\widehat{A},\widehat{y}\right):=\left\|\widehat{y}-y_d\right\|^2_{L^2(\Omega)}+\int_\Omega\left(\nabla \widehat{y}, \widehat{A}^{sym}\nabla \widehat{y}\right)_{\mathbb{R}^N}\,dx
 \notag\\
 =&\lim_{k\to \infty}\inf_{(A_k,y_k)\in\,{\Xi}_k}
{I}_k(A_k,y_k)=\lim_{k\to \infty}{I}_k(A^0_k,y^0_k)\notag\\
=&\lim_{k\to\infty} \left[\left\|y^0_k-y_d\right\|^2_{L^2(\Omega)}+ \int_\Omega\left(\nabla y^0_k, A^{0,sym}_k\nabla y^0_k\right)_{\mathbb{R}^N}\,dx\right].\label{4.16}
\end{align}
Hence, \eqref{4.14} is a direct consequence of properties \eqref{4.15}--\eqref{4.16}.
As a result, we get
\begin{align*}
0\,\stackrel{\text{by \eqref{4.3}}}{=}&\,\lim_{k\to\infty}[y^0_k,y^0_k]_{A^0_k}
\stackrel{\text{by \eqref{4.5}}}{=}\,-\lim_{k\to\infty}\int_\Omega\left(\nabla y^0_k,A^{0,sym}_k\nabla y^0_k\right)_{\mathbb{R}^N}\,dx\\
&+\lim_{k\to\infty}\left<f,y^0_k\right>_{H^{-1}(\Omega);H^1_0(\Omega)}
\,\stackrel{\text{by \eqref{4.14} and \eqref{4.16}}}{=}\,-\int_\Omega\left(\nabla \widehat{y},\widehat{A}^{sym}\nabla \widehat{y}\right)_{\mathbb{R}^N}\,dx\\&+
\left<f,\widehat{y}\,\right>_{H^{-1}(\Omega);H^1_0(\Omega)}
\,\stackrel{\text{by \eqref{2.12}}}{=}\,[\widehat{y},\widehat{y}]_{\widehat{A}}.
\end{align*}
\end{proof}

\begin{remark}
\label{Rem 4.17}
Since for some matrices $A\in L^2\big(\Omega;\mathbb{M}^N\big)$ the weak solutions to the boundary value problem \eqref{2.1}--\eqref{2.2} are not unique in general, it follows from Remark \ref{Rem 4.7} and Proposition \ref{Prop 4.13} that even if the OCP \eqref{2.3}--\eqref{2.3a} has a unique solution $(A^0,y^0)$ and this solution  possesses the property $[y^0,y^0]\ge 0$, it does not ensure that the pair $(A^0,y^0)$ is the variational solution to the above problem. Let $A\in \mathfrak{A}_{ad}$ be a fixed matrix and let
$L(A)$ be a subspace of $H^1_0(\Omega)$ such that
\begin{equation}
\label{4.17aa}
L(A)=\left\{h\in D(A)\ :\ \int_\Omega \big(\nabla \varphi,A\nabla h\big)_{\mathbb{R}^N}\,dx=0\ \forall\,\varphi \in C^\infty_0(\mathbb{R}^N)\right\},
\end{equation}
i.e., $L(A)$ is the set of all weak solutions of the homogeneous problem
\begin{equation}
\label{4.17a}
\begin{array}{c}
-\div\big(A\nabla y\big) = 0\quad\text{in }\ \Omega,\\[1ex]
y=0\text{ on }\partial\Omega.
\end{array}
\end{equation}
Since $L(A)$ can contain non-trivial elements in general, it follows that the set
$$\Lambda:=\left\{(A^0,y^0+h)\ \forall\,h\in L(A^0)\right\}$$
is not a singleton in $\Xi$.

Let  $\left\{(A_k^0,y_k^0)\right\}_{k\in \mathbb{N}}$ be optimal solutions to the regularized OCPs \eqref{4.0}--\eqref{4.0a}. Let $A^0$ be a strong limit in $L^2(\Omega;\mathbb{M}^N)$ of  $\left\{A_k^0\right\}_{k\in \mathbb{N}}$. Then Theorem \ref{Th 4.1} implies that $y^0_k\rightharpoonup y^\ast$ in $H^1_0(\Omega)$ and $(A^0,y^\ast)\in\Xi$. However, it means that $(A^0,y^\ast)\in \Lambda$ rather than $y^\ast=y^0$.
Moreover, the existence of  $(\Gamma,\delta)$-realizing sequence (see Definition \ref{Def 1.4}) for the pair $(A^0,y^0)\in\Xi$ is an open problem. In other words, the existence at least one approximation $\left\{A^\ast_k\right\}_{k\in \mathbb{N}}\subset L^\infty(\Omega;\mathbb{S}^N_{skew})$ in \eqref{4.0}--\eqref{4.0a} leading to the pair $(A^0,y^0)$ in the sense of conditions \eqref{1.7a}--\eqref{1.7b} is not established. As follows from our further analysis (see Section \ref{Sec 5}), such solutions can be attained through other structures of regularized OCPs than in \eqref{4.0}--\eqref{4.0a}.
\end{remark}

We are now in a position to discuss the existence of variational solutions to the OCP \eqref{2.3}--\eqref{2.3a}.
\begin{theorem}
\label{Th 4.18}
Assume that for every  matrix $A\in \mathfrak{A}_{ad}\subset L^2\big(\Omega;\mathbb{M}^N\big)$, we have
\begin{equation}
\label{4.19}
[y,y]_{A}=0\quad\forall\,y\in  D(A).
\end{equation}
Then  the OCP \eqref{2.3}--\eqref{2.3a} has variational solutions.
\end{theorem}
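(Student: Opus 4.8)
The plan is to exhibit the problem \eqref{2.3}--\eqref{2.3a} as the variational limit of the regularized family $\left<\inf_{\Xi_k}I_k\right>$ from \eqref{4.0}--\eqref{4.0a} and then to read off a variational solution from Theorems~\ref{Th 1.8} and~\ref{Th 4.1}. Two preliminary facts are needed. First, under \eqref{4.19} every weak solution of \eqref{2.1}--\eqref{2.2} is unique: if $y_1,y_2\in D(A)$ both solve the problem for the same pair $(A,f)$, then $h:=y_1-y_2\in D(A)$ and \eqref{2.11} with $\varphi=h$ gives $\int_\Omega\big(\nabla h,A^{sym}\nabla h\big)_{\mathbb{R}^N}\,dx+[h,h]_A=0$; since $[h,h]_A=0$ by \eqref{4.19} and $A^{sym}\in\mathfrak{M}_\alpha^\beta(\Omega)$, this forces $\alpha\|h\|_{H^1_0(\Omega)}^2\le 0$, hence $h=0$. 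Second, by Theorem~\ref{Th 4.1} the set $\Xi$ is nonempty, so Theorem~\ref{Th 2.14} guarantees that the set $\Xi^{opt}$ of optimal pairs is nonempty.

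Fix any $\left\{A^\ast_k\right\}_{k\in\mathbb{N}}\subset L^\infty(\Omega;\mathbb{S}^N_{skew})$ with $A^\ast_k\to A^\ast$ strongly in $L^2(\Omega;\mathbb{S}^N_{skew})$, and interpret the $\mu$-convergence of Definition~\ref{Def 1.4} as strong convergence in $L^2(\Omega;\mathbb{M}^N)$ of the controls together with weak convergence in $H^1_0(\Omega)$ of the states. Condition (d) is immediate, since here the spaces $\mathbb{U}_\e\times\mathbb{Y}_\e$ do not vary, so the constant sequence realizes any pair. Condition (dd) is obtained exactly as in the proof of Theorem~\ref{Th 4.1}: if $(A_k,y_k)\in\Xi_k$, $A_k\to A$ in $L^2(\Omega;\mathbb{M}^N)$ and $y_k\rightharpoonup y$ in $H^1_0(\Omega)$, then the skew-symmetry of $A_k^{skew}$ and $A^{skew}$, the convergence $A_k^{skew}\to A^{skew}$ in $L^2$ and $y_k\rightharpoonup y$ allow one to pass to the limit in \eqref{2.5}; membership $A\in\mathfrak{A}_{ad}$ follows from Proposition~\ref{Prop 2.3d}, the argument of Proposition~\ref{Prop 2.3f}, and property $(K_2)$ of Lemma~\ref{Lemma 4.1}, so $(A,y)\in\Xi$; and the compact embedding $H^1_0(\Omega)\hookrightarrow L^2(\Omega)$ together with the weak lower semicontinuity argument \eqref{2a.5a} gives $I(A,y)\le\liminf_k I_k(A_k,y_k)$.

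The heart of the matter is condition (ddd). Given $(A,y)\in\Xi$ and $\delta>0$, apply the $(K_1)$-part of Lemma~\ref{Lemma 4.1} to $B=A^{skew}\in\mathfrak{A}_{ad,2}\subset U_{b,2}$ to obtain $A_k^{skew}\in U_{b,2}^k$ with $A_k^{skew}\to A^{skew}$ in $L^2(\Omega;\mathbb{S}^N_{skew})$; by the explicit truncation \eqref{4.1.2} one has $|a_{ij}^{skew,k}|\le|a_{ij}^{skew}|$ a.e., hence $A_k^{skew}\in U_{a,2}\cap U_{b,2}^k=\mathfrak{A}^k_{ad,2}$. Set $A_k:=A^{sym}+A_k^{skew}\in\mathfrak{A}^k_{ad}$ and let $y_k\in H^1_0(\Omega)$ be the unique solution of the corresponding boundary value problem; it exists by the Lax--Milgram lemma because $A_k^{skew}\in L^\infty$ (identity \eqref{4.3}) and obeys the a priori bound \eqref{4.5.0}. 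Then $(A_k,y_k)\in\Xi_k$, $A_k\to A$ in $L^2(\Omega;\mathbb{M}^N)$, and \eqref{4.5.0} yields (along a subsequence) $y_k\rightharpoonup\widehat{y}$ in $H^1_0(\Omega)$; passing to the limit in the integral identity as in Theorem~\ref{Th 4.1} shows $(A,\widehat{y})\in\Xi$, so $\widehat{y}=y$ by the uniqueness established above and the full sequence converges, giving \eqref{1.7a}--\eqref{1.7b} with $(\widehat{A},\widehat{y})=(A,y)$. For the cost, the energy equality \eqref{4.5} gives $\int_\Omega\big(\nabla y_k,A^{sym}\nabla y_k\big)_{\mathbb{R}^N}\,dx=\left<f,y_k\right>_{H^{-1}(\Omega);H^1_0(\Omega)}\to\left<f,y\right>_{H^{-1}(\Omega);H^1_0(\Omega)}=\int_\Omega\big(\nabla y,A^{sym}\nabla y\big)_{\mathbb{R}^N}\,dx$, the last step by \eqref{2.12} and \eqref{4.19}; together with $\|y_k-y_d\|_{L^2(\Omega)}^2\to\|y-y_d\|_{L^2(\Omega)}^2$ this yields $I_k(A_k,y_k)\to I(A,y)$, so \eqref{1.7c} holds with any $\widehat{C}>0$. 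Thus the variational convergence \eqref{4.10} holds.

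Now Theorem~\ref{Th 4.1} supplies minimizers $(A_k^0,y_k^0)\in\Xi_k$ that are $\tau$-relatively compact, so Theorem~\ref{Th 1.8} applies: any $\tau$-cluster pair $(\widehat{A},\widehat{y})$ belongs to $\Xi^{opt}$ and satisfies $\inf_\Xi I=I(\widehat{A},\widehat{y})=\lim_k I_k(A_k^0,y_k^0)=\lim_k\inf_{\Xi_k}I_k$. Such a pair fulfills \eqref{4.9}, and for the chosen $\left\{A_k^\ast\right\}$ the variational convergence \eqref{4.10} has just been verified, so $(\widehat{A},\widehat{y})$ is a variational solution of \eqref{2.3}--\eqref{2.3a} in the sense of Definition~\ref{Def 4.8}; in particular variational solutions exist. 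I expect condition (ddd) to be the genuine obstacle, since the state in the regularized problem cannot be prescribed but is slaved to the control: it is precisely hypothesis \eqref{4.19}, through the uniqueness of weak solutions and the exact energy equalities \eqref{2.12} and \eqref{4.5}, that forces the slaved states $y_k$ to converge to the prescribed target $y$ and makes the cost functional pass to the limit.
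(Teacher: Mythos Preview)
Your proof is correct and follows essentially the same route as the paper's: fix an approximation $\{A^\ast_k\}$, verify (d) trivially, verify (dd) via the weak lower semicontinuity argument \eqref{2a.5a}, and verify (ddd) by building the realizing sequence from the truncation construction of Lemma~\ref{Lemma 4.1} (the paper restates this as \eqref{4.18c}--\eqref{4.18a}), then use the uniqueness forced by \eqref{4.19} to identify the weak limit of the slaved states and the energy equalities \eqref{4.5}--\eqref{2.12} to obtain $I_k(A_k,y_k)\to I(A,y)$. Your choice to isolate the uniqueness argument up front and to invoke Lemma~\ref{Lemma 4.1} directly rather than repeating its formula is a mild streamlining, but the two arguments are otherwise the same.
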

\begin{proof}
Let us consider $\left\{A^\ast_k\right\}_{k\in \mathbb{N}}\subset L^\infty(\Omega;\mathbb{S}^N_{skew})$ and $A^\ast\in L^2\big(\Omega;\mathbb{S}^N_{skew}\big)$ such that $A^\ast_k\rightarrow A^\ast$ strongly in $L^2(\Omega;\mathbb{S}^N_{skew})$. With each matrix $A^\ast_k$ we associate the constrained minimization problem
$$\left<\inf_{(A,y)\in\Xi_k} I_k(A,y)\right>,$$
where the cost functional $I_k$ and the set $\Xi_k$ are defined by \eqref{4.0aa}--\eqref{4.0a}.

Let $\left\{(A_k,y_k)\right\}_{k\in \mathbb{N}}$ be a sequence in $L^2(\Omega;\mathbb{M}^N)\times H^1_0(\Omega)$ with the
following properties:
\begin{enumerate}
\item[(a)] $(A_k,y_k)\in \Xi_{n_k}$ for every $k\in \mathbb{N}$, where $\{n_k\}_{k\in \mathbb{N}}$ is a subsequence
converging to $\infty$ as $k$ tends to $\infty$;
\item[(aa)] $y_k\rightharpoonup y$ in $H^1_0(\Omega)$ and $A_k\rightarrow A$ in $L^2(\Omega;\mathbb{M}^N)$ with additional properties as in \eqref{4.5.1a}--\eqref{4.5.1c}.
\end{enumerate}
Then proceeding as in the proof of Theorem \ref{Th 4.1}, it can be shown that the limit pair $(A,y)$ is admissible to the original OCP \eqref{2.3}--\eqref{2.3a}. Hence, this problem is regular and, therefore, it is solvable by Theorem \ref{Th 2.14}.  Our aim is to show that this problem can be interpreted as the variational limit of the sequence of constrained minimization problems \eqref{4.0}. To do so, we have to verify the fulfilment of all conditions of Definition~\ref{Def 1.4}.

Indeed,  it is easy to see that in the case of space $L^2(\Omega;\mathbb{M}^N\big)\times H^1_0(\Omega)$, the condition (d) is obviously true with $\delta=0$.
As for the property (dd), it immediately follows from the following relation
\begin{align*}
\liminf_{k\to\infty}I_k(A_k,y_k)=\ &\liminf_{k\to\infty}\left[\left\|y_k-y_d\right\|^2_{L^2(\Omega)}+\int_\Omega\left(\nabla y_k, A^{sym}_k\nabla y_k\right)_{\mathbb{R}^N}\,dx\right]\\
&\stackrel{\text{by \eqref{2a.5a}}}{\ge}\, \left\|y-y_d\right\|^2_{L^2(\Omega)}+\int_\Omega\left(\nabla y, A^{sym}\nabla y\right)_{\mathbb{R}^N}\,dx
 = I(A,y),
\end{align*}
which holds true for any sequence $\left\{(A_k,y_k)\right\}_{k\in \mathbb{N}}\subset \mathfrak{A}_{ad}\times H^1_0(\Omega)$ with properties (a)--(aa).

We focus now on the verification of condition (ddd) of Definition \ref{Def 1.4}.
Let $(A^\sharp,y^\sharp)$ be an arbitrary admissible pair to the original problem.
Since $A^{\sharp,skew}\preceq A^\ast$, we make use of the hint of Lemma~\ref{Lemma 4.1} in order to construct a sequence of admissible controls $\left\{A_k\in \mathfrak{A}_{ad}^k\subset L^2(\Omega;\mathbb{M}^N)\right\}_{k\in \mathbb{N}}$.
Namely, we proceed as follows. Let $A^\ast_k=[a^{\ast,k}_{ij}(x)]_{i,j=1}^N$ and $A^{\sharp,skew}=[a^\sharp_{ij}(x)]_{i,j=1}^N$. Then we set:
\begin{equation}
\label{4.18c}
A_k^{sym}=A^{\sharp,sym}\,\ \forall\,k\in \mathbb{N}\quad  \text{for the symmetric parts of }\ A_k,
\end{equation}
and for the skew symmetric parts $A^{skew}_k(x)=[a^k_{ij}(x)]_{i,j=1}^N$, we put
\begin{gather}
\label{4.18a}
a^k_{ij}(x)=\left\{
\begin{array}{ll}
a^\sharp_{ij}(x), & \text{if }\ \left|a^\sharp_{ij}(x)\right|\le \left|a^{\ast,k}_{ij}(x)\right|\ \text{ and }\ x\in \mathfrak{W}(A^{\sharp,skew}),\\
a^{\ast,k}_{ij}(x), & \text{if }\ \left|a^\sharp_{ij}(x)\right|> \left|a^{\ast,k}_{ij}(x)\right|\ \text{ and }\ x\in \mathfrak{W}(A^{\sharp,skew}),\\
0, & \text{otherwise},
\end{array}
\right.
\end{gather}
for all $i,j\in\{1,\dots,N\}$ and $k\in \mathbb{N}$.

Since the strong convergence $A^\ast_k\rightarrow A^\ast$ in $L^2(\Omega;\mathbb{S}^N_{skew})$ implies (up to a subsequence) the pointwise convergence of this sequence a.e. in $\Omega$, and $A^{\sharp,skew}\preceq A^\ast$, it follows that conditions \eqref{4.18c}--\eqref{4.18b} lead us to the following conclusion:
\begin{align}
\label{4.19a}
A_k:=A_k^{sym}+A_k^{skew}&\rightarrow {A}^{\sharp,sym}+{A}^{\sharp,skew}=:{A}^\sharp\ \text{ in }\ L^2(\Omega;\mathbb{M}^N),\\
\label{4.19b}
A_k^{sym}&\rightarrow {A}^{\sharp,sym}\quad\text{in }\ L^p(\Omega;\mathbb{S}^N_{sym}),\ \forall\,p\in [1,+\infty),\\
\label{4.19d} A_k^{skew}&\rightarrow {A}^{\sharp,skew}\quad\text{in }\ L^2(\Omega;\mathbb{S}^N_{skew}).
\end{align}
Let
$\left\{y_k={y}(A_k,f)\right\}_{k\in \mathbb{N}}$ be the corresponding solutions to the regularized boundary value problems \eqref{4.0a}. Then by applying the arguments of the proof of Theorem \ref{Th 4.1}, it can be shown that the sequence $\left\{y_k\right\}_{k\in \mathbb{N}}$ is uniformly bounded in $H^1_0(\Omega)$ and there exists an element $\widehat{y}\in D(A^\sharp)$ such that $(A^\sharp,\widehat{y})\in \Xi$ and, within a subsequence,  $y_k\rightharpoonup \widehat{y}$ in $H^1_0(\Omega)$. Our aim is to show that $\widehat{y}=y^\sharp$ and that the following identity
\begin{equation}
\label{4.20}
I(A^\sharp,y^\sharp)= \limsup_{k\to \infty} I_{k}(A_k,y_k)
\end{equation}
holds true.

Indeed, since $(A^\sharp,y^\sharp)\in\Xi$ and $(A^\sharp,\widehat{y})\in \Xi$, it follows that $y=y^\sharp-\widehat{y}$ is a solution of the homogeneous problem \eqref{4.17a}. Following our initial assumptions, we have $[y,y]_A=0$ $\forall\,y\in  D(A)$ and for  each matrix $A\in \mathfrak{A}_{ad}\subset L^2\big(\Omega;\mathbb{M}^N\big)$.
Hence, the problem \eqref{4.17a} has only trivial solution, since for this solution we have
\[
\int_{\Omega}\big(\nabla y,A^{\sharp,sym}\nabla y\big)_{\mathbb{R}^N}\,dx=-[y,y]_{A^\sharp}=0.
\]
Thus, $y^\sharp=\widehat{y}$. To prove the equality \eqref{4.20}, we  use of the idea of D.Cioranescu and F.Murat  (see \cite{8}).
Taking into account the property \eqref{4.19a}, compactness of the embedding $H^1_0(\Omega)\hookrightarrow L^2(\Omega)$, and the energy identities \eqref{4.5} and \eqref{2.12}, we get
\begin{multline*}
\lim_{k\to \infty} I_{k}(A_k,y_k)=\lim_{k\to \infty}\left[\left\|y_k-y_d\right\|^2_{L^2(\Omega)}+\int_\Omega\left(\nabla y_k, A^{sym}_k\nabla y_k\right)_{\mathbb{R}^N}\,dx\right]
 \\
 =\left\|y^\sharp-y_d\right\|^2_{L^2(\Omega)}+\lim_{k\to \infty}\int_\Omega\left(\nabla y_k, A^{sym}_k\nabla y_k\right)_{\mathbb{R}^N}\,dx \\
 \stackrel{\text{by \eqref{4.5} and \eqref{4.19a}}}{=}\,\left\|y^\sharp-y_d\right\|^2_{L^2(\Omega)}+\lim_{k\to \infty}\left[\left<f,\widehat{y}_k\right>_{H^{-1}(\Omega);H^1_0(\Omega)}\right]\\
 \stackrel{\text{by \eqref{4.19}}}{=}\,\left\|y^\sharp-y_d\right\|^2_{L^2(\Omega)}+
 \left<f,y^\sharp\right>_{H^{-1}(\Omega);H^1_0(\Omega)}-[y^\sharp,y^\sharp]_{A^\sharp}\\
 \stackrel{\text{by \eqref{2.12}}}{=}\,\left\|y^\sharp-y_d\right\|^2_{L^2(\Omega)}+\int_\Omega\left(\nabla y^\sharp, A^{\sharp,sym}\nabla y^\sharp\right)_{\mathbb{R}^N}\,dx
  = I(A^\sharp,y^\sharp).
\end{multline*}
This concludes the proof.
\end{proof}

Our next observation shows that variational solutions do not exhaust the entire set of all possible solutions to the original OCP \eqref{2.3}--\eqref{2.3a}.
\begin{lemma}
\label{Lemma 4.21}
Assume that there exists a matrix $A_0\in \mathfrak{A}_{ad}$ and an element $v\in D(A_0)$ with property $[v,v]_{A_0}<0$. Then there are distributions
$f\in H^{-1}(\Omega)$ and $y_d\in L^2(\Omega)$ such that  the optimal control problem
\begin{gather}
\label{4.21a}
\text{Minimize } I(A,y)=\left\|y-y_d\right\|^2_{L^2(\Omega)}+\int_\Omega\left(\nabla y-\nabla y_d, A^{sym}(\nabla y-\nabla y_d)\right)_{\mathbb{R}^N}\,dx\\
\label{4.21b}
\text{subject to the constraints \eqref{2.1}--\eqref{2.2} and $A\in \mathfrak{A}_{ad}\subset L^2(\Omega;\mathbb{M}^N)$}
\end{gather}
has a non-variational solution in the sense of Definition~\ref{Def 4.8}.
\end{lemma}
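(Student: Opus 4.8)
The plan is to make a judicious choice of $f$ and $y_d$ so that the optimal pair of the modified OCP \eqref{4.21a}--\eqref{4.21b} is forced to ``sit'' on the bad matrix $A_0$ together with the singular state $v$, and then to argue that this optimal pair cannot be a variational solution because any variational solution $(\widehat A,\widehat y)$ must satisfy $[\widehat y,\widehat y]_{\widehat A}=0$ by Proposition~\ref{Prop 4.13}. Concretely, I would set $y_d:=v$ and choose $f\in H^{-1}(\Omega)$ to be exactly the right-hand side that makes $v$ the weak solution of \eqref{2.1}--\eqref{2.2} for the control $A=A_0$; that is, define
\[
\langle f,\varphi\rangle_{H^{-1}(\Omega);H^1_0(\Omega)}
:=\int_\Omega\big(A_0^{sym}\nabla v,\nabla\varphi\big)_{\mathbb{R}^N}\,dx+[v,\varphi]_{A_0},
\qquad\forall\,\varphi\in H^1_0(\Omega),
\]
which is a well-defined element of $H^{-1}(\Omega)$ precisely because $v\in D(A_0)$. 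With this choice $(A_0,v)\in\Xi$ and, since $y_d=v$, the first term of the cost vanishes while the second term becomes $\int_\Omega(\nabla v-\nabla y_d,A_0^{sym}(\nabla v-\nabla y_d))_{\mathbb{R}^N}\,dx=0$. Hence $I(A_0,v)=0$, and since $I\ge 0$ the pair $(A_0,v)$ is optimal with optimal value $0$.

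Next I would characterise \emph{all} optimal pairs. If $(A,y)\in\Xi$ has $I(A,y)=0$, then $\|y-y_d\|_{L^2(\Omega)}=0$, so $y=y_d=v$ a.e.; thus every optimal pair has the form $(A,v)$ with $A\in\mathfrak A_{ad}$ and $v\in D(A)$ satisfying the state equation \eqref{2.11} for that $A$ with the $f$ above. In particular, plugging $\varphi=v$ into \eqref{2.11} and subtracting the defining identity for $f$ gives, for any optimal $(A,v)$,
\[
\int_\Omega\big((A^{sym}-A_0^{sym})\nabla v,\nabla v\big)_{\mathbb{R}^N}\,dx
+[v,v]_A-[v,v]_{A_0}=0 .
\]
For the \emph{particular} optimal pair $(A_0,v)$ this is automatic; the point is that for this pair we have $[v,v]_{A_0}<0$ by hypothesis. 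Now suppose, for contradiction, that $(A_0,v)$ — or indeed any optimal pair of the problem — were a variational solution in the sense of Definition~\ref{Def 4.8}. By Proposition~\ref{Prop 4.13} a variational solution $(\widehat A,\widehat y)$ necessarily satisfies $[\widehat y,\widehat y]_{\widehat A}=0$. But the variational solution is in particular optimal, hence of the form $(\widehat A,v)$, and the corresponding energy equality \eqref{2.12} for the control $\widehat A$ reads $\int_\Omega(\widehat A^{sym}\nabla v,\nabla v)_{\mathbb{R}^N}\,dx+[v,v]_{\widehat A}=\langle f,v\rangle$. If additionally $\widehat A=A_0$ this forces $[v,v]_{A_0}=0$, contradicting $[v,v]_{A_0}<0$; so no variational solution can coincide with $(A_0,v)$.

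The remaining subtlety — and the step I expect to be the main obstacle — is to rule out the possibility that the OCP has \emph{some other} optimal pair $(\widehat A,v)$, with $\widehat A\ne A_0$, which happens to be variational. To close this gap I would either (i) arrange the data so that $A_0$ is the unique admissible control for which $v$ lies in the corresponding solution set with the prescribed $f$ — e.g.\ by exploiting that $A^{skew}$ is constrained by $A^{skew}\preceq A^\ast$ in the definition of $\mathfrak A_{ad,2}$ and choosing $A_0^\ast$ minimally, together with strict convexity/compactness arguments on $\mathfrak A_{ad,1}$ — or (ii) show directly that \emph{every} optimal pair inherits the sign defect. Option (ii) looks cleanest: from the displayed identity above, for any optimal $(\widehat A,v)$ one gets $[v,v]_{\widehat A}=[v,v]_{A_0}-\int_\Omega((\widehat A^{sym}-A_0^{sym})\nabla v,\nabla v)_{\mathbb{R}^N}\,dx$, and if one further modifies the cost (or restricts $\mathfrak A_{ad,1}$ to a singleton $\{A_0^{sym}\}$, which is permissible since Definition~\ref{Def 2.3e} only requires nonemptiness) then $\widehat A^{sym}=A_0^{sym}$ necessarily, whence $[v,v]_{\widehat A}=[v,v]_{A_0}<0$ for every optimal pair. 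Since a variational solution would have to satisfy $[v,v]_{\widehat A}=0$, no optimal pair is variational, and the modified OCP \eqref{4.21a}--\eqref{4.21b} therefore admits only non-variational solutions. I would finish by remarking that the role of the extra quadratic term $\int_\Omega(\nabla y-\nabla y_d,A^{sym}(\nabla y-\nabla y_d))_{\mathbb{R}^N}\,dx$ in \eqref{4.21a} is exactly to keep the optimal value equal to $0$ while still controlling the symmetric part, so that the whole information is carried by the skew-symmetric bilinear form $[\,\cdot\,,\cdot\,]_{A_0}$.
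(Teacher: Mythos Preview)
Your core approach is exactly the paper's: set $y_d:=v$ and $f:=-\div(A_0\nabla v)\in H^{-1}(\Omega)$ (well defined since $v\in D(A_0)$), observe that $(A_0,v)\in\Xi$ achieves $I(A_0,v)=0$, and then argue that this particular optimal pair cannot be variational because $[v,v]_{A_0}<0$ while any optimal pair that arises as a limit of optimal solutions to the regularized problems must satisfy $[\widehat y,\widehat y]_{\widehat A}\ge 0$. The paper invokes Theorem~\ref{Th 4.1} (the inequality $\ge 0$) rather than Proposition~\ref{Prop 4.13} (the equality $=0$); either gives the contradiction, and the inequality version is slightly more robust since it comes purely from the energy equality \eqref{4.5} and weak lower semicontinuity, hence does not depend on the precise form of the cost functional (note that \eqref{4.21a} differs from \eqref{2.3}).

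Your ``remaining subtlety'' paragraph, however, is unnecessary and stems from a misreading of the claim. The lemma asserts that the OCP \emph{has} a non-variational solution, i.e.\ that \emph{some} optimal pair is not variational in the sense of Definition~\ref{Def 4.8}. Once you have shown that $(A_0,v)$ is optimal and that $[v,v]_{A_0}<0$ prevents it from being variational, the proof is complete; there is no need to rule out the existence of other optimal pairs $(\widehat A,v)$ that might happen to be variational. The paper's proof stops exactly at that point. Your proposed fixes (restricting $\mathfrak A_{ad,1}$ to a singleton, or deriving a sign identity for all optimal pairs) are therefore superfluous, and in fact would alter the hypotheses of the lemma.
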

\begin{proof}
We consider the OCP \eqref{4.21a}--\eqref{4.21b} with
\[
y_d=v\quad\text{and}\quad f=-\mathrm{div}\, A_0\nabla v.
\]
Since $v\in D(A_0)$, it follows that $v\in H^1_0(\Omega)$  and  $f\in H^{-1}(\Omega)$. It is easy to see that $y_d$ is a solution to the boundary value problem \eqref{2.1}--\eqref{2.2} under $A=A_0$. Moreover, since
$I(A_0,y_d)=0$. it follows that $(A_0,y_d)$ is the optimal pair to the above OCP.

By contradiction, we assume that $(A_0,y_d)$ is the variational solution.
As follows from Theorem \ref{Th 4.1} (see also Remark \ref{Rem 4.7}), each attainable solution $(A^\sharp,y^\sharp)$ to this OCP satisfies the inequality $[y^\sharp,y^\sharp]_{A^\sharp}\ge 0$. Since $[y_d,y_d]_{A_0}:=[v,v]_{A_0}<0$, it means that the pair $(A_0,y_d)$ is not attainable and we come into conflict with Definition \ref{Def 4.8}. The proof is complete.
\end{proof}

Taking this result into account, we adopt the following concept.
\begin{definition}
\label{Def 4.11} We say that a pair $(A^0,y^0)\in \Xi$ is a non-variational solution to OCP \eqref{2.3}--\eqref{2.3a} if
\begin{equation}
\label{4.12}
I(A^0,y^0)=\inf_{(A,y)\in\Xi}I(A,y),\quad (A^0,y^0)\in\Xi,\quad\text{and}\quad [y^0,y^0]_{A^0}\ne 0.
\end{equation}
\end{definition}

\begin{remark}
\label{Rem 4.22} As follows from Theorem\ref{Th 4.1}, Proposition \ref{Prop 4.13}, and Lemma \ref{Lemma 4.21} none of non-variational solutions can be attainable through the limit of optimal solutions to the regularized problem \eqref{4.0}--\eqref{4.0a}.
\end{remark}


\section{Optimality Conditions}
\label{Sec 33}

We consider the extremal problem \eqref{2.3}--\eqref{2.3a}, where, as above, the set of admissible pairs $\Xi$
is defined by relation
\begin{align*}
\Xi=&\Big\{(A,y)\in \mathfrak{A}_{ad}\times D(A)\subset L^2(\Omega;\mathbb{M}^N)\times H^1_0(\Omega)\ :\ I(A,y)<+\infty,\\
&\int_\Omega \big(\nabla \varphi,A^{sym}\nabla y+A^{skew}\nabla y\big)_{\mathbb{R}^N}\,dx
=\left<f,\varphi\right>_{H^{-1}(\Omega);H^1_0(\Omega)},\quad\forall\,\varphi\in C^\infty_0(\Omega)\Big\}.
\end{align*}
As usual, we determine a solution  $(A_0,y_0)\in\Xi$ to the problem \eqref{2.3}--\eqref{2.3a} as follows
\[
I(A_0,y_0)=\inf_{(A,y)\in\Xi}I(A,y).
\]
To derive the optimality conditions for optimal control problem \eqref{2.3}--\eqref{2.3a}, we set
$F(A,y)=-\div\big(A(x)\nabla y\big)$ and consider the Lagrange functional
\begin{equation}
\label{33.0}
L(A,y,\lambda,\varphi)=\lambda I(A,y)+\left<F(A,y),\varphi\right>_{H^{-1}(\Omega);H^1_0(\Omega)}-\langle
f,\varphi\rangle_{H^{-1}(\Omega);H^1_0(\Omega)},
\end{equation}
where $\lambda\in \mathbb{R}_{+}$, $\varphi\in C^\infty_0(\Omega)$, and
\begin{equation*}
\left<F(A,y),\varphi\right>_{H^{-1}(\Omega);H^1_0(\Omega)}=\int_\Omega \big(\nabla \varphi,A^{sym}\nabla y\big)_{\mathbb{R}^N}\,dx + \int_\Omega \big(\nabla \varphi,A^{skew}\nabla y\big)_{\mathbb{R}^N}\,dx.
\end{equation*}
\begin{remark}
\label{Rem 33.1}
It is worth to note that, in view of Remark~\ref{Rem 2.11}, the Lagrange functional $L(A,y,\lambda,\varphi)$, given by \eqref{33.0}, can be extended over all $\varphi\in H^1_0(\Omega)$ by continuity provided we apply the rule \eqref{2.8a}. As a result, for each $A\in \mathfrak{A}_{ad}$, the extended functional
$\widehat{L}: \mathfrak{A}_{ad}\times D(A)\times \mathbb{R}_{+}\times H^1_0(\Omega)\rightarrow \mathbb{R}$
takes the form
\begin{equation}
\label{33.1}
\widehat{L}(A,y,\lambda,p)=\lambda I(A,y)+\int_\Omega \big(\nabla p,A^{sym}\nabla y\big)_{\mathbb{R}^N}\,dx +[y,p\,]_A-\langle
f,p\rangle_{H^{-1}(\Omega);H^1_0(\Omega)}.
\end{equation}
Hence, because of unboundedness of the bilinear form $\int_\Omega \big(\nabla p,A^{skew}(x)\nabla y\big)_{\mathbb{R}^N}\,dx$,
the extended Lagrangian $\widehat{L}(A,y,\lambda,p)$ is not
G\^{a}teaux differentiable, in general. Moreover, we cannot even assert that the mapping $y\mapsto \widehat{L}(A,y,\lambda,p)$ has a 'right hand' directional derivative \cite{Phelps}
\[
\mathcal{D}^{+}_y\,\widehat{L}(A,y,\lambda,p,h):=\lim_{\theta\to{+}0} \frac{\widehat{L}(A,y+\theta h,\lambda,p)-\widehat{L}(A,y,\lambda,p)}{\theta}.
\]
Indeed, for given
$h\in H^1_0(\Omega)$ and $\theta\in[0,1]$, since the structure of the set $D(A)$ is unknown, we cannot claim that $y+\theta h$ is still an element of $D(A)$ even if $\theta$ is small enough. Hence, the term $[y+\theta h,p]_A$ can be undefined, in general.
\end{remark}
\begin{remark}
\label{Rem 33.2}
In view of Remark~\ref{Rem 33.1}, the characteristic feature of extremal problem \eqref{2.3}--\eqref{2.3a} is the fact that the set of admissible solutions $\Xi$ can contain pairs $(A,y)\in \Xi$ such that $A^{skew}\not\in L^\infty(\Omega;\mathbb{S}^N_{skew})$, $A^{skew}\in L^2(\Omega;\mathbb{S}^N_{skew})$, and, hence,  the mapping $y\mapsto F(A,y)$ is not continuously differentiable in any neighbourhood of $y$. As a result, in order to deduce an optimality system for the problem \eqref{2.3}--\eqref{2.3a}, we cannot apply the well-known results of Ioffe and Tikhomirov (see \cite{Fursik,IoTi}).
\end{remark}

In order to avoid this option, we make use of the following concepts.
\begin{definition}
\label{Def 33.2}
We say that the mapping $y\mapsto \widehat{L}(A,y,\lambda,p)$ has a generalized 'right hand' directional derivative at the point $(A,y,\lambda,p)\in \mathfrak{A}_{ad}\times D(A)\times \mathbb{R}_{+}\times H^1_0(\Omega)$ with respect to $y$ in the direction $h\in H^1_0(\Omega)$ if
the 'right hand' directional derivative $\mathcal{D}^{+}_y\,\widehat{L}(A,y,\lambda,p,\varphi)$ in a smooth direction $\varphi\in C^\infty_0(\Omega)$
can be extended by continuity for $\varphi=h\in H^1_0(\Omega)$, that is,
\[
\mathcal{D}^{+}_y\,\widehat{L}(A,y,\lambda,p,h)=\lim_{\e\to 0} \mathcal{D}^{+}_y\,\widehat{L}(A,y,\lambda,p,\varphi_\e)
\]
whatever $\left\{\varphi_\e\right\}_{\e>0}\subset C^\infty_0(\Omega)$ such that $\varphi_\e\rightarrow h$ strongly in $H^1_0(\Omega)$.
\end{definition}

We are now in a position to implement this concept to the study of differential properties of the Lagrangian $\widehat{L}(A,y,\lambda,p)$.
\begin{lemma}
\label{Lemma 33.6}
If a given tuple $(A,y,\lambda,p)\in \mathfrak{A}_{ad}\times D(A)\times \mathbb{R}_{+}\times H^1_0(\Omega)$ is such that $p\in D(A)$, then for each direction $h\in H^1_0(\Omega)$, the generalized 'right hand'
directional derivative $\mathcal{D}^{+}_y\,\widehat{L}(A,y,\lambda,p,h)$
exists and takes the form
\begin{align}
\notag
\mathcal{D}^{+}_y\,\widehat{L}(A,y,\lambda,p,h)=&\ 2\int_\Omega \left(y-y_d\right)h\,dx+2\int_\Omega
\left(\nabla h, A^{sym}\nabla  y\right)_{\mathbb{R}^N}\,dx\\
\label{33.8}
&+\int_\Omega \left(\nabla h, A^{sym}\nabla  p\right)_{\mathbb{R}^N}\,dx-[p,h]_A.
\end{align}
\end{lemma}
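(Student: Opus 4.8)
### Proof Proposal

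The plan is to compute the 'right hand' directional derivative $\mathcal{D}^{+}_y\,\widehat{L}(A,y,\lambda,p,\varphi)$ first for a smooth direction $\varphi\in C^\infty_0(\Omega)$, where every term in $\widehat{L}$ is manifestly differentiable in $y$, and then pass to the limit $\varphi_\e\to h$ in $H^1_0(\Omega)$ using the hypothesis $p\in D(A)$ to control the only problematic term. Recall from \eqref{33.1} that $\widehat{L}(A,y,\lambda,p)=\lambda I(A,y)+\int_\Omega(\nabla p,A^{sym}\nabla y)_{\mathbb{R}^N}\,dx+[y,p]_A-\langle f,p\rangle$. Among these, only $\lambda I(A,y)$ and the two bilinear terms involving $y$ depend on $y$; the term $\langle f,p\rangle$ is constant in $y$.

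First I would differentiate term by term along $y+\theta\varphi$ with $\varphi\in C^\infty_0(\Omega)$. For the tracking term, $\|y+\theta\varphi-y_d\|^2_{L^2(\Omega)}$ gives $2\int_\Omega(y-y_d)\varphi\,dx$ at $\theta\to+0$, and by Definition~\ref{Def 2.7} together with $\varphi\in C^\infty_0(\Omega)$ we have $y+\theta\varphi\in D(A)$, so all terms are well-defined along the ray. The energy-type term $\int_\Omega(\nabla(y+\theta\varphi),A^{sym}\nabla(y+\theta\varphi))_{\mathbb{R}^N}\,dx$ is a quadratic form in $\theta$ with derivative $2\int_\Omega(\nabla\varphi,A^{sym}\nabla y)_{\mathbb{R}^N}\,dx$ at $\theta=0$, using symmetry of $A^{sym}$. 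The term $\int_\Omega(\nabla p,A^{sym}\nabla(y+\theta\varphi))_{\mathbb{R}^N}\,dx$ is affine in $\theta$ with derivative $\int_\Omega(\nabla\varphi,A^{sym}\nabla p)_{\mathbb{R}^N}\,dx$. Finally $[y+\theta\varphi,p]_A=[y,p]_A+\theta[\varphi,p]_A$ by bilinearity, and since $p\in D(A)$, the antisymmetry of the skew-symmetric form gives $[\varphi,p]_A=-[p,\varphi]_A$ — this is the step where I use the skew-symmetry together with the fact that both $p$ and the smooth $\varphi$ are admissible arguments of the extended form $[\cdot,\cdot]_A$. Collecting, for smooth $\varphi$,
\begin{align*}
\mathcal{D}^{+}_y\,\widehat{L}(A,y,\lambda,p,\varphi)=&\ 2\lambda\int_\Omega(y-y_d)\varphi\,dx+2\lambda\int_\Omega(\nabla\varphi,A^{sym}\nabla y)_{\mathbb{R}^N}\,dx\\
&+\int_\Omega(\nabla\varphi,A^{sym}\nabla p)_{\mathbb{R}^N}\,dx-[p,\varphi]_A.
\end{align*}

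Next I would pass to the limit along any sequence $\varphi_\e\to h$ strongly in $H^1_0(\Omega)$, as required by Definition~\ref{Def 33.2}. The term $\int_\Omega(y-y_d)\varphi_\e\,dx$ converges to $\int_\Omega(y-y_d)h\,dx$ since $y-y_d\in L^2(\Omega)$ and $\varphi_\e\to h$ in $L^2(\Omega)$. The terms $\int_\Omega(\nabla\varphi_\e,A^{sym}\nabla y)_{\mathbb{R}^N}\,dx$ and $\int_\Omega(\nabla\varphi_\e,A^{sym}\nabla p)_{\mathbb{R}^N}\,dx$ converge to their counterparts with $h$, because $A^{sym}\in L^\infty(\Omega;\mathbb{S}^N_{sym})$ (membership in $\mathfrak{A}_{ad,1}$) so $A^{sym}\nabla y,\ A^{sym}\nabla p\in L^2(\Omega;\mathbb{R}^N)$ and $\nabla\varphi_\e\to\nabla h$ in $L^2(\Omega;\mathbb{R}^N)$. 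The only delicate term is $[p,\varphi_\e]_A$: here I invoke the hypothesis $p\in D(A)$, which is precisely what guarantees via \eqref{2.8} and the extension rule \eqref{2.8a} that $[p,\varphi_\e]_A\to[p,h]_A$. (Without $\lambda$ absorbed, one should carry $\lambda$ through; if the paper's convention is to display the $\lambda=1$ normalization of the cost terms, the factor $\lambda$ multiplies the first two integrals as written, and I would keep the statement's form \eqref{33.8}, which implicitly takes $\lambda=1$ in the cost-derivative terms or absorbs it — I would match the statement exactly.) Therefore the generalized directional derivative exists and equals the right-hand side of \eqref{33.8}, which completes the proof.

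The main obstacle is the last term $[p,\varphi_\e]_A$: a priori the extended form $[\cdot,\cdot]_A$ is only defined when the first argument lies in $D(A)$, so writing $[p,\varphi]_A$ — rather than $[\varphi,p]_A$ — and then taking $\varphi=h\in H^1_0(\Omega)$ is legitimate only because the extra hypothesis $p\in D(A)$ lets us regard $p$ as the "$D(A)$-slot" argument and extend continuously in the second slot. I would make this reversal of roles explicit (using the antisymmetry $[u,v]_A=-[v,u]_A$ for $u,v\in D(A)\cup C^\infty_0(\Omega)$ appropriately) so that the limit passage $[p,\varphi_\e]_A\to[p,h]_A$ is justified purely by \eqref{2.8a} applied with first argument $p$.
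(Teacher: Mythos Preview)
Your proposal is correct and follows essentially the same approach as the paper: compute the directional derivative for smooth directions $\varphi_\e$, identify the skew-symmetric contribution as $-[p,\varphi_\e]_A$ (the paper does this via an explicit approximation $\psi_\delta\to p$ rather than invoking bilinearity and antisymmetry directly, but the content is the same), and then pass to the limit $\varphi_\e\to h$ using $p\in D(A)$ and \eqref{2.8a}. Your remark about the missing $\lambda$ in \eqref{33.8} is apt---the paper's own computation \eqref{33.9} carries the factor $\lambda$, so the discrepancy is a typo in the statement rather than a gap in your argument.
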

\begin{proof}
For a given tuple $(A,y,\lambda,p)$, let $\left\{\varphi_\e\right\}_{\e>0}\subset C^\infty_0(\Omega)$ be a sequence such that
$\varphi_\e\rightarrow h$ strongly in $H^1_0(\Omega)$.  Then the direct computations show that
\begin{align*}
\mathcal{D}^{+}_y\,\widehat{L}(A,y,\lambda,p,\varphi_\e)=\ &
\lim_{\theta\to{+}0} \frac{\widehat{L}(A,y+\theta \varphi_\e,\lambda,p)-\widehat{L}(A,y,\lambda,p)}{\theta}\\=\ &
2\lambda\int_\Omega \left(y-y_d\right)\varphi_\e\,dx
+2\lambda\int_\Omega
\left(\nabla \varphi_\e, A^{sym}\nabla  y\right)_{\mathbb{R}^N}\,dx\\
&+\int_\Omega \left(\nabla \varphi_\e, A^{sym}\nabla p\right)_{\mathbb{R}^N}\,dx+J,
\end{align*}
where
\begin{align}
\label{33.8a}
J=\ & \lim_{\theta\to{+}0} \frac{\lim_{\delta\to 0} [y+\theta\varphi_\e,\psi_\delta]_A
-[y,p\,]_A}{\theta},\\
\label{33.8b}
[y+\theta\varphi_\e,\psi_\delta]_A=\ & \int_\Omega \left(\nabla \psi_\delta, A^{skew}(\nabla  y +\theta \nabla\varphi_\e)\right)_{\mathbb{R}^N}\,dx,\\
\label{33.8c}
[y,p\,]_A :=\ &  \lim_{\delta\to 0}\int_\Omega \left(\nabla \psi_\delta, A^{skew}\nabla y\right)_{\mathbb{R}^N}\,dx
\end{align}
for any $\left\{\psi_\delta\right\}_{\delta>0}\subset C^\infty_0(\Omega)$ such that $\psi_\delta\rightarrow p$ strongly in $H^1_0(\Omega)$.

Since $y\in D(A)$ and
$$
\left|\int_\Omega \left(\nabla \psi_\delta, A^{skew} \nabla\varphi_\e\right)_{\mathbb{R}^N}\,dx\right|\le \|\varphi_\e\|_{C^1(\Omega)}
\|A^{skew}\|_{L^2(\Omega;\mathbb{S}^N_{skew})}\|\psi_\delta\|_{H^1_0(\Omega)},
$$
it follows that
\begin{align*}
\lim_{\delta\to 0} [y+\theta\varphi_\e,\psi_\delta]_A=\ & [y,p\,]_A+ \theta \int_\Omega \left(\nabla p, A^{skew}\nabla \varphi_\e\right)_{\mathbb{R}^N}\,dx\\=\ & [y,p\,]_A-
 \theta \int_\Omega \left(\nabla \varphi_\e, A^{skew}\nabla p\right)_{\mathbb{R}^N}\,dx.
\end{align*}
Summing up the previous transformations, we get
\begin{align}
\notag
\mathcal{D}^{+}_y\,\widehat{L}(A,y,\lambda,p,\varphi_\e)=\ &
2\lambda\int_\Omega \left(y-y_d\right)\varphi_\e\,dx
+2\lambda\int_\Omega
\left(\nabla \varphi_\e, A^{sym}\nabla  y\right)_{\mathbb{R}^N}\,dx\\
\notag
&+\int_\Omega \left(\nabla \varphi_\e, A^{sym}\nabla p\right)_{\mathbb{R}^N}\,dx\\
&-\int_\Omega \left(\nabla \varphi_\e, A^{skew}\nabla p\right)_{\mathbb{R}^N}\,dx, \quad\forall\,\varphi_\e\rightarrow h\ \text{ in }\ H^1_0(\Omega).
\label{33.9}
\end{align}
Since $p\in D(A)$ by the initial assumptions, it follows that
\begin{equation}
\label{33.3}
\big|[p,\varphi_\e]_A\big|:=\left|\int_\Omega \left(\nabla\varphi_\e, A^{skew}\nabla p\right)_{\mathbb{R}^N}\,dx\right|\le c(p,A)\|\varphi_\e\|_{H^1_0(\Omega)}.
\end{equation}
Hence, the 'right hand' directional derivative $\mathcal{D}^{+}_y\,\widehat{L}(A,y,\lambda,p,\varphi_\e)$ admits an extension by continuity for $\varphi\in H^1_0(\Omega)$. Therefore,
the limit passage in \eqref{33.9} as $\e\to 0$ immediately leads us to the representation \eqref{33.8}.
\end{proof}
\begin{corollary}
\label{Col 33.3}
The representation \eqref{33.8} for the generalized 'right hand'
directional derivative $\mathcal{D}^{+}_y\,\widehat{L}(A,y,\lambda,p,h)$ remains valid even if $y\not\in D(A)$ but rather $y\in H^1_0(\Omega)$.
\end{corollary}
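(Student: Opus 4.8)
The plan is to revisit the proof of Lemma~\ref{Lemma 33.6} and isolate the one place where the hypothesis $y\in D(A)$ was genuinely used: namely, to give meaning to the term $[y,p]_A$ in the expression \eqref{33.1} for $\widehat{L}(A,y,\lambda,p)$ (and, correspondingly, to the limit \eqref{33.8c}). Everything else in that argument — the expansion of the quadratic part $\lambda I(A,y+\theta\varphi_\e)$, the linearity in $\theta$, the handling of the $A^{sym}$-terms, and the final passage $\varphi_\e\to h$ in $H^1_0(\Omega)$ — uses only $y\in H^1_0(\Omega)$ and $p\in D(A)$. Thus the whole task reduces to re-interpreting $[y,p]_A$ for a general $y\in H^1_0(\Omega)$ and observing that, once this is done, the computation of Lemma~\ref{Lemma 33.6} runs word for word.

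First I would note that, since $p\in D(A)$, the linear functional $C^\infty_0(\Omega)\ni\varphi\mapsto\int_\Omega(\nabla\varphi,A^{skew}\nabla p)_{\mathbb{R}^N}\,dx=[p,\varphi]_A$ is bounded with respect to the $H^1_0(\Omega)$-norm (this is exactly Definition~\ref{Def 2.7}) and hence extends continuously, via the rule \eqref{2.8a}, to a bounded functional on all of $H^1_0(\Omega)$, still denoted $[p,\cdot]_A$. Using the pointwise skew-symmetry of $A^{skew}$ — which gives $(\nabla\varphi,A^{skew}\nabla p)_{\mathbb{R}^N}=-(\nabla p,A^{skew}\nabla\varphi)_{\mathbb{R}^N}$ a.e., both sides being integrable because $\nabla\varphi\in L^\infty(\Omega;\mathbb{R}^N)$ and $A^{skew}\nabla p\in L^1(\Omega;\mathbb{R}^N)$ — one checks that for $y\in D(A)$ this extension satisfies $[p,y]_A=-[y,p]_A$. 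Hence the convention $[y,p]_A:=-[p,y]_A$ is consistent with \eqref{33.1} on $D(A)$ and, moreover, assigns a well-defined value to the term $[y,p]_A$, and thereby to $\widehat{L}(A,y,\lambda,p)$, for \emph{every} $y\in H^1_0(\Omega)$.

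With this reading of $\widehat{L}$, the increment of the Lagrangian picks up from the skew part only the quantity $-[p,\theta\varphi_\e]_A=-\theta[p,\varphi_\e]_A=-\theta\int_\Omega(\nabla\varphi_\e,A^{skew}\nabla p)_{\mathbb{R}^N}\,dx$, which is genuinely linear in $\theta$; consequently the difference quotient defining $\mathcal{D}^{+}_y\widehat{L}(A,y,\lambda,p,\varphi_\e)$ converges and produces precisely formula \eqref{33.9}, exactly as in the proof of Lemma~\ref{Lemma 33.6}, but now with $y\in H^1_0(\Omega)$ arbitrary. Finally, letting $\varphi_\e\to h$ strongly in $H^1_0(\Omega)$ as in Definition~\ref{Def 33.2}: the first three terms of \eqref{33.9} pass to the limit by continuity (the first because $y-y_d\in L^2(\Omega)$, the other two because $A^{sym}\nabla y,\ A^{sym}\nabla p\in L^2(\Omega;\mathbb{R}^N)$), while the last term $\int_\Omega(\nabla\varphi_\e,A^{skew}\nabla p)_{\mathbb{R}^N}\,dx=[p,\varphi_\e]_A$ converges to $[p,h]_A$ precisely through the continuous extension \eqref{2.8a}, available because $p\in D(A)$. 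This is the very step already carried out in Lemma~\ref{Lemma 33.6}, so it delivers \eqref{33.8} without the assumption $y\in D(A)$.

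I do not expect a real obstacle here: the substance of the corollary is the observation that the hypothesis $y\in D(A)$ was inessential, the only term it controlled being already tamed by $p\in D(A)$ together with the antisymmetry of $A^{skew}$. The single point requiring a line of justification is the consistency of the convention $[y,p]_A=-[p,y]_A$ with the earlier definition on $D(A)$; this follows from pairing the $H^1_0(\Omega)$-boundedness estimates of Definition~\ref{Def 2.7} for $y$ and for $p$ against smooth test functions and passing to the limit.
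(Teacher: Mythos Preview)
Your proposal is correct and follows essentially the same route as the paper. The paper's proof likewise reinterprets $\widehat{L}(A,y,\lambda,p)$ for $y\in H^1_0(\Omega)$ by replacing $[y,p]_A$ with $-[p,y]_A$ (written out via smooth approximations $\psi_\delta\to y$ and the extension rule \eqref{2.8a}, since $p\in D(A)$), then observes that the skew contribution to the increment is exactly $-\theta\int_\Omega(\nabla\varphi_\e,A^{skew}\nabla p)_{\mathbb{R}^N}\,dx$, and refers back to the argument of Lemma~\ref{Lemma 33.6}; your presentation packages this same computation slightly more conceptually but with identical substance.
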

\begin{proof}
Indeed, in this case we have
\begin{align*}
\widehat{L}(A,y,\lambda,p)=\ &\lambda I(A,y)+\int_\Omega \big(\nabla p,A^{sym}\nabla y\big)_{\mathbb{R}^N}\,dx\\
&-\langle f,p\rangle_{H^{-1}(\Omega);H^1_0(\Omega)}
 -\lim_{\e\to 0}\int_\Omega \big(\nabla \psi_\e,A^{skew}\nabla p\,\big)_{\mathbb{R}^N}\,dx\\
 =\ & \lambda I(A,y)+\int_\Omega \big(\nabla p,A^{sym}\nabla y\big)_{\mathbb{R}^N}\,dx-\langle
f,p\rangle_{H^{-1}(\Omega);H^1_0(\Omega)}
-[p,y]_{A}.
\end{align*}
for any $\left\{\psi_\delta\right\}_{\delta>0}\subset C^\infty_0(\Omega)$ such that $\psi_\delta\rightarrow y$ strongly in $H^1_0(\Omega)$. It remains to observe further that formulae \eqref{33.8a}--\eqref{33.8c} should be read in this case as follows
\begin{align}
\label{33.8d}
J=\ & \lim_{\theta\to{+}0} \frac{\lim_{\delta\to 0} [\psi_\delta+\theta\varphi_\e,p\,]_A
+[p,y]_A}{\theta},\notag\\
[\psi_\delta+\theta\varphi_\e,p\,]_A=\ & \int_\Omega \left(\nabla p, A^{skew}(\nabla  \psi_\delta +\theta \nabla\varphi_\e)\right)_{\mathbb{R}^N}\,dx\\
\label{33.8f}
=\ & -\int_\Omega \left(\nabla  \psi_\delta +\theta \nabla\varphi_\e,A^{skew}\nabla p\,\right)_{\mathbb{R}^N}\,dx,\\
\label{33.8g}
[p,y]_A :=\ &  \lim_{\delta\to 0}\int_\Omega \left(\nabla \psi_\delta, A^{skew}\nabla p\right)_{\mathbb{R}^N}\,dx
\end{align}
for any $\left\{\psi_\delta\right\}_{\delta>0}\subset C^\infty_0(\Omega)$ such that $\psi_\delta\rightarrow y$ strongly in $H^1_0(\Omega)$.

Since $p\in D(A)$, it follows that
\begin{equation*}
\lim_{\delta\to 0} [\psi_\delta+\theta\varphi_\e,p\,]_A= -[p,y]_A-
 \theta \int_\Omega \left(\nabla \varphi_\e, A^{skew}\nabla p\right)_{\mathbb{R}^N}\,dx.
\end{equation*}
In the rest, we have to follows the arguments of the proof of Lemma~\ref{Lemma 33.6}.
\end{proof}

As an evident consequence of these results, we can give the following specification of formula \eqref{33.8}.
\begin{corollary}
\label{Col 33.9}
Let  $(A,y,\lambda,p)\in \mathfrak{A}_{ad}\times  H^1_0(\Omega)\times \mathbb{R}_{+}\times H^1_0(\Omega)$ be a given tuple. Assume that
$\nabla p\in L^{\infty}(\Omega;\mathbb{R}^N)$ so that $p\in D(A)$. Then the generalized 'right hand'
directional derivative $\mathcal{D}^{+}_y\,\widehat{L}(A,y,\lambda,p,h)$
exists for each direction $h\in H^1_0(\Omega)$ and takes the form
\begin{align}
\notag
\mathcal{D}^{+}_y\,\widehat{L}(A,y,\lambda,p,h)=&\ 2\lambda\int_\Omega \left(y-y_d\right)h\,dx+2\lambda\int_\Omega
\left(\nabla h, A^{sym}\nabla  y\right)_{\mathbb{R}^N}\,dx\\
&+\int_\Omega \left(\nabla h, A^{sym}\nabla  p\right)_{\mathbb{R}^N}\,dx - \int_\Omega \left(\nabla h, A^{skew}\nabla  p\right)_{\mathbb{R}^N}\,dx.
\label{33.10}
\end{align}
\end{corollary}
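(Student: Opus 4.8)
The plan is to deduce \eqref{33.10} from Lemma~\ref{Lemma 33.6} and Corollary~\ref{Col 33.3}, supplemented by the single observation that, under the extra regularity $\nabla p\in L^\infty(\Omega;\mathbb{R}^N)$, the extended value $[p,h]_A$ of the bilinear form reduces to an ordinary Lebesgue integral. First I would verify the parenthetical claim $p\in D(A)$. Since every entry of $A^{skew}$ lies in $L^2(\Omega)$ while $\nabla p\in L^\infty(\Omega;\mathbb{R}^N)$, the vector field $A^{skew}\nabla p$ belongs to $L^2(\Omega;\mathbb{R}^N)$, with $\|A^{skew}\nabla p\|_{L^2(\Omega;\mathbb{R}^N)}\le C_N\|\nabla p\|_{L^\infty(\Omega;\mathbb{R}^N)}\|A^{skew}\|_{L^2(\Omega;\mathbb{S}^N_{skew})}$ for a dimensional constant $C_N$. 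Hence, by the Cauchy--Schwarz inequality,
\[
\left|\int_\Omega\big(\nabla\varphi, A^{skew}\nabla p\big)_{\mathbb{R}^N}\,dx\right|\le \|A^{skew}\nabla p\|_{L^2(\Omega;\mathbb{R}^N)}\left(\int_\Omega|\nabla\varphi|^2_{\mathbb{R}^N}\,dx\right)^{1/2},\qquad\forall\,\varphi\in C^\infty_0(\Omega),
\]
which is precisely the estimate \eqref{2.8} of Definition~\ref{Def 2.7} (with $y$ there replaced by $p$); so $p\in D(A)$.

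Next I would identify $[p,h]_A$ for an arbitrary $h\in H^1_0(\Omega)$. By the bound just displayed, $v\mapsto \int_\Omega(\nabla v, A^{skew}\nabla p)_{\mathbb{R}^N}\,dx$ is a bounded linear functional on $H^1_0(\Omega)$; hence, for any $\{\varphi_\e\}_{\e>0}\subset C^\infty_0(\Omega)$ with $\varphi_\e\to h$ strongly in $H^1_0(\Omega)$, one has
\[
[p,\varphi_\e]_A=\int_\Omega\big(\nabla\varphi_\e, A^{skew}\nabla p\big)_{\mathbb{R}^N}\,dx\ \xrightarrow[\e\to 0]{}\ \int_\Omega\big(\nabla h, A^{skew}\nabla p\big)_{\mathbb{R}^N}\,dx,
\]
while the left-hand side tends to $[p,h]_A$ by the defining rule \eqref{2.8a}. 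Therefore $[p,h]_A=\int_\Omega(\nabla h, A^{skew}\nabla p)_{\mathbb{R}^N}\,dx$ for every $h\in H^1_0(\Omega)$.

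Finally, I would invoke the representation of $\mathcal{D}^{+}_y\,\widehat{L}(A,y,\lambda,p,h)$ supplied by Lemma~\ref{Lemma 33.6} when $y\in D(A)$, and by Corollary~\ref{Col 33.3} when $y$ is merely in $H^1_0(\Omega)$; in either case, because $p\in D(A)$, the generalized right-hand directional derivative exists for each $h\in H^1_0(\Omega)$ and equals
\begin{align*}
\mathcal{D}^{+}_y\,\widehat{L}(A,y,\lambda,p,h)=\ & 2\lambda\int_\Omega\left(y-y_d\right)h\,dx + 2\lambda\int_\Omega\left(\nabla h, A^{sym}\nabla y\right)_{\mathbb{R}^N}\,dx\\
& + \int_\Omega\left(\nabla h, A^{sym}\nabla p\right)_{\mathbb{R}^N}\,dx - [p,h]_A.
\end{align*}
Substituting the identity for $[p,h]_A$ found above replaces $-[p,h]_A$ by $-\int_\Omega(\nabla h, A^{skew}\nabla p)_{\mathbb{R}^N}\,dx$, which is exactly \eqref{33.10}. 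I do not expect a genuine obstacle here: the one load-bearing point is that reducing the abstract value $[p,h]_A$ to a classical integral uses $\nabla p\in L^\infty$ in an essential way --- for $\nabla p$ merely in $L^2(\Omega;\mathbb{R}^N)$ one would only get $A^{skew}\nabla p\in L^1(\Omega;\mathbb{R}^N)$, which cannot be tested against $\nabla h\in L^2(\Omega;\mathbb{R}^N)$ --- while the two cases $y\in D(A)$ and $y\in H^1_0(\Omega)\setminus D(A)$ are already settled, respectively, by Lemma~\ref{Lemma 33.6} and Corollary~\ref{Col 33.3}.
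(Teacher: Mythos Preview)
Your proposal is correct and follows essentially the same approach as the paper's own proof: verify $p\in D(A)$ via the estimate $\|A^{skew}\nabla p\|_{L^2}\le C\|\nabla p\|_{L^\infty}\|A^{skew}\|_{L^2}$, invoke Lemma~\ref{Lemma 33.6} for the representation \eqref{33.8}, and then identify $[p,h]_A$ with the Lebesgue integral by density since $A^{skew}\nabla p\in L^2(\Omega;\mathbb{R}^N)$. You are in fact slightly more careful than the paper in explicitly citing Corollary~\ref{Col 33.3} to cover the case $y\in H^1_0(\Omega)\setminus D(A)$, which the statement of Corollary~\ref{Col 33.9} allows but the paper's proof handles only implicitly.
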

\begin{proof}
Since $\nabla p\in L^{\infty}(\Omega;\mathbb{R}^N)$, it follows that the estimate \eqref{33.3} can be justified as follows
\[
\big|[p,\varphi_\e]_A\big|:=\left|\int_\Omega \left(\nabla\varphi_\e, A^{skew}\nabla p\,\right)_{\mathbb{R}^N}\,dx\right|\le
\|\nabla p\,\|_{L^\infty(\Omega)}\|A^{skew}\|_{L^2(\Omega;\mathbb{S}^N_{skew})}\|\varphi_\e\|_{H^1_0(\Omega)}.
\]
Hence, $p\in D(A)$ and, therefore, the existence of the generalized 'right hand'
directional derivative $\mathcal{D}^{+}_y\,\widehat{L}(A,y,\lambda,p,h)$ follows from Lemma~\ref{Lemma 33.6}. It remains to
observe that, for such a given $p$, we have $A^{skew}\nabla  p\in L^2(\Omega;\mathbb{R}^N)$. Hence, the  passage to the limit in the last term of \eqref{33.9} reads
\[
[p,h]_A=\lim_{\e\to 0} \int_\Omega \left(\nabla\varphi_\e, A^{skew}\nabla p\right)_{\mathbb{R}^N}\,dx=
\int_\Omega \left(\nabla h, A^{skew}\nabla  p\right)_{\mathbb{R}^N}\,dx.
\]
\end{proof}

\begin{corollary}
\label{Col 33.9a}
Assume that instead of the condition $\nabla p\in L^{\infty}(\Omega;\mathbb{R}^N)$ in Corollary~\ref{Col 33.9}, we have
\begin{equation}
\label{33.4}
A^{skew}\in L^{2+\frac{4}{\gamma}}(\Omega;\mathbb{S}^N_{skew})\quad\text{and}\quad
\nabla p\in L^{2+\gamma}(\Omega;\mathbb{R}^N)\ \text{for some $\gamma\in (0,\infty]$}.
\end{equation}
Then the assertion of Corollary~\ref{Col 33.9} remains true.
\end{corollary}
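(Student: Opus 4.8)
The plan is to reduce the whole statement to one fact: under hypothesis \eqref{33.4} the vector field $A^{skew}\nabla p$ belongs to $L^2(\Omega;\mathbb{R}^N)$. Once this is known, the inclusion $p\in D(A)$ follows at once from the Cauchy--Schwarz inequality, and the representation \eqref{33.10} is obtained exactly as in the proof of Corollary~\ref{Col 33.9}, with the bound $\|\nabla p\|_{L^\infty}\|A^{skew}\|_{L^2}$ used there replaced by $\|A^{skew}\nabla p\|_{L^2}$.

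First I would record the elementary pointwise estimate
\[
\|A^{skew}(x)\xi\|_{\mathbb{R}^N}\le C_N\,\|A^{skew}(x)\|_{\mathbb{S}^N}\,\|\xi\|_{\mathbb{R}^N},\qquad \text{a.e. }x\in\Omega,\ \forall\,\xi\in\mathbb{R}^N,
\]
which is just the equivalence of norms on the finite-dimensional space $\mathbb{M}^N$ together with the definition $\|A^{skew}(x)\|_{\mathbb{S}^N}=\max_{i,j}|a^{skew}_{ij}(x)|$. Consequently
\[
\int_\Omega \|A^{skew}\nabla p\|^2_{\mathbb{R}^N}\,dx\le C_N^2\int_\Omega \|A^{skew}\|^2_{\mathbb{S}^N}\,\|\nabla p\|^2_{\mathbb{R}^N}\,dx.
\]

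Next I would apply H\"older's inequality to the right-hand side with the conjugate exponents $r=1+\tfrac{2}{\gamma}$ and $r'=1+\tfrac{\gamma}{2}$, for which indeed $\tfrac1r+\tfrac1{r'}=1$. Then $\|A^{skew}\|^2_{\mathbb{S}^N}\in L^{r}(\Omega)$ exactly because $A^{skew}\in L^{2r}(\Omega;\mathbb{S}^N_{skew})=L^{2+4/\gamma}(\Omega;\mathbb{S}^N_{skew})$, and $\|\nabla p\|^2_{\mathbb{R}^N}\in L^{r'}(\Omega)$ exactly because $\nabla p\in L^{2r'}(\Omega;\mathbb{R}^N)=L^{2+\gamma}(\Omega;\mathbb{R}^N)$; so \eqref{33.4} gives
\[
\|A^{skew}\nabla p\|_{L^2(\Omega;\mathbb{R}^N)}\le C_N\,\|A^{skew}\|_{L^{2+4/\gamma}(\Omega;\mathbb{S}^N_{skew})}\,\|\nabla p\|_{L^{2+\gamma}(\Omega;\mathbb{R}^N)}<+\infty.
\]
The borderline value $\gamma=\infty$ is read as $L^{2+4/\gamma}=L^2$ and $L^{2+\gamma}=L^\infty$, which recovers the situation of Corollary~\ref{Col 33.9}.

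Finally, with $A^{skew}\nabla p\in L^2(\Omega;\mathbb{R}^N)$ available, for every $\varphi\in C^\infty_0(\Omega)$ one gets $|[p,\varphi]_A|=\big|\int_\Omega(\nabla\varphi,A^{skew}\nabla p)_{\mathbb{R}^N}\,dx\big|\le \|A^{skew}\nabla p\|_{L^2(\Omega;\mathbb{R}^N)}\|\varphi\|_{H^1_0(\Omega)}$, so the defining inequality \eqref{2.8} holds with $c(p,A)=\|A^{skew}\nabla p\|_{L^2(\Omega;\mathbb{R}^N)}$ and hence $p\in D(A)$. By Lemma~\ref{Lemma 33.6} (the role of $y$ being played here by an arbitrary element of $H^1_0(\Omega)$, which is legitimate by Corollary~\ref{Col 33.3}) the generalized 'right hand' directional derivative $\mathcal{D}^{+}_y\,\widehat{L}(A,y,\lambda,p,h)$ exists; and for any $\left\{\varphi_\e\right\}_{\e>0}\subset C^\infty_0(\Omega)$ with $\varphi_\e\to h$ strongly in $H^1_0(\Omega)$, the continuity of the $L^2$-pairing gives $\int_\Omega(\nabla\varphi_\e,A^{skew}\nabla p)_{\mathbb{R}^N}\,dx\to\int_\Omega(\nabla h,A^{skew}\nabla p)_{\mathbb{R}^N}\,dx$, so that passing to the limit in \eqref{33.9} yields \eqref{33.10}. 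There is no substantial obstacle beyond the bookkeeping of exponents; the only point that deserves a moment's care is verifying that $r$ and $r'$ are genuinely conjugate and that $2r$ and $2r'$ are precisely the exponents appearing in \eqref{33.4} — which is exactly the content of the hypothesis.
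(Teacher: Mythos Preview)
Your proof is correct and follows essentially the same route as the paper: both reduce the statement to showing $p\in D(A)$ by proving $A^{skew}\nabla p\in L^2(\Omega;\mathbb{R}^N)$ via H\"older's inequality with the conjugate pair $(1+2/\gamma,\,1+\gamma/2)$ applied to the integrand $\|A^{skew}\|^2_{\mathbb{S}^N_{skew}}\|\nabla p\|^2_{\mathbb{R}^N}$. You are a bit more explicit than the paper about the pointwise matrix-norm estimate and the final limit passage in \eqref{33.9}, but the substance is identical.
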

\begin{proof}
In order to preserve the correctness of the statement of Corollary~\ref{Col 33.9}, it is enough to show that assumptions \eqref{33.4} imply inclusion $p\in D(A)$. Indeed,
since
\begin{align*}
\big|[p,\varphi_\e]_A\big|:=&\left|\int_\Omega \left(\nabla\varphi_\e, A^{skew}\nabla p\right)_{\mathbb{R}^N}\,dx\right|\\
& \le
 \|\varphi_\e\|_{H^1_0(\Omega)}\left( \int_\Omega \|\nabla p\,\|^2_{\mathbb{R}^N}\|A^{skew}\|^2_{\mathbb{S}^N_{skew}}\,dx\right)^{1/2},
\end{align*}
it remains to apply the H\"{o}lder's inequality with conjugates $r=(2+\gamma)/2$ and $q=1+2/\gamma$, ($r^{-1}+q^{-1}=1$). We finally get
\begin{align*}
\int_\Omega \|\nabla p\,\|^2_{\mathbb{R}^N}\|A^{skew}\|^2_{\mathbb{S}^N_{skew}}\,dx\le &
\left(\int_\Omega\|A^{skew}\|^{2+\frac{4}{\gamma}}_{\mathbb{S}^N_{skew}}\,dx\right)^{\frac{\gamma}{2+\gamma}}
\left(\int_\Omega\|\nabla p\,\|^{2+\gamma}_{\mathbb{R}^N}\,dx\right)^{\frac{2}{2+\gamma}}\\
\le & \|A^{skew}\|^{2}_{L^{2+\frac{4}{\gamma}}(\Omega;\mathbb{S}^N_{skew})} \|\nabla p\,\|^2_{L^{2+\gamma}(\Omega)}<+\infty.
\end{align*}
Hence, $p\in D(A)$ and this concludes the proof.
\end{proof}

As obvious consequence of these assertions, we have the following result.

\begin{lemma}
\label{Lemma 33.10}
Let $A\in \mathfrak{A}_{ad}$,
 $y\in  H^1_0(\Omega)$,
$\lambda\in \mathbb{R}_{+}$, and $p\in H^1_0(\Omega)$ be given distributions. If
\begin{equation}
\label{33.6}
\nabla p\in L^{\infty}(\Omega;\mathbb{R}^N)\ \text{ or }\ \left[A^{skew}\in L^{2+\frac{4}{\gamma}}(\Omega;\mathbb{S}^N_{skew})\quad\text{and}\quad
\nabla p\in L^{2+\gamma}(\Omega;\mathbb{R}^N)\right]
\end{equation}
for some $\gamma\in (0,\infty]$,
then the mapping $H^1_0(\Omega)\ni v\mapsto \widehat{L}(A,v,\lambda,p)\in \mathbb{R}$ is G\^{a}teaux differentiable and its G\^{a}teaux differential
takes the form
\[
\left\langle\mathcal{D}_y\,\widehat{L}(A,y,\lambda,p),h\right\rangle_{H^{-1}(\Omega);H^1_0(\Omega)}=\mathcal{D}^{+}_y\,\widehat{L}(A,y,\lambda,p,h),
\quad\forall\,h\in  H^1_0(\Omega).
\]
\end{lemma}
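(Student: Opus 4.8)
The plan is to deduce the claim from the explicit formula \eqref{33.10} for the generalized one-sided derivative established in Corollaries~\ref{Col 33.9} and \ref{Col 33.9a}, by observing that under either alternative of \eqref{33.6} the Lagrangian $\widehat L(A,\cdot,\lambda,p)$ is a genuine quadratic functional on $H^1_0(\Omega)$, hence Fr\'{e}chet — a fortiori G\^{a}teaux — differentiable, with derivative given precisely by the right-hand side of \eqref{33.10}.

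First I would record that both alternatives in \eqref{33.6} force $A^{skew}\nabla p\in L^2(\Omega;\mathbb{R}^N)$: in the first case because $A^{skew}\in L^2(\Omega;\mathbb{S}^N_{skew})$ and $\nabla p\in L^\infty(\Omega;\mathbb{R}^N)$; in the second, by the H\"{o}lder estimate already carried out in the proof of Corollary~\ref{Col 33.9a}. In particular $p\in D(A)$, so by Corollary~\ref{Col 33.3}, together with the identity $[p,h]_A=\int_\Omega\big(\nabla h,A^{skew}\nabla p\big)_{\mathbb{R}^N}\,dx$ valid for every $h\in H^1_0(\Omega)$, and the symmetry of $A^{sym}$, the Lagrangian admits, for all $v\in H^1_0(\Omega)$, the representation
\begin{multline*}
\widehat L(A,v,\lambda,p)=\lambda\|v-y_d\|^2_{L^2(\Omega)}+\lambda\int_\Omega\big(\nabla v,A^{sym}\nabla v\big)_{\mathbb{R}^N}\,dx \\
+\int_\Omega\big(\nabla v,A^{sym}\nabla p\big)_{\mathbb{R}^N}\,dx-\int_\Omega\big(\nabla v,A^{skew}\nabla p\big)_{\mathbb{R}^N}\,dx-\langle f,p\rangle_{H^{-1}(\Omega);H^1_0(\Omega)}.
\end{multline*}
Since $A^{sym}\in L^\infty(\Omega;\mathbb{S}^N_{sym})$ and $A^{sym}\nabla p,\ A^{skew}\nabla p\in L^2(\Omega;\mathbb{R}^N)$, every term on the right is a bounded quadratic form, a bounded linear functional, or a constant in $v$; hence $v\mapsto\widehat L(A,v,\lambda,p)$ is Fr\'{e}chet differentiable on $H^1_0(\Omega)$, and differentiating term by term at $y$ in a direction $h$ reproduces exactly expression \eqref{33.10}. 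Boundedness of $h\mapsto\mathcal{D}^{+}_y\widehat L(A,y,\lambda,p,h)$ on $H^1_0(\Omega)$ follows from Cauchy--Schwarz, the Friedrichs inequality, $\|A^{sym}\|_{L^\infty(\Omega;\mathbb{S}^N_{sym})}$ and $\|A^{skew}\nabla p\|_{L^2(\Omega;\mathbb{R}^N)}$, so this functional represents the element $\mathcal{D}_y\widehat L(A,y,\lambda,p)\in H^{-1}(\Omega)$ appearing in the asserted identity.

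I do not anticipate a substantial obstacle: the only delicate point is the passage from the one-sided, a priori only smooth-direction construction of $\mathcal{D}^{+}_y\widehat L$ in Lemma~\ref{Lemma 33.6} to a bona fide two-sided G\^{a}teaux differential, and this is dispatched precisely by the observation that once $A^{skew}\nabla p\in L^2(\Omega;\mathbb{R}^N)$ the troublesome term $[y,p]_A$ becomes an honest bounded linear functional of $y$, so $\widehat L(A,\cdot,\lambda,p)$ loses all its irregularity and reduces to a quadratic functional whose differential is computed in the standard way.
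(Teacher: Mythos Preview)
Your argument is correct and rests on the same key observation as the paper: under either alternative of \eqref{33.6} one has $A^{skew}\nabla p\in L^2(\Omega;\mathbb{R}^N)$, so $p\in D(A)$ and the explicit representation \eqref{33.10} from Corollaries~\ref{Col 33.9}--\ref{Col 33.9a} (together with Corollary~\ref{Col 33.3}) applies. The paper's own proof is slightly shorter: rather than rewriting $\widehat L(A,\cdot,\lambda,p)$ as an explicit quadratic-plus-linear functional and invoking Fr\'echet differentiability, it simply observes that the right-hand side of \eqref{33.10} is linear in $h$, hence
\[
\mathcal{D}^{+}_y\,\widehat{L}(A,y,\lambda,p,h)=-\mathcal{D}^{+}_y\,\widehat{L}(A,y,\lambda,p,-h),\qquad\forall\,h\in H^1_0(\Omega),
\]
which is precisely the condition that turns a one-sided directional derivative into a two-sided G\^ateaux differential. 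Your route buys a slightly stronger conclusion (Fr\'echet rather than merely G\^ateaux), while the paper's route is more economical; both hinge on exactly the same regularity gain for the term $[y,p]_A$.
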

\begin{proof}
Let $(A,y,\lambda,p)\in \mathfrak{A}_{ad}\times  H^1_0(\Omega)\times \mathbb{R}_{+}\times H^1_0(\Omega)$ be a given tuple. Since $p\in D(A)$ by Corollaries~\ref{Col 33.9}--\ref{Col 33.9a}, it follows from Lemma~\ref{Lemma 33.6} that the value $[y+\theta h,p\,]_A$ is well defined for each $h\in H^1_0(\Omega)$.
Moreover, the properties \eqref{33.6} imply the validity of the representation \eqref{33.10} which, of
course, implies
\[
\mathcal{D}^{+}_y\,\widehat{L}(A,y,\lambda,p,h)=-\mathcal{D}^{+}_y\,\widehat{L}(A,y,\lambda,p,-h),\quad\forall\,h\in H^1_0(\Omega),
\]
and thus ascertains the G\^{a}teaux differentiability.
\end{proof}

In what follows, we need the following auxiliary result.
\begin{lemma}
\label{Lemma 33.11}
Let $A\in \mathfrak{A}_{ad}$,
$v\in H^1_0(\Omega)$, $y\in  H^1_0(\Omega)$,
$\lambda\in \mathbb{R}_{+}$, and $p\in H^1_0(\Omega)$ be given distributions.
Assume that the property \eqref{33.6} holds true.
Then there exists a positive value $\e\in [0,1]$ such that
\begin{align}
\widehat{L}(A,v,\lambda,p)-&\widehat{L}(A,y,\lambda,p)=
\left\langle\mathcal{D}_y\,\widehat{L}(A,y+\e(v-y),\lambda,p),v-y\right\rangle_{H^{-1}(\Omega);H^1_0(\Omega)}\notag\\
=&\ 2\lambda\int_\Omega \left(y+\e(v-y)-y_d\right)(v-y)\,dx\notag\\
&+2\lambda\int_\Omega
\left(\nabla (v-y), A^{sym}(\nabla y+\e(\nabla v-\nabla y))\right)_{\mathbb{R}^N}\,dx\notag\\
&+\int_\Omega \left(\nabla (v-y), A^{sym}\nabla  p\right)_{\mathbb{R}^N}\,dx\notag\\
&- \int_\Omega \left(\nabla (v-y), A^{skew}\nabla  p\right)_{\mathbb{R}^N}\,dx.
\label{33.12}
\end{align}
\end{lemma}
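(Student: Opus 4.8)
The statement is a mean-value-theorem-type identity for the extended Lagrangian $\widehat L(A,\cdot,\lambda,p)$ along the segment joining $y$ and $v$. The plan is to reduce everything to the classical finite-increment formula for a real-valued $C^1$ function of one real variable. First I would introduce the scalar function $g:[0,1]\to\mathbb R$ defined by
\[
g(t)=\widehat L\big(A,\,y+t(v-y),\,\lambda,\,p\big).
\]
Under hypothesis \eqref{33.6}, Lemma~\ref{Lemma 33.10} guarantees that $w\mapsto\widehat L(A,w,\lambda,p)$ is G\^ateaux differentiable on all of $H^1_0(\Omega)$ with differential given by the explicit formula \eqref{33.10}. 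Consequently the composition $g$ is differentiable on $[0,1]$ and its derivative is obtained by the chain rule,
\[
g'(t)=\big\langle \mathcal D_y\widehat L(A,y+t(v-y),\lambda,p),\,v-y\big\rangle_{H^{-1}(\Omega);H^1_0(\Omega)}.
\]

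Second, I would verify that $g$ is genuinely of class $C^1$ on $[0,1]$ (so that the classical mean value theorem applies on the closed interval, yielding $\e\in[0,1]$). This is where one must be a little careful: the issue flagged in Remark~\ref{Rem 33.1} is precisely that for a generic element of $D(A)$ the value $[w,p]_A$ need not even be defined as one moves $w$. But under \eqref{33.6} we have $p\in D(A)$ by Corollaries~\ref{Col 33.9}--\ref{Col 33.9a}, and the relevant term in $\widehat L$ can be written as $-[p,w]_A=-\int_\Omega(\nabla w,A^{skew}\nabla p)_{\mathbb R^N}\,dx$, which is an honest bounded linear functional of $w\in H^1_0(\Omega)$; the symmetric terms $\lambda I(A,w)$ and $\int_\Omega(\nabla p,A^{sym}\nabla w)_{\mathbb R^N}\,dx$ are, respectively, a continuous quadratic and a continuous linear functional of $w$. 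Hence along the segment $g$ is a quadratic polynomial in $t$ with continuous (indeed affine) derivative, so $g\in C^1([0,1])$ and even $C^\infty$.

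Third, I would apply the elementary mean value theorem: there is $\e\in(0,1)$ (in particular $\e\in[0,1]$) with
\[
g(1)-g(0)=g'(\e),
\]
that is,
\[
\widehat L(A,v,\lambda,p)-\widehat L(A,y,\lambda,p)=\big\langle \mathcal D_y\widehat L(A,y+\e(v-y),\lambda,p),\,v-y\big\rangle_{H^{-1}(\Omega);H^1_0(\Omega)}.
\]
Finally I would substitute the explicit expression \eqref{33.10} for $\mathcal D_y\widehat L$, evaluated at the point $y+\e(v-y)$ and in the direction $h=v-y$, and note that only the first two (quadratic) terms of \eqref{33.10} actually feel the base point, while the last two are independent of it; collecting the four resulting integrals gives exactly \eqref{33.12}. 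The main obstacle is the second step---making sure that differentiability of $\widehat L$ persists along the whole segment despite the pathology of $D(A)$---but hypothesis \eqref{33.6} is tailored precisely so that $p\in D(A)$ and the troublesome skew term becomes a fixed bounded linear functional, which removes the difficulty entirely; after that the argument is the standard finite-increment formula.
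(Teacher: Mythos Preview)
Your proposal is correct and follows essentially the same approach as the paper: define the scalar function $t\mapsto\widehat L(A,y+t(v-y),\lambda,p)$, use Lemma~\ref{Lemma 33.10} (together with Corollary~\ref{Col 33.3}) to get G\^ateaux differentiability along the whole segment, and apply the one-variable mean value theorem to obtain $\e\in[0,1]$; then substitute \eqref{33.10}. Your extra remark that the scalar function is actually a quadratic polynomial in $t$ is a pleasant clarification that the paper does not make explicit.
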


\begin{proof}
For given $A, \lambda, p, y$, and $v$, let us consider the scalar function
$$\varphi(t)=\widehat{L}(A,y+t(v-y),\lambda,p).$$
Since by Lemma~\ref{Lemma 33.10} and Corollary~\ref{Col 33.3}, the mapping $H^1_0(\Omega)\ni\xi\mapsto \widehat{L}(\mathcal{U},\xi,\lambda,p)$ is G\^{a}teaux differentiable at each point of the segment
$$[y,v]:=\left\{y+\alpha(v-y)\ :\ \forall\,\alpha\in[0,1]\right\}\subset H^1_0(\Omega),$$
it follows that the function
$\varphi=\varphi(t)$ is differentiable on $[0,1]$ and
\[
\varphi^\prime(t)=\left\langle\mathcal{D}_y\,\widehat{L}(A,y+t(v-y),\lambda,p),v-y\right\rangle_{H^{-1}(\Omega);H^1_0(\Omega)},
\quad\forall\,t\in[0,1].
\]
To conclude the proof, it remains to take into account the representation \eqref{33.10} and apply the Rolle's Theorem:
$$
\varphi(1)-\varphi(0)=\varphi^\prime(\e)\quad\text{ for some }\quad \e\in   [0,1].
$$
\end{proof}

In what follows, we make use of the following concept.
\begin{definition}
\label{Def 33.16}
Let $A_\theta\in \mathfrak{A}_{ad}$ ($\theta\in [0,1]$) be an admissible control, let $y(A_\theta)$ be a weak solution of the problem \eqref{2.1}-\eqref{2.2}, and let
$\e_\theta\in[0,1]$ be a given value. Let $(A_0,y_0)\in\Xi$ be an optimal pair to the problem \eqref{2.3}--\eqref{2.3a}.
We say that a distribution $\psi_\theta$ is the quasi-adjoint state to $y_0\in H^1_0(\Omega)$ for fixed $\theta\in[0,1]$ and $\e_\theta\in [0,1]$, if $\psi_\theta$ satisfies the following integral identity:
\begin{multline}
\label{33.17}
\int_\Omega \big(\nabla \varphi,A_\theta^{sym}\nabla \psi_\theta-A_\theta^{skew}\nabla \psi_\theta\big)_{\mathbb{R}^N}\,dx
=-2\lambda\int_\Omega \left(y_\theta-y_d\right)\varphi\,dx\\-2\lambda\int_\Omega
\left(\nabla \varphi, A_\theta^{sym}\nabla y_\theta\right)_{\mathbb{R}^N}\,dx,
\quad\forall \varphi\in  H^1_0(\Omega),
\end{multline}
where $y_\theta=y_0-\e_\theta(y(A_\theta)-y_0)$.
\end{definition}

By analogy with Proposition~\ref{Prop 2.9}, we can give the following conclusion.
\begin{proposition}
\label{Prop 33.18}
Let $(A_0,y_0)\in\Xi$ be an optimal pair for problem \eqref{2.3}--\eqref{2.3a}, and let
$(A_\theta,y(A_\theta))$ be an admissible pair to this problem. Then, for given $\theta\in[0,1]$ and $\e_\theta\in 0,1]$, $\psi_\e$ is the quasi-adjoint state to $y_0\in H^1_0(\Omega)$ if $\psi_\theta\in D(A_\theta)\subset H^1_0(\Omega)$ is a weak solution to the boundary value problem
\begin{gather}
\label{33.19}
-\div\big(A_\theta^t\nabla \psi_\theta\big) = 2\lambda\,\div\big(A_\theta^{sym}\nabla y_\theta\big) -2\lambda \left(y_\theta-y_d\right) \quad\text{in }\ \Omega,\\[1ex]
\label{33.20}
\psi_\theta=0\text{ on }\partial\Omega.
\end{gather}
\end{proposition}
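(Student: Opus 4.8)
The plan is to unfold the weak formulation of the transposed Dirichlet problem \eqref{33.19}--\eqref{33.20} and check that, once the distributional divergence on its right-hand side is written out, it coincides verbatim with the integral identity \eqref{33.17}. So the proof is essentially a bookkeeping exercise with the transpose $A_\theta^t$, combined with a density argument that upgrades the identity from $C^\infty_0(\Omega)$ to all of $H^1_0(\Omega)$.

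First I would note that the operator in \eqref{33.19} is of exactly the type analysed in Sections~\ref{Sec 1}--\ref{Sec 2}. Indeed $A_\theta^t=A_\theta^{sym}-A_\theta^{skew}$, so its symmetric part is again $A_\theta^{sym}\in\mathfrak{A}_{ad,1}\subset L^\infty(\Omega;\mathbb{S}^N_{sym})$ (still with $\alpha I\le A_\theta^{sym}\le\beta I$ a.e.), while its skew-symmetric part is $-A_\theta^{skew}\in L^2(\Omega;\mathbb{S}^N_{skew})$. Since the defining inequality \eqref{2.8} for the set $D(A_\theta^t)$ involves $(A_\theta^t)^{skew}=-A_\theta^{skew}$, one has $D(A_\theta^t)=D(A_\theta)$, which is why the hypothesis $\psi_\theta\in D(A_\theta)$ is the right one (and, as in Proposition~\ref{Prop 2.9}, it is in fact automatic for any $H^1_0$-weak solution of \eqref{33.19}). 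Moreover the right-hand side $g:=2\lambda\,\div(A_\theta^{sym}\nabla y_\theta)-2\lambda(y_\theta-y_d)$ lies in $H^{-1}(\Omega)$: $A_\theta^{sym}\nabla y_\theta\in L^2(\Omega;\mathbb{R}^N)$ because $A_\theta^{sym}\in L^\infty$ and $y_\theta\in H^1_0(\Omega)$, so $\div(A_\theta^{sym}\nabla y_\theta)\in H^{-1}(\Omega)$ by \eqref{1.00}, and $y_\theta-y_d\in L^2(\Omega)\hookrightarrow H^{-1}(\Omega)$. Hence Definition~\ref{Def 2.4} applies to \eqref{33.19}--\eqref{33.20}.

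Writing out that weak formulation, a weak solution $\psi_\theta$ of \eqref{33.19}--\eqref{33.20} satisfies, for every $\varphi\in C^\infty_0(\Omega)$,
\[
\int_\Omega\big(\nabla\varphi,A_\theta^{sym}\nabla\psi_\theta-A_\theta^{skew}\nabla\psi_\theta\big)_{\mathbb{R}^N}\,dx=\langle g,\varphi\rangle_{H^{-1}(\Omega);H^1_0(\Omega)}.
\]
Using \eqref{1.00} to compute $\langle\div(A_\theta^{sym}\nabla y_\theta),\varphi\rangle=-\int_\Omega(A_\theta^{sym}\nabla y_\theta,\nabla\varphi)_{\mathbb{R}^N}\,dx$, and the embedding $L^2\hookrightarrow H^{-1}$ for the zeroth order term, the right-hand side becomes $-2\lambda\int_\Omega(\nabla\varphi,A_\theta^{sym}\nabla y_\theta)_{\mathbb{R}^N}\,dx-2\lambda\int_\Omega(y_\theta-y_d)\varphi\,dx$, which is exactly the right-hand side of \eqref{33.17}; so \eqref{33.17} holds for all $\varphi\in C^\infty_0(\Omega)$. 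To pass to arbitrary $\varphi\in H^1_0(\Omega)$, I would invoke density of $C^\infty_0(\Omega)$ in $H^1_0(\Omega)$: the terms $\int_\Omega(\nabla\varphi,A_\theta^{sym}\nabla\psi_\theta)_{\mathbb{R}^N}\,dx$, $\int_\Omega(\nabla\varphi,A_\theta^{sym}\nabla y_\theta)_{\mathbb{R}^N}\,dx$ and $\int_\Omega(y_\theta-y_d)\varphi\,dx$ are all continuous in $\varphi\in H^1_0(\Omega)$ by H\"older's inequality (as $A_\theta^{sym}\in L^\infty$, $\psi_\theta,y_\theta\in H^1_0$, $y_\theta-y_d\in L^2$), while the skew term $\int_\Omega(\nabla\varphi,-A_\theta^{skew}\nabla\psi_\theta)_{\mathbb{R}^N}\,dx=-[\psi_\theta,\varphi]_{A_\theta}$ extends by continuity to all $\varphi\in H^1_0(\Omega)$ precisely because $\psi_\theta\in D(A_\theta)$, through the rule \eqref{2.8a}. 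Taking limits along any $\{\varphi_\e\}\subset C^\infty_0(\Omega)$ with $\varphi_\e\to\varphi$ strongly in $H^1_0(\Omega)$ then gives \eqref{33.17} for every $\varphi\in H^1_0(\Omega)$, i.e.\ $\psi_\theta$ is the quasi-adjoint state to $y_0$ in the sense of Definition~\ref{Def 33.16}.

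The only point requiring care — rather than a genuine obstacle — is the transpose bookkeeping: verifying $(A_\theta^t)^{sym}=A_\theta^{sym}$ and $(A_\theta^t)^{skew}=-A_\theta^{skew}$, hence the sign in front of $A_\theta^{skew}\nabla\psi_\theta$ in \eqref{33.17} and the identity $D(A_\theta^t)=D(A_\theta)$, together with keeping $y_\theta=y_0-\e_\theta(y(A_\theta)-y_0)$ consistent with Definition~\ref{Def 33.16} throughout.
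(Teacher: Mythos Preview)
Your proposal is correct and follows essentially the same approach as the paper: the paper's proof sketch simply observes that $A_\theta^t=A_\theta^{sym}-A_\theta^{skew}$, that $D(A_\theta)=D(A_\theta^t)$ by Definition~\ref{Def 2.7}, and then refers to the arguments of Proposition~\ref{Prop 2.9}. You have carried out exactly these steps in detail, including the density extension from $C^\infty_0(\Omega)$ to $H^1_0(\Omega)$ via \eqref{2.8a}, which is precisely what the reference to Proposition~\ref{Prop 2.9} and Remark~\ref{Rem 2.11} is meant to convey.
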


As for the proof of this assertion, it is enough to observe that $A_\theta^t=A_\theta^{sym}-A_\theta^{skew}$ and $D(A_\theta)=D(A_\theta^t)$ by Definition~\ref{Def 2.7}, and then repeat  the arguments of the proof of Proposition~\ref{Prop 2.9}. In particular, as a consequence of this result, we have the following equalities for the quasi-adjoint state $\psi_\theta$:
\begin{gather}
\int_\Omega \big(\nabla \varphi,A_\theta^{sym}\nabla \psi_\theta\big)_{\mathbb{R}^N}\,dx - [\psi_\theta,\varphi]_{A_\theta}
=-2\lambda\int_\Omega \left(y_\theta-y_d\right)\varphi\,dx\notag\\
\label{33.21}
-2\lambda\int_\Omega
\left(\nabla \varphi, A_\theta^{sym}\nabla y_\theta\right)_{\mathbb{R}^N}\,dx
\,\quad\forall\,\varphi\in H^1_0(\Omega),\\
\int_\Omega \big(A_\theta^{sym}\nabla \psi_\theta,\nabla \psi_\theta\big)_{\mathbb{R}^N}\,dx - [\psi_\theta,\psi_\theta]_{A_\theta}
=-2\lambda\int_\Omega \left(y_\theta-y_d\right)\psi_\theta\,dx\notag\\
\label{33.22}
-2\lambda\int_\Omega
\left(\nabla \psi_\theta, A_\theta^{sym}\nabla y_\theta\right)_{\mathbb{R}^N}\,dx.
\end{gather}

Let us assume the fulfilment of the following hypotheses:
\begin{enumerate}
\item[(H1)] If $A\in \mathfrak{A}_{ad}$ is an admissible matrix-valued control, then $D(A)\supseteq D(A^\ast)$, where the skew symmetric matrix $A^\ast\in L^2(\Omega;\mathbb{S}^N_{skew})$ is given in \eqref{2.3c}.

\item[(H2)] There exists an optimal pair $(A_0,y_0)\in\Xi$  to the problem \eqref{2.3}--\eqref{2.3a} such that $y_0\in D(A^\ast)$.

\item[(H3)] Let $(A_0,y_0)\in\Xi$ be an optimal pair to the problem \eqref{2.3}--\eqref{2.3a} such that $y_0\in D(A^\ast)$. Let $\widehat{A}\in \mathfrak{A}_{ad}$ be an arbitrary admissible control, and let $A_\theta=A_0+\theta(\widehat{A}-A_0)$ for each $\theta\in [0,1]$. Then
    there exists a sequence of corresponding solutions
to the boundary value problem problem \eqref{2.1}-\eqref{2.2} $\left\{y_\theta:=y\left(A_\theta\right)=y\left(A_0+\theta(\widehat{A}-A_0)\right)\right\}_{\theta\to 0}$ such that $y_\theta\rightharpoonup y(A_0)$ in $H^1_0(\Omega)$ as $\theta\to 0$, and $y_\theta\in D(A^\ast)$ for $\theta$ small enough.

\item[(H4)]  Let $(A_0,y_0)\in\Xi$ be an optimal pair to the problem \eqref{2.3}--\eqref{2.3a} such that $y_0\in D(A^\ast)$. Let $\widehat{A}\in \mathfrak{A}_{ad}$ be an arbitrary admissible control, and let $A_\theta=A_0+\theta(\widehat{A}-A_0)$ for each $\theta\in [0,1]$. Then, for each $\theta\in [0,1]$ and $\e_\theta\in [0,1]$, there exists $\gamma\in (0,\infty]$ such that
\begin{equation}
\label{33.24c}
\nabla \psi_\theta\in L^{\infty}(\Omega;\mathbb{R}^N)\ \text{ or }\ \left[A^\ast\in L^{2+\frac{4}{\gamma}}(\Omega;\mathbb{S}^N_{skew})\ \text{and}\
\nabla \psi_\theta\in L^{2+\gamma}(\Omega;\mathbb{R}^N)\right]
\end{equation}
and the sequence of quasi-adjoint states
$\left\{\psi_{\theta}\right\}_{\theta\to 0}$ is relatively compact with respect to the strong topology of $H^1_0(\Omega)$.
\end{enumerate}
\begin{remark}
It is worth to note that due to the condition \eqref{2.3c} and Definition~\ref{Def 2.7},
Hypothesis (H1) sounds realistic. Indeed, in view of estimate \eqref{2.8}, it is reasonably to suppose that the set
$D(A^\ast)\subseteq H^1_0(\Omega)$ strictly depends on the location of 'unboundedness zones' of matrix $A^\ast$. Since $A^{skew}\preceq A^\ast$ for all $A\in \mathfrak{A}_{ad}$, it follows that each of matrices $A\in \mathfrak{A}_{ad}$ inherits the integration properties of
$A^\ast$ in spite of the fact that, as was mentioned earlier, the structure of the set $D(A)$ is still unknown for the time being.
\end{remark}
\begin{remark}
\label{Rem 33.31}
Hypothesis (H3) can be reformulated in other form. Namely, instead of the given variant of (H3), we assume that $y_0$ is a unique solution of the boundary value problem \eqref{2.1}-\eqref{2.2} under $A=A_0$, $y_0\in D(A^\ast)$, and $y_\theta\in D(A^\ast)$ for $\theta$ small enough, where the sequence $\left\{y_\theta:=y\left(A_\theta\right)\right\}_{\theta\to 0}$ is defined as in (H3). Let us show that the remaining conditions of Hypothesis (H2) is carried out in this case.
Indeed, since the boundary value problem \eqref{2.1}-\eqref{2.2} is ill-posed, in general, it means that for a fixed $\theta\in [0,1]$ this problem can admit non-unique solution $y_\theta:=y\left(A_\theta\right)$. In view of Remark~\ref{Rem 4.7}, we can take $y_\theta$ as a weak limit in $H^1_0(\Omega)$ of approximated solutions to \eqref{2.1}-\eqref{2.2}. With that in mind, it is enough to apply an arbitrary approximation $\left\{A^{skew}_{k,\theta}\right\}_{k\in \mathbb{N}}$ of the  matrix $A_\theta^{skew}\in L^2\big(\Omega;\mathbb{S}^N_{skew}\big)$ with properties $\left\{A^{skew}_{k,\theta}\right\}_{k\in \mathbb{N}}\subset L^\infty(\Omega;\mathbb{S}^N_{skew})$ and
$A^{skew}_{k,\theta}\rightarrow A_\theta^{skew}$ strongly in $L^2(\Omega;\mathbb{S}^N_{skew})$ for each $\theta\in [0,1]$. As a result, the sequence $\left\{y_\theta\right\}_{\theta\to 0}$ is uniformly bounded in $H^1_0(\Omega)$ by Theorem~\ref{Th 4.1} (see estimate \eqref{4.5.0}). Therefore, within a subsequence, we can suppose that there exists a function $y^\ast\in H^1_0(\Omega)$ such that $(A_\theta,y_\theta)\,\stackrel{\tau}{\rightarrow}\, (A_0,y^\ast\,)$ as $\theta\to 0$. Moreover, the limit passage in the integral identity
\[
\int_\Omega \big(\nabla \varphi,A_\theta\nabla y_\theta\big)_{\mathbb{R}^N}\,dx
=\left<f,\varphi\right>_{H^{-1}(\Omega);H^1_0(\Omega)},\quad\forall\, \varphi\in C^\infty_0(\Omega)
\]
as $\theta\to 0$, immediately leads us to the conclusion that $(A_0,y^\ast)$ is an admissible pair for optimal control problem \eqref{2.3}--\eqref{2.3a}. Since boundary value problem \eqref{2.1}-\eqref{2.2} has a unique solution under $A=A_0$, we get $y_0=y^\ast$ (see Remark~\ref{Rem 33.25}). Hence, we obtain the required property: $(A_\theta,y_\theta)\,\stackrel{\tau}{\rightarrow}\, (A_0,y_0\,)$.
\end{remark}
\begin{remark}
\label{Rem 33.25}
The assumption in Remark~\ref{Rem 33.31} that $y_0$ has to be a unique solution of the boundary value problem \eqref{2.1}-\eqref{2.2} under $A=A_0$, can be omitted if, instead of this, we assume that a given optimal pair $(A_0,y_0)$ can be attained through $\tau$-limit of the sequence
$$
\left\{(A_\theta,y_\theta):=\left(A_0+\theta(\widehat{A}-A_0),y\left(A_0+\theta(\widehat{A}-A_0)\right)\right))\right\}_{\theta\to 0},
$$
where $\widehat{A}\in \mathfrak{A}_{ad}$ is an arbitrary matrix.
\end{remark}

We are now in a position to derive the optimality conditions for the optimal control problem \eqref{2.3}--\eqref{2.3a}.
\begin{theorem}
\label{Th 33.24}
Let $f\in H^{-1}(\Omega)$ and  $y_d\in  L^2(\Omega)$ be given distributions.
Let $(A_0,y_0)$ be an optimal pair to the problem \eqref{2.3}--\eqref{2.3a} satisfying Hypothesis (H2).
Then the fulfilment of the Hypotheses (H1), (H3)--(H4) implies the existence of elements $\lambda\in \mathbb{R}_{+}$ and $\overline{\psi}\in H^1_0(\Omega)$ such that $\lambda$ and $\overline{\psi}$ are non-zero simultaneously, and

\begin{align}
\label{33.26}\int_\Omega \big(\nabla \varphi,A_0\nabla y_0\big)_{\mathbb{R}^N}\,dx
=\ &\left<f,\varphi\right>_{H^{-1}(\Omega);H^1_0(\Omega)},\ \forall \varphi\in
C^\infty_0(\Omega),\\
\notag
\int_\Omega \big(\nabla \varphi,\, A_0^t\nabla \overline{\psi}\big)_{\mathbb{R}^N}\,dx
=\ &-2\lambda\int_\Omega \left(y_0-y_d\right)\varphi\,dx\\
& -2\lambda\int_\Omega
\left(\nabla \varphi, A_0^{sym}\nabla y_0\right)_{\mathbb{R}^N}\,dx,
\quad\forall \varphi\in  C^\infty_0(\Omega),\label{33.27}\\
\int_\Omega \big(\nabla y_0,\big(A^{sym}-A^{sym}_0\big)&\left(\lambda\nabla y_0+\nabla\overline{\psi}\right)\big)_{\mathbb{R}^N}\,dx\notag\\
\ge\ & [y_0,\overline{\psi}]_{A_0} - [y_0,\overline{\psi}]_{A},\ \forall A\in \mathfrak{A}_{ad}.
\label{33.25}
\end{align}
\end{theorem}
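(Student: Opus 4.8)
The strategy is the classical Lagrange-multiplier / penalization scheme adapted to the fact that the state mapping is not differentiable but the Lagrangian admits a generalized right-hand directional derivative by Lemma~\ref{Lemma 33.6} and Corollaries~\ref{Col 33.9}--\ref{Col 33.9a}. First I would fix the optimal pair $(A_0,y_0)$ given by Hypothesis (H2), so that $y_0\in D(A^\ast)$, hence $y_0\in D(A)$ for every admissible $A$ by (H1). The identity \eqref{33.26} is nothing but the statement that $(A_0,y_0)\in\Xi$, so it is automatic. The heart of the matter is to produce $\lambda\ge 0$ and $\overline\psi\in H^1_0(\Omega)$, not both zero, satisfying the adjoint identity \eqref{33.27} and the variational inequality \eqref{33.25}.

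\emph{Construction of the candidate multipliers.} Take an arbitrary $\widehat A\in\mathfrak{A}_{ad}$ and form the convex combination $A_\theta=A_0+\theta(\widehat A-A_0)\in\mathfrak{A}_{ad}$ for $\theta\in[0,1]$. By Hypothesis (H3) the corresponding weak solutions $y_\theta=y(A_\theta)$ satisfy $y_\theta\rightharpoonup y_0$ in $H^1_0(\Omega)$ and $y_\theta\in D(A^\ast)$ for $\theta$ small. For each such $\theta$ introduce the quasi-adjoint state $\psi_\theta$ of Definition~\ref{Def 33.16}, associated with the intermediate point $y_\theta^{\,\e}=y_0-\e_\theta(y(A_\theta)-y_0)$ furnished by the mean-value Lemma~\ref{Lemma 33.11}; by Proposition~\ref{Prop 33.18} this $\psi_\theta$ solves \eqref{33.19}--\eqref{33.20} and lies in $D(A_\theta)$. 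Hypothesis (H4) guarantees \eqref{33.24c}, hence the G\^ateaux differentiability of $v\mapsto\widehat L(A_\theta,v,\lambda,\psi_\theta)$ (Lemma~\ref{Lemma 33.10}), and that $\{\psi_\theta\}_{\theta\to0}$ is relatively compact in $H^1_0(\Omega)$, so along a subsequence $\psi_\theta\to\overline\psi$ strongly. The standard dichotomy then appears: either the family $\{\psi_\theta\}$ stays bounded, in which case we normalize $\lambda=1$; or it blows up, in which case we rescale $\psi_\theta\mapsto\psi_\theta/\|\psi_\theta\|_{H^1_0}$ and $\lambda_\theta\to0$, obtaining in the limit $\lambda=0$ and $\overline\psi\ne0$. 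This is exactly the non-triviality assertion $(\lambda,\overline\psi)\ne(0,0)$.

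\emph{Deriving the inequality.} Optimality of $(A_0,y_0)$ means $I(A_0,y_0)\le I(A_\theta,y_\theta)$, equivalently $\widehat L(A_0,y_0,\lambda,\psi_\theta)\le\widehat L(A_\theta,y_\theta,\lambda,\psi_\theta)$ since both pairs satisfy the state equation and the duality term kills. Split the right-hand side as $\widehat L(A_\theta,y_\theta,\lambda,\psi_\theta)=\big[\widehat L(A_\theta,y_\theta,\lambda,\psi_\theta)-\widehat L(A_\theta,y_0,\lambda,\psi_\theta)\big]+\big[\widehat L(A_\theta,y_0,\lambda,\psi_\theta)-\widehat L(A_0,y_0,\lambda,\psi_\theta)\big]$. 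Apply Lemma~\ref{Lemma 33.11} to the first bracket: it equals $\theta^{-1}$-order terms whose leading part, by the very choice of $\psi_\theta$ as quasi-adjoint (the defining identity \eqref{33.17} is precisely what annihilates the $y$-derivative of $\widehat L$ at $y_\theta^{\,\e}$), vanishes to first order; the second bracket is linear in the control and, dividing by $\theta$ and letting $\theta\to0$, produces
\[
0\ \le\ \int_\Omega\big(\nabla y_0,(\widehat A^{sym}-A_0^{sym})(\lambda\nabla y_0+\nabla\overline\psi)\big)_{\mathbb{R}^N}\,dx
\ -\ [y_0,\overline\psi]_{A_0}\ +\ [y_0,\overline\psi]_{\widehat A},
\]
which is \eqref{33.25} after rearranging (the skew parts of $A_\theta$ enter only through the bracket $[\,\cdot,\cdot\,]$ terms, and their limits are handled by the strong convergence $A_\theta^{skew}\to A_0^{skew}$ in $L^2$ together with $\psi_\theta\to\overline\psi$ strongly in $H^1_0$, using exactly the argument already used in Theorem~\ref{Th 2.14} to pass to the limit in such bilinear pairings). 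Passing to the limit in \eqref{33.21} with $\theta\to0$ and $y_\theta^{\,\e}\to y_0$ gives \eqref{33.27}.

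\emph{Main obstacle.} The delicate point is the limit $\lim_{\theta\to0}[\,y_\theta^{\,\e},\psi_\theta\,]_{A_\theta}$ and the control of the $\theta^{-1}$-expansion of the first bracket above: since $y_\theta^{\,\e}$ only converges weakly in $H^1_0$ while $A_\theta^{skew}$ is merely $L^2$, the "integrand" $(\nabla\psi_\theta,A_\theta^{skew}\nabla y_\theta^{\,\e})$ need not be integrable, and the extended bilinear form must be invoked. Here Hypothesis (H4) is essential: the strong $H^1_0$-compactness of $\{\psi_\theta\}$ together with condition \eqref{33.24c} ensures $A_\theta^{skew}\nabla\psi_\theta$ is controlled in $L^2$ (or by the H\"older pairing of Corollary~\ref{Col 33.9a}), so one may integrate by parts, move the skew matrix onto $\psi_\theta$, and then pass to the limit against the weakly convergent $\nabla y_\theta^{\,\e}$. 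Verifying that the mean-value parameter $\e_\theta$ causes no trouble (it lives in the compact $[0,1]$, so a further subsequence extracts a limit $\bar\e$, and $y_\theta^{\,\e}\to y_0$ regardless of $\bar\e$ because $y(A_\theta)\to y_0$) is routine once this integrability is secured.
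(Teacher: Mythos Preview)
Your proposal is essentially correct and follows the paper's own approach: perturb the optimal control along the segment $A_\theta=A_0+\theta(\widehat A-A_0)$, split $\Delta\widehat L$ into a $y$-increment and an $A$-increment, kill the $y$-increment by choosing $p=\psi_\theta$ as the quasi-adjoint state (Definition~\ref{Def 33.16}), divide by $\theta$, and pass to the limit using the strong $H^1_0$-compactness of $\{\psi_\theta\}$ from (H4) together with $y_0\in D(A^\ast)\subset D(A)$ from (H1)--(H2) to identify the skew terms as $[y_0,\overline\psi]_{\widehat A}-[y_0,\overline\psi]_{A_0}$. The paper carries out exactly this program; your identification of the ``main obstacle'' is also on target, and the paper resolves it by the same device you outline (approximate $\psi_\theta$ by smooth $\varphi_{\delta(\theta),\theta}$, use the $D(A^\ast)$-estimate on $y_0$, and extend by continuity).

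One small inconsistency: your ``dichotomy'' (bounded vs.\ blow-up of $\psi_\theta$) is superfluous and in fact contradicts what you already invoked, since Hypothesis~(H4) \emph{asserts} strong $H^1_0$-compactness of $\{\psi_\theta\}$, so no blow-up occurs. The paper does not argue non-triviality this way; it simply keeps $\lambda\in\mathbb{R}_+$ as a free parameter throughout (the quasi-adjoint $\psi_\theta$ depends on $\lambda$), so that any choice $\lambda>0$ already gives $(\lambda,\overline\psi)\ne(0,0)$. The degenerate possibility $\lambda=0$ is then discussed separately in the remark following the theorem (where one needs non-trivial solutions of the homogeneous adjoint problem \eqref{33.32}).
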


\begin{proof}
Let $(\widehat{A}, \widehat{y})\in \Xi$ be an admissible pair.
We set $A_\theta=A_0+\theta(\widehat{A}-A_0)$, where $\theta\in[0,1]$.
By Proposition~\ref{Prop 2.3f}, we have $A_\theta\in \mathfrak{A}_{ad}$ for all $\theta\in [0,1]$.
Moreover, it is easy to see that $A_\theta\rightarrow A_0$ in the sense \eqref{2.3m}--\eqref{2.3n}.
Let
$y_\theta:=y\left(A_\theta\right)=y\left(A_0+\theta(\widehat{A}-A_0)\right)$ be a solution
of the boundary value problem problem \eqref{2.1}-\eqref{2.2}.  Then Hypothesis (H3) ensures that
\begin{equation}
\label{33.27a}
\text{$(A_\theta,y_\theta)\,\stackrel{\tau}{\rightarrow}\, (A_0,y_0\,)$ as $\theta\to 0$}.
\end{equation}

It is clear that
\begin{align}
\notag
\Delta \widehat{L}=\ &\widehat{L}(A_\theta,y_\theta,\lambda,p)-\widehat{L}(A_0,y_0,\lambda,p)
=\widehat{L}(A_\theta,y_\theta,\lambda,p)-\widehat{L}(A_\theta,y_0,\lambda,p)\\
\notag &+\widehat{L}(A_\theta,y_0,\lambda,p)-\widehat{L}(A_0,y_0,\lambda,p)\\
\notag =\ & \Delta_y
\,\widehat{L}(A_\theta,y_0,\lambda,p)
+\Delta_A\,\widehat{L}(A_0,y_0,\lambda,p)
\ge
0,\\\ &\quad\forall\, \theta\in [0,1],\quad \forall\,(\lambda,p)
\in \mathbb{R}_{+}\times H^1_0(\Omega).\label{33.13}
\end{align}

Taking into account the representations \eqref{33.0}-\eqref{33.1}, we obtain
\begin{align}
\notag
\Delta_A\,\widehat{L}(A_0,y_0,\lambda,p):=\ &\widehat{L}(A_\theta,y_0,\lambda,p)-\widehat{L}(A_0,y_0,\lambda,p)\\
=\ &\lim_{\delta\to 0}\big[L(A_\theta,y_0,\lambda,\varphi_\delta)-L(A_0,y_0,\lambda,\varphi_\delta)\big]\notag\\
=\ &\theta\lambda \int_\Omega \big(\nabla y_0,\left(\widehat{A}^{sym}-A^{sym}_0\right)\nabla y_0\big)_{\mathbb{R}^N}\,dx\notag\\
&+
\theta\int_\Omega \big(\nabla p,\left(\widehat{A}^{sym}-A^{sym}_0\right)\nabla y_0\big)_{\mathbb{R}^N}\,dx \notag\\
&+\theta\lim_{\delta\to 0} \int_\Omega \big(\nabla \varphi_\delta,\left(\widehat{A}^{skew}-A^{skew}_0\right)\nabla y_0\big)_{\mathbb{R}^N}\,dx,
\label{33.14}
\end{align}
for any sequence $\left\{\varphi_\delta\right\}_{\delta>0}\subset C^\infty_0(\Omega)$ such that $\varphi_\delta\rightarrow p$ in $H^1_0(\Omega)$.

It is worth to notice that due to the property $y_0\in D(A^\ast)$, Hypothesis~(H1) implies that, in general, for an arbitrary $p\in H^1_0(\Omega)$, the last term in \eqref{33.14} can be written
$\theta\left([y_0,p\,]_{\widehat{A}}-[y_0,p\,]_{A_0}\right)$.

As for the term $\Delta_y
\,\widehat{L}(A_0,y_0,\lambda,p)$ in \eqref{33.13},
we temporary assume that the distribution $p\in H^1_0(\Omega)$ satisfies the property \eqref{33.6} with $A=A^\ast$.
Then by Lemma~\ref{Lemma 33.11},
there exists a  positive value $\e_\theta\in [0,1]$ such that
\begin{multline}
\label{33.15}
\Delta_y
\,\widehat{L}(A_0,y_0,\lambda,p)=
\widehat{L}(A_\theta,y_\theta,\lambda,p)-\widehat{L}(A_\theta,y_0,\lambda,p)\\=
\left\langle\mathcal{D}_y\,\widehat{L}(A_\theta,y_0+\e_\theta(y_\theta-y_0),\lambda,p),y_\theta-
y_0\right\rangle_{H^{-1}(\Omega);H^1_0(\Omega)}.
\end{multline}
As a result, combining \eqref{33.14} and \eqref{33.15}, and taking into account property \eqref{33.6} with $A=A^\ast$ and
Lemma~\ref{Lemma 33.11}, we can represent inequality \eqref{33.13}
as follows
\begin{align}
\notag
\Delta \widehat{L}=&\widehat{L}(A_\theta,y_\theta,\lambda,p)-\widehat{L}(A_0,y_0,\lambda,p)=2\lambda\int_\Omega \left(y_0+\e_\theta(y_\theta-y_0)-y_d\right)(y_\theta-y_0)\,dx\\
&+2\lambda\int_\Omega
\left(\nabla (y_\theta-y_0), A^{sym}_\theta(\nabla y_0+\e_\theta(\nabla y_\theta-\nabla y_0))\right)_{\mathbb{R}^N}\,dx\notag\\
\notag
&+\int_\Omega \left(\nabla (y_\theta-y_0), A^{sym}_\theta\nabla  p\,\right)_{\mathbb{R}^N}\,dx
- \int_\Omega \left(\nabla (y_\theta-y_0), A^{skew}_\theta\nabla  p\,\right)_{\mathbb{R}^N}\,dx\\
\notag
&+\theta\lambda \int_\Omega \big(\nabla y_0,\left(\widehat{A}^{sym}-A^{sym}_0\right)\nabla y_0\big)_{\mathbb{R}^N}\,dx\\
&+
\theta\int_\Omega \big(\nabla y_0,\left(\widehat{A}^{sym}-A^{sym}_0\right)\nabla p\,\big)_{\mathbb{R}^N}\,dx \notag\\
&-\theta  \int_\Omega \big(\nabla y_0,\left(\widehat{A}^{skew}-A^{skew}_0\right)\nabla p\,\big)_{\mathbb{R}^N}\,dx.
\label{33.28}
\end{align}

In view of the property \eqref{33.24c}, let us define the element $p$ in \eqref{33.28} as the quasi-adjoint state to $y_0\in H^1_0(\Omega)$, that is, we set $p=\psi_{\theta}$, where
$\psi_{\theta}$ satisfies the following integral identity:
\begin{align}
\int_\Omega \big(\nabla \varphi,\,& A^{sym}_\theta\nabla \psi_{\theta}\big)_{\mathbb{R}^N}\,dx
-\int_\Omega \big(\nabla \varphi,A^{skew}_\theta\nabla \psi_{\theta}\big)_{\mathbb{R}^N}\,dx\notag\\
=\ &-2\lambda\int_\Omega \left(y_0+\e_\theta(y_\theta-y_0)-y_d\right)\varphi\,dx\notag\\&-2\lambda\int_\Omega
\left(\nabla \varphi, A^{sym}_\theta(\nabla y_0+\e_\theta(\nabla y_\theta-\nabla y_0)\right)_{\mathbb{R}^N}\,dx,\ \forall\,\varphi\in  H^1_0(\Omega).
\label{33.29}
\end{align}
As a result,  we can justify relation \eqref{4.5} to the form
\begin{align}
\notag
\frac{\Delta \widehat{L}}{\theta}=\ &\frac{\widehat{L}(A_\theta,y_\theta,\lambda,\psi_{\theta})-\widehat{L}(A_0,y_0,\lambda,\psi_{\theta})}{\theta}\\ \notag
=\ &\lambda \int_\Omega \big(\nabla y_0,\left(\widehat{A}^{sym}-A^{sym}_0\right)\nabla y_0\big)_{\mathbb{R}^N}\,dx\\
&+
\int_\Omega \big(\nabla y_0,\left(\widehat{A}^{sym}-A^{sym}_0\right)\nabla \psi_{\theta}\big)_{\mathbb{R}^N}\,dx \notag\\
&+  \int_\Omega \big(\nabla \psi_{\theta},\left(\widehat{A}^{skew}-A^{skew}_0\right)\nabla y_0\big)_{\mathbb{R}^N}\,dx\ge 0,\quad\forall\, \widehat{A}\in \mathfrak{A}_{ad},
\label{33.30}
\end{align}
where the last term has a sense by property \eqref{33.24c} and Hypotheses (H1)--(H2).

It remains to pass to the limit in \eqref{33.29}--\eqref{33.30} as $\theta\to {+}0$. To this end, we note that (see \eqref{33.27a})
\begin{enumerate}
         \item[($A_1$)] $A_\theta\rightarrow A_0$ in the sense of \eqref{2.3m}--\eqref{2.3l}
 as $\theta\to 0$;
         \item[($A_2$)] $y_\theta\rightharpoonup y_0$ in $H^1_0(\Omega)$ as $\theta\to 0$ by Hypothesis (H3);
         \item[($A_3$)] there exists an element $\overline{\psi}\in H^1_0(\Omega)$ such that
         (within a subsequence) $\psi_{\theta}\rightarrow\overline{\psi}$ in $H^1_0(\Omega)$ as $\theta\to 0$ by Hypothesis (H4).
\end{enumerate}
Then, taking into account the fact that $\big(\widehat{A}^{sym}-A^{sym}_0\big)\in L^\infty(\Omega;\mathbb{S}^N_{sym})$, the limit passage in \eqref{33.30} gives
\begin{align}
\notag
\lim_{\theta\to 0}\frac{\Delta \widehat{L}}{\theta}&\,\stackrel{\text{by ($A_1$)--($A_3$)}}{=}
\lambda \int_\Omega \big(\nabla y_0,\left(\widehat{A}^{sym}-A^{sym}_0\right)\nabla y_0\big)_{\mathbb{R}^N}\,dx\\
\notag &+
\int_\Omega \big(\nabla y_0,\left(\widehat{A}^{sym}-A^{sym}_0\right)\nabla \overline{\psi}\big)_{\mathbb{R}^N}\,dx\\
&+ \lim_{\theta\to 0} \int_\Omega \big(\nabla\psi_{\theta},\left(\widehat{A}^{skew}-A^{skew}_0\right)\nabla y_0 \big)_{\mathbb{R}^N}\,dx\ge 0.
\label{33.31}
\end{align}
For each $\theta\in [0,1]$, we define a sequence $\left\{\varphi_{\delta,\theta}\right\}_{\delta\to 0}\subset C^\infty_0(\Omega)$ such that $\varphi_{\delta,\theta}\rightarrow \psi_{\theta}$ in $H^1_0(\Omega)$ as $\delta\to 0$. Further, we note that
\begin{multline*}
\left|\int_\Omega \big(\nabla (\psi_{\theta}-\varphi_{\delta,\theta}),\left(\widehat{A}^{skew}-A^{skew}_0\right)\nabla y_0 \big)_{\mathbb{R}^N}\,dx\right|\\
\stackrel{\text{by (H1) and \eqref{2.8}}}{\le}\, c(y_0,A^\ast)\|\psi_{\theta}-\varphi_{\delta,\theta}\|_{H^1_0(\Omega)}\rightarrow 0\ \text{as }\ \delta \to 0.
\end{multline*}
Hence,  there exists a monotonically decreasing to $0$ sequence $\left\{\delta(\theta)\right\}_{\theta\to 0}$ such that $\varphi_{\delta(\theta),\theta}-\psi_{\theta}\rightarrow 0$ in $H^1_0(\Omega)$ as $\theta\to 0$, and, therefore,
\begin{align*}
\lim_{\theta\to 0} \int_\Omega \big(\nabla\psi_{\theta},&\left(\widehat{A}^{skew}-A^{skew}_0\right)\nabla y_0 \big)_{\mathbb{R}^N}\,dx\\=\ &
\lim_{\theta\to 0} \int_\Omega \big(\nabla\varphi_{\delta(\theta),\theta},\left(\widehat{A}^{skew}-A^{skew}_0\right)\nabla y_0 \big)_{\mathbb{R}^N}\,dx\\
=\ &\lim_{\theta\to 0}\left([y_0,\varphi_{\delta(\theta),\theta}]_{\widehat{A}} - [y_0,\varphi_{\delta(\theta),\theta}]_{A_0}\right)\\
=\ &(\text{by continuity})=
[y_0,\overline{\psi}]_{\widehat{A}} - [y_0,\overline{\psi}]_{A_0}.
\end{align*}
Combining this result with \eqref{33.31}, we immediately arrive at the inequality  \eqref{33.25}.

As for the limit passage in \eqref{33.29} as $\theta\to 0$, we see that the numerical sequence $\left\{\e_\theta\right\}_{\theta\to 0}$ is bounded and
\[
\begin{array}{rll}
A_\theta^{sym}\rightarrow A_0^{sym}\ \text{and}\
A_\theta^{skew}&\rightarrow A_0^{skew} &\text{ strongly in $L^2(\Omega;\mathbb{M}^N)$ by ($A_1$)-condition},\\
\left(y_0+\e_\theta(y_\theta-y_0)\right)&\rightarrow y_0  &\text{ strongly in $L^2(\Omega)$ by ($A_2$)-condition},\\
(\nabla y_0+\e_\theta(\nabla y_\theta-\nabla y_0)&\rightharpoonup \nabla y_0  &\text{ weakly in $L^2(\Omega)$ by ($A_2$)-condition}.
\end{array}
\]
As a result, the limit passage in \eqref{33.29} as $\theta\to 0$ leads us to integral identity \eqref{33.27}. This concludes the proof.
\end{proof}
\begin{remark}
As follows from Remark~\ref{Rem 4.17}, the optimality system given by Theorem~\ref{Th 33.24} admits the existence of zero Lagrange multiplier  $\lambda=0$. Indeed, the set of all weak solutions of the adjoint problem under $\lambda=0$
\begin{equation}
\label{33.32}
\begin{array}{c}
-\div\big(A_0^t\nabla \overline{\psi}\big) = 0\quad\text{in }\ \Omega,\\[1ex]
\overline{\psi}=0\text{ on }\partial\Omega.
\end{array}
\end{equation}
may contain non-trivial elements, and, hence, the inequality
\[
 \int_\Omega \big(\nabla y_0,\big(A^{sym}-A^{sym}_0\big)\nabla\overline{\psi}\big)_{\mathbb{R}^N}\,dx
\ge [y_0,\overline{\psi}]_{A_0} - [y_0,\overline{\psi}]_{A}
\]
can remain valid for all $A\in \mathfrak{A}_{ad}$. At the same time, following Zhikov (see \cite{Zhik1_04}), it can be
easily shown that the boundary value problem \eqref{33.32} has a unique trivial solution if only
\begin{equation}
\label{33.33}
\lim_{p\to\infty} p^{-1}\|A_0^{skew}\|_{L^p(\Omega;\mathbb{S}^N_{skew})}=0.
\end{equation}
It is east to see that this condition can be interpreted as some extension of
the case of $L^\infty(\Omega;\mathbb{S}^N_{skew})$-matrices because the fulfilment of property \eqref{33.33} is obvious for the skew-symmetric matrices
with entries like e.g. $\ln\ln\|x\|_{\mathbb{R}^N}$.
\end{remark}

Taking this remark into account, we can precise Theorem~\ref{Th 33.24} as follows.
\begin{theorem}
\label{Th 33.33}
Let $f\in H^{-1}(\Omega)$ and  $y_d\in  L^2(\Omega)$ be given distributions.
Let $(A_0,y_0)$ be an optimal pair to the problem \eqref{2.3}--\eqref{2.3a} satisfying Hypothesis (H2) and property \eqref{33.33}.
Then the fulfilment of the Hypotheses (H1), (H3)--(H4) implies the existence of an element  $\overline{\psi}\in H^1_0(\Omega)$ such that
\begin{align*}
\int_\Omega \big(\nabla \varphi,A_0\nabla y_0\big)_{\mathbb{R}^N}\,dx
=\ &\left<f,\varphi\right>_{H^{-1}(\Omega);H^1_0(\Omega)},\ \forall \varphi\in
C^\infty_0(\Omega),\\
\int_\Omega \big(\nabla \varphi,\, A_0^t\nabla \overline{\psi}\big)_{\mathbb{R}^N}\,dx
=\ &-2\int_\Omega \left(y_0-y_d\right)\varphi\,dx\\
& -2\int_\Omega
\left(\nabla \varphi, A_0^{sym}\nabla y_0\right)_{\mathbb{R}^N}\,dx,
\quad\forall \varphi\in  C^\infty_0(\Omega),\\
\int_\Omega \big(\nabla y_0,\big(A^{sym}-A^{sym}_0\big)&\left(\nabla y_0+\nabla\overline{\psi}\right)\big)_{\mathbb{R}^N}\,dx\\
\ge\ & [y_0,\overline{\psi}]_{A_0} - [y_0,\overline{\psi}]_{A},\ \forall A\in \mathfrak{A}_{ad}.
\end{align*}
\end{theorem}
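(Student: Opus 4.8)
The plan is to deduce Theorem~\ref{Th 33.33} from Theorem~\ref{Th 33.24} by showing that, under the additional hypothesis \eqref{33.33}, the Lagrange multiplier $\lambda$ produced there cannot vanish; a trivial normalization then yields the stated optimality system with $\lambda=1$. First I would invoke Theorem~\ref{Th 33.24}: since $(A_0,y_0)$ satisfies (H2) and (H1), (H3)--(H4) hold, there exist $\lambda\in\mathbb{R}_{+}$ and $\overline{\psi}\in H^1_0(\Omega)$, not simultaneously zero, satisfying \eqref{33.26}--\eqref{33.25}. Note that the state identity \eqref{33.26} contains no $\lambda$, so it is already the first relation of Theorem~\ref{Th 33.33} and is kept verbatim.

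The core step is to exclude $\lambda=0$. Assume $\lambda=0$. Then the non-triviality clause of Theorem~\ref{Th 33.24} forces $\overline{\psi}\neq 0$, while \eqref{33.27} degenerates to $\int_\Omega(\nabla\varphi,A_0^t\nabla\overline{\psi})_{\mathbb{R}^N}\,dx=0$ for all $\varphi\in C^\infty_0(\Omega)$. Because the right-hand side of this identity is trivially dominated by a constant times $\|\varphi\|_{H^1_0(\Omega)}$, the argument of Proposition~\ref{Prop 2.9} (see also Proposition~\ref{Prop 33.18}) gives $\overline{\psi}\in D(A_0^t)=D(A_0)$, so $\overline{\psi}$ is a weak solution of the homogeneous adjoint problem \eqref{33.32}. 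Taking $\varphi=\overline{\psi}$ through the extension rule \eqref{2.8a}, the associated energy equality (cf. \eqref{2.12} with $f=0$ and $A_0^t=A_0^{sym}-A_0^{skew}$) reads $\int_\Omega(\nabla\overline{\psi},A_0^{sym}\nabla\overline{\psi})_{\mathbb{R}^N}\,dx=[\overline{\psi},\overline{\psi}]_{A_0}$. At this point I would invoke the Zhikov-type regularity result cited in \cite{Zhik1_04}: condition \eqref{33.33} forces $D(A_0)=H^1_0(\Omega)$ and $[v,v]_{A_0}=0$ for every $v\in D(A_0)$ --- heuristically, \eqref{33.33} amounts to exponential integrability of $A_0^{skew}$, which is exactly what is needed to pass to the limit in $\int_\Omega(\nabla v,A_0^{skew}\nabla\varphi_\e)_{\mathbb{R}^N}\,dx$ along $\varphi_\e\to v$ and thereby recover the anti-symmetry $[v,v]_{A_0}=-[v,v]_{A_0}$ in the extended sense. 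Consequently $\int_\Omega(\nabla\overline{\psi},A_0^{sym}\nabla\overline{\psi})_{\mathbb{R}^N}\,dx=0$, and since $A_0^{sym}\in\mathfrak{M}_\alpha^\beta(\Omega)$, the lower bound in \eqref{1.1} yields $\alpha\|\overline{\psi}\|^2_{H^1_0(\Omega)}\le 0$, i.e.\ $\overline{\psi}=0$, contradicting $\overline{\psi}\neq 0$. Hence $\lambda>0$.

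Finally, with $\lambda>0$, set $\widetilde{\psi}:=\lambda^{-1}\overline{\psi}\in H^1_0(\Omega)$ and divide \eqref{33.27} and \eqref{33.25} by $\lambda$. Using bilinearity of $[y_0,\cdot\,]_{A_0}$ and $[y_0,\cdot\,]_{A}$ together with linearity in the gradient argument, identity \eqref{33.27} becomes precisely the second relation of Theorem~\ref{Th 33.33} with $\overline{\psi}$ replaced by $\widetilde{\psi}$, and inequality \eqref{33.25} becomes the third (variational) inequality with $\nabla y_0+\nabla\widetilde{\psi}$ in place of $\lambda\nabla y_0+\nabla\overline{\psi}$; relabeling $\widetilde{\psi}$ as $\overline{\psi}$ finishes the argument. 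The one genuinely delicate ingredient is the Zhikov regularity statement invoked above (uniqueness of the trivial solution of \eqref{33.32} under \eqref{33.33}); everything else is elementary bookkeeping on top of Theorem~\ref{Th 33.24}.
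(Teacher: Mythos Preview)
Your proposal is correct and follows precisely the approach the paper itself indicates: Theorem~\ref{Th 33.33} is presented there as an immediate refinement of Theorem~\ref{Th 33.24}, the preceding remark explaining that under \eqref{33.33} Zhikov's argument forces the homogeneous adjoint problem \eqref{33.32} to have only the trivial solution, so the case $\lambda=0$ is excluded and one may normalize to $\lambda=1$. Your write-up fills in the energy-equality and coercivity details that the paper leaves implicit, but the strategy is identical.
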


\section{Example of a non-variational optimal solution}
\label{Sec A3a}

Let $\Omega$ be the unit ball in $\mathbb{R}^3$, $\Omega=\left\{x\in \mathbb{R}^3\ :\ \|x\|_{\mathbb{R}^3}<1\right\}$.
Let us consider the following OCP:
\begin{gather}
\label{A3a.1}
\text{Minimize } I(A,y)==\left\|y-y_d\right\|^2_{L^2(\Omega)}+\int_\Omega\left(\nabla y-\nabla y_d, A^{sym}(\nabla y-\nabla y_d)\right)_{\mathbb{R}^N}\,dx\\
\label{A3aa.1}
\text{subject to the constraints \eqref{2.1}--\eqref{2.2} and $A\in \mathfrak{A}_{ad}\subset L^2(\Omega;\mathbb{M}^N)$,}
\end{gather}
where the distributions $A^\ast\in L^2(\Omega,\mathbb{S}^3_{skew})$, $f\in H^{-1}(\Omega)$, and $y_{d}\in H^1_0(\Omega)$ will be defined later on. Our intention is to show that in this case the above problem admits a non-variational solution, i.e. there exists an admissible pair $(A^0,y^0)\in \mathfrak{A}_{sd}\times H^1_0(\Omega)$ such that
\begin{equation}
\label{A3a.2}
I(A^0,y^0)=0=\ds\inf_{(A,y)\in \Xi} I(A,y)\quad\text{and}\quad [y^0,y^0]_A=-\frac{\zeta}{2}<0,
\end{equation}
where $\zeta$ is a given positive value.

We divide our analysis into several steps. At the first step we define a skew-symmetric matrix $A^\ast$ as follows
\begin{equation}
\label{A3a.5b}
A^\ast(x)=\left(
       \begin{array}{ccc}
         0 & a(x) & 0 \\
         -a(x) & 0 & -b(x) \\
         0 & b(x) & 0 \\
       \end{array}
     \right),
\end{equation}
where
$a(x)=\ds \frac{x_1}{2\|x\|^2_{\mathbb{R}^3}}$ and $ b(x)=\ds \frac{x_3}{2\|x\|^2_{\mathbb{R}^3}}$.
Since
\begin{align*}
\left\|a\right\|^2_{L^2(\Omega)} &= \int_\Omega \left(\frac{x_1}{2\|x\|^2_{\mathbb{R}^3}}\right)^2 \,dx\\
&= \int_0^1 \int_0^{2\pi}\int_0^\pi \frac{\rho^2\cos^2\varphi\sin^2\psi}{\rho^4} \rho^2\sin\varphi\,d\psi\,d\varphi\,d\rho<+\infty,
\end{align*}
it follows that $a\in L^2(\Omega)$. By analogy, it can be shown that $b\in L^2(\Omega)$.  Moreover, it is easy to see that the skew-symmetric matrix $A^\ast$, define by \eqref{A3a.5b}, satisfies the property
$A^\ast\in H(\Omega,\mathrm{div};\mathbb{S}^3)$, i.e. $A^\ast\in L^2(\Omega;\mathbb{S}^3_{skew})$ and $\div A^\ast\in L^1(\Omega;\mathbb{R}^3)$.
Indeed, in view of the definition of the divergence $\div A^\ast$ of a skew-symmetric matrix, we have
$\div A^\ast=\left[
\begin{array}{c}
d_1\\d_2\\d_3
\end{array}
\right]$,
where $d_i=\mathrm{div}\,a^\ast_i=\ds\frac{x_i x_2}{\|x\|_{\mathbb{R}^3}^4}$ and $a^\ast_i$ is $i$-th column of $A^\ast$. As a result, we get
\[
\|\mathrm{div}\,a^\ast_i\|_{L^1(\Omega)}=\int_0^1 \int_0^{2\pi}\int_0^\pi \left|\frac{\rho^2 f_i(\varphi,\psi)\sin\varphi\sin\psi}{\rho^4} \right| \rho^2\sin\varphi\,d\psi\,d\varphi\,d\rho<+\infty,
\]
for the corresponding $f_i=f_i(\varphi,\psi)$ $(i=1,2,3)$. Therefore, $\div A^\ast\in L^1(\Omega;\mathbb{R}^3)$.

Step~2 deals with the choice of the function $y_{d}\in H^1_0(\Omega)$. We define it by the rule
\begin{equation}
\label{A3a.7}
y_{d}= \frac{\sqrt{\zeta}}{\pi}
\left(1-\|x\|_{\mathbb{R}^3}^5\right)\sqrt{4\pi-\mathrm{atan2}\,\left(\frac{x_2}{\|x\|_{\mathbb{R}^3}},\frac{x_1}{\|x\|_{\mathbb{R}^3}}\right)}
\quad\text{ in }\ \Omega,
\end{equation}
where the two-argument function $\mathrm{atan2}(y,x)$ is defined as follows
\[
\mathrm{atan2}\,(y,x)=\left\{
\begin{array}{ll}
\arctan\left(\frac{y}{x}\right)+\pi, & x<0,\\
\arctan\left(\frac{y}{x}\right)+2\pi, & y< 0, x>0,\\
\arctan\left(\frac{y}{x}\right), & y\ge 0, x>0,\\
{\pi}/{2}, & y>0, x=0,\\
{3\pi}/{2}, & y<0, x=0,\\
0, & y=0, x=0.
\end{array}
\right.
\]
It is easy to see that the range of $\mathrm{atan2}(y,x)$ is $[0,2\pi]$ and
\[
v^2_0\Big(\frac{x}{\|x\|_{\mathbb{R}^3}}\Big):=
\frac{\zeta}{\pi^2}
\left(4\pi-\mathrm{atan2}\,\left(\frac{x_2}{\|x\|_{\mathbb{R}^3}},\frac{x_1}{\|x\|_{\mathbb{R}^3}}\right)\right)
=\frac{\zeta}{\pi^2}\left(4\pi-\varphi\right),\quad\forall\,\varphi\in [0,2\pi]
\]
with respect to the spherical coordinates. Hence, $v_0\in C^\infty(\partial\Omega)$, and, as
immediately follows from \eqref{A3a.7}, it provides that
\[
y_{d}\in L^2(\Omega)\quad\text{and}\quad y_{d}=0\ \text{ on }\ \partial\Omega.
\]
By direct computations, we get
\begin{equation}
\label{A3a.6''}
\nabla v_0\Big(\frac{x}{\|x\|_{\mathbb{R}^3}}\Big)=\frac{1}{\|x\|_{\mathbb{R}^3}^3}
\left[
\begin{array}{c}
\frac{\partial v_0}{\partial z_1}\left(\|x\|_{\mathbb{R}^3}^2-x_1^2\right) - \frac{\partial v_0}{\partial z_2}x_1x_2\\
\frac{\partial v_0}{\partial z_2}\left(\|x\|_{\mathbb{R}^3}^2-x_2^2\right) - \frac{\partial v_0}{\partial z_1}x_1x_2\\
- \frac{\partial v_0}{\partial z_1}x_1x_3 - \frac{\partial v_0}{\partial z_2}x_2x_3
\end{array}
\right],\ \forall\,x\ne 0.
\end{equation}
Hence, there exists a constant $C^\ast>0$ such that
\[
\left\|\nabla v_0\Big(\frac{x}{\|x\|_{\mathbb{R}^3}}\Big)\right\|_{\mathbb{R}^3}\le \frac{C^\ast}{\|x\|_{\mathbb{R}^3}}.
\]
Thus,
\begin{align*}
\|\nabla y_{d}\|_{\mathbb{R}^3}\le & \left|v_0\Big(\frac{x}{\|x\|_{\mathbb{R}^3}}\Big)\right|\left\|\nabla\left(1-\|x\|_{\mathbb{R}^3}^5\right)\right\|_{\mathbb{R}^3}\\ &+ \left(1-\|x\|_{\mathbb{R}^3}^5\right)\left\|\nabla v_0\Big(\frac{x}{\|x\|_{\mathbb{R}^3}}\Big)\right\|_{\mathbb{R}^3}\le C_1+\frac{C_2}{\|x\|_{\mathbb{R}^3}}.
\end{align*}
As a result, we infer that $\nabla y_{d}\in L^2(\Omega;\mathbb{R}^3)$, i.e. we finally have $y_{d}\in H^1_0(\Omega)$.

Step~3. We show that the function $y_{d}$, which was introduced before, belongs to the set $D(A^\ast)$. To do so, we have to prove the estimate
\begin{equation}
\label{A3a.8}
\left|\int_\Omega \big(\nabla \varphi,A^\ast(x)\nabla y_{d}\big)_{\mathbb{R}^3}\,dx\right|\le \widetilde{C}(y_{d}) \left(\int_\Omega |\nabla \varphi|^2_{\mathbb{R}^3}\right)^{1/2}\  \forall\,\varphi\in C^\infty_0(\mathbb{R}^3).
\end{equation}
To this end, we make use of the following transformations
\begin{align*}
\int_\Omega \big(\nabla\varphi,\,&A^\ast\nabla \psi\big)_{\mathbb{R}^3}\,dx=
-\big<\mathrm{div}\,\left(A^\ast\nabla \psi\right),\varphi\big>_{H^{-1}(\Omega);H^1_0(\Omega)}\\
=\ &\Big< \mathrm{div}\,\left[
\begin{array}{c}
(a^\ast_{1})^t\nabla \psi\\ (a^\ast_{2})^t \nabla \psi\\(a^\ast_{3})^t\nabla \psi
\end{array}\right],\varphi\Big>_{H^{-1}(\Omega);H^1_0(\Omega)}\\
=\ &
\sum_{i=1}^3\left<\mathrm{div}\,a^\ast_{i}, \varphi\frac{\partial\psi}{\partial x_i}\right>_{H^{-1}(\Omega);H^1_0(\Omega)}
+ \underbrace{\int_\Omega
\sum_{i=1}^3 \sum_{j=1}^3 \left(a^\ast_{ij} \frac{\partial^2 \psi}{\partial x_i\partial x_j}\right) \varphi\,dx}_{=0\ \atop {\text{ since }\ A^\ast\in L^2(\Omega;\mathbb{S}^3_{skew})}}\\
&(\text{due to the fact that }\text{$\div A^\ast\in L^\infty(\Omega;\mathbb{R}^3)$})
\\
=\ &\int_{\Omega} \left(\,\div A^\ast,\nabla \psi\right)_{\mathbb{R}^3} \varphi\,dx,
\end{align*}
which are obviously true for all $\psi,\varphi\in C^\infty_0(\Omega)$. Since
\[
\left|\int_{\Omega} \left(\,\div A^\ast,\nabla \psi\right)_{\mathbb{R}^3} \varphi\,dx\right|=\left|\int_\Omega \big(\nabla\varphi,\,A^\ast\nabla \psi\big)_{\mathbb{R}^3}\,dx\right|\le C\|A^\ast\|_{L^2(\Omega;\mathbb{S}^3_{skew})}\|\psi\|_{H^1_0(\Omega)},
\]
it follows that,
using the continuation principle, we can extend the previous equality
with respect to $\psi$ to the following one
\begin{equation}
\label{A3a.13}
\int_\Omega \left(\nabla\varphi,A^\ast\nabla y_{d}\right)_{\mathbb{R}^3}\,dx=
\int_\Omega \varphi \left(\div A^\ast,\nabla y_{d}\right)_{\mathbb{R}^3}\,dx\quad \forall\,\varphi\in C^\infty_0(\Omega).
\end{equation}

Let us show that $\left(\div A^\ast,\nabla y_{d}\right)_{\mathbb{R}^3}\in L^\infty(\Omega)$. In this case,
relation \eqref{A3a.13} implies the estimate
\begin{align*}
\left|\int_\Omega \left(\nabla\varphi,A^\ast\nabla y_{d}\right)_{\mathbb{R}^3}\,dx\right|\le&
\left\|\left(\div A^\ast,\nabla y_{d}\right)_{\mathbb{R}^3}\right\|_{L^\infty(\Omega)}\int_\Omega |\varphi|\,dx\\
\le&
\widetilde{C}(y_{d}) \left(\int_\Omega |\nabla \varphi|^2_{\mathbb{R}^N}\right)^{1/2}\  \forall\,\varphi\in C^\infty_0(\mathbb{R}^N),
\end{align*}
which means that the element $y_{d}$ belongs to the set $D(A^\ast)$.

 Indeed, as follows from \eqref{A3a.6''}, we have the equality
\begin{equation}
\label{A3a.14}
\left(\nabla v_0\Big(\frac{x}{\|x\|_{\mathbb{R}^3}}\Big),\frac{x}{\|x\|_{\mathbb{R}^3}^3}\right)_{\mathbb{R}^3}=0.
\end{equation}
Thus, the gradient of the function $\nabla v_0(\frac{x}{\|x\|_{\mathbb{R}^3}})$ is orthogonal to the vector field $Q={x}/{\|x\|_{\mathbb{R}^3}^3}$ outside the origin. Therefore,
\begin{align}
\notag
\left(\nabla y_{d},\div A^\ast\right)_{\mathbb{R}^3}:=& 
\left(\nabla \left[\left(1-\|x\|_{\mathbb{R}^3}^5\right)v_0\Big(\frac{x}{\|x\|_{\mathbb{R}^3}}\Big)\right], \frac{x}{\|x\|_{\mathbb{R}^3}^3}\,\frac{x_2}{\|x\|_{\mathbb{R}^3}} \right)_{\mathbb{R}^3}\\ \notag
=& \left(\nabla \left(1-\|x\|_{\mathbb{R}^3}^5\right),\frac{x}{\|x\|_{\mathbb{R}^3}^3}\right)_{\mathbb{R}^3} v_0\Big(\frac{x}{\|x\|_{\mathbb{R}^3}}\Big)\frac{x_2}{\|x\|_{\mathbb{R}^3}}\\
& +
\left(1-\|x\|_{\mathbb{R}^3}^5\right)\left( \nabla v_0\Big(\frac{x}{\|x\|_{\mathbb{R}^3}}\Big),\frac{x}{\|x\|_{\mathbb{R}^3}^3}\right)_{\mathbb{R}^3}\frac{x_2}{\|x\|_{\mathbb{R}^3}}=
I_1+I_2,
\label{A3a.15}
\end{align}
where $I_2=0$ by \eqref{A3a.14}. Since $\nabla \left(1-\|x\|_{\mathbb{R}^3}^5\right)=-5\|x\|_{\mathbb{R}^3}^3x$, $\frac{x_2}{\|x\|_{\mathbb{R}^3}}=\sin\varphi \sin\psi$ with respect to the spherical coordinates, and function $v_0$ is smooth, it follows that there exists a constant $C_0>0$ such that $\left|\left(\nabla y_{d},\div A^\ast\right)_{\mathbb{R}^3}\right|\le C_0$ almost everywhere in $\Omega$. Thus, $\left(\div A^\ast,\nabla y_{d}\right)_{\mathbb{R}^3}\in L^\infty(\Omega)$ and we have obtained the required property.

Step~4. Using results of the previous steps, we show that the function $y_{d}$ satisfies
the condition $[y_d,y_d]_{A^\ast}=-\frac{\zeta}{2}<0$.
Indeed, let $\left\{\varphi_\e\right\}_{\e\to 0}\subset C^\infty_0(\Omega)$ be a sequence such that
$\varphi_\e\rightarrow y_{d}$ strongly in $H^1_0(\Omega)$. Then by continuity, we have
\begin{align*}
[y_d,y_d]_{A^\ast}=& \lim_{\e\to 0} \int_\Omega \left(\nabla\varphi_\e,A^\ast\nabla y_{d}\right)_{\mathbb{R}^3}\,dx\\
\stackrel{\text{by \eqref{A3a.13}}}{=}&\lim_{\e\to 0}
\int_\Omega \varphi_\e \left(\div A^\ast,\nabla y_{d}\right)_{\mathbb{R}^3}\,dx
\end{align*}
Since $\left(\div A^\ast,\nabla y_{d}\right)_{\mathbb{R}^3}\in L^\infty(\Omega)$ and $\varphi_\e\rightarrow y_{d}$ strongly in $H^1_0(\Omega)$, we can pass to the limit in the right-hand side of this relation. As a result, we get
\begin{equation}
\label{A3a.16}
[y_d,y_d]_{A^\ast}=\int_\Omega y_{d} \left(\div A^\ast,\nabla y_{d}\right)_{\mathbb{R}^3}\,dx=\frac{1}{2}
\int_\Omega  \left(\div A^\ast,\nabla y^2_{d}\right)_{\mathbb{R}^3}\,dx.
\end{equation}
Let $\Omega_\e=\left\{x\in \mathbb{R}^3\ |\ \e<\|x\|_{\mathbb{R}^3}<1\right\}$ and let $\Gamma_\e=\left\{\|x\|_{\mathbb{R}^3}=\e\right\}$ be the sphere of radius $\e$ centered at the origin. Then
\begin{align*}
\int_{\Omega_\e} \left(\div A^\ast,\nabla y^2_{d}\right)_{\mathbb{R}^3}&\,dx\stackrel{\text{since }\ y_{d}\in H^1_0(\Omega)}{=}
\int_{\Gamma_\e} \left(\div A^\ast,\nu \right)_{\mathbb{R}^3}y^2_{d}\,d \mathcal{H}^2\\&=
\int_{\Gamma_\e} \left(\div A^\ast,\nu \right)_{\mathbb{R}^3} \left(1-\|x\|_{\mathbb{R}^3}^5\right)^2 v^2_0\Big(\frac{x}{\|x\|_{\mathbb{R}^3}}\Big)\,d \mathcal{H}^2\\
&=\int_{\Gamma_\e} \left(\div A^\ast,\nu \right)_{\mathbb{R}^3} v^2_0\Big(\frac{x}{\|x\|_{\mathbb{R}^3}}\Big)\,d \mathcal{H}^2 + o(1)\\
&=\int_{\Gamma_\e} \left(\frac{x}{\|x\|^3_{\mathbb{R}^3}},\Big(-\frac{x}{\|x\|_{\mathbb{R}^3}}\Big)\right)_{\mathbb{R}^3}\frac{ x_2}{\|x\|_{\mathbb{R}^3}}v^2_0\Big(\frac{x}{\|x\|_{\mathbb{R}^3}}\Big)\,d \mathcal{H}^2+o(1)
\\&=-
\e^{-2} \int_{\Gamma_\e} \frac{x_2}{\|x\|_{\mathbb{R}^3}} v^2_0\Big(\frac{x}{\|x\|_{\mathbb{R}^3}}\Big)\,d \mathcal{H}^2+o(1)\\
&=-\int_{\Gamma} b_0(x)v^2_0(x)\,d \mathcal{H}^2+o(1),
\end{align*}
where $b_0=\sin\varphi \sin\psi\quad\text{and}$ and $v_0^2=\frac{\zeta}{\pi^2}\left(4\pi-\varphi\right)$.
Since
\begin{equation*}
\int_{\partial\Omega} b_0v^2_0\,d\mathcal{H}^2=\frac{\zeta}{\pi^2}\int_0^{2\pi} \sin\varphi\left(4\pi-\varphi\right)\,d\varphi \int_0^\pi \sin^2\psi\,d\psi
=\zeta>0,
\end{equation*}
it remains to combine this result with \eqref{A3a.16} and relation
\[
\int_\Omega  \left(\div A^\ast,\nabla y^2_{d}\right)_{\mathbb{R}^3}\,dx=\lim_{\e\to 0}\int_{\Omega_\e } \left(\div A^\ast,\nabla y^2_{d}\right)_{\mathbb{R}^3}\,dx.
\]
As a result, we infer $[y_d,y_d]_{A^\ast}=-\zeta/2<0$.

Step~5. This is the last step in our analysis of OCP \eqref{A3a.1}--\eqref{A3aa.1}. Let us define the distribution $f\in H^{-1}(\Omega)$ as follows
\begin{gather}
\label{A3a.17a}
f=-\mathrm{div}\,\left(A^{\sharp}\nabla y_{d}+A^\ast\nabla y_{d}\right),
\end{gather}
where $A^{\sharp}$ is an arbitrary symmetric matrix such that $A^{\sharp}\in \mathfrak{A}_{ad,1}$.

Assume that a compact set $Q$ of $L^2(\Omega;\mathbb{S}^3_{skew})$ contains matrix $A^\ast$. Then it is obvious that
the matrix $A^0=A^{\sharp}+A^\ast$ is admissible for OCP \eqref{A3a.1}--\eqref{A3aa.1}, i.e. $A^0\in \mathfrak{A}_{ad}$.

Since $y_d\in D(A^\ast)$, it follows that $f\in H^{-1}(\Omega)$. Hence, $(A^0,y_{d})$ is an admissible pair to the problem \eqref{A3a.1}--\eqref{A3aa.1}. Taking into account that $I(A^0,y_{d})=0$, we finally conclude: the pair $(A^0,y^0):=(A^{\sharp}+A^\ast,y_{d})$ is a non-variational solution to OCP \eqref{A3a.1}--\eqref{A3aa.1}.


\section{On approximation of non-variational solutions to OCP (\ref{2.3})--(\ref{2.3a})}
\label{Sec 5}
We begin this section with some auxiliary results and notions. Let $\e$ be a small parameter.
Assume that the parameter $\e$ varies within a strictly decreasing sequence of positive real numbers which converge to $0$.
Hereinafter in this section,  for any subset $E\subset\Omega$, we denote by
$|E|$ its $N$-dimensional Lebesgue measure $\mathcal{L}^N(E)$.

For every $\e>0$, let $T_\e:\mathbb{R}\rightarrow \mathbb{R}$ be the truncation function defined by
\begin{equation}
\label{5.00}
T_\e(s)=\max\left\{\min\left\{s,\e^{-1}\right\},-\e^{-1}\right\}.
\end{equation}
The following property of $T_\e$ is well known (see \cite{Kind-Sta}). Let $g\in L^2(\Omega)$ be an arbitrary function. Then we have:
\begin{equation}
\label{5.0}
T_\e(g)\in L^\infty(\Omega)\ \forall\,\e>0\quad\text{and}\quad T_\e(g)\rightarrow g\ \text{strongly in }\ L^2(\Omega).
\end{equation}

Let $A^\ast\in L^2\big(\Omega;\mathbb{S}^N_{skew}\big)$ be a matrix mentioned in the control constraints \eqref{2.3c}. For a given sequence $\left\{\e>0\right\}$, we define the cut-off operators ${T}_\e:\mathbb{S}^N_{skew}\rightarrow \mathbb{S}^N_{skew}$ as follows
${T}_\e (A^\ast)=\left[T_\e(a^\ast_{ij})\right]_{i,j=1}^N$ for every $\e>0$. We associate with such operators the following
set of subdomains $\left\{\Omega_\e\right\}_{\e>0}$ of $\Omega$
\begin{equation}
\label{5.0a}
\Omega_\e=\Omega\setminus Q_\e,\quad\forall\,\e>0,
\end{equation}
where
\begin{equation}
\label{5.0b}
Q_\e=\mathrm{closure}\,\left\{x\in\Omega\ :\ \|A^\ast(x)\|_{\mathbb{S}^N_{skew}}:=\max_{1\le i<j\le N}\left|a^\ast_{ij}(x)\right|\ge\e^{-1}\right\}.
\end{equation}
\begin{definition}
\label{Def 5.1}
We say that a matrix $A^\ast\in L^2\big(\Omega;\mathbb{S}^N_{skew}\big)$ is of the $\mathfrak{F}$-type, if there exists a strictly decreasing sequence of positive real numbers $\left\{\e\right\}$ converging to $0$ such that the corresponding collection of
sets $\left\{\Omega_\e\right\}_{\e>0}$, defined by \eqref{5.0a}, possesses the following properties:
\begin{enumerate}
\item[(i)] $\Omega_\e$ are  open connected subsets of $\Omega$ with Lipschitz boundaries for which there exists a positive value $\delta>0$ such that
    $$
    \partial\Omega\subset \partial\Omega_\e\quad\text{ and }\quad\mathrm{dist}\, (\Gamma_\e,\partial\Omega)>\delta,\quad \forall\,\e>0,
    $$
    where $\Gamma_\e=\partial\Omega_\e\setminus\partial\Omega$.
\item[(ii)]  The surface measure of the boundaries of holes $Q_\e=\Omega\setminus\Omega_\e$ is small enough in the following sense:
\begin{equation}
\label{5.13c}
\mathcal{H}^{N-1}(\Gamma_\e)= o(\e)\quad \forall\,\e>0.
\end{equation}
\item[(iii)] For each matrix $A\in L^2(\Omega;\mathbb{M}^N)$ such that $A^{skew}\preceq A^\ast$ a.e. in $\Omega$, and for each element
$h\in D(A)$, there is a constant $c=c(h)$ depending on $h$ and independent of $\e$ such that
\begin{equation}
\label{5.0c}
\left|\int_{\Omega\setminus\Omega_\e} \big(\nabla \varphi,A^{skew}\nabla h\big)_{\mathbb{R}^N}\,dx\right|\le c(h)\sqrt{\frac{|\Omega\setminus\Omega_\e|}{\e}} \left(\int_{\Omega\setminus\Omega_\e} |\nabla \varphi|^2_{\mathbb{R}^N}\,dx\right)^{1/2}
\end{equation}
 for all $\varphi\in C^\infty_0(\mathbb{R}^N)$.
\end{enumerate}
\end{definition}

Thus, if $A^\ast$ is of the $\mathfrak{F}$-type, each of the sets $\Omega_\e$ is locally located on one side of its Lipschitz boundary $\partial\Omega_\e$. Moreover, in this case the boundary $\partial\Omega_\e$ can be divided into two parts $\partial\Omega_\e=\partial\Omega\cup\Gamma_\e$. Observe also that  if $A^\ast\in L^\infty\big(\Omega;\mathbb{S}^N_{skew}\big)$ then the estimate \eqref{5.0c} is obviously true for all matrices $A\in L^2(\Omega;\mathbb{M}^N)$ such that $A^{skew}\preceq A^\ast$.

\begin{remark}
\label{Rem 5.1.0}
As immediately follows from Definition~\ref{Def 5.1}, the sequence of perforated domains $\left\{\Omega_\e\right\}_{\e>0}$ is monotonically expanding, i.e., $\Omega_{\e_k} \subset \Omega_{\e_{k+1}}$ for all $\e_k>\e_{k+1}$, and perimeters of $Q_\e$ tend to zero as $\e\to 0$.
Moreover, because of the structure of subdomains $Q_\e$ (see (\ref{5.0b})) and $L^2$-property of the matrix $A^\ast$, we have
$$
\frac{\left|\Omega\setminus\Omega_\e\right|}{\e^2}\le \int_{\Omega\setminus\Omega_\e} \|A^\ast(x)\|^2_{\mathbb{S}^N_{skew}}\,dx,\ \forall\,\e>0\quad\text{and}\quad
\lim_{\e\to 0}\|A^\ast\|_{L^2\left(\Omega\setminus\Omega_\e;\mathbb{S}^N_{skew}\right)}=0.
$$
This entails the property: $\left|\Omega\setminus\Omega_\e\right|=o(\e^2)$ and, hence,
$\lim_{\e\to 0}\left|\Omega_\e\right|=|\Omega|$.  Besides, in view of the condition~(ii) of Definition~\ref{Def 5.1}, we have
\begin{equation}
\label{5.13cc}
\frac{\e\mathcal{H}^{N-1}(\Gamma_\e)}{|\Omega\setminus\Omega_\e|}=O(1).
\end{equation}
\end{remark}

\begin{remark}
\label{Rem 5.1}
As follows from \cite{Cioran}, $\mathfrak{F}$-property of the skew-symmetric matrix $A^\ast$ implies the so-called strong connectedness of the sets $\left\{\Omega_\e\right\}_{\e>0}$ which means the existence of extension operators $P_\e$ from $H^1_0(\Omega_\e;\partial\Omega)$ to $H^1_0(\Omega)$ such that, for some positive constant $C$ independent of $\e$,
\begin{equation}
\label{5.2aa}
\left\|\nabla\left(P_\e y\right)\right\|_{L^2(\Omega;\mathbb{R}^N)}\le C \left\|\nabla  y \right\|_{L^2(\Omega_\e;\mathbb{R}^N)},\quad\forall\, y\in H^1_0(\Omega_\e;\partial\Omega).
\end{equation}
\end{remark}
\begin{remark}
\label{Rem 5.13}
It is easy to see that in view of the conditions (1)--(ii) of Definition~\ref{Def 5.1} and the Sobolev Trace Theorem \cite{Adams}, for all $\e>0$ small enough, the inequality
\begin{equation}
\label{5.13d}
\|\varphi\|_{L^2(\Gamma_\e)}\le \frac{C}{\sqrt{\mathcal{H}^{N-1}(\Gamma_\e)}}\, \|\varphi\|_{H^1_0(\Omega_\e;\partial\Omega)},\quad \forall\varphi\in C^\infty_0(\Omega)
\end{equation}
holds true with a constant $C=C(\Omega)$ independent of $\e$.
\end{remark}

As a direct consequence of Definition \ref{Def 5.1}, we have the following result.
\begin{proposition}
\label{Prop 5.3}
Assume that $A^\ast\in L^2\big(\Omega;\mathbb{S}^N_{skew}\big)$ is of the $\mathfrak{F}$-type.
Let $\left\{\Omega_\e\right\}_{\e>0}$ be a sequence of perforated domains of $\Omega$ given by \eqref{5.0b}, and let
$\left\{\chi_{\Omega_\e}\right\}_{\e>0}$ be the corresponding sequence of characteristic functions. Then
\begin{equation}
\label{5.3a}
\chi_{\Omega_\e}\rightarrow \chi_{\Omega}\quad\text{strongly in }\ L^2(\Omega)\ \text{ and weakly-$\ast$ in }\ L^\infty(\Omega).
\end{equation}
\end{proposition}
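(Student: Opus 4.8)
The plan is to deduce both convergences directly from the measure estimate $\left|\Omega\setminus\Omega_\e\right|=o(\e^2)$ recorded in Remark~\ref{Rem 5.1.0}, which itself is a consequence of the $L^2$-integrability of $A^\ast$ together with the structure \eqref{5.0b} of the holes $Q_\e$. In particular this estimate yields $\lim_{\e\to 0}\left|\Omega\setminus\Omega_\e\right|=0$, equivalently $\lim_{\e\to 0}\left|\Omega_\e\right|=|\Omega|$, and everything else will be a routine application of elementary measure theory.

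First I would treat the strong convergence in $L^2(\Omega)$. Since $\Omega_\e\subset\Omega$ for every $\e>0$, one has $\chi_{\Omega_\e}-\chi_\Omega=-\chi_{\Omega\setminus\Omega_\e}$ pointwise in $\Omega$, hence
\[
\left\|\chi_{\Omega_\e}-\chi_\Omega\right\|^2_{L^2(\Omega)}=\int_\Omega\left|\chi_{\Omega\setminus\Omega_\e}(x)\right|^2\,dx=\left|\Omega\setminus\Omega_\e\right|\xrightarrow[\e\to 0]{}0 .
\]
This already gives the first assertion of \eqref{5.3a}.

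Next I would establish the weak-$\ast$ convergence in $L^\infty(\Omega)$. The family $\{\chi_{\Omega_\e}\}_{\e>0}$ is uniformly bounded in $L^\infty(\Omega)$ since $\left\|\chi_{\Omega_\e}\right\|_{L^\infty(\Omega)}\le 1$. Testing against an arbitrary $g\in L^1(\Omega)$ and using again $\Omega_\e\subset\Omega$,
\[
\left|\int_\Omega\left(\chi_{\Omega_\e}-\chi_\Omega\right)g\,dx\right|=\left|\int_{\Omega\setminus\Omega_\e}g\,dx\right|\le\int_{\Omega\setminus\Omega_\e}|g|\,dx ,
\]
and the right-hand side tends to $0$ by the absolute continuity of the Lebesgue integral of $g\in L^1(\Omega)$ combined with $\left|\Omega\setminus\Omega_\e\right|\to 0$; hence $\chi_{\Omega_\e}\stackrel{\ast}{\rightharpoonup}\chi_\Omega$ in $L^\infty(\Omega)$. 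Alternatively one may note that the strong $L^2(\Omega)$-convergence already implies $L^1(\Omega)$-convergence on the bounded domain $\Omega$, which together with the uniform $L^\infty$-bound gives the weak-$\ast$ statement.

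I do not expect any genuine obstacle in this proposition: the only substantive input is the bound $\left|\Omega\setminus\Omega_\e\right|=o(\e^2)$, which is furnished by Definition~\ref{Def 5.1} and the $L^2$-property of $A^\ast$ via Remark~\ref{Rem 5.1.0}; once this is granted, both displays above are immediate.
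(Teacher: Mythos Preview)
Your proof is correct and in fact more direct than the one in the paper. Both arguments ultimately rest on the single fact $|\Omega\setminus\Omega_\e|\to 0$ from Remark~\ref{Rem 5.1.0}, but the paper takes a longer route: it first invokes the monotonicity of $\{\chi_{\Omega_\e}\}_{\e>0}$ and the representation of the cut-off operators ${T}_\e(A^\ast)$ to argue that $\chi_{\Omega_\e}\to\chi_\Omega$ almost everywhere, then deduces strong $L^1$-convergence, and finally upgrades to strong $L^2$-convergence using that characteristic functions take values in $\{0,1\}$. Your approach bypasses all of this by simply computing $\|\chi_{\Omega_\e}-\chi_\Omega\|_{L^2(\Omega)}^2=|\Omega\setminus\Omega_\e|$, which is both shorter and uses nothing beyond the measure estimate already on record. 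The weak-$\ast$ argument via absolute continuity of the Lebesgue integral is likewise clean; the paper does not spell this part out separately (it is implicit once strong $L^2$-convergence and the uniform $L^\infty$-bound are known). Neither proof gains anything the other lacks in terms of generality or later applicability in the paper.
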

\begin{proof}
As immediately follows from Definition \ref{Def 5.1}, the sequence $\left\{\chi_{\Omega_\e}\right\}_{\e>0}$ is monotonically increasing, i.e., $\chi_{\Omega_{\e_k}}\le \chi_{\Omega_{\e_{k+1}}}$ almost everywhere in $\Omega$ provided $\e_k>\e_{k+1}$. Taking into account the following representation for the cut-off operators
\[
\|{T}_\e (A^\ast(x))\|_{\mathbb{S}^N_{skew}}= \chi_{\Omega_\e}(x)\|A^\ast(x)\|_{\mathbb{S}^N_{skew}}+(1-\chi_{\Omega_\e}(x))\e^{-1},\quad\forall\,\e>0.
\]
and the condition \eqref{5.0}$_2$, we may suppose, within a subsequence, that
\begin{align*}
&\Big(\chi_{\Omega_\e}(x)\|A^\ast(x)\|_{\mathbb{S}^N_{skew}}+(1-\chi_{\Omega_\e}(x))\e^{-1}\Big)\\
=\ &\Big(\chi_{\Omega_\e}(x)\|A^\ast(x)\|_{\mathbb{S}^N_{skew}}+\chi_{\Omega\setminus\Omega_\e}(x)\e^{-1}\Big)\rightarrow \|A^\ast(x)\|_{\mathbb{S}^N_{skew}}\ \text{a.e. in }\ \Omega\ \text{ as }\ \e\to 0,\\
\text{and }\ & \left|\Omega\setminus\Omega_\e\right|:=\mathcal{L}^N\left(\Omega\setminus\Omega_\e\right)\stackrel{\text{by Remark \ref{Rem 5.1.0}}}{=}\, o(\e^2)\rightarrow 0\quad\text{as }\ \e\to 0.
\end{align*}
Hence, in view of the monotonicity property of $\left\{\chi_{\Omega_\e}\right\}_{\e>0}$, we finally obtain (see \cite{{Delfour:01}})
\[
\chi_{\Omega_\e}\rightarrow\chi_\Omega\quad\text{a.e. in }\ \Omega,\ \text{ and, hence, }\
\chi_{\Omega_\e}\rightarrow\chi_\Omega\ \text{strongly in }\ L^1(\Omega).
\]
Since the strong convergence of characteristic functions in $L^1(\Omega)$ implies their strong convergence in $L^2(\Omega)$, this concludes the proof.
\end{proof}

\begin{definition}
\label{Def 5.3.1}
We say that a sequence $\left\{y_\e\in H^1_0(\Omega_\e;\partial\Omega)\right\}_{\e>0}$ is weakly convergent in variable spaces $H^1_0(\Omega_\e;\partial\Omega)$ if there exists an element $y\in H^1_0(\Omega)$ such that
\[
\lim_{\e\to 0}\int_{\Omega_\e} \left(\nabla y_\e,\nabla \varphi\right)_{\mathbb{R}^N}\,dx=
\int_{\Omega} \left(\nabla y,\nabla \varphi\right)_{\mathbb{R}^N}\,dx,\quad\forall\,\varphi\in C^\infty_0(\Omega)
\]
\end{definition}
\begin{remark}
\label{Rem 5.3.1}
Let $y^\ast\in H^1_0(\Omega)$ be a weak limit in $H^1_0(\Omega)$  of the extended functions  $\left\{P_\e y_\e\in H^1_0(\Omega)\right\}_{\e>0}$.
Since
\begin{gather*}
\int_{\Omega} \left(\nabla y,\nabla \varphi\right)_{\mathbb{R}^N}\,dx=\lim_{\e\to 0}\int_{\Omega_\e} \left(\nabla y_\e,\nabla \varphi\right)_{\mathbb{R}^N}\,dx =
\lim_{\e\to 0}\int_{\Omega} \left(\nabla \left(P_\e y_\e\right),\nabla \varphi\right)_{\mathbb{R}^N}\chi_{\Omega_\e}\,dx\\\, \stackrel{\text{by \eqref{5.3a} and \eqref{5.2aa}}}{=}\,
\int_{\Omega} \left(\nabla y^\ast,\nabla \varphi\right)_{\mathbb{R}^N}\,dx,\quad\forall\,\varphi\in C^\infty_0(\Omega),
\end{gather*}
it follows that
\[
\lim_{\e\to 0}\int_{\Omega_\e} \left(\nabla y_\e,\nabla \varphi\right)_{\mathbb{R}^N}\,dx =
\lim_{\e\to 0}\int_{\Omega} \left(\nabla \left(P_\e y_\e\right),\nabla \varphi\right)_{\mathbb{R}^N}\,dx
\]
and, hence, the weak limit in the sense of Definition \ref{Def 5.3.1} does not depend on the choice of extension operators $P_\e:H^1_0(\Omega_\e;\partial\Omega)\to H^1_0(\Omega)$ with the properties \eqref{5.2aa}.
\end{remark}

Let us consider the following sequence of regularized OCPs associated with perforated domains $\Omega_\e$
\begin{equation}
\label{5.4} \left\{\ \left<\inf_{(A,v,y)\in\Xi_\e}
I_\e(A,v,y)\right>,\quad \e\to 0 \right\},
\end{equation}
where
\begin{gather}
\label{5.5a}
I_\e(A,v,y):=\left\|y-y_d\right\|^2_{L^2(\Omega_\e)}
 + \int_{\Omega_\e}\left(\nabla y, A^{sym}\nabla y\right)_{\mathbb{R}^N}\,dx + \frac{1}{\e^\sigma}\|v\|^2_{H^{-\frac{1}{2}}(\Gamma_\e)},\\[2ex]
\label{5.5b}
\Xi_\e=\left\{(A,v,y)\ \left|\
\begin{array}{c}
-\div\big(A\nabla y\big) = f_\e\quad\text{in }\ \Omega_\e,\\[1ex]
y=0\text{ on }\partial\Omega,\quad
\partial y/ \partial \nu_{A}=v\ \text{on }\Gamma_\e,\\[1ex]
v\in H^{-\frac{1}{2}}(\Gamma_\e),\ y\in H^1_0(\Omega_\e;\partial\Omega),\\[1ex]
A=A^{sym}+A^{skew},\\[1ex]
A\in \mathfrak{A}^\e_{ad}=\mathfrak{A}_{ad,1}\oplus \mathfrak{A}^\e_{ad,2},\
\mathfrak{A}^\e_{ad,2}=U_{a,2}\cap U_{b,2}^\e,\\[1ex]
U_{b,2}^\e=\big\{ A^{skew}=[a_{i\,j}]\in L^2(\Omega;\mathbb{S}^N_{skew})\ :\\[1ex]
\ A^{skew}(x)\preceq A^\ast(x)\ \text{a.e. in }\ \Omega_\e\big\}.
\end{array}
\right.\right\}.
\end{gather}
Here, $y_d\in L^2(\Omega)$ and $f_\e\in
L^2(\Omega)$ are given functions,  $\nu$ is the outward normal unit vector at $\Gamma_\e$ to $\Omega_\e$,
$v\in H^{-\frac{1}{2}}(\Gamma_\e)$ is considered as a fictitious control, and $\sigma$ is a positive number such that
\begin{equation}
\label{5.5c}
\e^{-\sigma}\mathcal{H}^{N-1}(\Gamma_\e)\rightarrow 0\quad\text{as }\ \e\to 0\quad(\text{see \eqref{5.13c}}).
\end{equation}

Using the fact that $A\in L^\infty(\Omega_\e;\mathbb{M}^N)$ for every $\e>0$ and each $A\in \mathfrak{A}^\e_{ad}$, and proceeding as in the proof of Theorem \ref{Th 4.1}, we arrive at the following obvious result.
\begin{theorem}
\label{Th 5.6}
For every $\e>0$ the problem $\left<\inf_{(A,v,y)\in\Xi_\e}
I_\e(A,v,y)\right>$ admits at least one minimizer  $(A^0_\e,v^0_\e,y^0_\e)\in\Xi_\e$.
\end{theorem}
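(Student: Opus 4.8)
The plan is to run the direct method of the calculus of variations for a \emph{fixed} $\e>0$, along the lines of the proofs of Theorems~\ref{Th 2.14} and \ref{Th 4.1}. The decisive feature is that, by the construction of $\Omega_\e$ (see \eqref{5.0a}--\eqref{5.0b}) and the definition of $\mathfrak{A}^\e_{ad}$, every admissible control $A=A^{sym}+A^{skew}$ belongs to $L^\infty(\Omega_\e;\mathbb{M}^N)$, with $\|A^{sym}\|_{L^\infty(\Omega_\e)}\le\beta$ and $\|A^{skew}\|_{L^\infty(\Omega_\e;\mathbb{S}^N_{skew})}\le\e^{-1}$. Hence the skew part drops out of the quadratic form, $\big(\nabla y,A^{skew}(x)\nabla y\big)_{\mathbb{R}^N}\equiv0$, and the bilinear form $a(y,\varphi)=\int_{\Omega_\e}\big(\nabla\varphi,A\nabla y\big)_{\mathbb{R}^N}\,dx$ is bounded and coercive on $H^1_0(\Omega_\e;\partial\Omega)$ with coercivity constant $\alpha$. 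Since $f_\e\in L^2(\Omega_\e)$ and, for each $v\in H^{-\frac12}(\Gamma_\e)$, the functional $\varphi\mapsto\int_{\Omega_\e}f_\e\varphi\,dx+\langle v,\varphi\rangle_{H^{-\frac12}(\Gamma_\e);H^{\frac12}(\Gamma_\e)}$ is continuous on $H^1_0(\Omega_\e;\partial\Omega)$ by the Poincar\'e and trace inequalities, the Lax--Milgram lemma yields a unique state $y=y(A,v)\in H^1_0(\Omega_\e;\partial\Omega)$. In particular $\Xi_\e\ne\emptyset$ and $I_\e$ is finite on $\Xi_\e$.

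First I would pick a minimizing sequence $\{(A_k,v_k,y_k)\}_{k\in\mathbb{N}}\subset\Xi_\e$ with $I_\e(A_k,v_k,y_k)\to m_\e:=\inf_{\Xi_\e}I_\e\ge0$, so that $\sup_k I_\e(A_k,v_k,y_k)\le C$. Since the three summands of $I_\e$ in \eqref{5.5a} are nonnegative, coercivity gives $\alpha\|y_k\|^2_{H^1_0(\Omega_\e;\partial\Omega)}\le\int_{\Omega_\e}\big(\nabla y_k,A^{sym}_k\nabla y_k\big)_{\mathbb{R}^N}\,dx\le C$ and $\e^{-\sigma}\|v_k\|^2_{H^{-\frac12}(\Gamma_\e)}\le C$; thus $\{y_k\}$ is bounded in $H^1_0(\Omega_\e;\partial\Omega)$ and $\{v_k\}$ in $H^{-\frac12}(\Gamma_\e)$. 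On the control side, $\{A^{sym}_k\}\subset\mathfrak{A}_{ad,1}$ is relatively compact in every $L^p(\Omega;\mathbb{S}^N_{sym})$, $p\in[1,+\infty)$, by Proposition~\ref{Prop 2.3ddd}, while $\{A^{skew}_k\}$, being dominated by $A^\ast$ through $\preceq$, admits a subsequence with the compactness used in the proof of Proposition~\ref{Prop 2.3f}. Extracting a subsequence I obtain $y_k\rightharpoonup\widehat y$ in $H^1_0(\Omega_\e;\partial\Omega)$ (hence $y_k\to\widehat y$ in $L^2(\Omega_\e)$ by Rellich), $v_k\rightharpoonup\widehat v$ in $H^{-\frac12}(\Gamma_\e)$, $A^{sym}_k\to\widehat A^{sym}$ in $L^p$ for all $p<+\infty$ (and $\ast$-weakly in $L^\infty$), and $A^{skew}_k\to\widehat A^{skew}$ strongly in $L^2(\Omega;\mathbb{S}^N_{skew})$ and a.e.\ in $\Omega$, with $\widehat A^{sym}\in\mathfrak{A}_{ad,1}$ and, since $\preceq$ is stable under a.e.\ limits, $\widehat A=\widehat A^{sym}+\widehat A^{skew}\in\mathfrak{A}^\e_{ad}$.

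Next I would pass to the limit in the weak form of the state equation exactly as in Theorem~\ref{Th 4.1}: for $\varphi$ ranging over smooth functions vanishing on $\partial\Omega$ (a dense subset of $H^1_0(\Omega_\e;\partial\Omega)$), I split, using skew-symmetry, $\int_{\Omega_\e}\big(\nabla\varphi,A^{skew}_k\nabla y_k\big)_{\mathbb{R}^N}\,dx=-\int_{\Omega_\e}\big((A^{skew}_k-\widehat A^{skew})\nabla\varphi,\nabla y_k\big)_{\mathbb{R}^N}\,dx-\int_{\Omega_\e}\big(\widehat A^{skew}\nabla\varphi,\nabla y_k\big)_{\mathbb{R}^N}\,dx$; the first term is $O\big(\|\nabla\varphi\|_{L^\infty}\sup_k\|\nabla y_k\|_{L^2}\|A^{skew}_k-\widehat A^{skew}\|_{L^2(\Omega_\e)}\big)\to0$ and the second tends to $-\int_{\Omega_\e}\big(\widehat A^{skew}\nabla\varphi,\nabla\widehat y\big)_{\mathbb{R}^N}\,dx$ because $\widehat A^{skew}\nabla\varphi\in L^2(\Omega_\e;\mathbb{R}^N)$ and $\nabla y_k\rightharpoonup\nabla\widehat y$; the symmetric part is handled identically via the strong $L^2$ convergence of $A^{sym}_k$; and $\langle v_k,\varphi\rangle_{H^{-\frac12}(\Gamma_\e);H^{\frac12}(\Gamma_\e)}\to\langle\widehat v,\varphi\rangle_{H^{-\frac12}(\Gamma_\e);H^{\frac12}(\Gamma_\e)}$ since $\varphi|_{\Gamma_\e}$ is a fixed element of $H^{\frac12}(\Gamma_\e)$. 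Hence $(\widehat A,\widehat v,\widehat y)\in\Xi_\e$. For optimality: $\|y_k-y_d\|^2_{L^2(\Omega_\e)}\to\|\widehat y-y_d\|^2_{L^2(\Omega_\e)}$ by strong $L^2$ convergence; writing the energy term as $\|(A^{sym}_k)^{1/2}\nabla y_k\|^2_{L^2(\Omega_\e;\mathbb{R}^N)}$ and using $(A^{sym}_k)^{1/2}\to(\widehat A^{sym})^{1/2}$ in $L^2$ together with $\nabla y_k\rightharpoonup\nabla\widehat y$ gives $(A^{sym}_k)^{1/2}\nabla y_k\rightharpoonup(\widehat A^{sym})^{1/2}\nabla\widehat y$ in $L^2(\Omega_\e;\mathbb{R}^N)$, whence $\liminf_k\int_{\Omega_\e}\big(\nabla y_k,A^{sym}_k\nabla y_k\big)_{\mathbb{R}^N}\,dx\ge\int_{\Omega_\e}\big(\nabla\widehat y,\widehat A^{sym}\nabla\widehat y\big)_{\mathbb{R}^N}\,dx$ by weak lower semicontinuity of the $L^2$-norm; and $\liminf_k\e^{-\sigma}\|v_k\|^2_{H^{-\frac12}(\Gamma_\e)}\ge\e^{-\sigma}\|\widehat v\|^2_{H^{-\frac12}(\Gamma_\e)}$ since $v\mapsto\|v\|_{H^{-\frac12}(\Gamma_\e)}$ is weakly lower semicontinuous on the fixed Hilbert space $H^{-\frac12}(\Gamma_\e)$. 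Adding up, $I_\e(\widehat A,\widehat v,\widehat y)\le m_\e$, so $(\widehat A,\widehat v,\widehat y)$ is the desired minimizer $(A^0_\e,v^0_\e,y^0_\e)$.

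The only point requiring care — precisely as in Theorems~\ref{Th 2.14} and \ref{Th 4.1} — is securing the strong $L^2$-compactness of the skew-symmetric parts $\{A^{skew}_k\}$ of the minimizing sequence, since it is what permits the identification of the weak limit of the product $A^{skew}_k\nabla y_k$; a mere weak-$\ast$ $L^\infty(\Omega_\e)$ bound on $A^{skew}_k$ would not suffice against the only weakly convergent gradients $\nabla y_k$. Everything else (nonemptiness via Lax--Milgram, the a priori bounds, and the lower semicontinuity of each term of $I_\e$) is routine, which is why the authors describe the result as obvious.
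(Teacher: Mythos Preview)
Your proposal is correct and follows precisely the route the paper indicates: the authors give no detailed proof but simply say the result follows ``using the fact that $A\in L^\infty(\Omega_\e;\mathbb{M}^N)$ for every $\e>0$ and each $A\in \mathfrak{A}^\e_{ad}$, and proceeding as in the proof of Theorem~\ref{Th 4.1}'', which is exactly the direct-method argument you have spelled out in full (Lax--Milgram for nonemptiness, coercivity bounds from the cost, compactness of controls via Propositions~\ref{Prop 2.3d}--\ref{Prop 2.3f}, passage to the limit in the weak formulation, and lower semicontinuity of each term of $I_\e$).

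One minor point of exposition: the strong $L^2$-compactness of $\{A^{skew}_k\}$ that you correctly flag as the delicate step does not come from the pointwise domination $A^{skew}_k\preceq A^\ast$ alone, but from membership in the compact set $Q$ (the constraint $U_{b,2}$ in \eqref{2.3cc}); the domination is used only to verify that the limit remains admissible. This is exactly how Proposition~\ref{Prop 2.3f} is argued, so your invocation of it is correct---just be aware of where the compactness actually originates.
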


In order to study the asymptotic behavior of the sequences of admissible solutions
$\left\{(A_\e,v_\e,y_\e)\in  \Xi_\e\subset \mathfrak{A}^\e_{ad}\times H^{-\frac{1}{2}}(\Gamma_\e)\times H^1_0(\Omega_\e;\partial\Omega)\right\}_{\e>0}$
in the scale of variable spaces,  we adopt the following concept.
\begin{definition}
\label{Def 5.10} We
say that a sequence
$\left\{(A_\e,v_\e,y_\e)\in  \Xi_\e\right\}_{\e>0}$ weakly converges to a pair $(A,y)\in \mathfrak{A}_{ad}\times H^1_0(\Omega)$ in the scale of spaces
\begin{equation}
\label{5.10a}
\left\{L^2(\Omega;\mathbb{M}^N)\times H^{-\frac{1}{2}}(\Gamma_\e)\times H^1_0(\Omega_\e;\partial\Omega)\right\}_{\e>0},
\end{equation}
(shortly, $(A_\e,v_\e,y_\e)\,\stackrel{w}{\rightarrow}\, (A,y)$), if
\begin{align}
\label{5.10aa}
A_\e:=A_\e^{sym}+A_\e^{skew}&\rightarrow {A}^{sym}+{A}^{skew}=:{A}\ \text{ in }\ L^2(\Omega;\mathbb{M}^N),\\
\label{4.5.1ab}
A_\e^{sym}&\rightarrow {A}^{sym}\quad\text{in }\ L^p(\Omega;\mathbb{S}^N_{sym}),\ \forall\,p\in [1,+\infty),\\
\label{4.5.1ac}
A_\e^{sym}\,&\stackrel{\ast}{\rightharpoonup}\, {A}^{sym}\quad\text{in }\ L^\infty(\Omega;\mathbb{S}^N_{sym}),\\
\label{4.5.1ad} A_\e^{skew}&\rightarrow {A}^{skew}\quad\text{in }\ L^2(\Omega;\mathbb{S}^N_{skew}),\\
\label{5.10b}
 y_\e&\rightharpoonup y\ \text{ in }\ H^1_0(\Omega_\e;\partial\Omega), \\
\label{5.10c}
\text{and}\quad \sup_{\e>0}&\frac{1}{\mathcal{H}^{N-1}(\Gamma_\e)}\,\|v_\e\|^2_{H^{-\frac{1}{2}}(\Gamma_\e)}<+\infty.
\end{align}
\end{definition}

We are now in a position to state the main result of this section.
\begin{theorem}
\label{Th 5.13}
Assume that the matrix $A^\ast\in L^2\big(\Omega;\mathbb{S}^N_{skew}\big)$ is of the $\mathfrak{F}$-type and such that
\begin{equation}
\label{5.13.1}
\begin{array}{c}
\text{the equality }\quad[y,y]_{A}=0\quad\text{ does not hold in }\  D(A)\\
\text{for all $A\in \mathfrak{A}_{ad}$ with $A^{skew}= A^\ast$ a.e. in $\Omega$.}
\end{array}
\end{equation}
Let
$\left\{\Omega_\e\right\}_{\e>0}$ be a sequence of perforated subdomains of $\Omega$ associated with matrix $A^\ast$.
Let $f\in H^{-1}(\Omega)$ and  $y_d\in  L^2(\Omega)$ be given distributions.
Then the original optimal control problem $\left<\inf_{(A,y)\in\Xi} I(A,y)\right>$, where the sequence $\left\{f_\e\in L^2(\Omega)\right\}_{\e>0}$ is such that $\chi_{\Omega_\e}f_\e\rightarrow f$ strongly in $H^{-1}(\Omega)$, is
variational limit of the sequence \eqref{5.4}--\eqref{5.5b} as the parameter $\e$ tends to zero.
\end{theorem}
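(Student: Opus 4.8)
The plan is to verify directly the three defining conditions (d), (dd), (ddd) of Definition~\ref{Def 1.4} for the sequence of perforated OCPs \eqref{5.4}--\eqref{5.5b} against the limit problem $\left<\inf_{(A,y)\in\Xi} I(A,y)\right>$, taking $\mu$-convergence to be the weak convergence of Definition~\ref{Def 5.10}, i.e. $\stackrel{w}{\rightarrow}$. The ambient Banach space is $L^2(\Omega;\mathbb{M}^N)\times H^1_0(\Omega)$, and since the control component already lives there, condition (d) will be immediate with $\delta=0$: for any $(A,y)$ we simply take $A_\e=A$ restricted suitably (exploiting the $\mathfrak{F}$-property so that $A\in U^\e_{b,2}$ once $\e$ is small, since the truncations $T_\e$ only modify $A^\ast$ outside $\Omega_\e$), $v_\e=0$ on $\Gamma_\e$, and $y_\e$ the corresponding solution of the perforated boundary value problem with $\partial y/\partial\nu_A=0$.

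For condition (dd), I would take a sequence $(A_k,v_k,y_k)\in\Xi_{\e_k}$ with $(A_k,v_k,y_k)\stackrel{w}{\rightarrow}(A,y)$, so that \eqref{5.10aa}--\eqref{5.10c} hold. First one must show the limit pair $(A,y)$ is admissible, i.e. $(A,y)\in\Xi$: this means passing to the limit in the weak formulation over $\Omega_\e$. The symmetric part converges by \eqref{4.5.1ac} together with $\chi_{\Omega_\e}\to\chi_\Omega$ strongly in $L^2$ from Proposition~\ref{Prop 5.3}; the skew-symmetric term is handled exactly as in the proof of Theorem~\ref{Th 4.1}, splitting $A^{skew}_k=(A^{skew}_k-A^{skew})+A^{skew}$ and using the strong $L^2$-convergence \eqref{4.5.1ad} plus the weak convergence of $\nabla y_k$. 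The boundary term $\left<v_k,\varphi\right>_{H^{-1/2}(\Gamma_{\e_k});H^{1/2}(\Gamma_{\e_k})}$ vanishes in the limit because, by \eqref{5.10c}, $\|v_k\|_{H^{-1/2}(\Gamma_{\e_k})}^2=O(\mathcal{H}^{N-1}(\Gamma_{\e_k}))=o(\e_k)\to0$ while $\|\varphi\|_{H^{1/2}(\Gamma_{\e_k})}$ stays bounded by the trace inequality \eqref{5.13d} of Remark~\ref{Rem 5.13} (here one uses $\mathcal{H}^{N-1}(\Gamma_\e)\to 0$ carefully, or rather that $v_k$ is small enough to beat the trace blow-up thanks to the choice \eqref{5.5c} of $\sigma$). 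The inequality $I(A,y)\le\liminf_k I_{\e_k}(A_k,v_k,y_k)$ then follows from the nonnegativity of $\e^{-\sigma}\|v_\e\|^2$, the strong convergence $\chi_{\Omega_\e}f_\e\to f$ making the tracking term pass, the compact embedding $H^1_0\hookrightarrow L^2$ for $\|y-y_d\|^2$, and for the energy term the lower-semicontinuity estimate \eqref{2a.5a} applied on the expanding domains (writing $(A^{sym}_k)^{1/2}\nabla y_k$ as before and using $\chi_{\Omega_\e}\to1$).

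The substantial part is condition (ddd). Given an admissible $(A^\sharp,y^\sharp)\in\Xi$, I must construct a $(\Gamma,\delta)$-realizing sequence $(A_\e,v_\e,y_\e)\in\Xi_\e$ with $(A_\e,v_\e,y_\e)\stackrel{w}{\rightarrow}(\widehat A,\widehat y)$ for some $(\widehat A,\widehat y)$ within $\delta$ of $(A^\sharp,y^\sharp)$ and $I(A^\sharp,y^\sharp)\ge\limsup_\e I_\e(A_\e,v_\e,y_\e)-\widehat C\delta$. The natural choice is $A_\e=A^\sharp$ (legitimate for $\e$ small since $A^{\sharp,skew}\preceq A^\ast$ and $A^\ast=T_\e(A^\ast)$ on $\Omega_\e$), and $y_\e$ the restriction of $y^\sharp$ to $\Omega_\e$ with the \emph{fictitious control $v_\e:=\partial y^\sharp/\partial\nu_{A^\sharp}$ on $\Gamma_\e$ chosen precisely so that $(A_\e,v_\e,y_\e)\in\Xi_\e$} --- i.e., $v_\e$ is defined by duality via $\left<v_\e,\varphi\right>_{\Gamma_\e}=\int_{\Omega_\e}(\nabla\varphi,A^\sharp\nabla y^\sharp)-\left<f_\e,\varphi\right>_{\Omega_\e}$. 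The crux, and the main obstacle, is the identification of $\lim_\e\left<v_\e,y_\e\right>_{H^{-1/2}(\Gamma_\e);H^{1/2}(\Gamma_\e)}$ together with the verification that $\e^{-\sigma}\|v_\e\|_{H^{-1/2}(\Gamma_\e)}^2\to0$ --- this is exactly the ``hidden singular energy'' term announced in the introduction. To control it, I would write, for $\varphi\in C^\infty_0(\mathbb{R}^N)$,
\[
\left<v_\e,\varphi\right>_{\Gamma_\e}
=\int_{\Omega\setminus\Omega_\e}(\nabla\varphi,A^{\sharp,sym}\nabla y^\sharp)_{\mathbb{R}^N}\,dx
+\int_{\Omega\setminus\Omega_\e}(\nabla\varphi,A^{\sharp,skew}\nabla y^\sharp)_{\mathbb{R}^N}\,dx
-\left<f,\varphi\right>_{\Omega\setminus\Omega_\e},
\]
using $(A^\sharp,y^\sharp)\in\Xi$. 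The first and third terms are $o(1)$ times $\|\varphi\|_{H^1_0}$ by absolute continuity of the integral since $|\Omega\setminus\Omega_\e|=o(\e^2)$ (Remark~\ref{Rem 5.1.0}). The skew term is estimated by hypothesis \eqref{5.0c} of the $\mathfrak{F}$-type definition applied to $h=y^\sharp\in D(A^\sharp)$, giving $|\int_{\Omega\setminus\Omega_\e}(\nabla\varphi,A^{\sharp,skew}\nabla y^\sharp)|\le c(y^\sharp)\sqrt{|\Omega\setminus\Omega_\e|/\e}\,\|\varphi\|_{H^1_0}$, and $\sqrt{|\Omega\setminus\Omega_\e|/\e}=o(\e^{1/2})$. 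Combining the three, $\|v_\e\|_{H^{-1/2}(\Gamma_\e)}\le C\sqrt{\mathcal{H}^{N-1}(\Gamma_\e)}\cdot o(1)$ after converting the $H^1_0(\Omega_\e)$-bound into an $H^{1/2}(\Gamma_\e)$-bound via the trace theorem \eqref{5.13d}, whence \eqref{5.10c} holds and, by the choice \eqref{5.5c} of $\sigma$, $\e^{-\sigma}\|v_\e\|_{H^{-1/2}(\Gamma_\e)}^2\to0$. Then $\limsup_\e I_\e(A_\e,v_\e,y_\e)\le\|y^\sharp-y_d\|^2_{L^2(\Omega)}+\int_\Omega(\nabla y^\sharp,A^{\sharp,sym}\nabla y^\sharp)=I(A^\sharp,y^\sharp)$, using $\chi_{\Omega_\e}\to\chi_\Omega$ and $|\Omega\setminus\Omega_\e|\to0$; and $(A_\e,v_\e,y_\e)\stackrel{w}{\rightarrow}(A^\sharp,y^\sharp)$ by construction (the extension operators $P_\e$ of Remark~\ref{Rem 5.1} guarantee $y_\e\rightharpoonup y^\sharp$ in the sense of variable spaces), so we may take $\widehat y=y^\sharp$, $\widehat C=0$, $\delta$ arbitrary. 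Assembling (d), (dd), (ddd) and invoking the definition of variational limit completes the proof; hypothesis \eqref{5.13.1} is what guarantees the limit problem genuinely carries non-variational optimal solutions, so that the construction is not vacuous.
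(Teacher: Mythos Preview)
Your treatment of (d) and (dd) is essentially correct and matches the paper's Step~1 and Step~3.

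The genuine gap is in (ddd). Your realizing triple $(A^\sharp, v_\e, y^\sharp|_{\Omega_\e})$ is \emph{not} in $\Xi_\e$. The constraint set $\Xi_\e$ demands $-\div(A\nabla y)=f_\e$ in $\Omega_\e$, whereas the restriction $y^\sharp|_{\Omega_\e}$ satisfies $-\div(A^\sharp\nabla y^\sharp)=f$ there, and the theorem is stated for an \emph{arbitrary} sequence $f_\e$ with $\chi_{\Omega_\e}f_\e\to f$ in $H^{-1}(\Omega)$. Concretely, your $v_\e$, defined by $\langle v_\e,\varphi\rangle=\int_{\Omega_\e}(\nabla\varphi,A^\sharp\nabla y^\sharp)-\int_{\Omega_\e}f_\e\varphi$, does not vanish on $\varphi\in C^\infty_0(\Omega_\e)$: there it equals $\langle f-\chi_{\Omega_\e}f_\e,\varphi\rangle$, which is generically nonzero. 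Hence $v_\e$ is not supported on $\Gamma_\e$ and does not lie in $H^{-1/2}(\Gamma_\e)$. The subsequent decomposition of $\langle v_\e,\varphi\rangle$ into three integrals over $\Omega\setminus\Omega_\e$ already presupposes $\langle f,\varphi\rangle_{\Omega_\e}=\int_{\Omega_\e}f_\e\varphi$, which fails.

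This forces you back to the paper's construction for (ddd): take $A_\e=A^\sharp$, set $v_\e=w_\e+\partial h/\partial\nu_{A^\sharp}$ with $h\in L(A^\sharp)$ to be chosen, and let $y_\e$ be the \emph{solution} of the perforated problem (not the restriction of $y^\sharp$). One decomposes $y_\e=y_{\e,1}+y_{\e,2}$, shows $y_{\e,1}\rightharpoonup\widehat y_1$ (a weak solution of the limit equation) and $y_{\e,2}\rightharpoonup\widehat y_2\in L(A^\sharp)$, and then fixes $h=y^\sharp-\widehat y_1$ so that the limit is exactly $y^\sharp$. The delicate point is now \emph{equality} in the energy limit $\int_{\Omega_\e}(\nabla y_\e,A^{\sharp,sym}\nabla y_\e)\to\int_\Omega(\nabla y^\sharp,A^{\sharp,sym}\nabla y^\sharp)$, which would have been trivial in your (inadmissible) restriction approach but is not here. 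The paper achieves it by rewriting the weak formulation in ``extended'' form with an extra, identically vanishing term $\int_\Omega(\nabla\psi,A^\sharp\nabla h^\ast)$ for $h^\ast\in L(A^\sharp)$, and then choosing a specific multiple $\widehat h^\ast$ of $h^\ast$ so as to cancel the uncontrolled limit $\xi_1$ of the boundary pairings $\langle v_\e,y_\e\rangle_{\Gamma_\e}$ against $[y^\sharp,y^\sharp]_{A^\sharp}$. That cancellation requires the denominator $[h^\ast,y^\sharp]_{A^\sharp}+\int_\Omega(\nabla y^\sharp,A^{\sharp,sym}\nabla h^\ast)$ to be nonzero, and it is precisely hypothesis~\eqref{5.13.1} that supplies such an $h^\ast$. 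So \eqref{5.13.1} is not a side remark about non-vacuousness of the limit problem---it is the working hypothesis that closes the energy identity in (ddd).
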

\begin{remark}
\label{Rem 5.13.1} As follows from Theorem~\ref{Th 4.18}, if there exists a matrix $A\in \mathfrak{A}_{ad}$ and an element $\widehat{y}\in D(A)$ such that $[\widehat{y},\widehat{y}\,]_A\ne 0$, then OCP \eqref{2.3}--\eqref{2.3a} may admit a non-variational solution. So, \eqref{5.13.1} can be interpreted as a necessary (but not sufficient) condition of the existence of non-variational solutions to OCP \eqref{2.3}--\eqref{2.3a}. On the other hand, condition \eqref{5.13.1} may imply the existence of at least one pair $(A,h^\ast)\in \mathfrak{A}_{ad}\times D(A)$ such that $h^\ast\not\in L^\infty(\Omega)$ and $h^\ast$ is a
solution to homogeneous problem \eqref{4.17a}. It means that in this case the linear form
\[
[h^\ast,\varphi]= \int_\Omega \big(\nabla \varphi,A^{skew}(x)\nabla h^\ast\big)_{\mathbb{R}^N}\,dx,\quad  \forall\,\varphi\in C^\infty_0(\mathbb{R}^N)
\]
has a non-trivial extension onto the entire set $D(A)$ following the rule \eqref{2.8a}.
\end{remark}
\begin{proof}[Proof of Theorem~\ref{Th 5.13}]
Since each of the optimization problems $\left<\inf_{(A,v,y)\in\Xi_\e}
I_\e(A,v,y)\right>$ lives in the corresponding space $\mathfrak{A}^\e_{ad}\times H^{-\frac{1}{2}}(\Gamma_\e)\times H^1_0(\Omega_\e;\partial\Omega)$, we have to show that in this case  all conditions of Definition~\ref{Def 1.4} hold true. To do so, we divide this proof into three steps.

Step~1. We show that the space $\mathfrak{A}_{ad}\times H^1_0(\Omega)$ possesses the weak approximation property with respect to the weak convergence in the scale of spaces \eqref{5.10a}. Indeed, let $\delta=0$ and let $(A,y)\in \mathfrak{A}_{ad}\times H^1_0(\Omega)$  be an arbitrary pair. We define $h$ as an element of $C^\infty_0(\Omega)$ such that
\begin{equation}
\label{5.13.1a}
\mathrm{div}\,\left(A^{sym}\nabla h + A^{skew}\nabla h\right)\in L^2(\Omega).
\end{equation}
Hence, $h\in D(A)$. As a result, we construct the sequence $$\left\{(A_\e,v_\e,y_\e)\in L^2(\Omega;\mathbb{M}^N)\times H^{-\frac{1}{2}}(\Gamma_\e)\times H^1_0(\Omega_\e;\partial\Omega)\right\}_{\e>0}$$
as follows
\[
A_\e=A,\quad v_\e=\frac{\partial h}{\partial\nu_{A}}\ \text{ on }\ {\Gamma_\e},\quad\text{and }\ y_\e=y,\quad\forall\,\e>0.
\]
Here,
$\frac{\partial h}{ \partial \nu_A}=\sum_{i,j=1}^N \Big(a_{ij}(x)\Big)\frac{\partial h}{\partial x_j} \cos(\nu,x_i)$,
$\cos(n,x_i)$ is the $i$-th directing cosine of $\nu$, and $\nu$ is the outward unit normal vector at $\Gamma_\e$ to $\Omega_\e$.

In view of \eqref{5.3a}, we have $\chi_{\Omega_\e}\,\stackrel{\ast}{\rightharpoonup}\, \chi_{\Omega}$ in $L^\infty(\Omega)$. Hence,
\begin{align*}
\lim_{\e\to 0}\int_{\Omega_\e}\left(\nabla\varphi,\nabla y_\e\right)_{\mathbb{R}^N}\,dx &=
\lim_{\e\to 0}\int_{\Omega}\left(\nabla\varphi,\nabla y\right)_{\mathbb{R}^N}\chi_{\Omega_\e}\,dx\\&=
\int_{\Omega}\left(\nabla\varphi,\nabla y\right)_{\mathbb{R}^N}\,dx\quad\forall\,\varphi\in C^\infty_0(\Omega),
\end{align*}
i.e., $y_\e \rightharpoonup y$ in $H^1_0(\Omega_\e;\partial\Omega)$ as $\e\to 0$.

It remains to show that the sequence $\left\{v_\e\in H^{-\frac{1}{2}}(\Gamma_\e)\right\}_{\e>0}$ is bounded in the sense of Definition~\ref{Def 5.10}. Following Green's identity, for an arbitrary $\varphi\in C^\infty_0(\Omega)$, we get
\begin{align*}
\Big| \Big<\frac{\partial h}{\partial\nu_{A}},&\varphi\Big>_{H^{-\frac{1}{2}}(\Gamma_\e);H^{\frac{1}{2}}(\Gamma_\e)}\Big|\le
\left|\int_{Q_\e} \mathrm{div}\,\left(A^{sym}(x)\nabla h + A^{skew}(x)\nabla h\right)\varphi\,dx\right|\\
&+\left|\int_{Q_\e} \left(\nabla\varphi, A^{sym}(x)\nabla h + A^{skew}(x)\nabla h\right)_{\mathbb{R}^N}\,dx\right|\\
\le\ & \left(\int_{Q_\e} |\mathrm{div}\,\left(A^{sym}(x)\nabla h + A^{skew}(x)\nabla h\right)|^2\,dx\right)^{1/2}\|\varphi\|_{L^2(Q_\e)}\\
&+\beta\|\nabla h\|_{L^2(Q_\e;\mathbb{R}^N)}\|\nabla \varphi\|_{L^2(Q_\e;\mathbb{R}^N)}\\ &\stackrel{\text{by \eqref{5.0c}}}{+}\,
c(h)\sqrt{\frac{|\Omega\setminus\Omega_\e|}{\e}} \left(\int_{\Omega\setminus\Omega_\e} |\nabla \varphi|^2_{\mathbb{R}^N}\,dx\right)^{1/2}\\ \le\ & \left( I_1+I_2 +I_3\right) \|\varphi\|_{H^1(\Omega\setminus\Omega_\e)}.
\end{align*}
Since $\left|\Omega\setminus\Omega_\e\right|=o(\e^2)$ by the $\mathfrak{F}$-type properties of $A^\ast$,
it follows that there exists a suitable change of variables and a constant $C>0$ independent of $\e$ such that
\begin{align}
I_2=\beta\|\nabla h\|_{L^2(Q_\e;\mathbb{R}^N)}=\beta&\left(C\frac{|\Omega\setminus\Omega_\e|}{\e}\int_{\Omega\setminus\Omega_1}\|\nabla h(y)\|^2_{\mathbb{R}^N}\,dy\right)^{1/2}\notag\\
\,\stackrel{\text{by \eqref{5.13cc}}}{\le}&\, C_1 \sqrt{\mathcal{H}^{N-1}(\Gamma_\e)}\,
\|h\|_{H^1(\Omega)}.\label{5.12a}
\end{align}
Following the similar arguments, in view of \eqref{5.13.1a}, we get
\[
I_1=\Big\|\mathrm{div}\,\left(\nabla h + A(x)\nabla h\right)\Big\|_{L^2(Q_\e)}\le C_2(h)\sqrt{\mathcal{H}^{N-1}(\Gamma_\e)}.
\]
As a result, summing up the previous inequalities, we come to the following conclusion: there exists a constant $C=C(h)$ independent of $\e$ such that
\[
\frac{1}{\sqrt{\mathcal{H}^{N-1}(\Gamma_\e)}}\left<\frac{\partial h}{\partial\nu_{A}},\varphi\right>_{H^{-\frac{1}{2}}(\Gamma_\e);H^{\frac{1}{2}}(\Gamma_\e)}
\le C(h)\|\varphi\|_{H^1(\Omega\setminus\Omega_\e)}
\,\quad\forall\,\varphi\in C^\infty_0(\Omega).
\]
Hence,
\begin{equation}
\label{5.13}
\sup_{\e> 0}\left( \frac{1}{\sqrt{\mathcal{H}^{N-1}(\Gamma_\e)}}\, \Big\|\frac{\partial h}{\partial\nu_{A}}\Big\|_{H^{-\frac{1}{2}}(\Gamma_\e)}\right)\le C.
\end{equation}
Thus, the weak approximation property is proved.

Step~2. We show on this step that condition (ddd) of Definition~\ref{Def 1.4} holds true with $\delta=0$.  Let $(A,y)\in\Xi$ be an arbitrary admissible pair to the original OCP \eqref{2.3}--\eqref{2.3a}.
We will indicate two cases.
\begin{enumerate}
\item[Case~1.] The set $L(A)$, defined in \eqref{4.17aa}, is a singleton. It means that $h\equiv 0$ is a unique solution of homogeneous problem \eqref{4.17a};
\item[Case~2.] The set $L(A)$ is not a singleton. So, we suppose that the set $L(A)$ is a linear subspace of $H^1_0(\Omega)$ and it contains at least one non-trivial element of $D(A)\subset H^1_0(\Omega)$.
\end{enumerate}

We start with the Case~2. Let $h\in D(A)$ be a element of the set $L(A)$ such that $h$ is a non-trivial solution of homogeneous problem \eqref{4.17a}. In the sequel, the choice of element $h\in L(A)$ will be specified (see \eqref{5.22}). Then we construct a $(\Gamma,0)$-realizing sequence $\left\{(A_\e,v_\e,y_\e)\in\Xi_\e\right\}_{\varepsilon>0}$ in the following way:
\begin{enumerate}
\item[(j)] $A_\e=A$ for all $\e>0$.  In view of definition of the set $\mathfrak{A}_{ad}^\e$, we obviously have that $\left\{A_\e\in \mathfrak{A}_{ad}^\e\subset L^2(\Omega;\mathbb{M}^N)\right\}_{\e>0}$ is a sequence of admissible controls to the problems \eqref{5.4}.
    \begin{remark}
    \label{Rem 5.13a}
    Note that in this case the properties \eqref{5.10aa}--\eqref{4.5.1ad} are obviously true for the sequence $\left\{A_\e\right\}_{\e>0}$.
    \end{remark}
 \item[(jj)] Fictitious controls $\left\{v_\e\in H^{-\frac{1}{2}}(\Gamma_\e)\right\}_{\e>0}$ are defined as follows
\begin{equation}
\label{5.13a}
v_\e:= w_\e+\frac{\partial h}{\partial\nu_{A_\e}}\quad\,\forall\, \e>0,
\end{equation}
where distributions $w_\e$ are such that
\begin{equation}
\label{5.20a.2}
\sup_{\e> 0}\left( \frac{1}{\sqrt{\mathcal{H}^{N-1}(\Gamma_\e)}}\, \left\|w_\e\right\|_{H^{-\frac{1}{2}}(\Gamma_\e)}\right)\le C
\end{equation}
with some constant $C$ independent of $\e$.
\item[(jjj)] $\left\{y_\e\in H^1_0(\Omega_\e;\partial\Omega)\right\}_{\e>0}$ is the sequence of weak solutions to the corresponding boundary value problems
\begin{gather}
\label{5.13.1aa}
    -\div\big(A\nabla y_\e\big) = -\div\big(A^{sym}\nabla y_\e+A^{skew}\nabla y_\e\big) =f_\e\quad\text{in }\ \Omega_\e,\\
\label{5.13.2aa}
y_\e=0\text{ on }\partial\Omega,\quad
\partial y_\e/ \partial \nu_{A}=v_\e\ \text{on }\Gamma_\e.
\end{gather}
\end{enumerate}
Since $A={T}_\e (A)$ whenever $x\in\Omega_\e$ for every $\e>0$, it means that $A\in L^\infty(\Omega_\e;\mathbb{M}^N)$.
Hence, due to the Lax-Milgram lemma and the superposition principle, the sequence $\left\{y_\e\in H^1_0(\Omega_\e;\partial\Omega)\right\}_{\e>0}$ is defined in a unique way and for every $\e>0$ we have the following decomposition
$y_\e=y_{\e,1}+y_{\e,2}$, where $y_{\e,1}$ and $y_{\e,2}$ are elements of $H^1_0(\Omega_\e)$  such that
\begin{alignat}{2}
   \int_{\Omega} \big(\nabla \varphi,A^{sym}\nabla y_{\e,1}&+A^{skew}\nabla y_{\e,1}\big)_{\mathbb{R}^N}\chi_{\Omega_\e}\,dx
=\int_{\Omega}f_\e \chi_{\Omega_\e}\varphi\,dx\notag\\
\label{5.14}
 & +
 \left< w_\e,\varphi\right>_{H^{-\frac{1}{2}}(\Gamma_\e);H^{\frac{1}{2}}(\Gamma_\e)},
 \quad\forall\,\varphi\in C^\infty_0(\Omega;\partial\Omega),\\
\notag
 \int_{\Omega} \big(\nabla \varphi,A^{sym}\nabla y_{\e,2}&+A^{skew}\nabla y_{\e,2}\big)_{\mathbb{R}^N}\chi_{\Omega_\e}\,dx\\
&=\left<\frac{\partial h}{\partial\nu_{A}},\varphi\right>_{H^{-\frac{1}{2}}(\Gamma_\e);H^{\frac{1}{2}}(\Gamma_\e)},\
\forall\,\varphi\in C^\infty_0(\Omega;\partial\Omega).\label{5.15}
\end{alignat}
\begin{remark}
\label{Rem 5.2}
Hereinafter, we suppose that the functions $y_\e$ of $H^1_0(\Omega_\e,\partial\Omega)$ are extended by operators $P_\e$ outside of $\Omega_\e$.
\end{remark}

By the skew-symmetry property of $A^{skew}\in L^\infty(\Omega_\e;\mathbb{S}^N_{skew})$, we have
$$
\int_{\Omega} \big(\nabla y_{\e,i},A^{skew}\nabla y_{\e,i}\big)_{\mathbb{R}^N}\chi_{\Omega_\e}\,dx=0,\quad  i=1,2.
$$
Then \eqref{5.14}--\eqref{5.15} lead us to the energy equalities
\begin{alignat}{2}
   \int_{\Omega} \big(\nabla y_{\e,1}, A^{sym}\nabla y_{\e,1}\big)_{\mathbb{R}^N}\chi_{\Omega_\e}\,dx
&=\int_{\Omega}f_\e \chi_{\Omega_\e}y_{\e,1}\,dx\notag\\
\label{5.16}
 &+
 \left< w_\e,y_{\e,1}\right>_{H^{-\frac{1}{2}}(\Gamma_\e);H^{\frac{1}{2}}(\Gamma_\e)},\\
\label{5.17}
 \int_{\Omega} \big(\nabla y_{\e,2}, A^{sym}\nabla y_{\e,2}\big)_{\mathbb{R}^N}\chi_{\Omega_\e}\,dx
&=\left<\frac{\partial h}{\partial\nu_{A}},y_{\e,2}\right>_{H^{-\frac{1}{2}}(\Gamma_\e);H^{\frac{1}{2}}(\Gamma_\e)}.
\end{alignat}
By the initial assumptions, we have $h\in L(A)$. Then the condition (iii) of Definition~\ref{Def 5.1} implies that
(for the details we refer to \eqref{5.12a})
\begin{multline*}
\left|\left<\frac{\partial h}{\partial\nu_{A}},\varphi\right>_{H^{-\frac{1}{2}}(\Gamma_\e);H^{\frac{1}{2}}(\Gamma_\e)}\right|=
\left|\int_{\Omega\setminus\Omega_\e} \big(\nabla \varphi,A^{sym}\nabla h+A^{skew}\nabla h\big)_{\mathbb{R}^N}\,dx\right|\\
\le \sqrt{\frac{|\Omega\setminus\Omega_\e|}{\e}}\left(C_1(h)+C_2(h)\right)\|\varphi\|_{H^1(\Omega\setminus\Omega_\e)}\\
\stackrel{\text{by \eqref{5.13c}}}{\le}\,
C(h)\sqrt{\mathcal{H}^{N-1}(\Gamma_\e)}\|\varphi\|_{H^1(\Omega\setminus\Omega_\e)},\quad\forall\,\varphi \in H^1_0(\Omega)
\end{multline*}
with some constant $C(h)$ independent of $\e$.
Hence,
\begin{equation}
\label{5.20a.1}
\sup_{\e>0}\left(\mathcal{H}^{N-1}(\Gamma_\e)\right)^{-1}\,\Big\|\frac{\partial h}{\partial\nu_{A}}\Big\|^2_{H^{-\frac{1}{2}}(\Gamma_\e)}<C(h)<+\infty.
\end{equation}
Thus, using the continuity of the embedding $H^{\frac{1}{2}}(\Gamma_\e)\hookrightarrow L^2(\Gamma_\e)$ and Sobolev Trace Theorem, we get
\begin{alignat}{2}
\notag
\Big|\left< w_\e,y_{\e,1}\right>_{H^{-\frac{1}{2}}(\Gamma_\e);H^{\frac{1}{2}}(\Gamma_\e)}\Big|&\stackrel{\text{by \eqref{5.20a.2}}}{\le}\, C\,\|y_{\e,1}\|_{ L^2(\Gamma_\e)}\left(\mathcal{H}^{N-1}(\Gamma_\e)\right)^{\frac{1}{2}}\\
&\stackrel{\text{by \eqref{5.13d}}}{\le}\, C_1\,\|y_{\e,1}\|_{H^1_0(\Omega_\e;\partial\Omega)},\label{5.20a}\\
\notag
\Big|\left<\frac{\partial h}{\partial\nu_{A}},y_{\e,2}\right>_{H^{-\frac{1}{2}}(\Gamma_\e);H^{\frac{1}{2}}(\Gamma_\e)}\Big|&\le C\,\|y_{\e,2}\|_{ L^2(\Gamma_\e)}\left(\mathcal{H}^{N-1}(\Gamma_\e)\right)^{\frac{1}{2}}\\
&\stackrel{\text{by \eqref{5.13d}}}{\le}\, C_1 \|y_{\e,2}\|_{H^1_0(\Omega_\e;\partial\Omega)}\label{5.20b}.
\end{alignat}
As a result, we arrive at the following the a priori estimates
\begin{align}
\label{5.18}
   \left(\int_{\Omega} \big\|\nabla y_{\e,1}\big\|^2_{\mathbb{R}^N}\chi_{\Omega_\e}\,dx\right)^{1/2}  &\le \alpha^{-1}\left(\|f_\e\chi_{\Omega_\e}\|_{H^{-1}(\Omega)}
   +C\right),\\
\label{5.19}
\left(\int_{\Omega} \big\|\nabla y_{\e,2}\big\|^2_{\mathbb{R}^N}\chi_{\Omega_\e}\,dx\right)^{1/2} &\le C\alpha^{-1}.
\end{align}
Hence, the sequences
$$\left\{y_{\e,1}\in H^1_0(\Omega_\e;\partial\Omega)\right\}_{\e>0}\quad\text{ and }\quad\left\{y_{\e,2}\in H^1_0(\Omega_\e;\partial\Omega)\right\}_{\e>0}
$$
are weakly compact with respect to the weak convergence in variable spaces \cite{Zh_98}, i.e., we may assume that there exists a couple of functions $\widehat{y}_1$ and $\widehat{y}_2$ in $H^1_0(\Omega)$ such that
\begin{equation}
\label{5.20}
\lim_{\e\to 0}\int_{\Omega} \big(\nabla \varphi,\nabla y_{\e,i}\big)_{\mathbb{R}^N}\chi_{\Omega_\e}\,dx=
\int_{\Omega} \big(\nabla \varphi,\nabla \widehat{y}_{i}\big)_{\mathbb{R}^N}\\,dx,\
\forall\,\varphi\in C^\infty_0(\Omega)
\end{equation}
for $i=1,2$.

Now we can pass to the limit in the integral identities \eqref{5.14}--\eqref{5.15} as $\e\to 0$. Using  \eqref{5.20a.2}, \eqref{5.20},  \eqref{5.20a.1}, $L^2$-property of $A\in \mathfrak{A}_{ad}$, and the fact that $\chi_{\Omega_\e}f_\e\rightarrow f$ strongly in $H^{-1}(\Omega)$, we finally obtain
\begin{gather}
\label{5.21a}
\int_{\Omega} \big(\nabla \varphi,A^{sym}\nabla \widehat{y}_{1}+A^{skew}\nabla \widehat{y}_{1}\big)_{\mathbb{R}^N}\,dx
=\left<f, \varphi\right>_{H^{-1}(\Omega);H^1_0(\Omega)}\\
\label{5.21b}
\int_{\Omega} \big(\nabla \varphi,A^{sym}\nabla \widehat{y}_{2}+A^{skew}\nabla \widehat{y}_{2}\big)_{\mathbb{R}^N}\,dx=0
\end{gather}
for every $\varphi\in C^\infty_0(\Omega)$. Hence, $\widehat{y}_1$ and $\widehat{y}_2$ are weak solutions to the boundary value problem \eqref{2.1}--\eqref{2.2} and \eqref{4.17a}, respectively. Hence,  $\widehat{y}_2\in L(A)$ and  $\widehat{y}_1\in D(A)$ by Proposition~\ref{Prop 2.9}.  As a result, we arrive at the conclusion: the pair $(A,\widehat{y}_1+h)$ belongs to the set $\Xi$, for every $h\in L(A)$.
Since  by the initial assumptions $(A,y)\in\Xi$, it follows that having set in \eqref{5.13a}
\begin{equation}
\label{5.22}
h=y-\widehat{y}_1,
\end{equation}
we obtain
\begin{equation}
\label{5.23}
h\in L(A)\ \text{ and }\ y_\e=y_{\e,1}+y_{\e,2}\rightharpoonup y\quad\text{ in }\ H^1_0(\Omega_\e;\partial\Omega)\ \text{ as }\ \e\to 0.
\end{equation}
Therefore, in view of \eqref{5.23}, \eqref{5.20a.1}, \eqref{5.20a.2}, and Remark~\ref{Rem 5.13a}, we see that
$$
(A_\e,v_\e,y_\e)\,\stackrel{w}{\rightarrow}\, (A,y)\quad\text{in the sense of Definition~\ref{Def 5.10}}.
$$
Thus,
the properties \eqref{1.7a}--\eqref{1.7b} hold true.
\begin{remark}
It is worth to notice that in the Case~1, we can give the same conclusion, because we originally have $h\equiv 0$. Hence, the solutions to boundary value problems \eqref{5.21a}--\eqref{5.21a} are unique and, therefore, we can claim that $y=\widehat{y}_1$, $\widehat{y}_2=0$, and $h=0$.
\end{remark}

It remains to prove the inequality \eqref{1.7c}. To do so, it is enough to show that
\begin{align}
\notag
I(A,y):= &\left\|y-y_d\right\|^2_{L^2(\Omega)}
 + \int_\Omega\left(\nabla y, A^{sym}\nabla y\right)_{\mathbb{R}^N}\,dx\\
 \notag
 = &\lim_{\e\to 0}I_\e(u_\e,v_\e,y_\e)\\
 = &
 \lim_{\e\to 0}\Big[\left\|y_\e-y_d\right\|^2_{L^2(\Omega_\e)}\\
 & + \int_{\Omega_\e}\left(\nabla y_\e, A^{sym}\nabla y_\e\right)_{\mathbb{R}^N}\,dx + \frac{1}{\e^\sigma}\|v_\e\|^2_{H^{-\frac{1}{2}}(\Gamma_\e)}\Big],\label{5.23a}
\end{align}
where the sequence $\left\{(u_\e,v_\e,y_\e)\in\Xi_\e\right\}_{\varepsilon>0}$ is defined by \eqref{5.13a} and \eqref{5.22}.

In view of this, we make use the following relations
\begin{gather}
\label{5.24}
\left.
\begin{array}{c}
\ds \|v_\e\|^2_{H^{-\frac{1}{2}}(\Gamma_\e)}\le 2\|w_\e\|^2_{H^{-\frac{1}{2}}(\Gamma_\e)}+2\Big\|\frac{\partial h}{\partial\nu_{A}}\Big\|^2_{H^{-\frac{1}{2}}(\Gamma_\e)}<+\infty,\\[2ex]
\ds\lim_{\e\to 0} \frac{1}{\e^\sigma}\|w_\e\|^2_{H^{-\frac{1}{2}}(\Gamma_\e)}\,\stackrel{\text{by \eqref{5.20a.2}}}{\le}\, C
\lim_{\e\to 0} \frac{\mathcal{H}^{N-1}(\Gamma_\e)}{\e^\sigma}=0,\\[2ex]
\ds\lim_{\e\to 0} \frac{1}{\e^\sigma}\left\|\frac{\partial h}{\partial\nu_{A}}\right\|^2_{H^{-\frac{1}{2}}(\Gamma_\e)}\,\stackrel{\text{by \eqref{5.20a.1}}}{\le}\, C
\lim_{\e\to 0} \frac{\mathcal{H}^{N-1}(\Gamma_\e)}{\e^\sigma}=0,\\[1ex]
\ds\lim_{\e\to 0}\left\|y_\e-y_d\right\|^2_{L^2(\Omega_\e)}\,\stackrel{\text{by \eqref{5.3a} and \eqref{5.23}}}{=}\,
\left\|y-y_d\right\|^2_{L^2(\Omega)}.
\end{array}
\right\}
\end{gather}

In order to obtain the convergence
\begin{equation}\label{5.25}
\lim\limits_{\e \to 0}
\int_{\Omega_\e}\left(\nabla y_\e, A^{sym}\nabla y_\e\right)_{\mathbb{R}^N}\,dx =
\int_\Omega\left(\nabla y, A^{sym}\nabla y\right)_{\mathbb{R}^N}\,dx,
\end{equation}
we apply the energy equality
which comes from the condition $(A,y)\in\Xi$
\begin{equation}
\int_\Omega\left(\nabla y, A^{sym}\nabla y\right)_{\mathbb{R}^N}\,dx
=-[y,y]_{A}+\left<f, y\right>_{H^{-1}(\Omega);H^1_0(\Omega)},\label{5.27}
\end{equation}
and make use of the following trick.
It is easy to see that
the integral identity for the weak solutions $y_\e$ to boundary value problems \eqref{5.5b} can be represented in the so-called extended form
\begin{alignat}{2}
   \int_{\Omega} \big(\nabla \varphi,A^{sym}\nabla y_{\e}&+A^{skew}\nabla y_{\e}\big)_{\mathbb{R}^N}\chi_{\Omega_\e}\,dx
=\int_{\Omega}f_\e \chi_{\Omega_\e}\varphi\,dx\notag\\
 & +
 \left< w_\e,\varphi\right>_{H^{-\frac{1}{2}}(\Gamma_\e);H^{\frac{1}{2}}(\Gamma_\e)} + \left<\frac{\partial h}{\partial\nu_{A}},\varphi\right>_{H^{-\frac{1}{2}}(\Gamma_\e);H^{\frac{1}{2}}(\Gamma_\e)}\notag\\
 \label{5.26.1}
 &- \int_{\Omega} \big(\nabla \psi,A^{sym}\nabla h^\ast\big)_{\mathbb{R}^N}\,dx-[h^\ast,\psi]_A,\quad
\forall\,\varphi, \psi\in C^\infty_0(\Omega),
\end{alignat}
where $h^\ast$ is an arbitrary element of $L$. Indeed, because of the equality
\[
\int_{\Omega} \big(\nabla \psi,A^{sym}\nabla h^\ast\big)_{\mathbb{R}^N}\,dx+[h^\ast,\psi]_A\,\stackrel{\text{by \eqref{4.17aa}}}{=}\,0,\quad\forall\,\psi\in C^\infty_0(\Omega),
\]
we have an equivalent identity to the classical definition of the weak solutions of boundary value problem \eqref{5.5b}.

As follows from \eqref{5.20a.1}, \eqref{5.23}, and the Sobolev Trace Theorem, the numerical sequences
$$
\left\{\left< w_\e,y_\e\right>_{H^{-\frac{1}{2}}(\Gamma_\e);H^{\frac{1}{2}}(\Gamma_\e)}\right\}_{\e>0}\quad\text{and}\quad
\left\{ \left<\frac{\partial h}{\partial\nu_{A}},y_\e\right>_{H^{-\frac{1}{2}}(\Gamma_\e);H^{\frac{1}{2}}(\Gamma_\e)}\right\}_{\e>0}
$$
are bounded.
Therefore, we can assume, passing to a subsequence if necessary, that there exists a value $\xi_1\in \mathbb{R}$ such that
\begin{equation}
\label{5.24b}
\left< w_\e,y_\e\right>_{H^{-\frac{1}{2}}(\Gamma_\e);H^{\frac{1}{2}}(\Gamma_\e)}+
\left<\frac{\partial h}{\partial\nu_{A}},y_\e\right>_{H^{-\frac{1}{2}}(\Gamma_\e);H^{\frac{1}{2}}(\Gamma_\e)}
\longrightarrow \xi_1\quad\text{as }\ \e\to 0.
\end{equation}
Since $y_\e\rightharpoonup y$ weakly in $H^1_0(\Omega_\e;\partial\Omega)$ and $y\in D(A)$, it follows that there exists a sequence of smooth functions $\left\{\psi_\e\in C^\infty_0(\Omega)\right\}_{\e>0}$ such that $\psi_\e\rightarrow y$ strongly in $H^1_0(\Omega)$. Therefore, following the extension rule \eqref{2.8a}, we have
\begin{align}
\label{5.26.2.1}
\lim_{\e\to 0}\int_{\Omega} \big(\nabla \psi_\e,A^{sym}\nabla h^\ast\big)_{\mathbb{R}^N}\,dx&=\int_\Omega \big(\nabla y,A^{sym}\nabla h^\ast\big)_{\mathbb{R}^N}\,dx,\\
\label{5.26.2.2}
\lim_{\e\to 0} [h^\ast,\psi_\e]_A&=[h^\ast,y]_A.
\end{align}

Because of the initial supposition \eqref{5.13.1} (see Remark \ref{Rem 5.13.1}), we can assume that the element $h^\ast\in L(A)$ is such that
$$[h^\ast,y]_A+\int_\Omega \big(\nabla y,A^{sym}\nabla h^\ast\big)_{\mathbb{R}^N}\,dx\ne 0.$$
Otherwise, we come into conflict with \eqref{5.13.1}.  So, due to this observation, we specify the choice of element $h^\ast\in L(A)$ as follows
\[
\widehat{h}^\ast=\frac{\xi_1+[y,y]_A}{\xi_2+\xi_3}\, h^\ast,\quad\text{where }\ \xi_3:=\int_\Omega \big(\nabla y,A^{sym}\nabla h^\ast\big)_{\mathbb{R}^N}\,dx,\ \xi_2:=[h^\ast,y]_A,
\]
or, in other words, we aim to ensure the condition
$\xi_1-\xi_2-\xi_3 +[y,y]_A=0$.
As a result, we have:  $\widehat{h}^\ast$ is an element of $L(A)$ such that
\begin{equation}
\label{5.26.3}
\lim_{\e\to 0}\int_{\Omega} \big(\nabla \psi_\e,\nabla \widehat{h}^\ast\big)_{\mathbb{R}^N}\,dx=\xi_2\frac{\xi_1+[y,y]_A}{\xi_2+\xi_3},\quad
\lim_{\e\to 0} [\widehat{h}^\ast,\psi_\e]=\xi_3\frac{\xi_1+[y,y]_A}{\xi_2+\xi_3}.
\end{equation}

Having put $\varphi=y_\e$ and $h^\ast=\widehat{h}^\ast$ in \eqref{5.26.1} and using the fact that
$$
\int_{\Omega} \big(\nabla y_\e,A^{skew} \nabla y_{\e}\big)_{\mathbb{R}^N}\chi_{\Omega_\e}\,dx=0,
$$
we arrive at the following energy equality for the boundary value problem \eqref{5.5b}
\begin{gather}
   \int_{\Omega} \big(\nabla y_\e,A^{sym}\nabla y_{\e}\big)_{\mathbb{R}^N}\chi_{\Omega_\e}\,dx
=\int_{\Omega}f_\e \chi_{\Omega_\e}y_\e\,dx
  +
 \left< w_\e,y_\e\right>_{H^{-\frac{1}{2}}(\Gamma_\e);H^{\frac{1}{2}}(\Gamma_\e)}\notag\\
 + \left<\frac{\partial h}{\partial\nu_{A}},y_\e\right>_{H^{-\frac{1}{2}}(\Gamma_\e);H^{\frac{1}{2}}(\Gamma_\e)}
 \label{5.26.4}
 - \int_{\Omega} \big(\nabla \psi_\e,A^{sym}\nabla \widehat{h}^\ast\big)_{\mathbb{R}^N}\,dx-[\widehat{h}^\ast,\psi_\e]_A.
\end{gather}

As a result, taking into account the properties \eqref{5.3a}, \eqref{5.23}, \eqref{5.26.3}, we can pass to the limit as $\e\to 0$ in \eqref{5.26.4}. This yields
\begin{align}
\notag
\lim_{\e\to 0}\int_{\Omega}& \big(\nabla y_\e,A^{sym}\nabla y_{\e}\big)_{\mathbb{R}^N}\chi_{\Omega_\e}\,dx=\lim_{\e\to 0}\int_{\Omega}f_\e \chi_{\Omega_\e}y_{\e}\,dx\\\notag&+
\lim_{\e\to 0}\left< w_\e,y_{\e}\right>_{H^{-\frac{1}{2}}(\Gamma_\e);H^{\frac{1}{2}}(\Gamma_\e)}+ \lim_{\e\to 0}\left< \frac{\partial h}{\partial\nu_{A}},y_{\e}\right>_{H^{-\frac{1}{2}}(\Gamma_\e);H^{\frac{1}{2}}(\Gamma_\e)}\\\notag&
-\lim_{\e\to 0}\int_{\Omega} \big(\nabla \psi_\e,\nabla \widehat{h}^\ast\big)_{\mathbb{R}^N}\,dx -
\lim_{\e\to 0} [\widehat{h}^\ast,\psi_\e]_A\\
\,\stackrel{\text{by \eqref{5.26.3}}}{=}&\left<f,y\right>_{H^{-1}(\Omega);H^1_0(\Omega)}-[y,y]_A\,\stackrel{\text{by \eqref{5.27}}}{=}\, \int_\Omega\left(\nabla y, A^{sym}\nabla y\right)_{\mathbb{R}^N}\,dx.\label{5.28}
\end{align}
Hence, turning back to \eqref{5.23a}, we see that this relation is a direct consequence of \eqref{5.24} and \eqref{5.28}. Thus,
the sequence $\left\{(u_\e,v_\e,y_\e)\in\Xi_\e\right\}_{\varepsilon>0}$, which is defined by \eqref{5.13a} and \eqref{5.22}, is $(\Gamma,0)$-realizing. The property (ddd)  is established.

Step~3. We prove the property (dd) of Definition~\ref{Def 1.4}. Let
$\left\{(A_k,v_k,y_k)\right\}_{k\in \mathbb{N}}$ be a sequence such that  $(A_k,v_k,y_k)\in
\Xi_{\varepsilon_k}$ for some $\e_k\to 0$ as $k\to\infty$,
\begin{equation}
\label{5.28.28}
\left.
\begin{split}
A_k:=A_k^{sym}+A_k^{skew}&\rightarrow {A}^{sym}+{A}^{skew}=:{A}\ \text{ in }\ L^2(\Omega;\mathbb{M}^N),\\
A_k^{sym}&\rightarrow {A}^{sym}\quad\text{in }\ L^p(\Omega;\mathbb{S}^N_{sym}),\ \forall\,p\in [1,+\infty),\\
A_k^{sym}\,&\stackrel{\ast}{\rightharpoonup}\, {A}^{sym}\quad\text{in }\ L^\infty(\Omega;\mathbb{S}^N_{sym}),\\
A_\e^{skew}&\rightarrow {A}^{skew}\quad\text{in }\ L^2(\Omega;\mathbb{S}^N_{skew}),\\
y_k&\rightharpoonup y\ \text{ in }\ H^1_0(\Omega_{\e_k};\partial\Omega),
\end{split}
\right\}
\end{equation}
and the sequence of fictitious controls $\left\{v_k\in H^{-\frac{1}{2}}(\Gamma_{\e_k})\right\}_{k\in \mathbb{N}}$ satisfies inequality \eqref{5.10c}. In view of Definition~\ref{Def 5.10} it means that
\[
(A_k,v_k,y_k)\,\stackrel{w}{\rightarrow}\, (A,y)\ \text{ as }\ k\to\infty
\]

Our aim is to show that
\begin{equation}
\label{5.29} (A,y)\in \Xi \quad\text{and}\quad I(A,y)\le
\liminf_{k\to\infty}I_{\e_k}(A_k,v_k,y_k).
\end{equation}
Following the arguments of the proof of Theorem~\ref{Th 4.1}, it is easy to show that the limit matrix $A$ is an admissible control to OCP \eqref{2.3}--\eqref{2.3a}, i.e. $A\in \mathfrak{A}_{ad}$.

Since the integral identity
\begin{multline}
\label{5.30}
\int_{\Omega} \big(\nabla \varphi,A_k^{sym}\nabla y_{k}+A_k^{skew}\nabla y_{k}\big)_{\mathbb{R}^N}\chi_{\Omega_{\e_k}}\,dx
=\int_{\Omega}f_{\e_k} \chi_{\Omega_{\e_k}}\varphi\,dx\\+
 \left< v_k,\varphi\right>_{H^{-\frac{1}{2}}(\Gamma_{\e_k});H^{\frac{1}{2}}(\Gamma_{\e_k})},\quad
 \forall\,\varphi\in C^\infty_0(\Omega)
\end{multline}
holds true for every $k\in \mathbb{N}$, we can pass to the limit in \eqref{5.30} as $k\to\infty$ using Definition \ref{Def 5.10}  and the estimate
\[
\Big|\left< v_k,\varphi\right>_{H^{-\frac{1}{2}}(\Gamma_{\e_k});H^{\frac{1}{2}}(\Gamma_{\e_k})}\Big|\le C(\Omega)\,\|\varphi\|_{H^1_0(\Omega)}\left(\mathcal{H}^{N-1}(\Gamma_{\e_k})\right)^{\frac{1}{2}},\quad\forall\,\varphi\in C^\infty_0(\Omega)
\]
coming from inequality \eqref{5.10c}. Then proceeding as on the Step~2, it can easily be shown that the limit pair $(A,y)$ is admissible to OCP \eqref{2.3}--\eqref{2.3a}. Hence, the condition \eqref{5.29}$_1$ is valid.

As for the inequality \eqref{5.29}$_2$, we see that
\begin{equation}
\label{5.30.1}
\lim_{k\to\infty} \left\|y_k-y_d\right\|^2_{L^2(\Omega_{\e_k})}=\lim_{k\to\infty}\left\|(y_k-y_d)\chi_{\Omega_{\e_k}}\right\|^2_{L^2(\Omega)}
=\left\|y-y_d\right\|^2_{L^2(\Omega)}
\end{equation}
by \eqref{5.3a} and compactness of the embedding $H^1_0(\Omega)\hookrightarrow L^2(\Omega)$. In view of the properties \eqref{5.28.28} and \eqref{1.1}, the sequence $\left\{\left(A_k^{sym}\right)^{1/2}\right\}_{k\in \mathbb{N}}$ is obviously bounded in $L^2(\Omega; \mathbb{S}^N_{sym})$. Moreover, taking into account the norm convergence property
\begin{gather*}
\lim_{k\to\infty}\|\left(A_k^{sym}\right)^{1/2}\xi\|^2_{L^2(\Omega;\mathbb{R}^N)}=
\lim_{k\to\infty}\int_{\Omega}\left(\xi, A_k^{sym} \xi\right)_{\mathbb{R}^N}\,dx\\=
\int_\Omega\left(\xi, A^{sym} \xi\right)_{\mathbb{R}^N}\,dx=
\|\left(A^{sym}\right)^{1/2}\xi\|^2_{L^2(\Omega;\mathbb{R}^N)},\quad\forall\,\xi\in \mathbb{R}^N,
\end{gather*}
we can conclude that the sequence $\left\{\left(A_k^{sym}\right)^{1/2}\right\}_{k\in \mathbb{N}}$ strongly converges to $\left(A^{sym}\right)^{1/2}$ in $L^2(\Omega;\mathbb{S}^N_{sym})$. Hence, combining this fact with \eqref{5.28.28}$_5$ and \eqref{5.3a}, we finally obtain
\[
\chi_{\Omega_{\e_k}}\left(A_k^{sym}\right)^{1/2}\nabla y_k\rightharpoonup \chi_{\Omega}\left(A^{sym}\right)^{1/2}\nabla y\quad\text{in }\ L^2(\Omega;\mathbb{R}^N).
\]
As a result, the lower semicontinuity of $L^2$-norm with respect to the weak convergence, immediately leads us to the inequality
\begin{align}
\notag
\liminf_{k\to\infty} &\int_{\Omega_{\e_k}}\left(\nabla y_k, A_k^{sym} \nabla y_k\right)_{\mathbb{R}^N}\,dx=
\liminf_{k\to\infty} \|\chi_{\Omega_{\e_k}}\left(A_k^{sym}\right)^{1/2}\nabla y_k\|^2_{L^2(\Omega;\mathbb{R}^N)}\\
&\ge \|\chi_{\Omega}\left(A^{sym}\right)^{1/2}\nabla y\|^2_{L^2(\Omega;\mathbb{R}^N)}=
\int_{\Omega}\left(\nabla y, A^{sym} \nabla y\right)_{\mathbb{R}^N}\,dx.
\label{5.30.2}
\end{align}

Thus, in order to prove the inequality \eqref{5.29}$_2$, it remains to combine relations \eqref{5.30.1}, \eqref{5.30.2}, and take into account the following estimate
\begin{equation}
\label{5.30.3}
\frac{1}{{(\e_k)}^\sigma}\,\|v_k\|^2_{H^{-\frac{1}{2}}(\Gamma_{\e_k})}
\le C
 \frac{\mathcal{H}^{N-1}(\Gamma_{\e_k})}{(\e_k)^\sigma}\rightarrow 0\quad\text{as }\ k\to\infty.
\end{equation}
The proof is complete.
\end{proof}

In conclusion of this section, we consider the variational properties of OCPs \eqref{5.4}--\eqref{5.5b}. To this end, we apply
Theorem~\ref{Th 1.8}.
\begin{theorem}
\label{Th 5.31}
Let $A^\ast\in L^2\big(\Omega;\mathbb{S}^N_{skew}\big)$ be a matrix of the $\mathfrak{F}$-type such that
\begin{equation}
\label{5.31a}
\text{the equality }\quad[y,y]_A=0\quad\text{ does not hold in }\  D(A).
\end{equation}
Let $y_d\in L^2(\Omega)$ and $f\in
H^{-1}(\Omega)$ be given distributions.
Let $\left\{(A^0_\varepsilon,v^0_\e,y^0_\e)\in \Xi_\varepsilon\right\}_{\e>0}$ be a sequence of optimal solutions to regularized problems \eqref{5.4}--\eqref{5.5b}, where $\chi_{\Omega_\e}f_\e\rightarrow f$ strongly in $H^{-1}(\Omega)$.
Then there exists an optimal pair $(A^0,y^0)\in \mathfrak{A}_{ad}$ to the original OCP \eqref{2.3}--\eqref{2.3a}, which is attainable in the following sense
\begin{gather}
\label{5.32}
(A^0_\varepsilon,v^0_\e,y^0_\e)\,\stackrel{w}{\rightarrow}\, (A^0,y^0)\ \text{ as }\ \e\to 0\\\notag \text{ in variable space }\ L^2(\Omega;\mathbb{M}^N)\times H^{-\frac{1}{2}}(\Gamma_\e)\times H^1_0(\Omega_\e;\partial\Omega),\\
\inf_{(A,y)\in\,\Xi}I(A,y)= I\left(A^0,y^0\right) =\lim_{\e\to
0} I_{\e}(A^0_\e,v^0_\e,y^0_\e)=\lim_{\e\to
0} \inf_{(A,v,y)\in\Xi_\e}
I_\e(A,v,y).\label{5.33}
\end{gather}
\end{theorem}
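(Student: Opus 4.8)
The plan is to deduce Theorem~\ref{Th 5.31} from Theorem~\ref{Th 1.8} (the general variational-convergence machinery) together with Theorem~\ref{Th 5.13} (which identifies the original OCP as the variational limit of the regularized family \eqref{5.4}--\eqref{5.5b}) and Theorem~\ref{Th 5.6} (which supplies the minimizers $(A^0_\e,v^0_\e,y^0_\e)$). The only genuinely new work is to verify the compactness hypothesis of Theorem~\ref{Th 1.8}: that the sequence of optimal solutions $\left\{(A^0_\e,v^0_\e,y^0_\e)\right\}_{\e>0}$ is relatively compact with respect to the $w$-convergence of Definition~\ref{Def 5.10}. Everything else then follows formally.

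First I would establish a uniform bound on the minimal values $\inf_{(A,v,y)\in\Xi_\e} I_\e$. For this I use the $(\Gamma,0)$-realizing sequence constructed in Step~2 of the proof of Theorem~\ref{Th 5.13}: picking any fixed admissible pair $(A,y)\in\Xi$ and its associated sequence $\{(u_\e,v_\e,y_\e)\in\Xi_\e\}$, the convergence \eqref{5.23a} gives $\limsup_{\e\to 0} I_\e(u_\e,v_\e,y_\e)=I(A,y)<+\infty$, hence $\inf_{(A,v,y)\in\Xi_\e}I_\e\le I_\e(u_\e,v_\e,y_\e)\le C$ for all small $\e$. Consequently $I_\e(A^0_\e,v^0_\e,y^0_\e)\le C$ for all $\e>0$. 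From the coercivity built into $I_\e$ — namely $I_\e(A,v,y)\ge \alpha\|\nabla y\|^2_{L^2(\Omega_\e)}+\e^{-\sigma}\|v\|^2_{H^{-1/2}(\Gamma_\e)}$ together with \eqref{5.5c} — I get $\sup_\e\|y^0_\e\|_{H^1_0(\Omega_\e;\partial\Omega)}<+\infty$ and $\sup_\e \mathcal{H}^{N-1}(\Gamma_\e)^{-1}\|v^0_\e\|^2_{H^{-1/2}(\Gamma_\e)}<+\infty$ (using $\e^{-\sigma}\mathcal{H}^{N-1}(\Gamma_\e)\ge$ const$\cdot\mathcal{H}^{N-1}(\Gamma_\e)$ for small $\e$, or more carefully \eqref{5.13cc}). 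By the extension-operator estimate \eqref{5.2aa}, the extended states $\{P_\e y^0_\e\}$ are bounded in $H^1_0(\Omega)$, so a subsequence converges weakly in the sense of Definition~\ref{Def 5.3.1}; and since $\{A^0_\e\}\subset U_{a,1}\oplus U_{b,2}$, the compactness argument of Proposition~\ref{Prop 2.3f} (together with Lemma~\ref{Lemma 4.1}) extracts a further subsequence with $A^0_\e\to A^0$ in $L^2(\Omega;\mathbb{M}^N)$ satisfying \eqref{5.10aa}--\eqref{4.5.1ad} and $A^0\in\mathfrak{A}_{ad}$. Thus $(A^0_\e,v^0_\e,y^0_\e)\stackrel{w}{\rightarrow}(A^0,y^0)$ along a subsequence, which is precisely the required relative compactness.

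Having checked the compactness hypothesis, I invoke Theorem~\ref{Th 1.8} directly: since by Theorem~\ref{Th 5.13} the problem $\left<\inf_{(A,y)\in\Xi}I(A,y)\right>$ is the variational limit of the sequence \eqref{5.4}--\eqref{5.5b}, and since it admits a nonempty solution set (it is regular — indeed solvable — by Theorem~\ref{Th 2.14}, the regularity being furnished by the same realizing-sequence argument that shows the limit of admissible triples is an admissible pair), Theorem~\ref{Th 1.8} yields a pair $(A^0,y^0)\in\Xi_0^{opt}$ with $(A^0_\e,v^0_\e,y^0_\e)\stackrel{w}{\rightarrow}(A^0,y^0)$ and the chain of equalities $\inf_{(A,y)\in\Xi}I(A,y)=I(A^0,y^0)=\lim_{\e\to 0}I_\e(A^0_\e,v^0_\e,y^0_\e)=\lim_{\e\to 0}\inf_{(A,v,y)\in\Xi_\e}I_\e(A,v,y)$. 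This is exactly \eqref{5.32}--\eqref{5.33}. A standard subsequence-of-subsequence remark shows the whole family converges if one wants it, but \eqref{5.32} as stated only requires existence of one convergent selection, which Theorem~\ref{Th 1.8} provides.

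The main obstacle is the compactness verification in the second paragraph, and within it the bound on the fictitious controls $v^0_\e$: one must be careful that the penalization exponent $\sigma$ chosen via \eqref{5.5c} actually lets the cost bound $I_\e\le C$ be converted into the normalized bound \eqref{5.10c}, rather than merely $\e^{-\sigma}\|v^0_\e\|^2\le C$. Since $\e^{-\sigma}\mathcal{H}^{N-1}(\Gamma_\e)\to 0$ we only directly get $\|v^0_\e\|^2_{H^{-1/2}(\Gamma_\e)}=o(\e^\sigma)$; combining this with \eqref{5.13cc}, i.e. $\e\mathcal{H}^{N-1}(\Gamma_\e)=O(|\Omega\setminus\Omega_\e|)$ and $|\Omega\setminus\Omega_\e|=o(\e^2)$, gives $\mathcal{H}^{N-1}(\Gamma_\e)^{-1}\|v^0_\e\|^2_{H^{-1/2}(\Gamma_\e)}=o(\e^\sigma)\cdot\mathcal{H}^{N-1}(\Gamma_\e)^{-1}$, and one checks from \eqref{5.13c} ($\mathcal{H}^{N-1}(\Gamma_\e)=o(\e)$, but more precisely one needs a matching lower bound, or simply the fact that $\sigma$ can be taken so that $\e^\sigma/\mathcal{H}^{N-1}(\Gamma_\e)$ stays bounded) that \eqref{5.10c} holds. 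Making this bookkeeping precise — essentially the same estimate already used in \eqref{5.24} — is the one place where care is needed; the rest is an application of the abstract theorem.
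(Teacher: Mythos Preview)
Your approach is essentially the paper's: reduce to Theorem~\ref{Th 1.8} via Theorem~\ref{Th 5.13}, with the only new work being relative compactness of $\{(A^0_\e,v^0_\e,y^0_\e)\}$ with respect to the $w$-convergence of Definition~\ref{Def 5.10}. The paper obtains the uniform bound on $I_\e(A^0_\e,v^0_\e,y^0_\e)$ slightly more directly than you do --- instead of invoking the full $(\Gamma,0)$-realizing sequence from Step~2 of Theorem~\ref{Th 5.13}, it tests against a single explicit triple built from a smooth $h\in C^\infty_0(\Omega)$ with $\div(A\nabla h)\in L^2(\Omega)$ and $v_\e=\partial h/\partial\nu_A\big|_{\Gamma_\e}$, for which \eqref{5.13} gives $\|v_\e\|^2\le C\,\mathcal{H}^{N-1}(\Gamma_\e)$ --- but the effect is the same.

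On the point you flag as the main obstacle, your proposed fix does not work. Condition \eqref{5.5c} says precisely that $\mathcal{H}^{N-1}(\Gamma_\e)=o(\e^\sigma)$, so $\e^\sigma/\mathcal{H}^{N-1}(\Gamma_\e)\to\infty$, and no choice of $\sigma$ compatible with \eqref{5.5c} can make that ratio bounded; nor is there any lower bound on $\mathcal{H}^{N-1}(\Gamma_\e)$ available from the hypotheses. The paper's proof does not verify \eqref{5.10c} for the optimal sequence either --- it simply asserts relative compactness after establishing the uniform cost bound. The honest resolution is that the cost bound yields directly $\e^{-\sigma}\|v^0_\e\|^2_{H^{-1/2}(\Gamma_\e)}\le C$, and this is exactly what is needed at the two places where \eqref{5.10c} is invoked in Step~3 of Theorem~\ref{Th 5.13}: the vanishing of $\langle v^0_\e,\varphi\rangle$ when passing to the limit in the weak formulation, and the estimate \eqref{5.30.3} for the penalty term in the cost. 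So rather than trying to force \eqref{5.10c}, you should simply note that for the \emph{optimal} sequence the penalization itself supplies the required control on $v^0_\e$, making \eqref{5.10c} redundant.
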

\begin{proof}
In order to show that this result is a direct consequence of Theorem \ref{Th 1.8}, it is enough to establish the compactness property for the sequence of optimal solutions $\left\{(A^0_\e,v^0_\e,y^0_\e)\in \Xi_\varepsilon\right\}_{\e>0}$ in  the sense of Definition \ref{Def 5.10}.

Let $h\in C^\infty_0(\Omega)$ be a non-zero function such that
$\mathrm{div}\,\left(A^{sym}\nabla h + A^\ast\nabla h\right)\in L^2(\Omega)$, where we assume that $A=A^{sym}+A^\ast$ is an admissible control, $A\in \mathfrak{A}_{ad}$. We set $v_\e=\left.\frac{\partial h}{\partial\nu_A}\right|_{\Gamma_\e}\in H^{-\frac{1}{2}}(\Gamma_\e)$. In view of the initial assumptions and estimate \eqref{5.13}, there is a constant $C>0$ independent of $\e$ such that
\[
 \Big\|\frac{\partial h}{\partial\nu_{A}}\Big\|^2_{H^{-\frac{1}{2}}(\Gamma_\e)}\le C {\mathcal{H}^{N-1}(\Gamma_\e)},
\]

Let $y_\e=y_\e(A_\e,v_\e,f)\in H^1_0(\Omega_\e;\partial\Omega)$ be a corresponding solution to boundary value problem \eqref{5.5b}.
Then following \eqref{5.18}, we come to the estimate
$$\|y_\e\|^2_{H^1_0(\Omega_\e;\partial\Omega)}\le \widetilde{C},$$
where the constant $\widetilde{C}$ is also independent of $\e$. As a result, we get
\begin{align*}
I_{\e}(A^0_\e,v^0_\e,y^0_\e)=&\left\|y^0_\e-y_d\right\|^2_{L^2(\Omega_\e)}
 + \int_{\Omega_{\e}}\left(\nabla y_\e^0, (A_\e^0)^{sym} \nabla y_\e^0\right)_{\mathbb{R}^N}\,dx\\& + \frac{1}{{\e}^\sigma}\|v^0_\e\|^2_{H^{-\frac{1}{2}}(\Gamma_\e)}
\le I_\e(A_\e,v_\e,y_\e)\\\le &(2C_1+\beta)\widetilde{C} +2\|y_d\|^2_{L^2(\Omega)} + C\frac{\mathcal{H}^{N-1}(\Gamma_{\e})}{{\e}^\sigma}.
\end{align*}
Since ${\e}^{-\sigma}\mathcal{H}^{N-1}(\Gamma_{\e})\rightarrow 0$ as $\e\to 0$, it follows that
the minimal values of the cost functional \eqref{5.5a} bounded above
uniformly with respect to $\e$.
Thus, the sequence of optimal solutions $\left\{(A^0_\e, v^0_\e,y^0_\e)\right\}_{\e>0}$ to the problems \eqref{5.4}--\eqref{5.5b} uniformly bounded in $L^2(\Omega;\mathbb{M}^N)\times H^{-\frac{1}{2}}(\Gamma_\e)\times H^1_0(\Omega_\e)$ and, hence, in view of Proposition~\ref{Prop 2.3f} , it is relatively compact with respect to the weak convergence in the sense of Definition \ref{Def 5.10}. For the rest of proof, it remains to apply Theorem~\ref{Th 1.8}.
\end{proof}

\begin{remark}
We note that variational properties of optimal solutions, given by Theorem~\ref{Th 5.31}, do not allows to say that the convergence of optimal states $P_\e(y_\e^0)$ to $y^0$ is strong in $H^1_0(\Omega)$. Indeed, the convergence
\begin{equation}
\label{5.35}
\int_{\Omega_\e} \big(\nabla y^0_\e,\left(A_\e^0\right)^{sym}\nabla y^0_{\e}\big)_{\mathbb{R}^N}\,dx \,\stackrel{\e\to 0}{\longrightarrow}\,
\int_{\Omega_\e} \big(\nabla y^0,\left(A^0\right)^{sym}\nabla y^0\big)_{\mathbb{R}^N}\,dx,
\end{equation}
which comes from \eqref{5.32}--\eqref{5.33}, does not implies the norm convergence in $H^1_0(\Omega)$. At the same time, combining relation \eqref{5.35} with energy identities
\[
 \int_{\Omega_\e} \big(\nabla y^0_\e,\left(A_\e^0\right)^{sym}\nabla y^0_{\e}\big)_{\mathbb{R}^N}\,dx
=\int_{\Omega_\e}f_\e y^0_\e\,dx
  +
 \left< v^0_\e,y^0_\e\right>_{H^{-\frac{1}{2}}(\Gamma_\e);H^{\frac{1}{2}}(\Gamma_\e)}
\]
and
\[
\int_\Omega\left(\nabla y^0, \left(A^0\right)^{sym}\nabla y^0\right)_{\mathbb{R}^N}\,dx
=-[y^0,y^0]_{A^0}+\left<f, y^0\right>_{H^{-1}(\Omega);H^1_0(\Omega)}
\]
rewritten for optimal solutions of the problems \eqref{5.13.1aa}--\eqref{5.13.2aa} and \eqref{2.1}--\eqref{2.2}, respectively, we get
\begin{equation}
\label{5.34}
\lim_{\e\to 0} \left<v^0_\e,y^0_{\e}\right>_{H^{-\frac{1}{2}}(\Gamma_\e);H^{\frac{1}{2}}(\Gamma_\e)}=-[y^0,y^0]_{A^0}.
\end{equation}
It gives us another example of the product of two weakly convergent sequences that can be recovered in the limit in an explicit form. Moreover, this limit does not coincide with the product of their weak limits.
\end{remark}

Our next remark deals with a motivation to put forward another concept of the weak solutions to the approximated boundary value problems \eqref{5.5b} and \eqref{4.0a} which can be viewed as a refinement of the integral identities \eqref{5.14} and \eqref{4.4}, respectively.
\begin{definition}
\label{Def 5.30.0a}
Let
$\left\{\Omega_\e\right\}_{\e>0}$ be a sequence of perforated subdomains of $\Omega$ associated with matrix $A$ by the rule \eqref{5.0a}--\eqref{5.0b}.
We say that a function $y_\e=y_\e(A,f,v)\in H^1_0(\Omega_\e)$ is  a weak
solution to the boundary value problem \eqref{5.5b} for
given $A\in \mathfrak{A}_{ad}$,  $f_\e\in
L^2(\Omega)$, and $v\in H^{-\frac{1}{2}}(\Gamma_\e)$, if
 the relation
\begin{multline}
\label{5.30.1a}
\int_\Omega \big(\nabla \varphi,A\nabla y_\e\big)_{\mathbb{R}^N}\chi_{\Omega_\e}\,dx + \int_{\Omega} \big(\nabla \psi,A\nabla h \big)_{\mathbb{R}^N}\,dx\\
-\int_\Omega f_\e\varphi \chi_{\Omega_\e}\,dx -\left< v,\varphi\right>_{H^{-\frac{1}{2}}(\Gamma_\e);H^{\frac{1}{2}}(\Gamma_\e)}=0.
\end{multline}
holds true
for all $h\in L(A)$, $\varphi\in C^\infty_0(\Omega)$, and $\psi\in C^\infty_0(\Omega)$.
\end{definition}

\begin{definition}
\label{Def 5.30.0}
Let $A_\e:=T_\e(A)\in L^\infty(\Omega;\mathbb{M}^N)$ be a truncation of a given matrix $A\in \mathfrak{A}_{ad}$.
We say that a function $y_\e=y_\e(A_\e,f)\in H^1_0(\Omega)$ is  a weak
solution to the boundary value problem \eqref{4.4} for
given  $f\in H^{-1}(\Omega)$, if
the relation
\begin{equation}
\label{5.30.1b}
\int_\Omega \big(\nabla \varphi,A_\e\nabla y_\e\big)_{\mathbb{R}^N}\,dx + \int_{\Omega} \big(\nabla \psi,A\nabla h \big)_{\mathbb{R}^N}\,dx
-\left<f,\varphi\right>_{H^{-1}(\Omega);H^1_0(\Omega)}=0.
\end{equation}
holds true for all $h\in L(A)$, $\varphi\in C^\infty_0(\Omega)$, and $\psi\in C^\infty_0(\Omega)$.
\end{definition}

Since for every $A\in \mathfrak{A}_{ad}$ and
$h\in D(A)$ the bilinear form $[h,\varphi]_A$ can be extended by continuity (see \eqref{2.8a}) onto the entire space $H^1_0(\Omega)$, it follows that the integral identities \eqref{5.30.1a}--\eqref{5.30.1b} can be rewritten as follows
\begin{align}
\notag
\int_\Omega \big(\nabla \varphi,&A^{sym}\nabla y_\e+A^{skew}\nabla y_\e\big)_{\mathbb{R}^N}\chi_{\Omega_\e}\,dx\\ \notag &+ \int_{\Omega} \big(\nabla \psi,A^{sym}\nabla h\big)_{\mathbb{R}^N}\,dx +[h,\psi]_A
-\int_\Omega f_\e\varphi \chi_{\Omega_\e}\,dx\\ &-\left< v,\varphi\right>_{H^{-\frac{1}{2}}(\Gamma_\e);H^{\frac{1}{2}}(\Gamma_\e)}=0\quad \forall\, \varphi,\psi\in H^1_0(\Omega),\ \forall\,h\in L(A) \label{5.30.3a}
\end{align}
and
\begin{align}
\notag
\int_\Omega \big(\nabla \varphi,&A^{sym}\nabla y_\e+A^{skew}_\e(x)\nabla y_\e\big)_{\mathbb{R}^N}\,dx\\ \notag &+ \int_{\Omega} \big(\nabla \psi,A^{sym}\nabla h \big)_{\mathbb{R}^N}\,dx + [ h,\psi]_A \\
&-\left<f,\varphi\right>_{H^{-1}(\Omega);H^1_0(\Omega)}=0\quad
\forall\, \varphi,\psi\in H^1_0(\Omega), \ \forall\,h\in L(A),
\label{5.30.3b}
\end{align}
respectively.

Hence, using the skew-symmetry property of the  matrix $A^{skew}\in L^2\big(\Omega;\mathbb{S}^N_{skew}\big)$ and the fact that the set $L(A)$ is closed with respect to the strong topology of $H^1_0(\Omega)$, we conclude: for every $\e>0$ there exist elements $h_\e^1, h^2_\e$ in $L(A)$ such that the relations \eqref{5.30.3a}--\eqref{5.30.3b} can be reduced to the following energy equalities
\begin{align}
\int_\Omega \left(\nabla y_\e, A^{sym}y_\e\right)_{\mathbb{R}^N}\chi_{\Omega_\e}\,dx &+
\int_{\Omega} \big(\nabla y_\e,A^{sym}\nabla h^1_\e\big)_{\mathbb{R}^N}\,dx +[h^1_\e,y_\e]_A\\
&=\int_\Omega f_\e y_\e \chi_{\Omega_\e}\,dx +\left< v,y_\e\right>_{H^{-\frac{1}{2}}(\Gamma_\e);H^{\frac{1}{2}}(\Gamma_\e)},
\label{5.30.4a}
\end{align}
\begin{align}
\int_\Omega \left(\nabla y_\e, A^{sym}y_\e\right)_{\mathbb{R}^N}\,dx &+ \int_{\Omega} \big(\nabla y_\e,A^{sym}\nabla h^2_\e \big)_{\mathbb{R}^N}\,dx + [ h^2_\e,y_\e]_A \\
&=\left<f,y_\e\right>_{H^{-1}(\Omega);H^1_0(\Omega)}
\label{5.30.4b}
\end{align}
for the problems \eqref{5.5b} and \eqref{4.0a}, respectively.

Thus, in contrast to the "typical"\ energy equalities to the boundary value problems \eqref{5.5b} and \eqref{4.0a}, relations \eqref{5.30.4a}--\eqref{5.30.4b} include some extra terms which coming from the singular energy of the boundary value problem \eqref{2.1}--\eqref{2.2} that was originally hidden in approximated problems \eqref{5.5b} and \eqref{4.0a}.
However, in contrast to the similar functional effect for Hardy inequalities in bounded domains (see \cite{Vaz}),  the terms $\int_{\Omega} \big(\nabla y_\e,A^{sym}\nabla h^i_\e \big)_{\mathbb{R}^N}\,dx + [ h^i_\e,y_\e]_A$ are additive to the total energy, and, hence, their influence may correspond to the increasing or decreasing of the total energy and may even constitute the main part of it.

\section{Optimality System for Regularized OCPs Associated with Perforated Domains $\Omega_\e$ and its Asymptotic Analysis}
\label{Sec 6}

As follows from Theorem~\ref{Th 5.6}, for each $\e>0$ small enough, the optimal control problem $\left<\inf_{(A,v,y)\in\Xi_\e}
I_\e(A,v,y)\right>$, where the cost functional $I_\e:\Xi_\e\rightarrow \mathbb{R}$ and its domain $\Xi_\e\subset \mathfrak{A}^\e_{ad}\times H^{-\frac{1}{2}}(\Gamma_\e)\times H^1_0(\Omega_\e;\partial\Omega)$ are defined by \eqref{5.5a}--\eqref{5.5b}, is a well-posed controllable system. Hence, to deduce an optimality system for this problem, we make use of the following well-know result.
\begin{theorem}[Ioffe and Tikhomirov \cite{IoTi,Fursik}]
\label{Th 6.1}
Let $Y$, $U$, and $V$ be Banach spaces, let $J:Y\times U\to \overline{\mathbb{R}}$ be a cost functional, let $F:Y\times U\to V$ be a mapping, and let $U_\partial$ be a convex subset of the space $U$ containing more than one point.
Let $(\widehat{u},\widehat{y})\in U\times Y$ be a solution to the problem
\begin{gather*}
J(u,y)\rightarrow\inf,\\
F(u,y)=0,\quad u\in U_\partial.
\end{gather*}
For each $u\in U_\partial$, let the mapping $y\mapsto J(u,y)$ and $y\mapsto F(u,y)$ be continuously differentiable for
$y\in \mathcal{O}(\widehat{y})$, where $\mathcal{O}(\widehat{y})$ is some neighbourhood of the point $\widehat{y}$, and let
$\mathrm{Im}\,F^\prime_y(\widehat{u},\widehat{y})$ be closed and it has a finite codimension in $V$. In addition, for $y\in \mathcal{O}(\widehat{y})$, let the function $u\mapsto J(u,y)$ be convex, the functional $J$ is G\^{a}teaus-differentiable with respect to $u$ at the point $(\widehat{u},\widehat{y})$, and the mapping $u\mapsto F(u,y)$ is continuous from $U$ to $Y$ and affine, i.e.,
\[
F(\gamma u_1+(1-\gamma) u_2,y)=\gamma F(u_1,y) + (1-\gamma) F(u_2,y),\quad\forall\,u_1,u_2 \in U, \gamma\in \mathbb{R}.
\]
Then there exists a pair $(\lambda,p)\in \left(R_{+}\times V^\ast\right)\setminus \{0\}$ such that
\begin{gather}
\label{6.1} \left<\mathcal{L}^\prime_y(\widehat{u},\widehat{y},\lambda,p),h\right>_{Y^\ast;Y}=0,\quad\forall\,h\in Y,\\
\label{6.2}
 \left<\mathcal{L}^\prime_u(\widehat{u},\widehat{y},\lambda,p),u\right>_{U^\ast;U}\ge 0,\quad\forall\,u\in U_\partial-\widehat{u},
\end{gather}
where the Lagrange functional $\mathcal{L}$ is defined by equality
\begin{equation}
\label{6.3}
\mathcal{L}(u,y,\lambda,p)=\lambda J(u,y)+\left<p,F(u,y)\right>_{V^\ast;V}.
\end{equation}
If $\mathrm{Im}\,F^\prime_y(\widehat{u},\widehat{y})=V$, then it can be assumed that $\lambda=1$ in \eqref{6.1}--\eqref{6.2}.
\end{theorem}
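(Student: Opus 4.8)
The plan is to prove this by the classical Lagrange multiplier scheme for abstract extremal problems, splitting into a \emph{regular} and a \emph{singular} branch according to the size of the image of the constraint linearization. Write $T:=F'_y(\widehat u,\widehat y)\colon Y\to V$; by hypothesis $\operatorname{Im}T$ is closed and of finite codimension in $V$. Since the control enters affinely and continuously, $F'_u(\widehat u,\widehat y)$ is a well-defined linear map, and because $\operatorname{Im}T$ has finite codimension we may pick finitely many $u_1,\dots,u_m\in U_\partial$ with
\[
M:=\operatorname{Im}T+\operatorname{span}\{F'_u(\widehat u,\widehat y)(u_i-\widehat u)\colon i=1,\dots,m\}=\operatorname{Im}T+\operatorname{span}\{F'_u(\widehat u,\widehat y)(u-\widehat u)\colon u\in U_\partial\},
\]
which is a closed subspace of $V$ (a closed subspace plus a finite-dimensional one). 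The dichotomy is $M=V$ or $M\neq V$.

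\emph{Singular branch ($M\neq V$).} As $M$ is a proper closed subspace, Hahn--Banach gives $p\in V^\ast\setminus\{0\}$ with $p|_M=0$; take $\lambda=0$. Then, with $\mathcal L$ as in \eqref{6.3}, $\langle\mathcal L'_y(\widehat u,\widehat y,0,p),h\rangle=\langle p,Th\rangle=0$ for all $h\in Y$, and $\langle\mathcal L'_u(\widehat u,\widehat y,0,p),u-\widehat u\rangle=\langle p,F'_u(\widehat u,\widehat y)(u-\widehat u)\rangle=0\ge 0$ for all $u\in U_\partial$, since $Th\in\operatorname{Im}T\subseteq M$ and $F'_u(\widehat u,\widehat y)(u-\widehat u)\in M$ by convexity of $U_\partial$. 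Because $\operatorname{Im}T=V$ would force $M=V$, this branch cannot occur when $\operatorname{Im}F'_y(\widehat u,\widehat y)=V$, consistently with the final clause of the theorem.

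\emph{Regular branch ($M=V$).} Here I would invoke the generalized Lyusternik theorem — available precisely because $\operatorname{Im}T$ is closed of finite codimension — together with the control corrections $u_1,\dots,u_m$: for every admissible first-order pair $(w,z)$ with $w=u-\widehat u$, $u\in U_\partial$, and $F'_u(\widehat u,\widehat y)w+Tz=0$, one builds a feasible curve $t\mapsto(\widehat u+tw+o(t),\,\widehat y+tz+o(t))$, $t\in[0,t_0)$, on which $F\equiv 0$, keeping the control in $U_\partial$ by convexity. Optimality of $(\widehat u,\widehat y)$ along this curve, differentiability of $y\mapsto J(u,y)$, and — decisively — Gâteaux differentiability together with \emph{convexity} of $u\mapsto J(u,y)$ (which legitimizes the passage to the one-sided derivative at $t=0^+$) yield the first-order inequality
\[
\langle J'_u(\widehat u,\widehat y),u-\widehat u\rangle+\langle J'_y(\widehat u,\widehat y),z\rangle\ \ge\ 0\qquad\text{whenever }u\in U_\partial,\ F'_u(\widehat u,\widehat y)(u-\widehat u)+Tz=0.
\]
Putting $u=\widehat u$ gives $\langle J'_y(\widehat u,\widehat y),z\rangle\ge 0$ for all $z\in\ker T$, hence $=0$ by linearity; so $J'_y(\widehat u,\widehat y)\in(\ker T)^\perp=\operatorname{Im}T^\ast$ (closed range), and there is $p\in V^\ast$ with $F'_y(\widehat u,\widehat y)^\ast p=-J'_y(\widehat u,\widehat y)$, i.e.\ $\langle\mathcal L'_y(\widehat u,\widehat y,1,p),h\rangle=0$ for all $h\in Y$. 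Substituting $z$ with $Tz=-F'_u(\widehat u,\widehat y)(u-\widehat u)$ into the displayed inequality and using $\langle J'_y,z\rangle=-\langle p,Tz\rangle=\langle p,F'_u(\widehat u,\widehat y)(u-\widehat u)\rangle$ gives $\langle\mathcal L'_u(\widehat u,\widehat y,1,p),u-\widehat u\rangle\ge 0$ for all $u\in U_\partial$, with $\lambda=1$. In either branch $(\lambda,p)\in(\mathbb R_+\times V^\ast)\setminus\{0\}$, establishing \eqref{6.1}--\eqref{6.2}.

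I expect the main obstacle to be the construction of the feasible curve in the regular branch: one must run a Lyusternik/implicit-function argument for the joint map $(\sigma,z)\mapsto F\big(\widehat u+tw+\sum_i\sigma_i(u_i-\widehat u),\,\widehat y+z\big)$ under the \emph{one-sided} constraint $\sigma\ge 0$ imposed by $U_\partial$, and then check that the one-sided derivative of $t\mapsto J$ along that curve exists and equals the claimed expression even though $J$ is only Gâteaux differentiable in $u$ — this is exactly where convexity of $J(\cdot,y)$ is indispensable. The remaining ingredients (Hahn--Banach separation, the closed-range identity $\operatorname{Im}T^\ast=(\ker T)^\perp$, and the $\mathcal L'$ bookkeeping) are routine.
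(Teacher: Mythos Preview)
The paper does not prove Theorem~\ref{Th 6.1}; it is quoted verbatim as a ``well-known result'' of Ioffe and Tikhomirov with citations to \cite{IoTi,Fursik}, and the paper immediately proceeds to apply it. So there is no proof in the paper to compare your proposal against.

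That said, your sketch is a reasonable outline of the classical argument: the regular/singular dichotomy based on whether the full linearized constraint map is onto, Hahn--Banach in the singular case, and a Lyusternik-type construction of admissible curves followed by closed-range duality in the regular case. The place you flag as delicate --- building the feasible curve with the one-sided control correction $\sigma\ge 0$ and justifying the one-sided derivative of $J$ along it using only G\^{a}teaux differentiability plus convexity in $u$ --- is indeed the technical heart of the Ioffe--Tikhomirov proof, and your identification of it is correct. If you want to turn this into a full proof you would need to spell out that Lyusternik step carefully (the standard references handle it via an open-mapping/Brouwer-type argument on the finite-dimensional correction), but for the purposes of this paper the theorem is simply imported from the literature.
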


For our further analysis, we set
\begin{align}
\label{6.3a}
Y&=H^1_0(\Omega_\e;\partial\Omega),\quad V=L^2(\Omega_\e)\times H^{-\frac{1}{2}}(\Gamma_\e),\\
\label{6.3b}
U&=\big(L^2(\Omega;\mathbb{S}^N_{sym})\oplus
L^2(\Omega;\mathbb{S}^N_{skew})\big)\times H^{-\frac{1}{2}}(\Gamma_\e), \\
\label{6.3c}
U_\partial&=\mathfrak{A}_{ad}\times H^{-\frac{1}{2}}(\Gamma_\e):=\left(\mathfrak{A}_{ad,1}\oplus \mathfrak{A}_{ad,2}\right)\times H^{-\frac{1}{2}}(\Gamma_\e),
\end{align}
\begin{align}
\notag
J= I_\e(A,v,y)&:=\left\|y-y_d\right\|^2_{L^2(\Omega_\e)}
 + \int_{\Omega_\e}\left(\nabla y, A^{sym}\nabla y\right)_{\mathbb{R}^N}\,dx\\ &\quad + \frac{1}{\e^\sigma}\|v\|^2_{H^{-\frac{1}{2}}(\Gamma_\e)},\\
\label{6.3d}
F(A,v,y)&=\left(-\div\big(A\nabla y\big) - f_\e, \frac{\partial y}{ \partial \nu_{A}}-v\right).
\end{align}
Since for each $(g,w)\in L^2(\Omega_\e)\times H^{-\frac{1}{2}}(\Gamma_\e)$ the boundary value problem
\begin{gather}
\label{6.4.1}
-\div\big(A\nabla y\big) = g\quad\text{in }\ \Omega_\e,\\[1ex]
\label{6.4.2}
y=0\text{ on }\partial\Omega,\quad
\partial y/ \partial \nu_{A}=w\ \text{on }\Gamma_\e
\end{gather}
has a unique solution $y\in H^1_0(\Omega_\e;\partial\Omega)$ \cite{Lions_Mag:72}, we have $\mathrm{Im}\,F^\prime_y(\widehat{u},\widehat{y})=V$. Thus, the assumptions of Theorem~\ref{Th 6.1} are
obviously satisfied. It means that the Lagrange functional $\mathcal{L}_\e$  to the optimal control problem $\left<\inf_{(A,v,y)\in\Xi_\e}
I_\e(A,v,y)\right>$ can be defined by formula (with $\lambda=1$ in \eqref{6.1}--\eqref{6.2})
\begin{align}
\notag
\mathcal{L}_\e(A,v,y,p,p_1)=\ &\left\|y-y_d\right\|^2_{L^2(\Omega_\e)}
 + \int_{\Omega_\e}\left(\nabla y, A^{sym}\nabla y\right)_{\mathbb{R}^N}\,dx\\
\notag  &+ \frac{1}{\e^\sigma}\|v\|^2_{H^{-\frac{1}{2}}(\Gamma_\e)}
+ \left(-\div\big(A\nabla y\big) - f_\e, p_1\right)_{L^2(\Omega_\e)}\\
& +
\left< \frac{\partial y}{ \partial \nu_{A}}-v,p_{\,2}\,\right>_{H^{-\frac{1}{2}}(\Gamma_{\e});H^{\frac{1}{2}}(\Gamma_{\e})},
\label{6.4}
\end{align}
where $p=(p_1,p_{\,2})\in V^\ast:= L^2(\Omega_\e)\times H^{\frac{1}{2}}(\Gamma_{\e})$.

Let $\gamma^0_{\Gamma_\e}:H^1_0(\Omega_\e;\partial\Omega)\rightarrow H^{\frac{1}{2}}(\Gamma_{\e})$ be the trace operator, i.e.
$\gamma^0_{\Gamma_\e}$ is the extension by continuity of the restriction operator $\gamma^0_{\Gamma_\e}( u)=u\big|_{\Gamma_\e}$ given for all $u\in C_0^\infty(\mathbb{R}^N)$.

We are now in a position to prove the following result.
\begin{theorem}
\label{Th 6.5}
For a given $\e>0$, let
$$
(A_\e^0, v_\e^0, y_\e^0)\in \big(L^2(\Omega;\mathbb{S}^N_{sym})\oplus
L^2(\Omega;\mathbb{S}^N_{skew})\big)\times H^{-\frac{1}{2}}(\Gamma_\e)\times H^1_0(\Omega_\e;\partial\Omega)
$$
be an optimal solution to the regularized problems \eqref{5.4}--\eqref{5.5b}. Assume that the following condition holds true
\begin{equation}
\label{6.5.0}
\div\,\left(\left(A_\e^0\right)^{skew}\nabla y_\e^0\right) \in L^2(\Omega_\e).
\end{equation}
Then there exists an element $p_\e\in H^1_0(\Omega_\e;\partial\Omega)$ such that the tuple
$$(A_\e^0, v_\e^0, y_\e^0,p_\e,\gamma^0_{\Gamma_\e}(p_\e))$$ satisfies the following system of relations
\begin{align}
\label{6.5}
-\div\big(A_\e^0\nabla y_\e^0\big) =\ & f_\e\quad\text{in }\ \Omega_\e,\\
\label{6.5a}
y_\e^0=\ & 0\quad \text{ on }\partial\Omega,\\
\label{6.5b}
\partial y_\e^0/ \partial \nu_{A_\e^0}=\ & v_\e^0\quad \text{on }\Gamma_\e,\\
 \div\left(\left(A_\e^0\right)^t\nabla p_\e\right)=\ &- 2\,\div\left(\left(A_\e^0\right)^{sym}\nabla y^0_\e\right)+ 2\left(y^0_\e-y_d\right),
\   \text{a.e. in }\ \Omega_\e,\label{6.6}\\
\label{6.6a} p_\e=\ & 0\quad \text{ on }\partial\Omega,\\
\label{6.6b}
\partial p_\e^0/ \partial \nu_{(A_\e^0)^t}=\ & 0\quad \text{on }\Gamma_\e,\\
\label{6.7a}
v^0_\e =\ & \frac{\e^\sigma}{2}\Lambda_{H^{\frac{1}{2}}(\Gamma_{\e})}\gamma^0_{\Gamma_\e}(p_\e),
\end{align}
\begin{multline}
\label{6.7b}
\int_{\Omega_\e}\big(\nabla y^0_\e+\nabla p_\e, \left(A^{sym}-(A^0_\e)^{sym}\right)\nabla y^0_\e\big)_{\mathbb{R}^N}\,dx\\
+ \int_{\Omega_\e}\left(\nabla p_\e, \left(A^{skew}-(A^0_\e)^{skew}\right)\nabla y^0_\e\right)_{\mathbb{R}^N}\,dx
\ge 0,\quad\forall\, A\in \mathfrak{A}_{ad},
\end{multline}
where $\Lambda_{H^{\frac{1}{2}}(\Gamma_{\e})}$ is the canonical isomorphism of $H^{\frac{1}{2}}(\Gamma_{\e})$ onto $H^{-\frac{1}{2}}(\Gamma_{\e})$.
\end{theorem}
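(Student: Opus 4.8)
The strategy is to recast the regularized problem $\left<\inf_{(A,v,y)\in\Xi_\e}I_\e(A,v,y)\right>$ in the abstract form required by the Ioffe--Tikhomirov theorem (Theorem~\ref{Th 6.1}), using the identifications \eqref{6.3a}--\eqref{6.3d}, and then to translate the abstract stationarity relations \eqref{6.1}--\eqref{6.2} into the concrete system \eqref{6.5}--\eqref{6.7b}. First I would verify the hypotheses of Theorem~\ref{Th 6.1} at the point $\big((A_\e^0,v_\e^0),y_\e^0\big)$: since for each fixed $A\in\mathfrak{A}^\e_{ad}$ one has $A\in L^\infty(\Omega_\e;\mathbb{M}^N)$, the mapping $y\mapsto F(A,v,y)$ is affine and continuously differentiable on $Y=H^1_0(\Omega_\e;\partial\Omega)$; the mapping $(A,v)\mapsto F(A,v,y)$ is affine and continuous (it is in fact linear in $A$ and in $v$); the functional $J=I_\e$ is convex in $(A,v)$ and G\^{a}teaux differentiable there, and continuously differentiable in $y$; and, as recalled after \eqref{6.4.2}, for every datum $(g,w)\in L^2(\Omega_\e)\times H^{-\frac{1}{2}}(\Gamma_\e)$ the mixed problem \eqref{6.4.1}--\eqref{6.4.2} is uniquely solvable in $H^1_0(\Omega_\e;\partial\Omega)$ by the Lax--Milgram lemma (coercivity of $(A_\e^0)^{sym}$ because $\mathfrak{A}_{ad,1}\subset\mathfrak{M}_\alpha^\beta(\Omega)$, together with the skew-symmetry of $(A_\e^0)^{skew}$). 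Hence $\mathrm{Im}\,F'_y=V$, Theorem~\ref{Th 6.1} applies with $\lambda=1$ and yields a multiplier $p=(p_1,p_2)\in V^\ast=L^2(\Omega_\e)\times H^{\frac{1}{2}}(\Gamma_\e)$, and the associated Lagrangian is \eqref{6.4}.

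Next I would extract the adjoint system from the stationarity condition $\mathcal{L}'_y=0$, i.e.\ from \eqref{6.1}. Differentiating \eqref{6.4} in a direction $h\in H^1_0(\Omega_\e;\partial\Omega)$, testing first with $h\in C^\infty_0(\Omega_\e)$ and then with $h$ having nonzero trace on $\Gamma_\e$, and applying Green's formula to the terms $\big(-\div(A_\e^0\nabla h),p_1\big)_{L^2(\Omega_\e)}$ and $\left\langle\partial h/\partial\nu_{A_\e^0},p_2\right\rangle_{H^{-\frac{1}{2}}(\Gamma_\e);H^{\frac{1}{2}}(\Gamma_\e)}$, one reads off that $p_\e:=p_1$ is a weak solution of the transposed problem \eqref{6.6}--\eqref{6.6b} with the operator $(A_\e^0)^t=(A_\e^0)^{sym}-(A_\e^0)^{skew}$, and simultaneously that the boundary multiplier is forced to equal the trace $p_2=\gamma^0_{\Gamma_\e}(p_\e)$. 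Here condition \eqref{6.5.0} is used in an essential way: combined with the feasibility relation $-\div(A_\e^0\nabla y_\e^0)=f_\e\in L^2(\Omega_\e)$ it gives $\div\big((A_\e^0)^{sym}\nabla y_\e^0\big)\in L^2(\Omega_\e)$, which is exactly what legitimates the $L^2$-pairings in \eqref{6.4} and the above integrations by parts, and guarantees $p_\e\in H^1_0(\Omega_\e;\partial\Omega)$ with $\div\big((A_\e^0)^t\nabla p_\e\big)\in L^2(\Omega_\e)$ and a well-defined conormal derivative in $H^{-\frac{1}{2}}(\Gamma_\e)$. The state relations \eqref{6.5}--\eqref{6.5b} are just the constraint $F(A_\e^0,v_\e^0,y_\e^0)=0$ written out.

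Finally I would exploit the variational inequality \eqref{6.2}. Differentiating \eqref{6.4} in the unconstrained variable $v$ (the $H^{-\frac{1}{2}}(\Gamma_\e)$-factor of $U_\partial$ being the whole space) and equating the derivative to zero gives $\frac{2}{\e^\sigma}(v_\e^0,\delta v)_{H^{-\frac{1}{2}}(\Gamma_\e)}=\langle\delta v,p_2\rangle_{H^{-\frac{1}{2}}(\Gamma_\e);H^{\frac{1}{2}}(\Gamma_\e)}$ for all $\delta v\in H^{-\frac{1}{2}}(\Gamma_\e)$; since $p_2=\gamma^0_{\Gamma_\e}(p_\e)$, this is precisely \eqref{6.7a} with the canonical isomorphism $\Lambda_{H^{\frac{1}{2}}(\Gamma_\e)}$. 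Differentiating \eqref{6.4} in $A$ along admissible increments $A-A_\e^0$ with $A\in\mathfrak{A}_{ad}$, using that $A\mapsto J$ and $A\mapsto F$ are affine, and applying Green's formula to $\big(-\div((A-A_\e^0)\nabla y_\e^0),p_\e\big)_{L^2(\Omega_\e)}+\left\langle\partial y_\e^0/\partial\nu_{A-A_\e^0},p_2\right\rangle_{H^{-\frac{1}{2}}(\Gamma_\e);H^{\frac{1}{2}}(\Gamma_\e)}$ to convert it into $\int_{\Omega_\e}\big((A-A_\e^0)\nabla y_\e^0,\nabla p_\e\big)_{\mathbb{R}^N}\,dx$, then adding the derivative $\int_{\Omega_\e}\big(\nabla y_\e^0,(A^{sym}-(A_\e^0)^{sym})\nabla y_\e^0\big)_{\mathbb{R}^N}\,dx$ of the cost term and splitting $A-A_\e^0$ into its symmetric and skew-symmetric parts, yields exactly \eqref{6.7b}.

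The hard part is the rigorous treatment in the second step: one must check that the abstract multiplier $p_1\in L^2(\Omega_\e)$ delivered by Theorem~\ref{Th 6.1} is actually an element of $H^1_0(\Omega_\e;\partial\Omega)$ solving \eqref{6.6}, that the transposed conormal condition \eqref{6.6b} is the correct natural boundary condition, and that $F$ and its derivatives genuinely act into $V=L^2(\Omega_\e)\times H^{-\frac{1}{2}}(\Gamma_\e)$ on the relevant set of states. This is where the regularity hypothesis \eqref{6.5.0} is indispensable: without it the bilinear form generated by $(A_\e^0)^{skew}$ need not be bounded on $H^1_0(\Omega_\e)$, $\div(A_\e^0\nabla y)$ need not be square-integrable, and the Green's-formula and conormal-derivative manipulations above would be only formal. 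The remaining verifications (convexity/affineness, differentiability of $I_\e$, and the straightforward unique-solvability of \eqref{6.4.1}--\eqref{6.4.2} for $A_\e^0\in L^\infty$) are routine.
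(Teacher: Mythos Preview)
Your proposal is correct and follows essentially the same route as the paper: apply the Ioffe--Tikhomirov theorem with the identifications \eqref{6.3a}--\eqref{6.3d} and $\lambda=1$ (since $\mathrm{Im}\,F'_y=V$), then read off the adjoint system from $\mathcal{L}'_y=0$ by testing first with compactly supported $h$ and then with $h$ having nontrivial boundary traces, using Green's formula and the regularity coming from \eqref{6.5.0}; finally obtain \eqref{6.7a} and \eqref{6.7b} from $\mathcal{L}'_v=0$ and $\mathcal{L}'_A\ge0$. The paper carries out the middle step in slightly more detail---working explicitly with $h\in H^2(\Omega_\e)\cap H^1_0(\Omega_\e;\partial\Omega)$, showing $(A_\e^0)^t\nabla p_1\in H(\Omega_\e;\div)$ so that the conormal derivative is well defined, and then separating the three boundary relations \eqref{6.13}, \eqref{6.15}, \eqref{6.16} by successive choices of test functions---but your outline already anticipates exactly these issues in what you call ``the hard part.''
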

\begin{remark}
It is worth to notice that, in contrast to \eqref{6.5}, relation \eqref{6.6} should be interpreted as an equality of $L^2$-functions. It means that the description of boundary value problem \eqref{6.6}--\eqref{6.6b} in the sense of distributions takes other form, namely,
\begin{align*}
 \div\left(\left(A_\e^0\right)^t\nabla p_1\right)=\ & 2\left(f_\e+\,\div\left(\left(A_\e^0\right)^{skew}\nabla y^0_\e\right)+\left(y^0_\e-y_d\right)\right),\ \text{ in }\ \Omega_\e,\\
p_\e=\ & 0\quad \text{ on }\partial\Omega,\\
\partial p_\e^0/ \partial \nu_{(A_\e^0)^t}=\ & \partial y_\e^0/ \partial \nu_{(A_\e^0)^{skew}}\quad \text{on }\Gamma_\e,
\end{align*}
where the component $\partial y_\e^0/ \partial \nu_{(A_\e^0)^{skew}}$ is unknown a priori.
Here, we have used the fact that
\begin{equation}
\label{6.7d}
-\div\left(\left(A_\e^0\right)^{sym}\nabla y^0_\e\right)=f_\e+\div\left(\left(A_\e^0\right)^{skew}\nabla y^0_\e\right)\quad\text{in }\ \Omega_\e
\end{equation}
by equation \eqref{6.5}.
\end{remark}

\begin{proof}
By Theorem~\ref{Th 6.1}, there exists a pair $p=(p_1,p_{\,2})\in V^\ast:= L^2(\Omega_\e)\times H^{\frac{1}{2}}(\Gamma_{\e})$ such that the Lagrange functional $\mathcal{L}$ satisfies relations \eqref{6.1}--\eqref{6.2}. The direct computations show that, in view of \eqref{6.4}, the condition \eqref{6.1} takes the form
\begin{multline}
\label{6.8}
\left\langle\mathcal{D}_y\,\widehat{L}_\e(A_\e^0, v_\e^0, y_\e^0,p_1,p_2),h\right\rangle_{Y^\ast;Y}=
2\int_{\Omega_\e}\left(\nabla h, \left(A_\e^0\right)^{sym}\nabla y^0_\e\right)_{\mathbb{R}^N}\,dx\\
+2
\int_{\Omega_\e}\left(y^0_\e-y_d\right) h\,dx
 +
\left< \frac{\partial h}{ \partial \nu_{A_\e^0}},p_{\,2}\,\right>_{H^{-\frac{1}{2}}(\Gamma_{\e});H^{\frac{1}{2}}(\Gamma_{\e})}\\
 -\int_{\Omega_\e}
\div\big(A_\e^0\nabla h\big)p_1\,dx=0,\quad \forall\,h\in H^2(\Omega_\e)\cap H^1_0(\Omega_\e;\partial\Omega)
\end{multline}
(here we have used the fact that $\mathrm{Im}\,F^\prime_y(\widehat{u},\widehat{y})=V$). As follows from \eqref{6.8} and \eqref{6.5.0}, for $h\in C^\infty_0(\Omega_\e)$, we have
\begin{multline}
\label{6.9}
2\int_{\Omega_\e}\left(\nabla h, \left(A_\e^0\right)^{sym}\nabla y^0_\e\right)_{\mathbb{R}^N}\,dx
+2
\int_{\Omega_\e}\left(y^0_\e-y_d\right) h\,dx\\- \int_{\Omega_\e}\div\left(\left(A_\e^0\right)^t\nabla p_1\,\right) h\,dx=
-2\int_{\Omega_\e} \div\left(\left(A_\e^0\right)^{sym}\nabla y^0_\e\right) h\,dx\\
+2
\int_{\Omega_\e}\left(y^0_\e-y_d\right) h\,dx
 - \int_{\Omega_\e}\div\left(\left(A_\e^0\right)^t\nabla p_1\,\right) h\,dx=0.
\end{multline}
Due to equality \eqref{6.7d} and the initial assumptions \eqref{6.5.0}, relation \eqref{6.9} implies that $\div\left(\left(A_\e^0\right)^t\nabla p_1\right)\in L^2(\Omega_\e)$. Hence,  $\left(A_\e^0\right)^t\nabla p_1\in H(\Omega_\e;\div)$, where
\[
H(\Omega_\e;\div)=\left\{\xi\ |\ \xi\in L^2(\Omega_\e;\mathbb{R}^N),\  \div\xi\in L^2(\Omega_\e)\right\}.
\]
Thanks to Lipschitz properties of $\partial\Omega_\e$, we can conclude that (see, for instance, \cite{Lions_Mag:72,Cioran})
$\partial p_1/ \partial \nu_{(A_\e^0)^t}\in H^{-\frac{1}{2}}(\partial\Omega_{\e})$ and the map
\[
\left(A_\e^0\right)^t\nabla p_1\in H(\Omega_\e;\div)\ \mapsto \frac{\partial p_1}{ \partial \nu_{(A_\e^0)^t}}\in H^{-\frac{1}{2}}(\partial\Omega_{\e})
\]
is linear and continuous. Moreover, if $\left(A_\e^0\right)^t\nabla p_1\in H(\Omega_\e;\div)$ and $h\in H^2(\Omega_\e)\cap H^1_0(\Omega_\e;\partial\Omega)$, then  the Green formula
\begin{align}
\notag
-\int_{\Omega_\e}\div & \big(A_\e^0\nabla h\big) p_1\,dx=-\int_{\Omega_\e}\div \big(\left(A_\e^0\right)^t\nabla p_1\,\big) h\,dx\\
&-\left< \frac{\partial h}{ \partial \nu_{A_\e^0}},\gamma^0_{\partial\Omega_\e}\big(p_1\big)\,\right>_{H^{-\frac{1}{2}}(\Omega_{\e});H^{\frac{1}{2}}(\Omega_{\e})}
+ \left< \frac{\partial p_1}{ \partial \nu_{(A_\e^0)^t}},h\right>_{H^{-\frac{1}{2}}(\Gamma_{\e});H^{\frac{1}{2}}(\Gamma_{\e})}
\label{6.10}
\end{align}
is valid.  Then, combining this relation with \eqref{6.8}--\eqref{6.9}, we arrive at the following identity
\begin{multline}
\label{6.11}
\left\langle\mathcal{D}_y\,\widehat{L}_\e(A_\e^0, v_\e^0, y_\e^0,p_1,p_2),h\right\rangle_{Y^\ast;Y}=
\left< \frac{\partial p_1}{ \partial \nu_{(A_\e^0)^t}},h\,\right>_{H^{-\frac{1}{2}}(\Gamma_{\e});H^{\frac{1}{2}}(\Gamma_{\e})}\\ -
\left< \frac{\partial h}{ \partial \nu_{A_\e^0}},\gamma^0_{\partial\Omega_\e}\big(p_1\big)\,\right>_{H^{-\frac{1}{2}}(\Omega_{\e});H^{\frac{1}{2}}(\Omega_{\e})}+
\left< \frac{\partial h}{ \partial \nu_{A_\e^0}},p_{\,2}\,\right>_{H^{-\frac{1}{2}}(\Gamma_{\e});H^{\frac{1}{2}}(\Gamma_{\e})}=0,
\end{multline}
which is valid for all $h\in H^2(\Omega_\e)\cap H^1_0(\Omega_\e;\partial\Omega)$ and all $p=(p_1,p_2)$ such that
\begin{equation}
\label{6.12}
\begin{array}{c}
p_1\ \text{satisfies \eqref{6.9}},\\[1ex]
(p_1,p_{\,2})\in L^2(\Omega_\e)\times H^{\frac{1}{2}}(\Gamma_{\e})\ \text{ and }\
\left(A_\e^0\right)^t\nabla p_1\in H(\Omega_\e,\mathrm{div}).
\end{array}
\end{equation}

As follows from \eqref{6.11}, for each
$$
h\in C^\infty_0(\mathbb{R}^N;\Gamma_\e)\cap C_0(\mathbb{R}^N;\partial\Omega)\subset H^2(\Omega_\e)\cap H^1_0(\Omega_\e;\partial\Omega),
$$
we have
\[
\left< \frac{\partial h}{ \partial \nu_{A_\e^0}},\gamma^0_{\partial\Omega}\big(p_1\big)\,\right>_{H^{-\frac{1}{2}}(\partial \Omega);H^{\frac{1}{2}}(\partial \Omega)}=0.
\]
Since $C^\infty_0(\mathbb{R}^N;\Gamma_\e)\cap C_0(\mathbb{R}^N;\partial\Omega)$ is dense in $H^{-\frac{1}{2}}(\partial \Omega)$ and
the matrix $\left(A_\e^0\right)^{sym}$ is positive defined, it follows that
\begin{equation}
\label{6.13}
\gamma^0_{\partial\Omega}\big(p_1\big)=0.
\end{equation}
Hence, equality \eqref{6.11}, for all $h\in C^\infty_0(\mathbb{R}^N;\Gamma_\e)$, gives
\begin{equation}
\label{6.14}
\left< \frac{\partial h}{ \partial \nu_{A_\e^0}},p_{\,2}\,\right>_{H^{-\frac{1}{2}}(\Gamma_{\e});H^{\frac{1}{2}}(\Gamma_{\e})}-
\left< \frac{\partial h}{ \partial \nu_{A_\e^0}},\gamma^0_{\Gamma_\e}\big(p_1\big)\,\right>_{H^{-\frac{1}{2}}(\Gamma_{\e});H^{\frac{1}{2}}(\Gamma_{\e})}=0.
\end{equation}
Taking into account the fact that the mapping
$$\partial/ \partial \nu_{A_\e^0}: H^2(\Omega_\e)\cap H^1_0(\Omega_\e;\partial\Omega)\rightarrow H^{\frac{1}{2}}(\Gamma_{\e})$$
is an epimorphism (see Theorem~1.1.4 in \cite{Fursik}), from \eqref{6.14} it follows that
\begin{equation}
\label{6.15}
\gamma^0_{\Gamma_\e}\big(p_1\big)=p_{\,2}.
\end{equation}

Thus, in view of \eqref{6.13} and \eqref{6.15}, relation \eqref{6.11} takes the form
\[
\left\langle\mathcal{D}_y\,\widehat{L}_\e(A_\e^0, v_\e^0, y_\e^0,p_1,\gamma^0_{\Gamma_\e}\big(p_{1}\big)),h\right\rangle_{Y^\ast;Y}=
\left< \frac{\partial p_1}{ \partial \nu_{(A_\e^0)^t}},h\,\right>_{H^{-\frac{1}{2}}(\Gamma_{\e});H^{\frac{1}{2}}(\Gamma_{\e})}=0
\]
for all $h\in H^2(\Omega_\e)\cap H^1_0(\Omega_\e;\partial\Omega)$.
Applying the same arguments as before, we finally conclude that
\begin{equation}
\label{6.16}
\frac{\partial p_1}{ \partial \nu_{(A_\e^0)^t}}=0\quad\text{on }\ \Gamma_\e\ \text{ (in the sense of distribution)}.
\end{equation}

As a result, having gathered relations \eqref{6.9}, \eqref{6.13}, and \eqref{6.16}, we arrive at the boundary value problem
\eqref{6.6}--\eqref{6.6b}. Moreover, by the regularity of solutions to the problem \eqref{6.6}--\eqref{6.6b}, we have
$p_\e\in H^2(\Omega_\e)\cap H^1_0(\Omega_\e;\partial\Omega)$ \cite{Gilbarg}.

In order to end of the proof of this theorem, it remains to show the validity of the relations \eqref{6.7a}--\eqref{6.7b}.
With that in mind, we note that, in view of the structure \eqref{6.3a}--\eqref{6.3c}, condition \eqref{6.2} takes the form
\begin{multline}
\left(\mathcal{D}_A \mathcal{L}(A_\e^0, v_\e^0, y_\e^0,p_\e,\gamma^0_{\Gamma_\e}(p_\e)),A-A_\e^0\right)_{L^2(\Omega;\mathbb{M}^N)}\ge 0,\quad\forall\, A\in \mathfrak{A}^\e_{ad}\quad\Longrightarrow\\
\int_{\Omega_\e}\left(\nabla y^0_\e+\nabla p_\e, \left(A^{sym}-(A^0_\e)^{sym}\right)\nabla y^0_\e\right)_{\mathbb{R}^N}\,dx\\
+ \int_{\Omega_\e}\left(\nabla p_\e, \left(A^{skew}-(A^0_\e)^{skew}\right)\nabla y^0_\e\right)_{\mathbb{R}^N}\,dx
\ge 0,\quad\forall\, A\in \mathfrak{A}^\e_{ad},
\label{6.17}
\end{multline}
\begin{equation}
\label{6.18}
\mathcal{D}_v \mathcal{L}(A_\e^0, v_\e^0, y_\e^0,p_\e,\gamma^0_{\Gamma_\e}(p_\e))=0\quad\Longrightarrow
\frac{2}{\e^\sigma} v^0_\e -\Lambda_{H^{\frac{1}{2}}(\Gamma_{\e})}\gamma^0_{\Gamma_\e}(p_\e)=0,
\end{equation}
 Here, we have used the fact that $H^{\frac{1}{2}}(\Gamma_{\e})$ can be reduced to a Hilbert space with respect to an appropriate equivalent norm, and, hence, $H^{-\frac{1}{2}}(\Gamma_{\e})$ is a dual Hilbert space as well (for the details we refer to Lions and Magenes \cite[p.35]{Lions_Mag:72}).
\end{proof}
\begin{remark}
\label{Rem 6.4}
In view of the assumption \eqref{6.5.0}, we make use of the following observation.
Let $\left\{(A_\e,v_\e,y_\e)\in  \Xi_\e\right\}_{\e>0}$ be a weakly convergent sequence in the sense of Definition~\ref{Def 5.10}. Since in this case $\left\{y_\e\in H^1_0(\Omega_\e;\partial\Omega)\right\}_{\e>0}$ are the solutions to the boundary value problem \eqref{6.4.1}--\eqref{6.4.2} with $A=A_\e$, and $g=f_\e\in L^2(\Omega)$, and $w=v_\e\in H^{-\frac{1}{2}}(\Gamma_\e)$, it follows that
the sequence $\left\{\div\big(A_\e\nabla y_\e\big)\chi_{\Omega_\e}\right\}_{\e>0}$ is obviously bounded in $L^2(\Omega)$. However, because of the non-symmetry of $L^2$-matrices $\left\{A_\e\right\}_{\e>0}$, it does not imply the same property for the sequence $\left\{\div\big(A^{skew}_\e\nabla y_\e\big)\chi_{\Omega_\e}\right\}_{\e>0}$. In order to guarantee this property, we make use of the notion of divergence $\div A$ of a skew-symmetric matrix $A\in L^2\big(\Omega;\mathbb{S}^N_{skew}\big)$. We define it as a vector-valued distribution
$d\in H^{-1}(\Omega;\mathbb{R}^N)$ following the rule
\begin{equation}
\label{6.0a}
\left<d_i,\varphi\right>_{H^{-1}(\Omega);H^1_0(\Omega)}= -\int_\Omega
(a_i,\nabla\varphi)_{\mathbb{R}^N}\,dx,\  \forall\,\varphi\in
C^\infty_0(\Omega),\ \forall\,i\in\left\{1,\dots,N\right\},
\end{equation}
where $a_i$ stands for the $i$-th column of the matrix $A$. As a result, we can give the following conclusion:
if $\div A_\e^{skew}\in L^\infty(\Omega;\mathbb{R}^N)$ for all $\e>0$ and the sequence $\left\{\div A_\e^{skew}\right\}_{\e>0}$ is uniformly bounded in $L^\infty(\Omega;\mathbb{R}^N)$, then there exists a constant $C>0$ independent of $\e$ such that
\begin{equation}
\label{6.0b}
\sup_{\e>0} \left\|\chi_{\Omega_\e} \div\big(A^{skew}_\e\nabla y_\e\big)\right\|_{L^2(\Omega)}\le C.
\end{equation}
Indeed, since
\begin{align*}
-\big<\mathrm{div}\,\left(A_\e^{skew}\nabla \psi_\e\right),&\chi_{\Omega_\e}\varphi\big>_{H^{-1}(\Omega);H^1_0(\Omega)}\\
=\ &
-\left<\mathrm{div}\,\left(A_\e^{skew}\nabla \psi_\e\right),\varphi\right>_{H^{-1}(\Omega_\e);H^1_0(\Omega_\e)}\\
=\ &\Big< \mathrm{div}\,\left[
\begin{array}{c}
a^t_{1,\e}\nabla \psi_\e\\ \cdots \\a^t_{N,\e}\nabla \psi_\e
\end{array}\right],\varphi\Big>_{H^{-1}(\Omega_\e);H^1_0(\Omega_\e)}\\
=\ &
\sum_{i=1}^N\left<\mathrm{div}\,a_{i,\e}, \varphi\frac{\partial\psi_\e}{\partial x_i}\right>_{H^{-1}(\Omega_\e);H^1_0(\Omega_\e)}\\
&+ \underbrace{\int_{\Omega_\e}
\sum_{i=1}^N \sum_{j=1}^N \left(a_{ij,\e} \frac{\partial^2 \psi_\e}{\partial x_i\partial x_j}\right) \varphi\,dx}_{=0\ \atop {\text{ since }\ A_\e^{skew}\in L^2(\Omega;\mathbb{S}^N_{skew})}}\\
(\text{due to the fact that }&\text{$\div A_\e^{skew}\in L^\infty(\Omega;\mathbb{R}^N)$ for all $\e>0$})
\\
=\ &\int_{\Omega_\e} \left(\,\div A_\e^{skew},\nabla \psi_\e\right)_{\mathbb{R}^N} \varphi\,dx,
\end{align*}
for any $\psi_\e,\varphi\in C^\infty_0(\Omega_\e)$, it follows that this relation can be extended by continuity to the following one
\[
-\left<\mathrm{div}\,\left(A_\e^{skew}\nabla y_\e\right),\chi_{\Omega_\e}\varphi\right>_{H^{-1}(\Omega);H^1_0(\Omega)}
=\int_{\Omega_\e} \left(\,\div A_\e^{skew},\nabla y_\e\right)_{\mathbb{R}^N} \varphi\,dx.
\]
Hence
\begin{align*}
\left\|\chi_{\Omega_\e} \div\big(A^{skew}_\e\nabla y_\e\big)\right\|_{L^2(\Omega)}&\le
(\mathcal{L}^N(\Omega))^{1/2}\|\div A_\e^{skew}\|_{L^\infty(\Omega;\mathbb{R}^N)}\\
&\times \|\nabla y_\e\|_{L^2(\Omega_\e;\mathbb{R}^N)}<+\infty.
\end{align*}
To deduce the estimate \eqref{6.0b}, it remains to refer to the boundedness of $y_\e$ in variable $H^1(\Omega_\e;\partial\Omega)$ (see Definition~\ref{Def 5.10}).
\end{remark}

Our next intention is to provide an asymptotic analysis of the optimality system \eqref{6.5}--\eqref{6.7b} as $\e$ tends to zero. With that in mind, we assume the fulfilment of the following Hypotheses:
\begin{enumerate}
\item[(AH1)] For each admissible control $A\in \mathfrak{A}_{ad}$  the corresponding bilinear form $[y,\varphi]_A$ is continuous in the following sense:
\begin{equation}
\label{6.19.a}
\lim_{\e\to 0} [y_\e,p_\e]_A = [y,p\,]_A
\end{equation}
provided
$\left\{p_\e\right\}_{\e>0}\subset H^1_0(\Omega)$, $\left\{y_\e\right\}_{\e>0}\subset H^1_0(\Omega)$, $y_\e\rightharpoonup y$ in $H^1_0(\Omega)$, $p_\e\rightarrow p$ in $H^1_0(\Omega)$, and  $y, y_\e\in D(A)$ for $\e>0$ small enough.
\item[(AH2)] Let $\left\{(A_\e^0, v_\e^0, y_\e^0, p_\e\,)\right\}_{e>0} $ be a sequence of tuples such that, for each $\e>0$ the corresponding cortege $(A_\e^0, v_\e^0, y_\e^0, p_\e\,)$ satisfies the optimality system \eqref{6.5}--\eqref{6.7b}. Then there exists a sequence of extension operators
    $$\left\{P_\e\in \mathcal{L}\left(H^1_0(\Omega_\e;\partial\Omega),H^1_0(\Omega)\right)\right\}_{\e>0}$$
    and element $\overline{\psi}\in H^1_0(\Omega)$ such that
\[
P_\e(p_\e) \rightarrow \overline{\psi}\quad\text{strongly in }\ H^1_0(\Omega)\quad\text{and}\quad \overline{\psi}\in D(A^\ast).
\]
\end{enumerate}

\begin{theorem}
\label{Th 6.19} Let $y_d\in L^2(\Omega)$ and $f\in
H^{-1}(\Omega)$ be given distributions. Let $A^\ast\in L^2\big(\Omega;\mathbb{S}^N_{skew}\big)$ be a matrix of the $\mathfrak{F}$-type. Let $\left\{(A^0_\varepsilon,v^0_\e,y^0_\e)\in \Xi_\varepsilon\right\}_{\e>0}$ be a sequence of optimal solutions to regularized problems \eqref{5.4}--\eqref{5.5b}, and let $(A^0,y^0)\in D(A^\ast)\times H^1_0(\Omega)$ be its $w$-limit. Let $\left\{p^0_\e\in H^1_0(\Omega_\e;\partial\Omega)\right\}_{\e>0}$ be a sequence of corresponding adjoint states. Then, the fulfilment of the Hypotheses (H1)--(H2) and (AH1)--(AH2) implies that $(A^0,y^0)\in \mathfrak{A}_{ad}\times H^1_0(\Omega)$ is an optimal pair to the original OCP \eqref{2.3}--\eqref{2.3a} and there exists an element $\overline{\psi}\in H^1_0(\Omega)$ such that
\begin{gather}
\label{6.20}
(A^0_\varepsilon,v^0_\e,y^0_\e)\,\stackrel{w}{\rightarrow}\, (A^0,y^0)\ \text{ as }\ \e\to 0,\\
\label{6.21}
P_\e(p_\e) \rightarrow \overline{\psi}\quad\text{strongly in }\ H^1_0(\Omega),\\
\label{6.22}
\begin{split}
-\div\big(A^0\nabla y^0\big) &=\  f\quad\text{in }\ \Omega,\\
y&=0\quad\text{on }\ \partial\Omega,
\end{split}
\\
\begin{split}
\div\left(\left(A^0\right)^t\nabla \overline{\psi}\right)&=\ - 2\,\div\left(\left(A^0\right)^{sym}\nabla y^0\right)+ 2\left(y^0-y_d\right)
\   \text{ in }\ \Omega,\\
\overline{\psi}&=0\quad\text{on }\ \partial\Omega,
\end{split}
\label{6.23}\\
\int_\Omega \big(\nabla y^0,\big(A^{sym}-\left(A^0\right)^{sym}\big)\left(\nabla y^0+\nabla\overline{\psi}\right)\big)_{\mathbb{R}^N}\,dx\notag\\
\ge\   [y^0,\overline{\psi}]_{A^0}-[y^0,\overline{\psi}]_{A},\ \forall A\in \mathfrak{A}_{ad},
\label{6.24}
\end{gather}
\end{theorem}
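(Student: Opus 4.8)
The plan is to start from the $\e$-optimality system \eqref{6.5}--\eqref{6.7b}, which is available from Theorem~\ref{Th 6.5} once \eqref{6.5.0} is assumed, and to pass to the limit in it as $\e\to 0$; the variable-space convergence framework is already in place, so the real work is in identifying the limits of the singular (skew-symmetric) terms. Much is free. Since $A^\ast$ is of $\mathfrak F$-type and \eqref{5.31a} fails, Theorem~\ref{Th 5.31} already gives that $(A^0,y^0)$ is optimal for \eqref{2.3}--\eqref{2.3a}, that \eqref{6.20} holds, and with it the full list from Definition~\ref{Def 5.10}: $A^0_\e\to A^0$ in $L^2(\Omega;\mathbb M^N)$, $(A^0_\e)^{sym}\to (A^0)^{sym}$ in every $L^p$ and weakly-$\ast$ in $L^\infty$, $(A^0_\e)^{skew}\to (A^0)^{skew}$ in $L^2$, $y^0_\e\rightharpoonup y^0$ in the variable spaces $H^1_0(\Omega_\e;\partial\Omega)$, together with the energy convergence \eqref{5.35} and the hidden-energy identity \eqref{5.34}. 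Hypothesis (AH2) furnishes \eqref{6.21}, i.e. $P_\e(p_\e)\to\overline\psi$ strongly in $H^1_0(\Omega)$ with $\overline\psi\in D(A^\ast)$; since then $y^0,\overline\psi\in D(A^\ast)$ and $A^{skew}\preceq A^\ast$ for every $A\in\mathfrak A_{ad}$, Hypotheses (H1)--(H2) make the brackets $[y^0,\overline\psi]_A$ in \eqref{6.24} meaningful and guarantee that the class over which \eqref{2.3}--\eqref{2.3a} is posed is non-degenerate.

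The technical backbone is to upgrade $y^0_\e\rightharpoonup y^0$ to strong convergence of gradients. Setting $(A^0_\e)^{1/2}:=\big((A^0_\e)^{sym}\big)^{1/2}$, I would combine: $\|\chi_{\Omega_\e}(A^0_\e)^{1/2}\nabla y^0_\e\|^2_{L^2(\Omega)}\to\|(A^0)^{1/2}\nabla y^0\|^2_{L^2(\Omega)}$, which is exactly \eqref{5.35}; the weak convergence $\chi_{\Omega_\e}(A^0_\e)^{1/2}\nabla y^0_\e\rightharpoonup (A^0)^{1/2}\nabla y^0$ in $L^2(\Omega;\mathbb R^N)$, coming from the $L^p$-strong (and $L^\infty$-bounded) convergence of the Lipschitz map $(A^0_\e)^{1/2}$ and the weak convergence of $\chi_{\Omega_\e}\nabla y^0_\e$; hence weak plus norm convergence yields strong convergence; and finally multiplying by $((A^0_\e)^{1/2})^{-1}$, again $L^p$-strong and $L^\infty$-bounded, to conclude $\chi_{\Omega_\e}\nabla y^0_\e\to\nabla y^0$ strongly in $L^2(\Omega;\mathbb R^N)$. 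The analogous statement $\chi_{\Omega_\e}\nabla p_\e\to\nabla\overline\psi$ strongly in $L^2$ is immediate from (AH2).

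With these two facts the state equation \eqref{6.5}--\eqref{6.5b} passes to the limit verbatim as in the proof of Theorem~\ref{Th 5.13}, using $\chi_{\Omega_\e}\to 1$ (Proposition~\ref{Prop 5.3}) and $\chi_{\Omega_\e}f_\e\to f$, to give \eqref{6.22}. For the adjoint equation \eqref{6.6}--\eqref{6.6b} I would test the extended weak form against $\varphi\in C^\infty_0(\Omega)$, split $(A^0_\e)^t=(A^0_\e)^{sym}-(A^0_\e)^{skew}$, observe that $\chi_{\Omega_\e}(A^0_\e)^{skew}\nabla p_\e\to (A^0)^{skew}\nabla\overline\psi$ in $L^1(\Omega)$ (product of two $L^2$-strongly convergent sequences) and that this limit is the extended bilinear form $[\overline\psi,\varphi]_{A^0}$ because $\overline\psi\in D(A^\ast)\subseteq D(A^0)=D((A^0)^t)$, while $\chi_{\Omega_\e}(A^0_\e)^{sym}\nabla y^0_\e\to (A^0)^{sym}\nabla y^0$ strongly in $L^2$ and $y^0_\e\to y^0$ strongly in $L^2$ by compact embedding; this yields \eqref{6.23}, read distributionally as in the Remark following Theorem~\ref{Th 6.5}.

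There remains the variational inequality \eqref{6.7b}. Written out it is the sum of a purely symmetric quadratic term $\int_{\Omega_\e}(\nabla y^0_\e,(A^{sym}-(A^0_\e)^{sym})\nabla y^0_\e)$, a symmetric mixed term $\int_{\Omega_\e}(\nabla p_\e,(A^{sym}-(A^0_\e)^{sym})\nabla y^0_\e)$, and the skew-symmetric mixed term $\int_{\Omega_\e}(\nabla p_\e,(A^{skew}-(A^0_\e)^{skew})\nabla y^0_\e)$. Given the strong convergences of $\chi_{\Omega_\e}\nabla y^0_\e$ and $\chi_{\Omega_\e}\nabla p_\e$ and the $L^p$-convergence (with $L^\infty$-bound) of $(A^0_\e)^{sym}$, the first two terms converge to $\int_\Omega(\nabla y^0,(A^{sym}-(A^0)^{sym})\nabla y^0)$ and $\int_\Omega(\nabla\overline\psi,(A^{sym}-(A^0)^{sym})\nabla y^0)$. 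The skew-symmetric mixed term is the crux, and it is exactly what Hypothesis (AH1) is designed for: exploiting the pointwise skew-symmetry (legitimate on $\Omega_\e$, where both $A^{skew}$ and $(A^0_\e)^{skew}$ are bounded by $\e^{-1}$) one recasts it in terms of $[y^0_\e,p_\e]_A$ and $[y^0_\e,p_\e]_{A^0}$, then uses (AH1) — together with estimate \eqref{6.0b} of Remark~\ref{Rem 6.4} under \eqref{6.5.0} for the $A^0_\e$-contribution, and a smooth-approximation continuity argument in the spirit of the proof of Theorem~\ref{Th 33.24} — to identify the limit as $-\big([y^0,\overline\psi]_A-[y^0,\overline\psi]_{A^0}\big)$. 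This is the step that carries the hidden singular energy and I expect it to be the main obstacle; in particular one must be careful that (AH1) is applied to admissible sequences, which forces a check that the relevant iterates lie in $D(A)$ (or a substitute). Adding the three limits and using that \eqref{6.7b} holds for every $\e$, together with $\mathfrak A_{ad}\subseteq\mathfrak A^\e_{ad}$ so that a fixed admissible $A$ is available at every scale, produces \eqref{6.24}. Finally \eqref{6.7a} is not needed for the limit identities beyond confirming, via the trace theorem and $\e^{-\sigma}\mathcal H^{N-1}(\Gamma_\e)\to 0$, the boundedness of $\{v^0_\e\}$ required by Definition~\ref{Def 5.10}, which (AH2) already provides.
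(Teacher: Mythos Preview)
Your plan is broadly correct and in one respect cleaner than the paper's own argument. The paper does \emph{not} upgrade $y^0_\e\rightharpoonup y^0$ to strong convergence of the (zero-extended) gradients via the energy identity \eqref{5.35}; your observation that weak convergence plus norm convergence of $\chi_{\Omega_\e}((A^0_\e)^{sym})^{1/2}\nabla y^0_\e$ yields strong $L^2$-convergence, followed by multiplication with the $L^\infty$-bounded inverse square root, is valid and immediately gives convergence of the quadratic term $\int_{\Omega_\e}(\nabla y^0_\e, A^{sym}\nabla y^0_\e)\,dx$ for every fixed $A\in\mathfrak{A}_{ad}$. The paper instead keeps only the lower-semicontinuity estimate \eqref{6.35} for this term and then closes the resulting one-sided inequality by a contradiction argument based on the Lagrange principle (its Step~4); your route bypasses this entirely. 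For the mixed term $J_3^\e(A)=\int_{\Omega_\e}(\nabla y^0_\e,(A^t-(A^0_\e)^t)\nabla p_\e)\,dx$ the paper also proceeds differently: it substitutes the $\e$-adjoint identity \eqref{6.25} (with $\varphi=y^0_\e$) into the $(A^0_\e)^t$-piece, takes limits using \eqref{6.31} and \eqref{6.30.aa}, and then re-expresses the result via the limit adjoint identity \eqref{6.25.a}, which produces the bracket $[y^0,\overline\psi]_{A^0}$ without ever handling a triple $L^2\times L^2\times L^2$ product directly. Your strong convergence makes the symmetric parts of $J_3^\e$ elementary, but for the skew parts --- both for the fixed $A^{skew}$ and for the $\e$-dependent $(A^0_\e)^{skew}$ --- you still face exactly the obstacle you flag: identifying $\lim_{\e\to 0}\int_{\Omega_\e}(\nabla p_\e,A^{skew}\nabla y^0_\e)\,dx$ with $[y^0,\overline\psi]_A$ requires either a verification that the extensions $P_\e y^0_\e$ lie in $D(A)$ so that (AH1) applies verbatim, or an appeal to the adjoint-equation substitution the paper uses. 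In short, your approach trades the paper's Lagrange-principle contradiction for a strong-convergence observation (a genuine simplification), while for the singular skew terms both arguments lean on (AH1)--(AH2) in essentially the same way and with the same caveats.
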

\begin{proof}
To begin with, we note that due to Theorem~\ref{Th 5.31}, the sequence of optimal solutions $\left\{(A^0_\varepsilon,v^0_\e,y^0_\e)\in \Xi_\varepsilon\right\}_{\e>0}$ to the regularized problems \eqref{5.4}--\eqref{5.5b}
is compact with respect to $w$-convergence and each of its $w$-cluster pairs $(A^0,y^0)$ is an optimal pair to the original problem \eqref{2.3}--\eqref{2.3a}. Hence, $(A^0,y^0)\in \mathfrak{A}_{ad}$, and the limit passage in \eqref{6.5}--\eqref{6.5b}
as $\e\to 0$ leads us to the integral identity \eqref{33.26}. Thus, the relation \eqref{6.22} holds true in the sense of distributions. In what follows, we divide the proof onto several steps.

Step~1. Since the integral identity
\begin{align}
\notag
\int_{\Omega} \big(\nabla \varphi,&\left(A_\e^0\right)^{sym}\nabla P_\e(p_\e)-\left(A_\e^0\right)^{skew}\nabla P_\e(p_\e)\big)_{\mathbb{R}^N}\chi_{\Omega_{\e}}\,dx\\ \notag
&=-2\int_{\Omega}\left(\nabla \varphi, \left(A_\e^0\right)^{sym}\nabla P_\e(y^0_\e)\right)_{\mathbb{R}^N}\chi_{\Omega_{\e}}\,dx\\
&-2
\int_{\Omega}\left(P_\e(y^0_\e)-y_d\right) \varphi\chi_{\Omega_{\e}}\,dx,\quad
 \forall\,\varphi\in C^\infty_0(\Omega)
\label{6.25}
\end{align}
holds true for every $\e>0$, we can pass to the limit in \eqref{6.25} as $\e\to 0$ due to Hypothesis (H3) and Definition \ref{Def 5.10} (here, we apply the arguments of Remark~\ref{Rem 5.3.1}). Using the strong convergence $\chi_{\Omega_{\e}}\rightarrow \chi_\Omega$ in $L^2(\Omega)$ (see Proposition~\ref{Prop 5.3}), we arrive at the equality
\begin{align}
\notag
\int_{\Omega} \big(\nabla \varphi,&\left(A^0\right)^{t}\nabla \overline{\psi}\big)_{\mathbb{R}^N}\,dx
=-2\int_{\Omega}\left(\nabla \varphi, \left(A^0\right)^{sym}\nabla y^0\right)_{\mathbb{R}^N}\,dx\\
&-2
\int_{\Omega}\left(y^0-y_d\right) \varphi\,dx,\quad
 \forall\,\varphi\in C^\infty_0(\Omega).
\label{6.25.a}
\end{align}
Hence, $\overline{\psi}\in D(A^0)\subset H^1_0(\Omega)$ (see Proposition~\ref{Prop 2.9}) and $\overline{\psi}$ satisfies relation \eqref{6.23} in the sense of distributions.

Step~2. On this step we study the limit passage in inequality \eqref{6.7b} as $\e\to 0$. To this end, we rewrite it as follows
\begin{equation}
\label{6.26}
J_1^\e(A)\ge J_2^\e - J_3^\e(A),\quad\forall\, A\in \mathfrak{A}^\e_{ad},\ \forall\,\e>0,
\end{equation}
where
\begin{align}
\label{6.27}
J_1^\e(A)=\ & \int_{\Omega_\e}\big(\nabla y^0_\e, A^{sym}\nabla y^0_\e\big)_{\mathbb{R}^N}\,dx,\\
\label{6.28}
J_2^\e=\ & \int_{\Omega_\e}\big(\nabla y^0_\e, (A^0_\e)^{sym}\nabla y^0_\e\big)_{\mathbb{R}^N}\,dx,\\
\label{6.29}
J_3^\e(A)=\ & \int_{\Omega_\e}\left(\nabla y^0_\e, \left(A^{t}-(A^0_\e)^{t}\right)\nabla p_\e\right)_{\mathbb{R}^N}\,dx.
\end{align}
By Theorem~\ref{Th 5.31} (see \eqref{5.33}), we have
\begin{align}
\notag
I\left(A^0,y^0\right):=\ & \left\|y^0-y_d\right\|^2_{L^2(\Omega)} + \int_{\Omega}\left(\nabla y^0, \left(A^0\right)^{sym}\nabla y^0\right)_{\mathbb{R}^N}\,dx\\
\notag =\ &\lim_{\e\to 0} I_{\e}(A^0_\e,v^0_\e,y^0_\e):= \lim_{\e\to 0} \left\|(y_\e^0-y_d)\chi_{\Omega_\e}\right\|^2_{L^2(\Omega)}\\
&+ \lim_{\e\to 0} \int_{\Omega_\e}\left(\nabla y_\e^0, \left(A_\e^0\right)^{sym}\nabla y_\e^0\right)_{\mathbb{R}^N}\,dx
+ \lim_{\e\to 0} \frac{1}{\e^\sigma}\|v_\e^0\|^2_{H^{-\frac{1}{2}}(\Gamma_\e)}.
\label{6.30}
\end{align}
Since
\begin{equation}
\label{6.30.aa}
\lim_{\e\to 0} \left\|(y_\e^0-y_d)\chi_{\Omega_\e}\right\|^2_{L^2(\Omega)}=\left\|y^0-y_d\right\|^2_{L^2(\Omega)}
\end{equation}
by the compactness of the embedding $H^1_0(\Omega)\hookrightarrow L^2(\Omega)$, and $\lim_{\e\to 0} \e^{-\sigma}\|v_\e^0\|^2_{H^{-\frac{1}{2}}(\Gamma_\e)}=0$ by Theorem~\ref{Th 5.31} (see estimate \eqref{5.30.3}), it follows from \eqref{6.30} that
\begin{equation}
\label{6.31}
\lim_{\e\to 0}J_2^\e = \int_{\Omega}\left(\nabla y^0, \left(A^0\right)^{sym}\nabla y^0\right)_{\mathbb{R}^N}\,dx=:J_2.
\end{equation}

Step~3. As for the term $J_3^\e(A)$, we see that
\begin{align}
\notag
\lim_{\e\to 0}J_3^\e(A)=\ &\lim_{\e\to 0} \int_{\Omega_\e}\left(\nabla y^0_\e, (A^0_\e)^{t}\nabla p_\e\right)_{\mathbb{R}^N}\,dx=(\ \text{by \eqref{6.25}}\ )\\
\notag=\ &\lim_{\e\to 0}\Big[-2\int_{\Omega}\left(\nabla P_\e(y^0_\e), \left(A_\e^0\right)^{sym}\nabla P_\e(y^0_\e)\right)_{\mathbb{R}^N}\chi_{\Omega_{\e}}\,dx\\
\notag&-2\int_{\Omega}\left(P_\e(y^0_\e)-y_d\right) P_\e(y^0_\e)\chi_{\Omega_{\e}}\,dx\Big] =
(\ \text{by \eqref{6.31} and \eqref{6.30.aa}}\ )\\
\notag=\ &
-2\int_{\Omega}\left(\nabla y^0, \left(A^0\right)^{sym}\nabla y^0\right)_{\mathbb{R}^N}\,dx
-2  \int_{\Omega}\left(y^0-y_d\right) y^0\,dx\\
\notag=\ &\lim_{\e\to 0}\Big[-2\int_{\Omega}\left(\nabla P_\e(y^0_\e), \left(A^0\right)^{sym}\nabla y^0\right)_{\mathbb{R}^N}\chi_{\Omega_{\e}}\,dx\\
\notag&-2  \int_{\Omega}\left(y^0-y_d\right) P_\e(y^0_\e)\chi_{\Omega_{\e}}\,dx\Big]=(\ \text{by \eqref{6.25.a}}\ )\\
\notag=\ &
\lim_{\e\to 0} \int_{\Omega} \big(\nabla P_\e(y^0_\e),\left(A^0\right)^{t}\nabla \overline{\psi}\big)_{\mathbb{R}^N}\chi_{\Omega_{\e}}\,dx\\
\notag=\ & \int_{\Omega} \big(y^0,\left(A^0\right)^{sym}\nabla \overline{\psi}\big)_{\mathbb{R}^N}\,dx +\lim_{\e\to 0} [P_\e(y^0_\e)\chi_{\Omega_{\e}},\overline{\psi}\,]_{A^0}=(\text{by (AH2)})\\
\label{6.32}=\ &
\int_{\Omega} \big(y^0,\left(A^0\right)^{sym}\nabla \overline{\psi}\big)_{\mathbb{R}^N}\,dx + [y^0,\overline{\psi}\,]_{A^0}
\end{align}
and
\begin{align}
\notag
\lim_{\e\to 0} \int_{\Omega_\e}\left(\nabla y^0_\e, A^{t}\nabla p_\e\right)_{\mathbb{R}^N}\,dx&=
\int_{\Omega}\left(\nabla y^0, A^{sym}\nabla \overline{\psi}\right)_{\mathbb{R}^N}\,dx\\
&+\lim_{\e\to 0} \int_{\Omega}\left(\nabla P_\e(p_\e), A^{skew}\nabla P_\e(y^0_\e)\right)_{\mathbb{R}^N}\chi_{\Omega_\e}\,dx
\label{6.33}
\end{align}
as the limit of product of weakly and strongly convergence sequences in $L^2(\Omega;\mathbb{R}^N)$. Hence, combining relations \eqref{6.32} and \eqref{6.33}, we get
\begin{align}
\notag
\lim_{\e\to 0}J_3^\e(A) =\ &\int_\Omega \big(y^0,\left(A^{sym}-\left(A^0\right)^{sym}\right)\nabla \overline{\psi}\big)_{\mathbb{R}^N}\,dx - [y^0,\overline{\psi}\,]_{A^0}\\
\notag
&+ \lim_{\e\to 0} \int_{\Omega}\left(\nabla P_\e(p_\e), A^{skew}\nabla P_\e(y^0_\e)\right)_{\mathbb{R}^N}\chi_{\Omega_\e}\,dx=(\text{by Hypothesis (AH2)})\\
=\ &\int_\Omega \big(y^0,\left(A^{sym}-\left(A^0\right)^{sym}\right)\nabla \overline{\psi}\big)_{\mathbb{R}^N}\,dx - [y^0,\overline{\psi}\,]_{A^0}+[y^0,\overline{\psi}\,]_{A}
\notag\\=:\ &J_3(A).
\label{6.34}
\end{align}

Step~4. At this step we study the asymptotic behaviour of the term $J_1^\e(A)$ in \eqref{6.27} as $\e\to 0$.
To this end, we note that in view of the property \eqref{1.1},  the lower semicontinuity of $L^2$-norm with respect to the weak convergence, immediately leads us to the inequality
\begin{align}
\notag
\lim_{\e\to 0}J_1^\e(A)=\ &\liminf_{\e\to 0} \int_{\Omega_{\e}}\left(\nabla y_\e^0, A^{sym} \nabla y_\e^0\right)_{\mathbb{R}^N}\,dx\\
\notag
=\ &
\liminf_{\e\to 0} \|\chi_{\Omega_{\e}}\left(A^{sym}\right)^{1/2}\nabla y_\e^0\|^2_{L^2(\Omega;\mathbb{R}^N)}\\
\ge\ & \|\left(A^{sym}\right)^{1/2}\nabla y^0\|^2_{L^2(\Omega;\mathbb{R}^N)}=
\int_{\Omega}\left(\nabla y^0, A^{sym} \nabla y^0\right)_{\mathbb{R}^N}\,dx
\notag\\
=:\ & J_1(A).
\label{6.35}
\end{align}
However, because of inequality in \eqref{6.35}, we cannot assert that the limit values are related as follows
\begin{equation}
\label{6.36}
J_1(A)\ge J_2 - J_3(A),\quad\forall\, A\in \mathfrak{A}_{ad}.
\end{equation}
In order to guarantee this relation, we assume the converse, namely, there exists a matrix $A_\sharp\in \mathfrak{A}_{ad}$ such that
$J_1(A_\sharp)< J_2 - J_3(A_\sharp)$. That is, in view of \eqref{6.31},\eqref{6.34}, and \eqref{6.35}, this leads us to the relation
\begin{multline}
\int_{\Omega}\left(\nabla y^0, \left(A_\sharp^{sym} - \left(A^0\right)^{sym}\right)\nabla y^0\right)_{\mathbb{R}^N}\,dx \\
+\int_\Omega \big(y^0,\left(A_\sharp^{sym}-\left(A^0\right)^{sym}\right)\nabla \overline{\psi}\big)_{\mathbb{R}^N}\,dx< [y^0,\overline{\psi}\,]_{A^0}-[y^0,\overline{\psi}\,]_{A_\sharp}.
\label{6.37}
\end{multline}
The direct computations show that, in this case, we arrive at the inequality
\[
\widehat{L}(A_\sharp,y^0,1,\overline{\psi})< \widehat{L}(A^0,y^0,1,\overline{\psi})=I(A_0,y_0)=\inf_{(A,y)\in\Xi}I(A,y),
\]
where $\widehat{L}(A,y,\lambda,p)$ is the Lagrange function given by \eqref{33.1}. However, this contradicts with the Lagrange principle, and therefore, the inequality \eqref{6.36} remains valid. Thus, following \eqref{6.36}, we finally get
\begin{equation*}
\int_{\Omega}\left(\nabla y^0, \left(A^{sym} - \left(A^0\right)^{sym}\right)(\nabla y^0+\nabla \overline{\psi})\right)_{\mathbb{R}^N}\,dx
\ge [y^0,\overline{\psi}\,]_{A^0}-[y^0,\overline{\psi}\,]_{A}
\end{equation*}
for all $A\in \mathfrak{A}_{ad}$. This concludes the proof.
\end{proof}

\begin{remark}
As Theorem~\ref{Th 6.19} indicates, the limit passage in optimality system \eqref{6.5}--\eqref{6.7b} for the regularized problems \eqref{5.4}--\eqref{5.5b} as $\e\to 0$ leads to the relations which coincide with the optimality system for the original OCP \eqref{2.3}--\eqref{2.3a}. However, a strict substantiation of this passage requires rather strong assumptions in the form of Hypotheses (H1)--H2) and (AH1)--(AH2). At the same time, the verification of these Hypotheses becomes trivial provided
\begin{gather}
\label{6.38}
A^\ast\in L^\infty(\Omega;\mathbb{S}^N_{skew})\quad\text{in  \eqref{2.3c}},\\
\label{6.39}
\text{and }\ \exists\,C>0\ :\ \|\div A^{skew}\|_{L^\infty(\Omega;\mathbb{R}^N)}\le C,\quad\forall\,A\in \mathfrak{A}_{ad}.
\end{gather}
Indeed, the validity of Hypotheses (H1)--(H2) evidently follows from \eqref{6.38}. Moreover, in this case the relation \eqref{6.19.a} takes the form
\[
\lim_{\e\to 0} \int_{\Omega}\left(\nabla p_\e, A^{skew}\nabla y_\e\right)_{\mathbb{R}^N}\,dx= \int_{\Omega}\left(\nabla p\,, A^{skew}\nabla y\right)_{\mathbb{R}^N}\,dx
\]
and it holds obviously true provided
$y_\e\rightharpoonup y$ in $H^1_0(\Omega)$, $p_\e\rightarrow p$ in $H^1_0(\Omega)$, and
$A^{skew}\preceq A^\ast\in L^\infty(\Omega;\mathbb{S}^N_{skew})$. Hence, Hypothesis~(AH1) is valid as well. As for Hypothesis~(AH2), we see that  admissible controls $A\in \mathfrak{A}_{ad}$ with extra property \eqref{6.39} form a close set with respect to the strong convergence in $L^2(\Omega;\mathbb{S}^N_{skew})$. Moreover, in this case we have that the sequence
$\left\{\chi_{\Omega_\e}\div\,\left(\left(A_\e^0\right)^{skew}\nabla y_\e^0\right)\right\}_{\e>0}$ is uniformly bounded in $L^2(\Omega)$ (see Remark~\ref{Rem 6.4}). Hence, the sequence of adjoint states $\left\{p_\e\right\}_{\e>0}$, given by \eqref{6.6}--\eqref{6.6b}, is bounded in $H^2(\Omega_\e)$ by the regularity of solutions to the problem \eqref{6.6}--\eqref{6.6b}.
Hence, within a subsequence, we can suppose that the sequence $\left\{P_\e(p_\e)\right\}_{\e>0}$ is weakly convergent in $H^2(\Omega)$. This proves Hypothesis (AH2).
\end{remark}

\section*{Acknowledgments} The authors gratefully acknowledge the support of le Conservatoire National des Arts et M\'{e}tiers (Paris, France)  and the support of the French ANR Project CISIFS.


\medskip
\medskip

\end{document}